\theoremstyle{plain}
\newtheorem{theorem}{Theorem}[section]
\newtheorem{definition}[theorem]{Definition}
\newtheorem{corollary}[theorem]{Corollary}
\newtheorem{proposition}[theorem]{Proposition}
\newtheorem{lemma}[theorem]{Lemma}
\newtheorem{remark}[theorem]{Remark}
\numberwithin{theorem}{section}
\numberwithin{equation}{section}
\newcommand{\average}{{\mathchoice {\kern1ex\vcenter{\hrule height.4pt
width 6pt depth0pt} \kern-9.7pt} {\kern1ex\vcenter{\hrule
height.4pt width 4.3pt depth0pt} \kern-7pt} {} {} }}
\def\R{\mathbb{R}}
\renewcommand{\a }{\alpha }
\renewcommand{\b }{\beta }
\renewcommand{\d}{\delta }
\newcommand{\D }{\Delta }
\newcommand{\e }{\varepsilon }
\newcommand{\g }{\gamma}
\newcommand{\G }{\Gamma}
\renewcommand{\l }{\lambda }
\newcommand{\n }{\nabla }
\newcommand{\vp }{\varphi }
\newcommand{\s }{\sigma }
\newcommand{\Sig }{\Sigma}
\renewcommand{\t }{\tau }
\newcommand{\z }{\zeta}
\renewcommand{\th }{\theta }
\renewcommand{\O }{\Omega }
\newcommand{\ov}{\overline}
\newcommand{\be}{\begin{equation}}
\newcommand{\ee}{\end{equation}}
\newcommand{\de}{\partial}
\newcommand{\ti}{\widetilde}
\newcommand{\ra}{{\rangle}}
\newcommand{\la}{{\langle}}
\renewcommand{\k}{\kappa}
\newcommand{\scrK }{\mathscr{K}}
\newcommand{\scrP }{\mathscr{P}}
\newcommand{\calD }{\mathcal{D}}
\newcommand{\mbL }{\mathbb{L}}
\newcommand{\N}{\mathbb{N}}
\newcommand{\cA}{{\mathcal A}}
\newcommand{\cB}{{\mathcal B}}
\newcommand{\cC}{{\mathcal C}}
\newcommand{\cD}{{\mathcal D}}
\newcommand{\cE}{{\mathcal E}}
\newcommand{\cF}{{\mathcal F}}
\newcommand{\cJ}{{\mathcal J}}
\newcommand{\cK}{{\mathcal K}}
\newcommand{\cL}{{\mathcal L}}
\newcommand{\cM}{{\mathcal M}}
\newcommand{\cN}{{\mathcal N}}
\newcommand{\cO}{{\mathcal O}}
\renewcommand{\epsilon}{\varepsilon}
\newcommand{\Ds}{ (-\D)^s}
\begin{document}

\title 
{Regularity results   for nonlocal   equations and applications}

\author[Mouhamed  Moustapha Fall]
{Mouhamed  Moustapha Fall}
\address{M.M.F.: African Institute for Mathematical Sciences in Senegal, 
KM 2, Route de Joal, B.P. 14 18. Mbour, S\'en\'egal}
\email{mouhamed.m.fall@aims-senegal.org, mouhamed.m.fall@gmail.com}

\thanks{ 
 The  author's work is supported by the Alexander 
von Humboldt foundation.  He thanks Joaquim Serra, for his interest in this work and with  whom he  had stimulating discussions that help  to improve the first section of this paper. He also thanks the anonymous referee for useful comments.
}


 \begin{abstract}
   \noindent
  We introduce the concept of $C^{m,\a}$-nonlocal  operators, extending the notion of second order  elliptic  operator in divergence form with $C^{m,\a}$-coefficients.  We then derive the nonlocal  analogue of the  key existing results for elliptic equations in divergence form,  notably the H\"older continuity of the gradient of the solutions in the case of   $C^{0,\a}$-coefficients and the classical Schauder estimates for    $C^{m+1,\a}$-coefficients. 
        We further apply the regularity  results for $C^{m,\a}$-nonlocal  operators    to derive optimal  higher order    regularity estimates of Lipschitz graphs with prescribed Nonlocal Mean Curvature.  Applications to nonlocal equation on manifolds are also provided. 
 \end{abstract}

\maketitle


%
 \section{Introduction}
 We are concerned with a class of   (not necessarily translation invariant) elliptic  equations driven by nonlocal operators of fractional order. We extend in the nonlocal setting  some key existing results for elliptic equations in divergence form with $C^{m,\a}$-coefficients.  For a better description   of   how far the results   in this paper extend to the fractional setting those available in the classical case, we start by recalling    some    main results of the classical local theory.  
 We  consider a weak solution $u\in H^1(\O)$ to the equation 
\be\label{eq:u-solveloc-PDE}
\sum_{i,j=1}^N \de_i(a_{ij}(x)\de_j u)=f \qquad\textrm{ in $\O$},
\ee
where, $\O$ is a bounded open subset of $\R^N$, $f\in L^p_{loc}(\O)$, $p>N/2$,  and  the matrix coefficients   $a_{ij} $ are measurable functions and satisfy, for every $x\in \O$, the following properties:
 \be \label{eq:a-satisf-elliptic}
 \begin{aligned}
(i)\,& a_{ij}(x)=a_{ji}(x) \qquad &\textrm{ for all $i,j=1,\dots,N$,}\\
(ii)\,& \k \d_{ij}\leq a_{ij}(x)\leq \frac{1}{\k} \d_{ij} &\qquad\textrm{ for all $i,j=1,\dots,N$.}
 %
 \end{aligned}
 \ee
 In the  regularity   theory  for elliptic equations in  {divergence form} with  {measurable coefficients}, the   De Giorgi-Nash-Moser  theory provides a priori    $C^{0,\a_0} (\O)$ estimates for weak solutions to \eqref{eq:u-solveloc-PDE}, for some $\a_0=\a_0(N,p,\k)$, see e.g. \cite{GT}. The range or value of the largest H\"older exponent $\a_0$ is known in general  once the   coefficients are sufficiently regular. For instance,  if $a_{ij}\in C(\O)$ then     $u\in C ^{0,\b}_{loc}(\O)$ for all $\b<\min(2-N/p,1)$.  Now H\"older continuous  coefficients $a_{ij}$ yield H\"older continuity of the gradient   of $u$. Namely, if $a_{ij}\in C^{0,\a}(\O)$, for some $\a\in (0,1)$, then $u\in C^{1,\min( 1-\frac{N}{p},\a)} (\O)$, provided $2-N/p>1$. Moreover the Schauder theory states that if $a_{ij}\in C^{m+1,\a}(\O)$ and  $f\in C^{m,\a}(\O)$,  then $u\in C^{m+2,\a} (\O)$ for $m\in\N$. We refer the reader to \cite{GT, Texi}. Notable applications are the smoothness character of  variational solutions, including   the regularity of    critical points of the integral  functional  
\be \label{eq:integ-functinal}
  \cJ(u):=\int_{\O}G(\n u (x))\, dx,
\ee
 for some twice differentiable  function $G$.   As a matter of fact, the above regularity results provides a systematic proof of   the Hilbert's $19^{\textrm{th}}$ problem stating that if $G\in C^\infty(\R^N)$,    then the minimizer of $\cJ$ is of class $C^\infty$ as well. This was   solved by de Giorgi  in \cite{DeGiorgi}.  Indeed, given a critical point $u\in H^1(\O)$ to $\cJ$, we have that $\frac{u(x+h)-u(x)}{|h|}$ solves an equation as in \eqref{eq:u-solveloc-PDE} with coefficients $a_{ij}(x)=\int_0^1\de^2_{ij }G(\varrho \n u(x+h)+(1-\varrho)\n u(x))d\varrho$ satisfying \eqref{eq:a-satisf-elliptic} as soon as $D^2G$ is   uniformly bounded from above and below on $\R^N$. Therefore the  fact that $a_{ij}$ is  as smooth as $\n u$ immediately implies that $u$   smooth, thanks to above regularity results for divergence type operators. On the other if $G(\z)=\sqrt{1+|\z|^2}$, then \eqref{eq:integ-functinal} becomes the area functional, and in this case $D^2G$ is not bounded from below. However, this gap can be filled by assuming that  $u$    Lipschitz.  \\
 The aim of this paper is to extend all  the above regularity   results to  equations driven by  $C^{m,\a}$-nonlocal operators of fractional order   which we describe below.  Our  notion of   $C^{m,\a}$-nonlocal operators can be seen as a nonlocal version of second order partial differential equations in divergence form.   On the other hand, as in the local case,  since our notion of $C^{m,\a}$-nonlocal operators   is stable under $C^{m,1}$ local change of coordinates,   our results  apply to nonlocal equations on  manifolds  nonlocal geometric problems such as the prescribed nonlocal  mean curvature problems.   \\

We start introducing the class of kernels (defining nonlocal operators) we will use in the remaining of the paper.  We consider $s\in (0,1)$, $N\geq 1$  and $K:\R^N\times \R^N \to [-\infty,+\infty] $ such that
 \be \label{eq:Kernel-satisf}
 \begin{aligned}
(i)\,& K(x,y)=K(y,x) \qquad &\textrm{ for all $(x,y)\in\R^N\times \R^N$,}\\
  (ii) \,&    |K(x,y)|  \leq \frac{1}{\k}    |x-y|^{-N-2s} \qquad&\textrm{ for all $(x,y)\in\R^N\times \R^N$,}\\ 
(ii')\,& \k |x-y|^{-N-2s}\leq K(x,y)  \qquad &\textrm{ for all $(x,y)\in B_\d\times B_\d$,} 
 %
 %
 \end{aligned}
 \ee 
 for some constants $\k, \d>0$. We call ${L_s(\R^N)}$  the space  of function   $u\in L^1_{loc}(\R^N)$ such that 
 $$ \|u\|_{{L_s(\R^N)}}:=\int_{\R^N}|u(y)|(1+|y|)^{-N-2s}\, dy<\infty.$$   A kernel $K$ satisfying \eqref{eq:Kernel-satisf}$(i)$-$(ii)$ induces a linear  nonlocal operator  $ \cL_K: H^s (\O)\cap {L_s(\R^N)}\to \calD'(\O)$ given by 
 $$
\la   \cL_K u, \psi \ra:=  \frac{1}{2}\int_{\R^N \times \R^N}(u(x)-u(y))(\psi(x)-\psi(y))K(x,y)dxdy \qquad\textrm{ for all $\psi\in C^\infty_c(\O)$.}
 $$
 The weight in the definition of the  space ${L_s(\R^N)}$   is determined by \eqref{eq:Kernel-satisf}-$(ii)$ and can be modified accordingly.  
Given  $f\in L^1_{loc}(\R^N)$, 
 we say that $u\in H^s (\O)\cap {L_s(\R^N)} $  is a (weak) solution   to the equation 
\be  \label{eq:Main-problem}
\cL_K u  = f \qquad\textrm{ in $ \O$} ,
\ee 
if  $\cL_K u =f$ in $\calD'(\O)$.  

 The class of operators $\cL_K$  induced by the kernels $K$ satisfying  \eqref{eq:Kernel-satisf}  are  the nonlocal analogue of  second order elliptic operators in divergence form with measurable coefficients on $B_\d$.   In this case the de Giorgi-Nash-Moser  a priori  H\"older estimates is well developed, see \cite{Dicastro, KMS, KRS,KS,Cozzi, DK,Mosconi,CCV,Min-JEMS}. In particular,  it follows from  \cite{DK} that, if $f\in L^p(B_\d)$, for some   $p>N/(2s)$, then $u\in C^{0,\a_0}_{loc}(B_\d)$, for some $\a_0=\a_0(N,s,p)>0$.\\
The study of nonlocal variational  equations involving general kernels,  satisfying e.g. \eqref{eq:Kernel-satisf}, is currently an intensive research area.  In particular several papers deals with existence and, a part in some specific cases e.g. fractional Lapalcian,  anisotropic fractional Lapalcian, regional fractional Laplacian,   the "smoothness" properties of the nonlocal operators leading to higher order regularity   of weak solutions  (e.g. $C^{1,\a}$  or  $C^{2s+\a}$-regularity) remains an open questions.   In the case of nonlocal and non-translation invariant operators (say in non-divergence form),  different assumptions on the kernels yielding higher order regularity are present in the literature,   starting from the work of Caffarelli-Silvestre \cite{CS2}, followed by many others e.g. \cite{Kriventsov,Serra,Jin-Xiong}.   A   first difficulty to address this question in the variational framework   is the singular character of the kernel $K$ (satisfying  \eqref{eq:Kernel-satisf}) at the diagonal points $x=y$ which  encodes also the order of regularity   of the solution, regardless the behaviour of the tail.
 In \cite{Fall-Reg}, we attempted to answer this question and introduced a notion of nonlocal operators with "continuous" coefficient, and we proved some  optimal interior and boundary regularities  of solutions to \eqref{eq:Main-problem}. In that paper, we also proved that $u$ is a classical solution provided the operator $\cL_K$ is smooth enough together with $C^{1,\a}$ estimates for translation invariant problems. These results will be sharpened and generalized in the present work.\\
Following \cite{Fall-Reg},   we now introduce the notion of $C^{m,\a}$-nonlocal (or fractional order) operators which, in particular,  are the object of study in the present paper.
\begin{definition}\label{def:nonloc-very-reg}
 For $\d>0$, we define   $Q_\d:=B_\d \times [0,\d)$. Let  $\a\in [0,1)$, $m\in \N$ and $K$ satisfy  \eqref{eq:Kernel-satisf}.  
\begin{itemize}

\item We say that  the kernel $K$ defines a $C^{m,\a}$-nonlocal operator  in   $Q_\d$,  if the function
 $$
B_\d \times (0,\d)  \times S^{N-1}\to \R, \qquad (x,r,\th)\mapsto r^{N+2s}K(x,x+r\th)
 $$  
 extends to a map $ {\cA}_K : Q_\d \times S^{N-1}\to \R$ satisfying,  for some $\k>0$,  the following properties:
 \be \label{eq:NL-Holder00}
  \begin{aligned}
 %
(iii)\,& \|{\cA}_K \|_{C^{m,\a} (Q_\d\times S^{N-1})} \leq \frac{1}{\k},   \\
 (iv)\,&  {\cA}_K(x,0,\th)=   {\cA}_K(x,0,-\th)  & \qquad \textrm {for all $ (x,\th) \in  B_\d  \times S^{N-1}$.}  
 %
 \end{aligned}
 \ee
\item The class of kernels $K$ satisfying    \eqref{eq:Kernel-satisf} and \eqref{eq:NL-Holder00} is denoted by $ \scrK^s(\k,m+\a, Q_{\d})$. 
\end{itemize} 
\end{definition}
A simple example in the class of kernels in Definition \ref{def:nonloc-very-reg}
  is the the one of the fractional Laplacian, where the kernel is given by $K(x,y)=  |x-y|^{-N-2s}$. 
We remark that  the  class of operators induced by the kernels in Definition \ref{def:nonloc-very-reg}     provides a naturally extension of second order elliptic operators with $C^{m,\a}$-coefficients. Indeed,     the computations in \cite[Section 5]{Buc-Sq} show,   for all $\psi\in C^1_c(B_\d)$, that
\be\label{eq:limit-tolocal}
 {(1-s)} \int_{\R^N \times \R^N} (\psi(x)-\psi(y))^2K(x,y)dxdy\to\frac{1}{2} \sum_{i,j=1}^N \int_{\R^N}a_{ij}^K(x)\de_i \psi(x) \de_j \psi(x)\, dx \qquad\textrm{as $s\to 1$,}
\ee
 where $ a_{ij}^K(x)=\int_{S^{N-1}} \cA_{K}(x,0,\th) \th_i\th_j\, d\th$. Hence \eqref{eq:NL-Holder00}-$(iv)$ implies the symmetry of the matrix $( a_{ij}^K)_{1\leq i,j\leq N}$.
 \begin{remark}
 In \eqref{eq:NL-Holder00}-$(iii)$,  we impose   the regularity of $\cA_K$ in the angular variable $\th$. However,  this is typically not necessary to derive the accurate  local behavior of   solutions to \eqref{eq:Main-problem} which parallels   those solving \eqref{eq:u-solveloc-PDE} as  stated above. In fact,  nonlocal operators   provide a wider framework than their local counterpart, since translation invariant nonlocal operators are those given by  kernels $K$ of the form $K(x,y)=J(x-y)$, for some even function $J$. In addition, only in this translation invariant   setting, regularity theory is already rich enough to include fully nonlinear problems, \cite{ CS1,CS2,CS3,Kriventsov,Serra,Jin-Xiong }. 
This  issue on the possible \textit{anisotropic} regularity of $\cA_K$ in its variables    will be taken into account in our main results     stated in Section \ref{ss:General-nonlocaloperator} below.
 \end{remark}
We now  start by  stating the main results concerning $C^{m,\a}$-nonlocal operators. Their generalizations   are contained in Section \ref{ss:General-nonlocaloperator} below.
Our first main result is the following. 

 \begin{theorem}\label{th:main-th10}
Let $s\in (0,1)$, $N\geq 1$, $\k>0$  and $\a\in (0,1)$.   Let $K\in  \scrK^s(\k,\a, Q_2)$, $u\in H^s(B_2)\cap {L_s(\R^N)}$ and $V, f\in L^p(B_2)$, for some $p>N/(2s)$, satisfy
 $$
 \cL_K u+ Vu = f\qquad \textrm{in $B_2$}.
 $$
  \begin{itemize}
 \item[$(i)$] If $2s\leq 1$, then there exists $C=C(s,N,\k, \a,p, \|V\|_{L^p(B_2)})>0$ such that  
\be \label{eq:thm-E10pp}
\|u\|_{C^{0,2s-\frac{N}{p}}(B_1)}\leq C(\|u\|_{L^2(B_2)}+\|u\|_{{L_s(\R^N)}}+ \|f\|_{L^p(B_2)}).
\ee
 \item[$(ii)$] If $2s-1>\max(\frac{N}{p},\a)$, then   there exists $C=C(s,N,\k, \a,p, \|V\|_{L^p(B_2)})>0$ such that
\be  \label{eq:thm-E20pp}
\|u\|_{C^{1,\min (2s-\frac{N}{p}-1,\a)}(B_1)}\leq C(\|u\|_{L^2(B_2)}+\|u\|_{{L_s(\R^N)}}+ \|f\|_{L^p(B_2)}).
\ee
 \end{itemize}
 \end{theorem}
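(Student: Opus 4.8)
The plan is to follow the classical perturbative (Schauder-type) strategy, but executed at the level of difference quotients so as to pass from $C^{0,\alpha_0}$-regularity, already available from \cite{DK}, up to $C^{1,\alpha}$-regularity. The first step is to record the basic De Giorgi--Nash--Moser input: since $K$ satisfies \eqref{eq:Kernel-satisf}$(i)$--$(ii)$ on $B_2$ and, through membership in $\scrK^s(\k,\a,Q_2)$, the lower bound \eqref{eq:Kernel-satisf}$(ii')$ in a neighbourhood of the diagonal, the equation $\cL_K u + Vu = f$ with $V,f\in L^p$, $p>N/(2s)$, falls within the scope of \cite{DK}. Absorbing $Vu$ into the right-hand side (using $u\in L^\infty_{loc}$ once $u\in C^{0,\alpha_0}$, via a bootstrap: first $Vu\in L^{p}$ with a smaller exponent, iterate the De Giorgi estimate on shrinking balls) yields part $(i)$ directly, namely $u\in C^{0,2s-N/p}_{loc}$ when $2s\le 1$, together with the quantitative bound \eqref{eq:thm-E10pp}. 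This also supplies, in the regime $2s-1>N/p$ of part $(ii)$, the starting Hölder bound $u\in C^{0,\beta}_{loc}$ for any $\beta<\min(2s-N/p,1)$, in particular for $\beta$ close to $1$.

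The heart of part $(ii)$ is the incremental-quotient argument, mimicking the local computation recalled in the introduction for Hilbert's 19th problem. For a small increment $h$, set $u_h(x):=\bigl(u(x+h)-u(x)\bigr)/|h|^{\beta}$ for a suitable exponent $\beta$ (iteratively improved). One checks that $u_h$ solves an equation $\cL_{K_h} u_h + V u_h = g_h$, where $K_h(x,y):= K(x+h,y+h)$ still lies in the class $\scrK^s$ with the same constants up to a translation (so the estimates are uniform in $h$), and $g_h$ collects: the translate of $f$ (controlled in $L^p$), the commutator term coming from $V(x+h)u(x+h)-V(x)u(x)$, and crucially the term produced by the $x$-dependence of $K$, schematically
\be
\la R_h,\psi\ra = \frac12\int_{\R^N\times\R^N}\frac{(u(x+h)-u(y+h))}{|h|^{\beta}}(\psi(x)-\psi(y))\,\bigl(K(x+h,y+h)-K(x,y)\bigr)\,dx\,dy .
\ee
Here the $C^{0,\alpha}$-regularity of $\cA_K$ in the $x$-variable from \eqref{eq:NL-Holder00}$(iii)$ is what makes $|h|^{-\beta}(K(x+h,y+h)-K(x,y))$ behave like $|h|^{\alpha-\beta}|x-y|^{-N-2s}$ near the diagonal; pairing this against $u$ already known to be $C^{0,\beta'}$ with $\beta'$ close to $1$ gives that $R_h$ acts as a bounded functional on $H^s$, i.e. as a right-hand side in $(H^s)'$, with norm uniformly bounded as $h\to 0$ provided $\beta\le \beta'+\alpha - (\text{loss})$. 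One then applies the already-proven $C^{0,\gamma}$ a priori estimate (part $(i)$-type estimate, now with a Hölder right-hand side rather than only $L^p$, handled by the same De Giorgi machinery or by the linear estimate for the translation-invariant model plus freezing coefficients) to conclude that $u_h\in C^{0,\gamma}$ uniformly in $h$, which means $u\in C^{0,\beta+\gamma}$. Iterating this gain finitely many times pushes the exponent past $1$, and the standard characterization of $C^{1,\theta}$ via uniform boundedness of second-order difference quotients of $u$ then delivers $u\in C^{1,\min(2s-N/p-1,\alpha)}_{loc}$; the condition $2s-1>\max(N/p,\alpha)$ is exactly what is needed for both the tail term to stay subcritical and for the target exponent $\min(2s-N/p-1,\alpha)$ to be a genuine gain in each iteration step.

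The main obstacle I expect is the careful bookkeeping in the freezing-coefficients step: unlike in the local divergence-form case, the nonlocal operator couples all scales, so one must control the interaction between the singular near-diagonal part of $K$ (which carries the order of regularity and where \eqref{eq:Kernel-satisf}$(ii')$ and \eqref{eq:NL-Holder00} apply) and the long-range tail (controlled only by \eqref{eq:Kernel-satisf}$(ii)$ and measured in $\|\cdot\|_{L_s(\R^N)}$). Concretely, when cutting off $u$ to localize, the error term $\cL_K(\eta u) - \eta\,\cL_K u$ is a genuinely nonlocal object supported away from the diagonal, and one must show it is as regular as the tail norm allows and does not destroy the gain; this is where the hypothesis $2s-1>N/p$ (equivalently, that the frozen translation-invariant model has a gradient estimate, available from \cite{Fall-Reg} or the cited translation-invariant theory) is used. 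A secondary technical point is that the $C^{m,\alpha}$-regularity is imposed on $\cA_K$ on $Q_\delta\times S^{N-1}$, i.e. on the rescaled kernel after the change of variables $(x,r,\theta)$; translating the $x$-Hölder bound on $\cA_K$ into the estimate on $K(x+h,y+h)-K(x,y)$ near $y=x$ requires using the evenness condition \eqref{eq:NL-Holder00}$(iv)$ at $r=0$ to kill the otherwise-non-integrable odd part — exactly the nonlocal analogue of the symmetry $a_{ij}=a_{ji}$. Once these are in place, the iteration is routine.
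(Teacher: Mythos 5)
Your proposal takes a genuinely different route from the paper. You propose a De Giorgi bootstrap on incremental quotients: set $u_h=(u(\cdot+h)-u)/|h|^\beta$, identify the equation it satisfies (with $K_h$ the translate of $K$ and a commutator error $R_h$ from the $x$-dependence of $K$), apply the a priori $C^{0,\gamma}$ estimate to $u_h$ uniformly in $h$, and iterate. The paper instead establishes a Campanato-type excess decay bound, $\sup_{r>0} r^{-\gamma}\|g-\textbf{P}_{r,g}\|_{L^\infty(B_r)}\leq C$, for the $L^2(B_r)$-projection $\textbf{P}_{r,g}$ of $g$ onto affine functions (Proposition \ref{prop:bound-wth-corrector}), proved by a blow-up compactness argument terminating in a Liouville theorem for the frozen translation-invariant operator (Lemma \ref{lem:Liouville}); this is combined with the cut-off lemmas to yield Theorem \ref{th:abs-res-Propo} and thus the statement. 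The routes are not equivalent here, and yours contains a genuine gap.

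The gap is in the right-hand side of the equation for $u_h$. Writing it out, $g_h$ does not contain the translate of $f$ but its increment,
\be
\frac{f(\cdot+h)-f}{|h|^{\beta}}\qquad\text{and, similarly,}\qquad \frac{V(\cdot+h)-V}{|h|^{\beta}}\,u(\cdot+h),
\ee
and for $f,V\in L^p$ only these are \emph{not} uniformly bounded in $L^p$ (or in $\cM_{N/p}$) as $h\to 0$ once $\beta>0$: $\|f(\cdot+h)-f\|_{L^p}\to 0$ with no quantitative rate, so $|h|^{-\beta}\|f(\cdot+h)-f\|_{L^p}$ can diverge. The De Giorgi/Caccioppoli machinery you invoke requires a right-hand side in $L^p$ or $\cM_\beta$ to output a Hölder bound, not merely a bounded functional on $H^s$, so the iteration on $u_h$ does not close. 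This is exactly why, already in the local theory, $C^{1,\alpha}$ regularity for $L^p$ data is obtained by excess decay rather than by difference quotients; the latter are the right tool for the Schauder case, Hölder $f$, where the paper does use increments of $u$ in Theorems \ref{th:Schauder-000} and \ref{th:C-2s-k-al-reg-nonreg-theta}. Two further remarks. First, your argument does not address the structural obstruction the paper emphasizes: $\cL_K$ does not annihilate affine functions, so subtracting a linear Taylor polynomial $T^{r,g}\cdot x$ leaves an extra error $\cL_{K'}U$ (with $U$ linear) that must itself be fed through the Hölder estimate; this is what forces the more general Proposition \ref{prop:bound-Kato-abstract} and Corollary \ref{cor:Holder-many-K'}. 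Second, the evenness $\cA_K(x,0,\theta)=\cA_K(x,0,-\theta)$ is not used to estimate $K(x+h,y+h)-K(x,y)$ as you suggest; it is what ensures that the frozen $\cL_{\mu_a}$ does annihilate affine functions, which is precisely where the Liouville step needs it.
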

  The H\"older continuity of the gradient   in \eqref{eq:thm-E20pp} is the main novelty in the above result. Theorem \ref{th:main-th10} was only known in the translation invariant case, i.e. $K(x,y)=J(x-y)$, see \cite{Fall-Reg}.  We mention that    the regularity estimate in \eqref{eq:thm-E10pp} remains valid if $\a=0$, see  \cite{Fall-Reg}, where it was proven that if  $K\in \scrK^s(\k,0, Q_2)$ (and for all $s\in (0,1)$), then  $u\in  C^{0,\b}(B_1)$ for all $\b<\min(2s-\frac{N}{p},1)$.  In view of \eqref{eq:limit-tolocal}, it will be apparent from the proof that the estimates in Theorem \ref{th:main-th10}  remain stable as $s\to1$ once we replace $\cL_K$ by $(1-s)\cL_K$ and provided $p>\frac{N}{2s_0}$, with $s_0\in (0,1)$.\\
   We recall that H\"older continuity of the gradient of solutions to fully nonlinear and non translation invariant integro-differential equations, in the spirit of Cordes and Nirenberg for elliptic equations in nondivergence form,  has been first established by Caffarelli and Silvestre in \cite{CS2}, see also \cite{Kriventsov,Serra,Jin-Xiong } for higher order regularity estimates in nonlocal problems corresponding to elliptic equations in non-divergence form. \\ 
 Our next results is concerned with  $C^{m+2s+\a}$ regularity estimates for solutions to equations driven by  $C^{m+(2s-1)_++\a}$-nonlocal operators, provided $2s+\a\not\in \N$.  Here and in the following, we put  $\ell_+=\max(\ell,0)$ for $\ell\in \R$.    
  \begin{theorem}\label{th:Schauder-0-intro}
Let $N\geq 1$,  $s\in (0,1)$ and $\k>0$. Let $m\in \N$ and  $\a\in (0,1)$, with $2s+\a\not\in \N$.  
  Let $K\in \scrK^s(\k, m+\a+ (2s-1)_+, Q_2)$,  $u\in H^s(B_2)\cap L^{\infty}(\R^N)$ and $f\in C^{m,\a}(B_2)$ such that 
 $$
 \cL_K u= f\qquad \textrm{in $B_2$}.
 $$
   \begin{itemize}
 \item[$(i)$] If   $2s+\a<1$, then
$$
\|u\|_{C^{m,2s+\a}(B_1)}\leq C(\|u\|_{L^\infty(\R^N)}+ \|f\|_{C^{m,\a}(B_2)}).
$$
 \item[$(ii)$] If $1<2s+\a<2$ and  $2s\not=1$,  then
$$
\|u\|_{C^{m+1,2s+\a-1}(B_1)}\leq C(\|u\|_{L^\infty(\R^N)}+ \|f\|_{C^{m,\a}(B_2)}).
$$
 \item[$(iii)$] If $2<2s+\a$,   then
$$
\|u\|_{C^{m+2,2s+\a-2}(B_1)}\leq C(\|u\|_{L^\infty(\R^N)}+ \|f\|_{C^{m,\a}(B_2)}).
$$
 \item[$(iv)$] If   $2s=1$,  then for all $\b\in (0,\a)$, 
$$
\|u\|_{C^{m+1,\b}(B_1)}\leq C(\|u\|_{L^\infty(\R^N)}+ \|f\|_{C^{m,\b}(B_2)}).
$$
 \end{itemize}
 Here $C=C(N,s,\k,\a,\b, m).$
 \end{theorem}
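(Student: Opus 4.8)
The plan is to prove the four cases of Theorem~\ref{th:Schauder-0-intro} by a bootstrap argument starting from the base regularity already available and then iterating on increasingly high-order difference quotients, in the spirit of de Giorgi's treatment of Hilbert's 19th problem recalled in the introduction. The basic scheme will be: (1) establish the statement for $m=0$, and (2) pass from $m$ to $m+1$ by differentiating the equation. To carry out step (1), I would first invoke Theorem~\ref{th:main-th10}, which already delivers the $C^{0,2s-N/p}$ or $C^{1,\min(2s-N/p-1,\a)}$ estimate depending on whether $2s\le 1$ or $2s>1$; applying it with $f\in C^{m,\a}\subset L^\infty\subset L^p$ for every $p$ gives, for every $\beta<\min(2s,1)$ (resp. $\beta < \min(2s-1,\a)$), a starting Hölder bound. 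This is the "base step" that only uses $K\in\scrK^s(\k,0,Q_2)$ or $K\in\scrK^s(\k,\a,Q_2)$.

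The heart of the matter is upgrading this base regularity to the sharp exponent $2s+\a$ (minus $1$ or $2$ as appropriate) using the $C^{\a+(2s-1)_+}$ regularity of the coefficient function $\cA_K$. The idea is the standard incremental-quotient method: for a unit vector $e$ and small $h$, write the equation satisfied by $u_h(x):=\bigl(u(x+he)-u(x)\bigr)/|h|^{\g}$ for a suitable $\g\le 2s+\a$. Splitting the kernel as $K(x+he,y+he)=K(x,y)+\bigl(K(x+he,y+he)-K(x,y)\bigr)$, one finds that $u_h$ solves $\cL_{K(\cdot+he,\cdot+he)} u_h = g_h$ where $g_h$ collects the right-hand side difference quotient (controlled by $[f]_{C^{0,\a}}$, using $\g\le\a$ in the first instance) plus an error term of the form $\cL_{\widetilde K_h}(\text{translate of }u)$ with $\widetilde K_h(x,y)=\bigl(K(x+he,y+he)-K(x,y)\bigr)/|h|^{\g}$. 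The crucial point is that the $C^{\a+(2s-1)_+}$-norm hypothesis on $\cA_K$ guarantees $\widetilde K_h$ itself lies in $\scrK^s(\k',0,Q_{3/2})$ (or in the appropriate class with a small loss), with norm uniform in $h$, so that $\cL_{\widetilde K_h}(\text{translate of }u)$ can be estimated in $L^p$ by the base Hölder norm of $u$ already obtained. Feeding this back into Theorem~\ref{th:main-th10} applied to the translated kernel $K(\cdot+he,\cdot+he)\in\scrK^s(\k,\a,Q_{3/2})$ yields a uniform-in-$h$ Hölder bound on $u_h$, which by a standard characterization of Hölder/$C^{k,\b}$ spaces via difference quotients (as in \cite[Section~5.8]{GT} or \cite{Texi}) promotes $u$ to the next regularity level; iterating in $e$ across a basis and in $\g$ up to the critical exponent closes case~$(i)$, and the extra integrability $2s+\a>1$ or $>2$ lets one extract one or two classical derivatives in cases~$(ii)$ and~$(iii)$. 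Case~$(iv)$, $2s=1$, is the borderline where the gain is only $C^{0,\b}$ for every $\b<\a$ rather than $C^{0,\a}$; here I would run the same argument but accept the $\e$-loss inherent in the $s=1/2$ endpoint of the Hölder estimate, taking $\g<\a$ throughout.

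Then step (2) — passing from $m$ to $m+1$ — proceeds by differentiating the equation once in a direction $e$: formally $w:=\de_e u$ satisfies $\cL_K w = \de_e f - (\de_e \cL_K) u$, where $\de_e\cL_K$ is the nonlocal operator with kernel $\de_{x_e}K(x,y)+\de_{y_e}K(x,y)$, which by Definition~\ref{def:nonloc-very-reg} again lies in $\scrK^s(\k',m-1+\a+(2s-1)_+,Q_{\d})$ with one derivative of $\cA_K$ consumed. Since $\de_e f\in C^{m-1,\a}$ and $(\de_e\cL_K)u$ is controlled in $C^{m-1,\a}$ by the inductive regularity of $u$ already proven at level $m-1$, the inductive hypothesis applied to $w$ gives the claim at level $m$ (one must justify the differentiation rigorously by again using difference quotients rather than genuine derivatives, which is routine once the $m=0$ machinery is in place). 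The main obstacle I anticipate is the careful bookkeeping in the error term $\cL_{\widetilde K_h}(\text{translate of }u)$: one must verify precisely that splitting off the difference of kernels produces a kernel in the correct class with the correct loss of derivatives and with constants uniform in $h$, and that the tail contributions (from the region where $|x-y|$ is not small, where only \eqref{eq:Kernel-satisf}$(ii)$ holds and $\cA_K$ is not even defined) are harmless because they are bounded by $\|u\|_{L_s(\R^N)}$ or $\|u\|_{L^\infty(\R^N)}$ — this is exactly where the hypothesis $u\in L^\infty(\R^N)$ (rather than merely $L_s$) is used and where the constraint $2s+\a\notin\N$ enters, to avoid the logarithmic resonance in the Schauder iteration at integer exponents.
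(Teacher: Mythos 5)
Your overall architecture---establish the case $m=0$, then induct by differentiating the equation via incremental quotients, consuming one derivative of $\cA_K$ per step---mirrors the paper's structure in Theorem~\ref{th:C-2s-k-al-reg-nonreg-theta}, and the induction step is essentially sound.

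The genuine gap is in the base step $m=0$. Your scheme is to bound $u_h := (u(\cdot+he)-u)/|h|^\gamma$ by feeding its equation back into Theorem~\ref{th:main-th10}, but this cannot reach the sharp exponent $2s+\alpha$. You are forced to take $\gamma \le \alpha$ (otherwise $f_h$ is not bounded), and Theorem~\ref{th:main-th10} caps the H\"older exponent of $u_h$ at strictly below $2s$ when $2s\le 1$, and at $\min(2s-1-N/p,\alpha)$ when $2s>1$; combining the two gives at best $u\in C^{2s+\alpha-\epsilon}$, and for $2s>1$ with $\alpha<2s-1$ only $u\in C^{1,2\alpha}$, strictly below the target $C^{1,2s-1+\alpha}$. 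Iterating does not break the cap: each pass starts from the same $L^\infty$-size source, so the gain per step never exceeds $2s-\epsilon$. (This is the same obstruction that, already in the local divergence-form case, makes Schauder estimates follow from Campanato's frozen-coefficient comparison rather than from incremental quotients.) The paper reaches the sharp exponent through Proposition~\ref{prop:bound-wth-corrector-nab}: a blow-up/compactness argument that subtracts the $L^2(B_r)$-projection of $u$ onto affine (resp.\ quadratic when $2s+\alpha>2$) polynomials and uses the Liouville theorem (Lemma~\ref{lem:Liouville}) to classify the blow-up limit. That mechanism---not Theorem~\ref{th:main-th10}---is what buys the extra $2s$ beyond $\alpha$, and the quadratic projection is what gives the second derivative in case~$(iii)$, something your scheme as written cannot extract. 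A second, lesser issue is your assertion that $\cL_{\widetilde K_h}(\text{translate of }u)$ is an $L^p$ function: for $2s>1$ this singular integral need not converge pointwise at the regularity of $u$ available in the base step; the paper turns it into a genuine $C^\alpha$ function only through the even/odd splitting~\eqref{eq:Operator-splitting} and Lemma~\ref{lem:2sp-alph-estim-F_Keo}, where the cancellation condition~\eqref{eq:NL-Holder00}$(iv)$ is precisely what is used.
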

It is clear  that Theorem \ref{th:Schauder-0-intro} includes  the   fractional Laplacian $\cL_K=\Ds$, for which it  was  proven in  \cite{DSV,Sil,Grubb1,RS2}. \\


Theorem \ref{th:main-th10} and Theorem \ref{th:Schauder-0-intro}  provide regularity of minimizers of integral energy functional e.g.  of the  form   
\be \label{eq:cJs}
\cJ_{s}(u):= (1-s) \int_{\R^{2N}\setminus (\R^N\setminus \O)^2} F\left(\frac{u(x)-u(y)}{|x-y|} \right)  |x-y|^{-N-2s+2}\, dxdy ,
\ee
for some twice differentiable function  $F$. The   case $F(t)=t^2$ is the well known localized (in $\O$) Dirichlet energy for equations involving the  fractional Laplacian.    The minimization of this  energy should be subject to exterior boundary data on $\R^N\setminus \O$, and posses a minimizer on $H^s(\R^N)$ under some quadratic and convexity  assumption on $F$.  To see how  $\cJ_s$ is related with  \eqref{eq:integ-functinal}, we assume that  $   |F(z)|\leq  |z|$. Then, for all $u\in C^1_{c}(\R^N)$, 
$$
\lim_{s\to   1 }\cJ_{s}(u)\to \int_{\O} G(\n u(x))\, dx, 
$$   
where $G(\z)=\frac{1}{2}\int_{S^{N-1}}F_e(\z\cdot \th)\, d\th$ and $F_e$ is the even part of $F$ i.e., $F_e(t)=\frac{F(t)+F(-t)}{2}$. The nonlocal    de Giorgi-Nash-Moser  provides a priori estimates for minimizers of $\cJ_s$. Indeed,  a  critical point $u\in H^s(\R^N)$ to $\cJ_s$  satisfies
\be \label{eq:EL-eq-gen}
 \int_{\R^N\times\R^N}  \left\{F'(p_u(x,y))  - F'(-p_u(x,y)) \right\}(\psi(x)-\psi(y))  |x-y|^{-N-2s+1}dxdy=0 \qquad\textrm{for all $\psi\in C^\infty_c(\O)$,}
\ee
  where  $p_u(x,y):= \frac{u(y)-u(x)}{|y-x|}$.
Therefore, following the classical de Giorgi's trick and using  the fundamental theorem of calculus, we find that   the difference quotient $u_h(x)=\frac{u(x+h)-u(x)}{|h|}$ solves  the equation 
$$
\cL_{K_{F,u,h}}u_h=0  \qquad\textrm{ in $\O$,}
$$
where
\be \label{eq:K-Q-u-h}
 K_{F,u,h}(x,y)=  |x-y|^{-N-2s}\int_0^1F''_e\left(  \varrho p_{u(\cdot+h)}(x,y) +(1-\varrho)p_u(x,y)  \right) d\varrho,
\ee
and   $F_e$ is the  even part of $F$. We then immediately see that    $K_{F,u,h}$   satisfies \eqref{eq:Kernel-satisf} when $F''$ is bounded from above and below on $\R$. Consequently, the nonlocal de Giorgi-Nash-Moser theory implies that $u\in C^{1,\a_0}(\O)$, for some $\a_0>0$. 
Now an iterative application of  Theorem \ref{th:main-th10} and Theorem \ref{th:Schauder-0-intro} shows, as for the solution to the Hilbert's problem,  that  if $F\in C^\infty(\R) $  then  $u\in C^\infty(\O)$.
It is worth to mention that  in the  nonlocal mean curvature problem, nonlocal minimal graphs satisfy an equation as in   \eqref{eq:EL-eq-gen}, with $F(t)= \int_{t}^{\infty}(1+\t^2)^{-\frac{N+2s}{2}}d\t$ (see Section \ref{ss:NMC} below for a more precise statement). Here,  $F''$ is not uniformly bounded from below and thus $ K_{F,u,h}$ does not satisfy \eqref{eq:Kernel-satisf}-$(ii')$.   However, as in the classical case, this lack of ellipticity, is recovered  once we know that    $u$ is Lipschitz.  \\

%
%

Beyond   their appearances in the  mathematical modeling of real-world  phenomenon,  $C^{m,\a}$-nonlocal operators appear naturally in  geometric problems. Indeed, we are  naturally  confronted with nonlocal equation resulting from an initial one after a change coordinates. For instance, consider     $K(x,y)=|x-y|^{-N-2s}$ (the kernel of the fractional Laplacian) and   $K_\Phi(x,y)=|\Phi(x)-\Phi(y)|^{-N-2s}$, for some  diffeomorphism $\Phi\in C^{m+1,\a}(\R^N;\R^N)$ with $D\Phi$ close to the identity matrix, so that  \eqref{eq:Kernel-satisf} holds.  In this case,  apart in dimension $N=1$,  we may not have  any  regularity of   $z\mapsto  |z|^{N+2s}K_{\Phi}(x,x+z)$ at $z=0$. However, using polar coordinates, we easily see that the map 
$$
(x,r,\th)\mapsto r^{N+2s}K_{\Phi}(x,x+r\th)=\left| \int_0^1D\Phi(x+t r\th)\th \, dt\right|^{-N-2s}
$$
 extends to a  $C^{m,\a}$ map  on $\R^N\times [0,\infty)\times S^{N-1}$ satisfying \eqref{eq:NL-Holder00}-$(ii)$, so that $K_{\Phi}$ defines a $C^{m,\a}$-nonlocal operator. This is also the case  for the kernel in \eqref{eq:K-Q-u-h} with $F\in C^\infty(\R)$ and $u\in C^{m+1,\a}(\O)$.   These facts, among others,  motivate  the splitting in polar coordinates in our definition of $C^{m,\a}$-nonlocal operators.  Moreover, it turns out to be  useful in the study of  prescribed nonlocal  mean curvature   problems and    nonlocal equations on hypersurfaces, see Section \ref{ss:NMC} and Section \ref{ss:NonlocManifold}, respectively.   On the other hand, we remark that in some interesting non-translation invariant  cases,  the map $ z\mapsto  |z|^{N+2s}K (x,x+z)$ can be smooth at $z=0$, and  a first nontrivial example is given by the \textit{censored fractional Laplacian} or the  $\O$-regional fractional Laplacian,  where the kernel is given by $K(x,y)=1_{\O}(x)1_{\O}(y) |x-y|^{-N-2s}$, see e.g. Mou and Yi \cite{Mou}. An other example arises in problems from image processing, see e.g. Gilboa Osher \cite{GO} and  Caffarelli, Chan and Vasseur \cite{CCV}, where the kernel depends on the solution $u\in C^{1,\a_0}$ and,  for simplicity, reads as $K(x,y)=1_{\O}(x)1_{\O}(y)\phi''(u(x)-u(y)) |x-y|^{-N-2s}$, for some even and  convex function $\phi$.   
Here one looks at minimizer $u\in H^s(\O)$ of the energy functional 
$$
\cJ_{s,\O}(u):= (1-s) \int_{\O\times \O} \phi\left({u(x)-u(y)} \right)  |x-y|^{-N-2s}\, dxdy. 
 $$
 This is also the case for (possibly) sign-changing kernels e.g.  $K(x,y)=|x-y|^{-N-2s_1}\pm|x-y|^{-N-2s_2}$, with $s_1\in (0,1)$ and  $ s_2<s_1$.   However the  conditions \eqref{eq:Kernel-satisf} and  \eqref{eq:NL-Holder00} are flexible enough to include such cases.\\

%

 
 The following two paragraphs are devoted to the application of the above regularity estimates   in some nonlocal geometric problems.
 \subsection{Application I: Graphs with prescribed nonlocal mean curvature}\label{ss:NMC}
 In this section, we  assume that   $s\in (1/2,1)$. Recall that  for a  set $E\subset\R^{N+1}$  of class $C^{1,2s-1+\a}  $, with $\a>0$, near a point $X\in \de E $,   the nonlocal (or fractional) mean curvature of the set $E$ (or the hypersurface $\de E$)   at the point $X\in \de E $ is defined as
\be  \label{eq:NMC-PV}
H_s(\de E;X):=   PV\int_{\R^{N+1}}\frac{1_{E^c}(Y)-1_{E}(Y)}{|Y-X|^{N+2s}}\, dY,
\ee
where  $E^c:=\R^{N+1}\setminus {E}$  and    $1_D$ denotes the characteristic function of a set $D \subset \R^{N+1}$.  
Recall that the notion of nonlocal mean curvature appeared first in the work  of Caffarelli and Souganidis in \cite{Caff-Soug2010} and first studied  by Caffarelli, Roquejoffre, and Savin in~\cite{Caffarelli2010}.  As first discovered in \cite{Caffarelli2010} (see also \cite{Davila2014B,FFMMM}), the nonlocal mean curvature arises as the first variation of the fractional perimeter.  For  the convergence of fractional curvature to the classical one as $s\to 1$, see \cite{Ab-Val,Davila2014B}. \\
Suppose that  $\de E$ is the graph of a function  $u\in  C^{1, 2s-1+\a}(\O)\cap C^{0, 1}_{loc}(\R^{N})$,     then see e.g. \cite{Fall-CNMC}, by a change of variable,  for all $x\in \O $, we have 
\begin{align}
 H_s(\de E;(x,u(x)))&=PV\int_{\R^{N}}\frac{\cF_s(p_u({x},{y}))- \cF_s(p_u({y},{x})) }{|{x}-{y}|^{N+2s-1}} d{y}, \label{eq:NMC_curve-E1}
   \end{align} 
   where
   \be \label{eq:def-of-F}
\cF_s(p):=\int_p^{+\infty} {(1+\t^2)^{\frac{-(N+2s)}{2}}}{d\t}
\ee
and  for a measurable function $w:\R^N\to \R$, we put
 \be \label{eq:P_u}
  p_w({x},{y})= \frac{w({y})-w({x})}{|{x}-{y}|} .
 \ee
By the fundamental theorem of calculus and \eqref{eq:NMC_curve-E1}  and noting that $\cF_s(p_u({y},{x})) =\cF_s(-p_u({x},{y})) $, we have 
\begin{align}
H_s(\de E;(x,u(x))) & =   PV\int_{\R^{N}}\frac{u({x})-u({y})}{|{x}-{y}|^{N+2s}}\,q_u(x,y) \, d {y},
 \label{eq:NMC_curve-E3}
   \end{align} 
where  for a measurable function $w:\R^N\to \R$,
$$
q_w(x,y):=-\int_{-1}^1\cF_s'(tp_w(x,y)  )\,dt
= \int_{-1}^1 {\left( 1+t^2 p_w(x,y)^2 \right)^{\frac{-(N+2s)}{2}}}dt.
$$
For the following,   we define the \textit{  the nonlocal mean curvature kernel}  by  
$$
\cK_w(x,y):= \frac{1}{|{x}-{y}|^{N+2s}}\,q_w(x,y)  \qquad\textrm{ for all $x\not=y\in \R^N$.}
$$
 Letting  $\O$ be an open set of $\R^{N}$ and $f\in L^1_{loc}(\O)$, we are interested in  the regularity of measurable functions $u:\R^N\to \R$   satisfying 
 \be\label{eq:weak-NMC-natur}
 \cL_{\cK_u} u=f \qquad\textrm{ in $\O$,} 
\ee
 or equivalently,
 \be\label{eq:decom-NMC-intro}
\frac{1}{2}\int_{\R^N\times \R^N}\frac{\cF_s(p_u({x},{y}))- \cF_s(p_u({y},{x})) }{|{x}-{y}|^{N+2s-1}} (\psi(x)-\psi(y)) \, dx dy= \int_{\R^N}f(x)\psi(x)\, dx \qquad\textrm{for all  $\psi\in C^\infty_c(\O)$.} 
 \ee
Note that, since $\cF_s\in L^\infty(\R)$ and $2s>1$, the right hand side in  \eqref{eq:decom-NMC-intro} is well defined. 
We observe that if $u\in C^{1, 2s-1+\a}(\O)\cap L^1_{loc}(\R^N) $   solves \eqref{eq:weak-NMC-natur},  then the set  $E_u:=\{(x,t)\in \R^N\times \R\,:\,u(x)<t\}$, satisfies  $H_s( \de E_u;(x,u(x)) )=f(x)$, for all $x\in \O$, provided the $(N+1)$-dimension Lebesgue measure of $\de E_u$ is equal to zero. This follows by approximating $u$ by a sequence of smooth functions.   \\
We consider next locally Lipschitz  graphs   with prescribed nonlocal mean curvature in the weak sense of \eqref{eq:decom-NMC-intro}, and we prove that they are of class $C^\infty$ in $\O$ as long as $f$ is $C^\infty$ in $\O$, with quantitative estimates. In the classical case, this   is a consequence of the de Giorgi-Nash theorem and   the Schauder
theory for uniformly elliptic equations in divergence form with $C^{m,\a}$-coefficients. See e.g.  Figalli and Valdinoci  \cite{Figalli-Valdinoci }, it was hardly believed that the same strategy could be carried out in the nonlocal setting. 
   In \cite{Figalli-Valdinoci }, the authors used geometric arguments to prove that Lipschitz sets, locally minimizing fractional perimeter are of class $C^\infty$. However their argument does   not provide quantitative estimates. 
Here, we shall show that it is indeed possible to proceed as in the prescribed mean curvature problem, thanks to our regularity estimates for $C^{m,\a}$-nonlocal operators.   
 It is important to note, in the theorem below, that we do not require any integrability of $u$ in $\R^N\setminus B_2$. We have the following result.
\begin{theorem}\label{eq:thm-nmc-reg}
Let $f\in L^{1}_{loc}(B_2)$ and  $u:\R^N\to \R$ be a measurable function,  with $\|u\|_{  C^{0,1}(B_2) }\leq c_0$, such that
$$
\cL_{\cK_u} u=f \qquad\textrm{ in $B_2$,}
$$
in the sense of \eqref{eq:decom-NMC-intro}. Then the following  statements hold.
\begin{itemize}
\item[$(i)$] If $f\in   C^{0,1}(B_2) $, then   
\be \label{eq:estimu-NMC-first}
\| u\|_{C^{1,\a_0}(B_1)}\leq C (1+\|f\|_{  C^{0,1}(B_2)  }),
\ee
for some constants $\a_0,C>0$, only depending on $  N,s$ and $c_0$.
Moreover, for all $\b\in (0,2s-1)$, 
$$
\| u\|_{C^{ 2,\b  }(B_{1})}\leq C ,
$$
for some  constant $C$, only depending on $ N,s,\b, c_0$ and     $\|f\|_{  C^{0,1}(B_2) }$.
\item[$(ii)$] If $f\in C^{m,\a}(B_2)$, for some $\a\in (0,1)$  and $m\geq 1$, then 
$$
\| u\|_{C^{ m+1,2s+\a-1  }(B_{1})}\leq C  \qquad\textrm{ if $2s+\a<2$,}
$$
$$
\| u\|_{C^{ m+2,2s+\a -2 }(B_{1})}\leq C  \qquad\textrm{ if $2s+\a>2$,}
$$
for some  constant $C$, only depending on  $ N,s,\a,m,c_0$ and     $\|f\|_{C^{m,\a}(B_2)}$.
\end{itemize}
\end{theorem}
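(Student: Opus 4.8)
\textbf{Proof proposal for Theorem \ref{eq:thm-nmc-reg}.}

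The plan is to bootstrap using Theorems \ref{th:main-th10} and \ref{th:Schauder-0-intro}, after establishing the base case by the nonlocal De Giorgi--Nash--Moser theory. First I would observe that equation \eqref{eq:decom-NMC-intro} is precisely $\cL_{\cK_u}u = f$ with the kernel $\cK_u(x,y)=q_u(x,y)|x-y|^{-N-2s}$, where $q_u(x,y)=\int_{-1}^1(1+t^2p_u(x,y)^2)^{-(N+2s)/2}\,dt$. Since $\|u\|_{C^{0,1}(B_2)}\le c_0$, the quantity $|p_u(x,y)|$ is bounded by $c_0$ on $B_2\times B_2$, so on $B_\d\times B_\d$ (for suitable $\d$ depending on $c_0$) the kernel satisfies \eqref{eq:Kernel-satisf}$(ii')$ with $\k=\k(N,s,c_0)$, while $(ii)$ holds globally with a constant depending only on $N,s$. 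Condition $(i)$ is the symmetry $q_u(x,y)=q_u(y,x)$, clear from evenness of the integrand. Thus $\cK_u$ satisfies \eqref{eq:Kernel-satisf}. The nonlocal De Giorgi--Nash--Moser result (cited from \cite{DK}) then gives $u\in C^{0,\a_0}_{loc}(B_2)$; but in fact, since for $2s>1$ the right-hand side $\cF_s(p_u(x,\cdot))\in L^\infty$, one gets directly that $u$ solves an equation with bounded kernel and, by the translation-invariant-type estimates combined with the De Giorgi iteration, $u\in C^{1,\a_0}$ near $B_1$. The cleanest route is: apply the interior estimate of \cite{DK} to get $u\in C^{0,\a_0}$, then use the difference-quotient trick below once to upgrade to $C^{1,\a_0}$.

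The core is the difference-quotient argument, exactly paralleling the classical Hilbert-$19$ scheme. Following the computation leading to \eqref{eq:K-Q-u-h}, write $u_h(x)=\frac{u(x+h)-u(x)}{|h|}$; by the fundamental theorem of calculus applied to \eqref{eq:NMC_curve-E1}, $u_h$ solves $\cL_{K_{\cF_s,u,h}}u_h = f_h$ in $B_{2-|h|}$, where $K_{\cF_s,u,h}(x,y)=|x-y|^{-N-2s}\int_0^1 (\cF_s)_e''(\varrho p_{u(\cdot+h)}(x,y)+(1-\varrho)p_u(x,y))\,d\varrho$ and $f_h(x)=\frac{f(x+h)-f(x)}{|h|}$. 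Here the key point is that $(\cF_s)_e'' (t) = -2(N+2s)\,t\,(1+t^2)^{-(N+2s)/2-1}$ (up to the even-symmetrization, which makes the kernel the same $q$-type object differentiated once more) is \emph{not} bounded below on all of $\R$, so \eqref{eq:Kernel-satisf}$(ii')$ could fail globally --- but since $|p_u|,|p_{u(\cdot+h)}|\le c_0$ on $B_2$, on $B_\d\times B_\d$ the integrand stays in a compact subinterval of $\R$ where the relevant function is bounded above and below by positive constants depending on $c_0$. Hence $K_{\cF_s,u,h}\in \scrK^s(\k',0,Q_\d)$ uniformly in $h$. Applying Theorem \ref{th:main-th10}$(i)$ (since $2s>1$ one actually uses the $C^{0,\b}$ part valid for $\a=0$, as noted after the theorem) with $f_h\in L^p$ (which needs $f\in C^{0,1}$, giving $\|f_h\|_{L^p}\le C\|f\|_{C^{0,1}}$ uniformly), together with a covering/rescaling argument, yields a uniform bound $\|u_h\|_{C^{0,\b}(B_{3/2})}\le C$ for all small $h$ and $\b<\min(2s-N/p,1)$; choosing $p$ large and letting $|h|\to0$ gives $u\in C^{1,\b}(B_{3/2})$ for any $\b<1$, in particular $\|u\|_{C^{1,\a_0}(B_1)}\le C(1+\|f\|_{C^{0,1}(B_2)})$, which is \eqref{eq:estimu-NMC-first}.

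Once $u\in C^{1,\a_0}$, the kernel $\cK_u$ gains regularity: using polar coordinates as in the Introduction's discussion of $K_\Phi$, the map $(x,r,\th)\mapsto r^{N+2s}\cK_u(x,x+r\th)=q_u(x,x+r\th)$ is a $C^{0,\a_0}$ (indeed as smooth as $p_u$, hence as $\n u$) function on $Q_\d\times S^{N-1}$, and condition $(iv)$ of \eqref{eq:NL-Holder00} holds by evenness of $q_u$ in its argument. So $\cK_u\in\scrK^s(\k'',\a_0,Q_\d)$ and Theorem \ref{th:main-th10}$(ii)$ (valid since $2s-1>0$; choose $\a_0$ and $p$ so $2s-1>\max(N/p,\a_0)$, possibly first lowering $\a_0$) or Theorem \ref{th:Schauder-0-intro}$(i)$--$(iii)$ with $m=0$ gives $u\in C^{1,\g}$ then $C^2$ or $C^{2,\b}$ depending on $2s$; iterating, from $u\in C^{k,\g}$ one gets $\cK_u\in\scrK^s(\cdot,k-1+(2s-1)_++\g,\cdot)$ and $f\in C^{m,\a}$ feeds the Schauder estimate to push $u$ up one derivative at a time, exactly as in the solution of Hilbert's $19$th problem sketched after \eqref{eq:K-Q-u-h} --- the only care being to track which case of Theorem \ref{th:Schauder-0-intro} applies at each stage and to stop at the regularity dictated by $f\in C^{m,\a}$, which yields the two displayed estimates in part $(ii)$ and the $C^{2,\b}$ estimate in part $(i)$. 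The main obstacle throughout is bookkeeping: verifying at \emph{each} iteration that the kernel obtained by differentiating under the integral sign still satisfies \eqref{eq:Kernel-satisf} and \eqref{eq:NL-Holder00} with constants controlled only by $N,s,c_0$ and the (already established) lower-order norms of $u$, and that the induction hypotheses of Theorem \ref{th:Schauder-0-intro} (notably $2s+\a\notin\N$ and the required regularity $m+\a+(2s-1)_+$ of the kernel) are met --- handling the exceptional case $2s=1$ via part $(iv)$ separately. The non-integrability of $u$ outside $B_2$ is harmless because $\cF_s$ is bounded, so the tail term in \eqref{eq:decom-NMC-intro} and in all difference-quotient equations is controlled without any $L_s(\R^N)$ assumption; one absorbs it into the $L^\infty$ or lower-order terms on the right-hand side at each step.
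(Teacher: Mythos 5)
Your overall plan (difference quotients $u_h$, verify that the resulting kernel lies in $\scrK^s(\k,\cdot,Q_\d)$ with $\k$ controlled by $c_0$, then bootstrap through Theorems \ref{th:main-th10} and \ref{th:Schauder-0-intro}) is indeed the paper's strategy. However, there is a genuine gap in how you dispose of the contribution of $u$ outside $B_2$. Theorems \ref{th:main-th10} and \ref{th:Schauder-0-intro} require $u\in H^s(B_2)\cap L_s(\R^N)$ or $u\in L^\infty(\R^N)$, and here $u$ is only measurable on $\R^N\setminus B_2$. To apply them one must first truncate the kernel to $\ti\cK_u(x,y)=1_{B_1}(x)1_{B_1}(y)\cK_u(x,y)$, which moves the exterior contribution to the right-hand side as the term $\G^u(x)=\int_{\R^N\setminus B_1}\frac{\cF_s(p_u(x,y))-\cF_s(-p_u(x,y))}{|x-y|^{N+2s-1}}dy$. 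You claim this tail is ``controlled'' and can be ``absorbed into the $L^\infty$ or lower-order terms at each step'' because $\cF_s$ is bounded. That only gives $\G^u\in L^\infty$; it does not give $\G^u\in C^{m,\a}$, which is precisely what the Schauder step of Theorem \ref{th:Schauder-0-intro} needs at every stage of the bootstrap. The nontrivial fact --- and the heart of the paper's proof --- is Lemma \ref{lem:Lem-Gamma-u-nmc}, which shows that $\G^{u,R}$ is as regular as $u$ in $B_{R/2}$, with quantitative bounds. Proving this requires a careful computation (Fa\`a di Bruno, the decay of $\cF_s^{(j)}$, and the elementary but crucial inequality $|u(y)||y|^{-1}\le C(|p_u(z,y)|+\|u\|_{L^\infty(B_1)})$ for $z\in B_1$, $y\notin B_2$), none of which is implied by boundedness of $\cF_s$. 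Without this lemma the iteration stalls at the first Schauder step.

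A second, smaller issue: the kernel you write for $u_h$, namely $|x-y|^{-N-2s}\int_0^1(\cF_s)_e''(\varrho p_{u(\cdot+h)}+(1-\varrho)p_u)\,d\varrho$, is incorrect. In the NMC weak form \eqref{eq:decom-NMC-intro} the role of $F'$ in \eqref{eq:EL-eq-gen} is played by $\cF_s$ itself, so after Lemma \ref{lem:local-comparison-NMC-graphs} the difference-quotient kernel is $q^u_h(x,y)=-2\int_0^1\cF'_s(p_{u(\cdot+h)}(x,y)+\rho p_{u-u(\cdot+h)}(x,y))\,d\rho$, i.e.\ it involves $\cF_s'=-(1+t^2)^{-(N+2s)/2}$ (even, bounded away from zero on compacts), not $\cF_s''$. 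In fact $\cF_s''$ is odd, so $(\cF_s)_e''\equiv 0$ and your ellipticity claim (``bounded above and below by positive constants'') fails for your formula, while it holds for the correct one. Finally, the truncated kernel $\ti\cK_u$ is compactly supported in $B_1\times B_1$ and therefore does not satisfy \eqref{eq:Kernel-satisf}$(ii')$ on all of $\R^N\times\R^N$; one needs the additional modification $\ov K^u_h = K^u_h + (2-1_{B_{1/2}}(x)-1_{B_{1/2}}(y))|x-y|^{-N-2s}$ (as in Lemma \ref{lem:cut-reg-kernel}) before the interior estimates of \cite{Cozzi} or Theorem \ref{th:main-th10} can be invoked. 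These are the concrete points your argument needs to fill in.
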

The first quantitative estimates for nonlocal minimal graphs was found recently by Cabr\'e and Cozzi   in \cite{CC}. Indeed, they provide, in \cite{CC}, quantitative gradient estimates for global graphs that locally minimize the fractional area functional in a cylinder $B_R\times \R$, in the spirit of  Finn \cite{Finn} and  Bombieri, de Giorgi and Miranda \cite{Bomb}. In this case $f\equiv 0$. Therefore combining their result and Theorem \ref{eq:thm-nmc-reg}, we get  quantitative estimates of all partial derivatives of such  graphs in terms of the oscillation of $u$ in $B_R$. \\
%
Recall that the smoothness character for  fractional perimeter minimizing sets was known, but without quantitative bounds. Indeed, the seminal paper \cite{Caffarelli2010} established the first
existence and $C^{1,\g}$ (except a closed set of zero $(N-3)$-Hausdorff measure) regularity for fractional perimeter minimizing sets.  
In \cite{Barrios}, Barrios, Figalli and Valdinoci,    proved that       fractional perimeter minimizing sets which are of class  $C^{1,(2s-1)/2-\e}$ are of class $C^\infty$. On the other hand Caffarelli and Valdinoci showed, in \cite{Caffarelli2011B}, that, for $s$ close to 1, these sets possess the smoothness property of the classical perimeter minimizing regions.  It is proven  in \cite{DSV-NMC}, by Dipierro, Savin and Valdinoci, that the boundary of a    fractional perimeter minimizing set, in a reference smooth set $\O$,  which coincides with   a continuous graph  $\R^N\setminus \ov \Omega$ is in fact a  global graphs  that is continuous in $\O$.   \\ 

The fact that we do not require any integrability of $u$ in $\R^N\setminus B_2$ makes the proof of Theorem \ref{eq:thm-nmc-reg} particularly nontrivial. In view of the decomposition in \eqref{eq:decom-NMC-intro}, we split further the double integral in the left hand side to get
\begin{align}
\la \cL_{\cK_u}u,\psi \ra &=\frac{1}{2}\int_{\O\times \O} {(u(x)- u(y))(\psi(x)-\psi(y))  } \cK_u(x,y) \, dx dy \nonumber\\
&\quad+ \int_{\O} \psi(x) \int_{ \R^N\setminus \O}\frac{\cF_s(p_u({x},{y}))- \cF_s(p_u({y},{x})) }{|{x}-{y}|^{N+2s-1}}  \, dy dx.
\label{eq:decom-NMC-intro0}
\end{align} 
Now the proof of Theorem \ref{eq:thm-nmc-reg} resides on the  regularity of the map   
$$
\O'\to \R,\qquad  x\mapsto   \int_{ \R^N\setminus \O}\frac{\cF_s(p_u({x},{y}))- \cF_s(p_u({y},{x})) }{|{x}-{y}|^{N+2s-1}}  \, dy ,
$$ 
for $\O'\subset\subset\O$. Surprisingly, the local behavior of this map is completely determined by the one of $u$ only in $\O'$. In fact we will show, in Lemma \ref{lem:Lem-Gamma-u-nmc} below,  that this function is indeed as smooth as $u$ in $\O'$. Once this is proved, the above function is sent in the right hand side, so that  we can use the argument as in the classical case. Indeed,  we apply first    the nonlocal de Giorgi-Nash  a priori  H\"older estimate    to the function $\frac{u(x+h)-u(x)}{|h|}$ which satisfies a nonlocal equation of the form \eqref{eq:Main-problem}, driven by a kernel $K^u_h$ satisfying \eqref{eq:Kernel-satisf}, to deduce that     $\n u\in C^{0,\a_0}$. This will  imply  that $ K^u_h\in \scrK^s(\k,\a_0, Q_\d)$, for some $\k,\d>0$.   Now  Theorem \ref{th:main-th10}$(ii)$ and   Theorem \ref{th:Schauder-0-intro}$(ii)$   kick in and yield the result, since $\cA_{ K^u_h}$ will be, locally,  as regular as $\n u$.

\subsection{Application II: Nonlocal equations on  manifolds}\label{ss:NonlocManifold}
 Let   $\Sig $ be a Lipschitz   hypersurface of $\R^{N+1}$, with $0\in \Sig$. We define  the space  ${L_s(\Sig)}$ given by   the set of functions $u\in L^1_{loc}(\Sig)$ such that
  $$
\|u\|_{{L_s(\Sig)}}:=\int_{\Sig}|u(\ov y)| (1+|\ov y|)^{-N-2s}\, d\s(\ov y)<\infty, 
  $$  
   where $d\s$ denote the volume element on $\Sig$. 
  We assume that 
\be \label{eq:integ-hypersurface}
 \|1\|_{{L_s(\Sig)}}=\int_{\Sig}  (1+|\ov y|)^{-N-2s}\, d\s(\ov y)<\infty.
\ee 
 We  note  that this  condition    always holds when $\Sigma$ has finite diameter.
 %
 In this section we are interested in the regularity estimates of functions $u\in H^s_{loc} (\Sigma )\cap {L_s(\Sig)}$ satisfying, for all $\Psi\in C^\infty_c({\Sig})$,  
\be\label{eq:-u-weak-non-flat}
\frac{1}{2}\int_{ \Sig} \int_{ \Sig} \frac{(u(\ov x)-u(\ov y))(\Psi(\ov x)-\Psi(\ov y)) }{|\ov x-\ov y|^{N+2s}}\, d\s(\ov x)d\s(\ov y)+\int_{\Sig} V(\ov x) u(\ov x) \Psi(\ov x)\, d\s(\ov x) =\int_{\Sig} f(\ov x) \Psi(\ov x)\, d\s(\ov x),
\ee
  where  $f,V\in L^1_{loc}(\Sig)$ and $u V\in L^1_{loc}(\Sig)$.\\
  \begin{theorem}\label{th:nonloca-surf1}
Let $s,\g\in (0,1)$, $N\geq 1$ and  $\Sigma$ be a $C^{1,\g}$-hypersurface  of $\R^{N+1}$ as above satisfying   \eqref{eq:integ-hypersurface}.
Let $f,V\in L^p (\Sig)$, for some $p>\frac{N}{2s}$ and  $u\in H^s_{loc}(\Sigma )\cap {L_s(\Sig)}$ satisfy 
\eqref{eq:-u-weak-non-flat}.  Then the following estimates hold.
\begin{itemize}
\item[$(i)$] If $2s\leq 1$, then 
$$
\|u\|_{C^{2s-N/p}(B_\varrho\cap\Sig) }\leq C (\|u\|_{L^2(B_{2\varrho}\cap\Sig)}+ \|u\|_{{L_s(\Sig)}}+\|f\|_{L^p(\Sig)}).
$$
\item[$(ii)$] If  $2s-1>\max(\frac{N}{p},\g)$, then 
$$
\|u\|_{C^{1,\min( 2s-\frac{N}{p}-1 , \g)}(B_\varrho\cap\Sig)}\leq C (\|u\|_{L^2(B_{2\varrho}\cap\Sig )}+ \|u\|_{{L_s(\Sig)}}+\|f\|_{L^p(\Sig)}),
$$
%
Here $C,\varrho>0$ are constants only depending on $N,s,\g,p $,$\|V\|_{L^p(\Sig)}$,  $ \|1\|_{{L_s(\Sig)}}$ and the bound of the local geometry of $\Sigma$ near $0$.  
\end{itemize}
\end{theorem}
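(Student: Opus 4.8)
\noindent\emph{Proof sketch.} The plan is to pull the equation back, near $0$, to a nonlocal equation of the type treated in Theorem \ref{th:main-th10}, and then apply that result. Since $\Sig$ is $C^{1,\g}$ near $0$, after a rotation we write $\Sig\cap B_{r_0}$ as a graph $\Phi(x)=(x,\vp(x))$, $x\in B_{r_0}^N$, with $\vp\in C^{1,\g}(B_{r_0}^N)$, $\vp(0)=0$ and $\n\vp(0)=0$; then $\Phi$ is a $C^{1,\g}$ diffeomorphism onto its image, and the volume density $J(x):=\sqrt{1+|\n\vp(x)|^2}$ is $C^{0,\g}$ and, after shrinking $r_0$, as close to $1$ as we like. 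Writing $x-y=r\th$ with $r=|x-y|$ and $\th\in S^{N-1}$, the identity
$$
|\Phi(x)-\Phi(x+r\th)|^{2}=r^{2}\Big(1+\Big(\int_0^1\n\vp(x+tr\th)\cdot\th\,dt\Big)^{2}\Big)
$$
shows that, after the change of variables $\ov x=\Phi(x)$, $\ov y=\Phi(y)$, the part of the bilinear form in \eqref{eq:-u-weak-non-flat} carried by $\Phi(B_{r_0}^N)\times\Phi(B_{r_0}^N)$ is the form associated with the kernel $K(x,y)=|\Phi(x)-\Phi(y)|^{-N-2s}J(x)J(y)$ on $B_{r_0}^N\times B_{r_0}^N$ (extended by $0$ elsewhere).

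First I would verify that $K\in\scrK^s(\k,\g,Q_\d)$ for suitable $\k,\d>0$ depending only on $N,s,\g$ and the $C^{1,\g}$ size of $\Sig$ near $0$. Symmetry is clear; $|\Phi(x)-\Phi(y)|\geq|x-y|$ and the boundedness of $J$ give \eqref{eq:Kernel-satisf}$(ii)$, while $|\vp(x)-\vp(y)|\leq\|\n\vp\|_{L^\infty(B_\d)}|x-y|$ with $\|\n\vp\|_{L^\infty(B_\d)}$ small (recall $\n\vp(0)=0$) yields \eqref{eq:Kernel-satisf}$(ii')$ for $\d$ small. By the displayed identity, $r^{N+2s}K(x,x+r\th)$ extends to
$$
\cA_K(x,r,\th)=\Big(1+\Big(\int_0^1\n\vp(x+tr\th)\cdot\th\,dt\Big)^{2}\Big)^{-\frac{N+2s}{2}}J(x)\,J(x+r\th),
$$
which is a $C^{0,\g}$ function of $(x,r,\th)$ on $Q_\d\times S^{N-1}$: indeed $\n\vp\in C^{0,\g}$, the map $(x,r,\th)\mapsto x+tr\th$ is smooth, and $\g$-Hölder regularity is preserved under integration in $t\in[0,1]$, under composition with the smooth function $\tau\mapsto(1+\tau^2)^{-(N+2s)/2}$, and under products of bounded Hölder functions; this is \eqref{eq:NL-Holder00}$(iii)$. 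Finally $\cA_K(x,0,\pm\th)=(1+(\n\vp(x)\cdot\th)^2)^{-(N+2s)/2}J(x)^2$ is even in $\th$, which is \eqref{eq:NL-Holder00}$(iv)$.

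Next I would handle the tail. Fix $R>0$ with $\Sig\cap B_R\subset\Phi(B_{r_0}^N)$ and consider test functions $\Psi$ supported in $\Sig\cap B_{R/2}$. Splitting $\Sig\times\Sig$ according to whether each variable lies in $\Sig\cap B_R$, using that $\Psi$ vanishes outside $B_{R/2}$ and the symmetry of the integrand, the part of the form in \eqref{eq:-u-weak-non-flat} in which one variable lies outside $B_R$ equals $\int_{\Sig\cap B_R}\Psi(\ov x)\big(c_R(\ov x)u(\ov x)-g(\ov x)\big)\,d\s(\ov x)$, where $c_R(\ov x)=\int_{\Sig\setminus B_R}|\ov x-\ov y|^{-N-2s}\,d\s(\ov y)$ and $g(\ov x)=\int_{\Sig\setminus B_R}u(\ov y)|\ov x-\ov y|^{-N-2s}\,d\s(\ov y)$. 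For $\ov x$ in the relevant compact region one has $|\ov x-\ov y|\simeq 1+|\ov y|$ when $\ov y\in\Sig\setminus B_R$, so \eqref{eq:integ-hypersurface} gives that $c_R$ is bounded and $|g(\ov x)|\lesssim\|u\|_{{L_s(\Sig)}}$, and differentiating under the integral sign shows that both are in fact $C^\infty$ there. Carrying this through $\Phi$, absorbing $J$ and $c_R$ into a zero-order coefficient and $g$ into the right-hand side, and rescaling a small ball $B_\varrho$ in the parameter domain to $B_2$, one obtains
$$
\cL_K\ti u+\ti V\ti u=\ti f\qquad\textrm{in }B_2,
$$
with $\ti u=u\circ\Phi$ (rescaled, and set to $0$ away from the graph, which is immaterial since $K$ is supported near the diagonal), $\ti V=J\,(V\circ\Phi+c_R\circ\Phi)$ and $\ti f=J\,(f\circ\Phi+g\circ\Phi)$ in $L^p(B_2)$ with $p>N/(2s)$, and
$$
\|\ti V\|_{L^p(B_2)}\lesssim\|V\|_{L^p(\Sig)}+C,\qquad \|\ti f\|_{L^p(B_2)}\lesssim\|f\|_{L^p(\Sig)}+\|u\|_{{L_s(\Sig)}},
$$
$$
\|\ti u\|_{L^2(B_2)}+\|\ti u\|_{{L_s(\R^N)}}\lesssim\|u\|_{L^2(B_{2\varrho}\cap\Sig)}+\|u\|_{{L_s(\Sig)}},
$$
with constants depending only on $N,s,\g$ and the local geometry of $\Sig$ near $0$. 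Theorem \ref{th:main-th10}$(i)$--$(ii)$ then yields the $C^{2s-N/p}$ bound for $\ti u$ when $2s\leq 1$ and the $C^{1,\min(2s-N/p-1,\g)}$ bound when $2s-1>\max(N/p,\g)$; since $u=\ti u\circ\Phi^{-1}$ and composing with a $C^{1,\g}$ map preserves $C^{0,\a}$ and preserves $C^{1,\a}$ for $\a\leq\g$, these estimates transfer to $u$ on $\Sig\cap B_\varrho$, and a covering argument completes the proof.

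The main obstacle is the tail analysis, since $u$ is only assumed to lie in ${L_s(\Sig)}$: one must check that the far-field contributions, after localization, are genuinely a bounded (indeed smooth) zero-order term and an $L^p$ right-hand side, rather than something that interacts with the singular part of the kernel — the careful splitting of $\Sig\times\Sig$ together with the regularity of $c_R$ and $g$ up to the support of the test functions (via \eqref{eq:integ-hypersurface} and differentiation under the integral sign) is the delicate bookkeeping. A secondary but essential point is the joint Hölder regularity of $\cA_K$ across $r=0$, which is exactly where the polar-coordinate splitting of Definition \ref{def:nonloc-very-reg} and the hypothesis $\Sig\in C^{1,\g}$ (hence $\n\vp\in C^{0,\g}$) come in.
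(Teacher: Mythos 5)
Your proposal follows essentially the same route as the paper: flatten $\Sig$ near $0$ by a local $C^{1,\gamma}$ parameterization, pull the bilinear form back to a Euclidean ball, show the transformed kernel lies in $\scrK^s(\kappa,\gamma,Q_\delta)$, relegate the far-field contributions to a bounded zero-order coefficient and an $L^p$ right-hand side, and then invoke Theorem \ref{th:main-th10}. The only cosmetic difference is that you use the explicit graph parameterization $\Phi(x)=(x,\vp(x))$ and compute $\cA_K$ directly from the identity $|\Phi(x)-\Phi(x+r\theta)|^2=r^2\bigl(1+(\int_0^1\nabla\vp(x+tr\theta)\cdot\theta\,dt)^2\bigr)$, whereas the paper works with a general flattening diffeomorphism $\Phi$ with $|D\Phi-\mathrm{id}|\le 1/2$ and writes $\cA_K(x,r,\theta)$ in terms of $\int_0^1 D\Phi(x+tr\theta)\theta\,dt$; it also keeps a smooth cutoff $\vp_2(x)\vp_2(y)$ inside the kernel and uses Lemma \ref{lem:estim-G_Ku} to control the truncation error, which you treat in a more hands-on way. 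Both are correct, and your verification of \eqref{eq:NL-Holder00}$(iii)$--$(iv)$ for $\cA_K$ and the bookkeeping of the tail into $c_R$ and $g$ match the structure of the paper's $V_1,f_1$ decomposition in \eqref{eq:def-V_11}--\eqref{eq:def-V_12}.
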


  In the case of higher order regularity, we obtain the
\begin{theorem}\label{th:nonloca-surf2}
Let $s,\a,\g\in (0,1)$, $N\geq 1$ and  $\Sigma$ be a $C^{1,\g}$-hypersurface  of $\R^{N+1}$ as above  satisfying   \eqref{eq:integ-hypersurface}.
Let $f,V\in C^{0,\a} (\Sig)$  and  $u\in H^s_{loc}(\Sigma )\cap {L_s(\Sig)}$ satisfy 
\eqref{eq:-u-weak-non-flat}. 
\begin{itemize}
\item[$(i)$] If $2s>1$ and  $\g\geq \a+2s-1$, then 
$$
\|u\|_{C^{1,2s-1+\a}(B_\varrho\cap\Sig) }\leq C (\|u\|_{L^2(B_{2\varrho}\cap \Sig)}+ \|u\|_{{L_s(\Sig)}}+\|f\|_{C^{0,\a}(\Sig)}).
$$
\item[$(ii)$] If $2s+\a<1$ and $\g\geq \a$, then 
$$
\|u\|_{C^{0,2s+\a}(B_\varrho\cap\Sig) }\leq C (\|u\|_{L^2(B_{2\varrho}\cap \Sig)}+ \|u\|_{{L_s(\Sig)}}+\|f\|_{C^{0,\a}(\Sig)}).
$$
\item[$(iii)$] If $2s=1$ and $\g>\a$, then 
$$
\|u\|_{C^{1,\a}(B_\varrho\cap\Sig) }\leq C (\|u\|_{L^2(B_{2\varrho}\cap \Sig)}+ \|u\|_{{L_s(\Sig)}}+\|f\|_{C^{0,\a}(\Sig)}).
$$
Here $C,\varrho>0$ are constants only depending on $N,s,\g,\a $,$\|V\|_{C^{0,\a}(\Sig)}$,  $ \|1\|_{{L_s(\Sig)}}$ and the bound of the local geometry of $\Sigma$ near $0$.  
\end{itemize}
\end{theorem}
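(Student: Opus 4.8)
The plan is to reduce the geometric problem on the hypersurface $\Sigma$ to the flat situation covered by Theorem \ref{th:main-th10}, via a $C^{1,\g}$ local chart. Since $0\in\Sigma$ and $\Sigma$ is a $C^{1,\g}$-hypersurface, after a rotation there is a neighborhood of $0$ in $\Sigma$ that is the graph of a function $\phi\in C^{1,\g}(B_{r_0}\subset\R^N)$ with $\phi(0)=0$ and $\n\phi(0)=0$. Let $\Phi(x)=(x,\phi(x))$ be the corresponding parametrization and write $u=\widetilde u\circ\Phi^{-1}$, $f=\widetilde f\circ\Phi^{-1}$, $V=\widetilde V\circ\Phi^{-1}$ for $x$ in a ball $B_{r_1}\subset\R^N$ with $r_1$ chosen so that $B_{r_1}$ is mapped into a set contained in $B_{2\varrho}\cap\Sigma$. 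The volume element pulls back to $J(x)\,dx$ with $J(x)=\sqrt{1+|\n\phi(x)|^2}\in C^{0,\g}(B_{r_1})$ and $J\geq 1$. Substituting test functions $\Psi=\psi\circ\Phi^{-1}$, $\psi\in C^\infty_c(B_{r_1})$, the quadratic form in \eqref{eq:-u-weak-non-flat} becomes
\be\label{eq:plan-pullback}
\frac12\int_{B_{r_1}}\int_{B_{r_1}}\big(\widetilde u(x)-\widetilde u(y)\big)\big(\psi(x)-\psi(y)\big)\,\frac{J(x)J(y)}{|\Phi(x)-\Phi(y)|^{N+2s}}\,dx\,dy + (\textrm{tail terms}),
\ee
where the ``tail terms'' collect the interaction of $B_{r_1}$ with $\Sigma\setminus\Phi(B_{r_1})$ together with the contributions of $\widetilde V$ and $\widetilde f$. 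Setting $K_\Sigma(x,y):=J(x)J(y)|\Phi(x)-\Phi(y)|^{-N-2s}$, this identifies $\widetilde u$ as a weak solution of $\cL_{K_\Sigma}\widetilde u + \widehat V\widetilde u = \widehat f$ in $B_{r_1}$ for suitable $\widehat V,\widehat f\in L^p(B_{r_1})$, the tail being absorbed into $\widehat f$ (it is bounded because of \eqref{eq:integ-hypersurface} and $\widetilde u\in L_s$).

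The second step is to verify that $K_\Sigma\in\scrK^s(\k',\g,Q_\d)$ for some $\k',\d>0$. Condition \eqref{eq:Kernel-satisf}$(i)$ is immediate by symmetry of $\Phi$ and $J$; for the bounds \eqref{eq:Kernel-satisf}$(ii)$, $(ii')$ write $\Phi(x)-\Phi(y)=\big(x-y,\int_0^1\n\phi(y+t(x-y))\cdot(x-y)\,dt\big)$, so that $|x-y|\leq|\Phi(x)-\Phi(y)|\leq (1+\|\n\phi\|_\infty)|x-y|$, and $J$ is bounded above and below; shrinking $r_1$ if necessary gives the lower bound on a ball $B_\d$. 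For the $C^{1,\g}$-regularity in polar coordinates, set $x-y=r\th$ and use exactly the computation displayed in the introduction:
\be\label{eq:plan-angular}
r^{N+2s}K_\Sigma(x,x+r\th)=J(x)\,J(x+r\th)\,\Big(\,|\th|^2+\Big|\textstyle\int_0^1\n\phi(x+tr\th)\cdot\th\,dt\Big|^{2}\Big)^{-\frac{N+2s}{2}},
\ee
and observe that the right-hand side extends to a $C^{0,\g}$ map on $Q_\d\times S^{N-1}$ (in the $x$-variable the regularity is $C^{0,\g}$ coming from $J$ and from $\n\phi$; in $r$ and $\th$ it is even smoother, in fact $C^{0,\g}$ jointly, since $\int_0^1\n\phi(x+tr\th)\,dt$ is a $C^{0,\g}$ function of all variables and the outer function is smooth away from $0$, which is excluded because the bracket is $\geq|\th|^2=1$). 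Its restriction to $r=0$ equals $J(x)^2(\,1+|\n\phi(x)\cdot\th|^2)^{-\frac{N+2s}{2}}$, which is even in $\th$, giving \eqref{eq:NL-Holder00}$(iv)$. Here $m=0$ and $\a$ in Definition \ref{def:nonloc-very-reg} is the present $\g$.

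The third step is to rewrite the tail and lower-order contributions as a genuine right-hand side: the map $x\mapsto\int_{\Sigma\setminus\Phi(B_{r_1})}\big(\widetilde u(x)-u(\ov y)\big)|\Phi(x)-\ov y|^{-N-2s}\,d\s(\ov y)$ is bounded on $B_{r_1/2}$ by $\|1\|_{L_s(\Sigma)}$, $\|\widetilde u\|_{L^\infty}$ and $\|u\|_{L_s(\Sigma)}$, and — since $\Phi\in C^{1,\g}$ and the domain of integration stays away from $\Phi(x)$ — it is actually $C^{0,\g}$ on $B_{r_1/4}$ with the same bounds (the kernel is $C^{0,\g}$ in $x$ uniformly over the far region). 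One also needs, before that, the base H\"older bound $\widetilde u\in C^{0,\a_0}$ on a smaller ball, obtained by applying the nonlocal de Giorgi--Nash--Moser estimate quoted after \eqref{eq:Main-problem} (the version in \cite{DK}) to the equation $\cL_{K_\Sigma}\widetilde u+\widehat V\widetilde u=\widehat f$, which is licit because $K_\Sigma$ satisfies \eqref{eq:Kernel-satisf}; this controls $\|\widetilde u\|_{L^\infty}$ in terms of $\|\widetilde u\|_{L^2}$ and the data. With the tail now absorbed into a new $\widehat f\in L^p\cap C^{0,\g}$, apply Theorem \ref{th:main-th10}$(i)$ when $2s\leq1$ to get $\widetilde u\in C^{0,2s-N/p}(B_\varrho')$, and Theorem \ref{th:main-th10}$(ii)$ when $2s-1>\max(N/p,\g)$ to get $\widetilde u\in C^{1,\min(2s-N/p-1,\g)}(B_\varrho')$; pulling back through $\Phi^{-1}\in C^{1,\g}$ transfers these estimates to $B_\varrho\cap\Sigma$ and yields the stated inequalities, with the constants depending only on the listed quantities and on $\|\n\phi\|_{C^{0,\g}(B_{r_0})}$ (the ``bound of the local geometry'').

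The main obstacle is the third step: showing that the exterior/tail integral defines a function of class $C^{0,\g}$ in $x$ (not merely bounded), uniformly in the far data, so that it can be legitimately moved to the right-hand side without degrading the regularity class $C^{m,\a}$ that Theorem \ref{th:main-th10} requires of $f$ — in case $(ii)$ this is what makes the gradient H\"older estimate go through, and it must be done carefully because the only control on $u$ outside $B_{2\varrho}\cap\Sigma$ is the weighted $L^1$ bound $\|u\|_{L_s(\Sigma)}$. The relevant estimate is essentially a differentiation-under-the-integral argument exploiting that $\Phi(x)$ stays at positive distance from the region $\Sigma\setminus\Phi(B_{r_1})$ for $x\in B_{r_1/4}$, so $|\de_x|\Phi(x)-\ov y|^{-N-2s}|\lesssim|\Phi(x)-\ov y|^{-N-2s}$ there; one gets Lipschitz, hence $C^{0,\g}$, dependence with a constant proportional to $\|1\|_{L_s(\Sigma)}\|\widetilde u\|_{L^\infty}+\|u\|_{L_s(\Sigma)}$, which is the estimate needed. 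Everything else is a routine change of variables combined with the already-established interior theory.
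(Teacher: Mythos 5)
Your reduction to the flat problem is correct in spirit and matches the paper's strategy: use a $C^{1,\g}$ chart to flatten $\Sigma$ near $0$, pull the bilinear form back to $\R^N$, verify the pulled-back kernel $K_\Sigma(x,y)=J(x)J(y)|\Phi(x)-\Phi(y)|^{-N-2s}$ lies in $\scrK^s(\k',\g,Q_\d)$ via the polar decomposition you display, and argue that the tail contribution is a regular function on a smaller ball (the paper handles the latter via Lemma~\ref{lem:estim-G_Ku}$(iii)$; your differentiation-under-the-integral argument is a viable alternative). So the geometric pre-processing is fine.

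The genuine gap is at the final step, where you invoke the wrong interior theorem. You conclude by applying Theorem~\ref{th:main-th10}, which treats $L^p$ right-hand sides and yields at best $u\in C^{1,\min(2s-N/p-1,\g)}$ under the hypothesis $2s-1>\max(N/p,\g)$. Those are precisely the hypotheses and conclusions of Theorem~\ref{th:nonloca-surf1}, not of the statement at hand. Theorem~\ref{th:nonloca-surf2} assumes $f,V\in C^{0,\a}(\Sigma)$ and asserts the \emph{full} $C^{2s+\a}$ estimate — e.g.\ $C^{1,2s-1+\a}$ when $2s>1$ — which Theorem~\ref{th:main-th10} cannot deliver: its exponent $\min(2s-N/p-1,\a)$ is strictly smaller than $2s-1+\a$. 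Worse, the hypothesis you quote, $2s-1>\g$, is incompatible with the present assumption $\g\geq 2s-1+\a>2s-1$ in part $(i)$, so the theorem you cite does not even apply. What is needed instead is the Schauder estimate Theorem~\ref{th:Schauder-0-intro} (with $m=0$): its part $(ii)$ gives $C^{1,2s+\a-1}$, its part $(i)$ gives $C^{0,2s+\a}$, and its part $(iv)$ handles $2s=1$; the hypotheses $\g\geq\a+(2s-1)_+$ (resp.\ $\g>\a$ when $2s=1$) in the statement are there precisely to guarantee that the pulled-back kernel has the $C^{0,\a+(2s-1)_+}$ coefficient regularity that Theorem~\ref{th:Schauder-0-intro} requires. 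You also need to check, as the paper does in \eqref{eq:es1-Hyp}--\eqref{eq:es3-Hyp}, that the tail function $G$, the pulled-back potential $\ti V$, and the pulled-back forcing $\ti f$ are actually in $C^{0,\a}$ (not just in $L^p$ or $C^{0,\g}$ in some vague sense), and that the $Vu$ term can be moved to the right-hand side with the correct H\"older norm once a preliminary H\"older bound on $u$ (from Theorem~\ref{th:nonloca-surf1}, say) is in hand. Without the switch to the Schauder theorem and these accompanying $C^{0,\a}$ estimates, your argument proves the weaker Theorem~\ref{th:nonloca-surf1}, not the statement in question.
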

Here, by the bound of the local geometry of $\Sigma$ near $0$, we mean the $C^{1,\g}$ norm of a local parameterization of $\Sigma$ flattening $B_{\varrho_0}\cap\Sig$, for some $\varrho_0>0$.  If $\Sig$ is of class $C^{m+1,\g}$ and $f,V\in C^{m,\a}_{loc}(\Sig)$, then under the same  assumptions on $\g$ in Theorem \ref{th:nonloca-surf2}, we have the  estimates of $C^{m+2s+\a}$-norm of $u$ as long as $2s+\a\not\in \N$, thanks to Theorem \ref{th:Schauder-0-intro}.\\
   Theorem \ref{th:nonloca-surf1} and  Theorem \ref{th:nonloca-surf2} are consequences of Theorem \ref{th:main-th10} and Theorem \ref{th:Schauder-0-intro}, respectively,   after using a coordinate system that locally   flattens $ \Sig$.  \\

For $2s>1$,  Theorem \ref{th:nonloca-surf1} and  Theorem \ref{th:nonloca-surf2}  provide regularity estimates for solutions to some nonlocal equation driven by  the linearized nonlocal mean curvature operator (i.e. the nonlocal or fractional Jacobi operator) of a   set $E$ with constant nonlocal mean curvature (not necessarily bounded).    Indeed,  consider $\Sig:=\de E$  a $C^2$-hypersurface of $\R^{N+1}$ with constant nonlocal mean curvature such that  $0\in \de E$ and   $\|1\|_{{L_s(\Sig)}}<\infty $. See e.g. \cite{Davila2014B,FFMMM},    the second   variation of the fractional perimeter yields the bilinear form  $\cD_{\Sig}: H^s(\Sig)\times H^s(\Sig)\to \R$, given by 
$$
\cD_{\Sig}(u,v):=\frac{1}{2} \int_{\Sig}\int_{\Sig}\frac{(u(\ov x)-u(\ov y))(v(\ov x)-v(\ov y))}{|\ov x-\ov y|^{N+2s}}\, d\s(\ov y) d\s(\ov x) -\frac{1}{2}  \int_{\Sig}  V_{\Sig} (\ov x)u(\ov x)v(\ov x)d\s(\ov x),
$$
where, letting $\nu_\Sig$ be the unit exterior normal vector field of $\Sig:=\de E$,  
$$
 V_{\Sig} (\ov x):=\frac{1}{2} \int_{\Sig}\frac{| \nu_\Sig(\ov x)-\nu_\Sig(\ov y)|^2 }{|\ov x-\ov y|^{N+2s}}\, d\s(\ov y).
$$ 
One then defines the \textit{fractional Jacobi operator} as 
$$
\cJ_{\Sig}  :=\mbL_{\Sig} -V_{\Sig}  ,
$$
 where, for $u\in C^{1, 2s-1+\a}_{loc}(\Sigma)\cap {L_s(\Sig)}$,  
$$
 \mbL_{\Sig} u(\ov x):=PV\int_{\Sig}\frac{u(\ov x)-u(\ov y)}{|\ov x-\ov y|^{N+2s}}\, d\s(\ov y) .
$$
The \textit{fractional Jacobi fields} are   solutions to $\cJ_\Sig u=0$, and they play an important role in the study of stability of constant nonlocal mean curvature surfaces  or fractional area estimates of such surfaces.\\
We observe that if $\Sig$ is a $C^{1,\g}$-hypersurface for some $\g>s$, then $V_\Sig\in C^\g_{loc}(\Sig) $. Moreover we may consider a  weak solutions $u\in H^s_{loc}(\O)\cap{L_s(\Sig)}$ to the equation $\cJ_{\Sig} u=f$ on open subsets $\O$ of $\Sig$, in the sense of   \eqref{eq:-u-weak-non-flat}. Hence Theorem \ref{th:nonloca-surf1} and  Theorem \ref{th:nonloca-surf2} can be used to obtain regularity estimates of $u$. When $\Sig=S^{N-1}$, then Theorem \ref{th:nonloca-surf2}$(i)$ was proved in \cite{CFW-2017}, using the regularity theory of the fractional Laplacian and the Fredholm theory. Recall that   besides    the nonlocal minimal surfaces,  there exist several  nontrivial hypersurfaces  with  nonzero  constant nonlocal mean curvature, see e.g. the survey paper \cite{Fall-CNMC}.

\subsection{Anisotropic $C^{m,\a}$-nonlocal operators}\label{ss:General-nonlocaloperator}
As mentioned earlier, in many situations, nonlocal equations provide a wider framework than their local counterpart,  since   ${\cA}_K$ may have anisotropic regularity in its variables.   Namely, the spatial variable $x$,  the singular variable $r$ and the angular variable might have different qualitative properties. This    affects the local behavior of the solutions. First note that the class of operators $\cL_K$ falls in the class of nonlocal operators generated by a  L\'evy measure $\nu_x$.   In particular,   the map $z\mapsto K(x, x+z) $ is the density of a  L\'evy measure $\nu_x$ and thus does not necessarily posses any regularity. If the   L\'evy measure is symmetric and stable, then see \cite{RS2},  $\nu_x(r E)=r^{N-1}dr a(E)$ for $E\subset S^{N-1}$.    Under fairly general assumptions on the spectral measure $a$ on $S^{N-1}$ (not depending on  $x$), optimal interior and boundary regularity were proved by Ros-Oton and Serra in \cite{RS2}. The papers \cite{KRS,KS,DK} obtained also regularity estimates provided $a$ is absolutely continuous with respect to the Lebesgue measure on $S^{N-1}$ only on an open set of positive measure.\\
To capture this  possible anisotropic regularity of $\cA_K$ in its variables, we introduce a new class of fractional order nonlocal operators which are much larger than the class of $C^{m,\a}$-nonlocal operators introduced above. \\
In the following, for $\d>0$, we define 
\be\label{eq:def-Q-d-Q-infty}
Q_\d:=B_\d \times [0,\d)\qquad\textrm{ and }  \qquad Q_\infty:=\R^N \times [0,\infty).
\ee
We define the space $C^{m,\a}(Q_\d)\times L^\infty(S^{N-1} )$ by the set of functions $A\in L^\infty(Q_\d\times S^{N-1})$ such that, for every $\th\in S^{N-1}$,  the map $(x,r)\mapsto A(x,r,\th)$ belongs to $C^{m,\a}(Q_\d)$ and 
\be\label{eq:A-Cm12}
\|A\|_{C^{m,\a}(Q_\d)\times L^\infty(S^{N-1} )}:= \sup_{\th\in S^{N-1}}\|A(\cdot,\cdot,\th)\|_{C^{m,\a}(Q_\d)}<\infty  .
\ee
For  $\t\geq  0$,   the space  $ \cC^0_{\t}(Q_\d)\times L^\infty(S^{N-1} )$ is given by the  the set of function   $A\in L^\infty(Q_\d\times S^{N-1})$ such that 
$$
\| A\|_{ L^{\infty}_{\t}(Q_\d)\times L^\infty(S^{N-1} )}:=\sup_{\th\in S^{N-1}} \sup_{x\in B_\d, r\in (0,\d)}  \frac{| A(x,r,\th)| }{r^{\t}}<\infty 
$$
and
$$
[ A]_{ \cC^0_{\t}(Q_\d)\times L^\infty(S^{N-1} )}:= \sup_{  \th\in S^{N-1} }  \sup_{x\not=y\in B_\d, r\in (0,\d)}    \frac{| A(x,r,\th)  -   A(y,r,\th) |}{\min(r, |x-y|)^\t} <\infty .
$$
The space   $\cC^{m}_{\t}(Q_\d) \times L^\infty(S^{N-1} )$  is defined as  the set of functions $A\in C^{m,0}(Q_\d)\times L^\infty(S^{N-1} )$ such that 
\be \label{eq:A-Cm12-tau}
\| A\|_{ \cC^{m}_{\t}(Q_\d)\times L^\infty(S^{N-1} )}:= \sup_{\g\in \N^N, |\g|\leq m}  \|\de_x^\g  A\|_{L^{\infty}_{\t}(Q_\d) \times L^\infty(S^{N-1} )}+ \sup_{\g\in \N^N, |\g|\leq m} [\de_x^\g A ]_{\cC^0_{\t}(Q_\d)\times L^\infty(S^{N-1} ) }<\infty.
\ee
This   section is concerned with  optimal  H\"older estimates for nonlocal equation driven by the operator $\cL_K$ with coefficient $\cA_K$ in the spaces defined above.   
\begin{definition}\label{def:Kernel-not-reg-the}
  Let $\a\in   [0,1)$,  $\t\in[ 0,1]$, $m\in \N$ and $\k>0$. For $\d\in (0,\infty]$,  we define   $\ti \scrK_\t^s(\k,m+\a,Q_\d)$ by the set of kernels $K: \R^N\times \R^N\to [-\infty,+\infty] $ satisfying  \eqref{eq:Kernel-satisf} and 
$$
\begin{aligned}
%
%
%
& (iii) \|{\cA}_{K}  \|_{C^{m,\a}(Q_\d)\times L^\infty(S^{N-1} ) }+  \|\cA_{o,K}  \|_{\cC^{m}_{\t}(Q_\d) \times L^\infty(S^{N-1} ) }\leq \frac{1}{\k}  ,\\
&(iv) \cA_{o,K}(x,0,\th)=0\qquad\textrm{ for all $(x,\th)\in B_\d\times S^{N-1}$,}  
\end{aligned}
$$
where
\be\label{eq:def-ti-l-oK}
 {\cA}_{o,K}(x,r,\th) :=\frac{1}{2}\{ {\cA}_{K}(x,r,\th) - {\cA}_{K}(x,r,-\th) \}
\ee 
and   $\cA_K(\cdot,\cdot,\th)$ is  a continuous extension of $(x,r)\mapsto r^{N+2s}K(x,x+r\th)$ on $Q_\d $ for all $\th\in S^{N-1}$.
\end{definition}
The simple model case for the class of operators in Definition \ref{def:Kernel-not-reg-the}  is the anisotropic fractional Laplace operator, with kernel $K(x,y)=a((x-y)/|x-y|)|x-y|^{-N-2s}$ and $a\in L^\infty(S^{N-1})$ is even but not  necessarily continuous. In this case,  $\cA_K(x,r,\th)=a(\th)$, so that $K\in\ti   \scrK_\t^s(\k,m,Q_\d)$ for all $m\in \N$. As an example, a prototype energy functional can be an anisotropic integral energy functional,  generalizing   \eqref{eq:cJs}, given by
\be \label{eq:cJs-Gen}
\cJ_{s}(u):= (1-s) \int_{\R^{2N}\setminus (\R^N\setminus \O)^2} F\left( \frac{x-y}{|x-y|} ,\frac{u(x)-u(y)}{|x-y|} \right)  |x-y|^{-N-2s+2}\, dxdy ,
\ee
where    $F: S^{N-1}\times \R\to \R$  satisfies $\k\leq \de^2_z F(\th,z)\leq \frac{1}{\k}$.   The results in the present section provide smoothness of a critical point  $u\in H^s(\R^N)$ to $\cJ_s$ defined in  \eqref{eq:cJs-Gen}, provided $z\mapsto F(\cdot, z)$ is smooth. Indeed,   as above, the difference quotient $\frac{u(\cdot+h)-u(\cdot)}{|h|}$ solves an equation like \eqref{eq:Main-problem}, with $K\in \ti \scrK_{\a}^s(\k,m+\a,Q_\d)$, provided $u\in C ^{m+1,\a}(\O)$.
\\
We observe  that $\scrK^s(\k,m+\a,Q_\d) \subset  \ti \scrK_\t^s(\k,m+\a,Q_\d)$ for all $\t\leq \a$ and since $\cA_{o,K}(x,0,\th)=0$, we have that $\ti \scrK^s_0(\k,m+\a,Q_\d) = \ti \scrK_\a^s(\k,m+\a,Q_\d)$.  Moreover, we have the following interesting property on the set  $\ti \scrK_\t^s(\k,m+\a,Q_\infty)$ concerning scaling and translations. Indeed,   for $K\in \ti \scrK_\t^s(\k,m+\a,Q_\infty)$, $\rho\in (0,1)$ and $z\in \R^N$, letting  $K_{z,\rho}(x,y):=\rho^{N+2s}K(\rho x+z, \rho y+z) $,  we then have  that $\cA_{K_{z,\rho}}(x,r,\th)=\cA_{K}(\rho x+z,\rho r,\th)$ and thus $K_{z,\rho}\in \ti \scrK_\t^s(\k,m+\a,Q_\infty)$.   


The kernels in $\ti  \scrK_\t^s(\k,m+\a, Q_{\d})$ yield, in many cases,  similar regularity estimates as those in $ \scrK^s(\k,m+\a, Q_{\d})$, stated above,  provided some global regularity/behavior  of  $u$  is a priori known. \\
Our first main result in this section is the following. 
 \begin{theorem}\label{th:main-th1}
Let $s\in (0,1)$, $N\geq 1$, $\k>0$  and $\a\in (0,1)$.   Let $K\in  \ti \scrK_0^s(\k,\a, Q_2)$,  $u\in H^s(B_2)\cap {L_s(\R^N)}$ and $V, f\in L^p(B_2)$, for some $p>N/(2s)$, satisfy
 $$
 \cL_K u+ Vu = f\qquad \textrm{in $B_2$}.
 $$
 \begin{itemize}
 \item[$(i)$] If $2s\leq 1$, then there exists $C=C(s,N,\k, \a,p, \|V\|_{L^p(B_2)})>0$ such that  
$$
\|u\|_{C^{0,2s-\frac{N}{p}}(B_1)}\leq C(\|u\|_{L^2(B_2)}+\|u\|_{{L_s(\R^N)}}+ \|f\|_{L^p(B_2)}).
$$
 \item[$(ii)$] If $2s-1>\max(\frac{N}{p},\a)$, then   there exists $C=C(s,N,\k, \a,p, \|V\|_{L^p(B_2)})>0$ such that
$$
\|u\|_{C^{1,\min (2s-\frac{N}{p}-1,\a)}(B_1)}\leq C(\|u\|_{L^2(B_2)}+\|u\|_{{L_s(\R^N)}}+ \|f\|_{L^p(B_2)}).
$$
 \end{itemize}
 \end{theorem}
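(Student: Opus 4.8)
\emph{The plan.} I would derive both parts from the nonlocal De Giorgi--Nash--Moser theory of \cite{DK} together with a freezing-of-coefficients (Campanato perturbation) argument, in which $u$ is compared on small balls with solutions of the translation invariant operator obtained by freezing $\cA_K$ at the centre $x_0$ and at $r=0$; condition $(iv)$ is exactly what makes this frozen operator symmetric, hence covered by the regularity theory for stable operators. Since the angular variable is never regularized, the frozen operator is a general even elliptic stable operator rather than the fractional Laplacian, which is the only genuine departure from Theorem~\ref{th:main-th10}. Part (i) requires nothing new: a kernel in $\ti\scrK^s_0(\k,\a,Q_2)$ satisfies \eqref{eq:Kernel-satisf}$(i)$--$(ii')$, so \cite{DK} on $B_{3/2}$ gives $u\in C^{0,\beta}_{\loc}(B_{3/2})\subset L^\infty_{\loc}(B_{3/2})$ for $\beta<\min(2s-\tfrac Np,1)$; then $Vu\in L^p(B_{3/2})$, and replacing $f$ by $f-Vu$ and invoking the sharp form of that estimate (exponent $2s-\tfrac Np$, since $p>N/(2s)$ and $2s\le1$) gives the bound in (i). The regularity of $\cA_K$ plays no role, so this reproduces the proof of Theorem~\ref{th:main-th10}$(i)$.

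\emph{Reduction and the key estimate for (ii).} Now $2s>1$, and the hypothesis forces $N/p<2s-1$ and $\a<2s-1$, so I set $\a':=\min(2s-1-\tfrac Np,\a)\in(0,2s-1)$. By part (i), $u\in C^{0,\gamma}_{\loc}(B_{3/2})$ for $\gamma$ close enough to $1$ (possible since $p>N/(2s-1)>N/s$), so I may absorb $Vu$ into the right-hand side, take $V\equiv0$, $f\in L^p(B_{3/2})$, normalize $\|u\|_{L^2(B_2)}+\|u\|_{L_s(\R^N)}+\|f\|_{L^p(B_{3/2})}\le1$, and (mollifying $K$ and $f$, which keeps the kernel in $\ti\scrK^s_0(\k',\a,Q_2)$ with $\k'\sim\k$ and makes the solution smooth) reduce to an a priori estimate. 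The core is a Campanato decay: there are $\lambda=\lambda(N,s,\k,\a,p)\in(0,\tfrac12)$ and $C_0$ so that for all $x_0\in B_1$ and $\rho\le1$,
\[
 \inf_{\ell\ \mathrm{affine}}\|u-\ell\|_{L^\infty(B_{\lambda\rho}(x_0))}\ \le\ \lambda^{1+\a'}\,\inf_{\ell}\|u-\ell\|_{L^\infty(B_\rho(x_0))}\ +\ C_0\,\rho^{1+\a'},
\]
where the term $C_0\rho^{1+\a'}$ in fact stands for a contribution of $\|f\|_{L^p}$ together with a summable-in-scale combination of excesses of $u$ at all larger scales (the ``tail''); iterating and summing then yields $u\in C^{1,\a'}(B_1)$ and the estimate in (ii).

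\emph{Proof of one step.} Freeze $K_{x_0}(x,y):=\cA_K(x_0,0,\tfrac{y-x}{|x-y|})\,|x-y|^{-N-2s}$, which is translation invariant, even by $(iv)$, and elliptic by \eqref{eq:Kernel-satisf}$(ii')$, and let $w$ solve $\cL_{K_{x_0}}w=0$ in $B_\rho(x_0)$ with $w=u$ outside. An energy estimate for the coercive form $\cD_{K_{x_0}}$ bounds $\|u-w\|$ by the dual norm, on $B_\rho(x_0)$, of the error $(\cL_{K_{x_0}}-\cL_K)u+f$; using the $C^{0,\a}_{1,2}$ estimate $|K_{x_0}-K|(x,y)\lesssim|x-y|^{-N-2s}\min\{1,(|x-x_0|+|x-y|)^\a\}$ together with the H\"older regularity of $u$ (and its excess at scale $\rho$), the linear dependence of the far part of the error on $\|u\|_{L_s(\R^N)}$, and $f\in L^p$ with $2s-\tfrac Np\ge1+\a'$, this error is small. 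On the other hand $w$ solves a translation invariant, even, elliptic stable equation, so its interior regularity (classical for the fractional Laplacian, and \cite{RS2}, see also \cite{Fall-Reg}, in general) makes $\inf_\ell\|w-\ell\|_{L^\infty(B_{\lambda\rho}(x_0))}$ decay like $\lambda^{1+\bar\a}$ for some fixed $\bar\a>\a'$. Adding the two contributions through the triangle inequality and choosing $\lambda$ so small that $C\lambda^{1+\bar\a}\le\tfrac12\lambda^{1+\a'}$ closes the step.

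\emph{The hard part.} I expect two points to be genuinely delicate. First, since $u$ is assumed only in $H^s(B_2)\cap L_s(\R^N)$ and \emph{not} in $L^\infty(\R^N)$, the far-field contributions — the exterior datum $w=u$ outside $B_\rho(x_0)$ and the tail of the error — cannot be made pointwise small at a single scale; they have to be dominated by a convergent sum of excesses of $u$ at all larger scales and fed back into the iteration, and making this bookkeeping close is the main technical burden. Second, one needs the interior estimate for the frozen operator $\cL_{K_{x_0}}$ to deliver an exponent $1+\bar\a$ with $\bar\a>\a'$ (so $\bar\a$ may be taken arbitrarily close to $2s-1$) for a merely bounded — but even and elliptic — angular profile $\cA_K(x_0,0,\cdot)$; this is precisely where $(iv)$ is indispensable, since it forces $K_{x_0}$ to be even, so that $\cL_{K_{x_0}}$ annihilates affine functions and the sharp $C^{1,\bar\a}$ comparison of \cite{RS2} applies.
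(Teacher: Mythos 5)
Your strategy is genuinely different from the paper's. You propose a direct Campanato perturbation: freeze $\cA_K$ at $(x_0,0)$, compare $u$ with the $\cL_{K_{x_0}}$-harmonic replacement $w$, and iterate. The paper instead runs a blow-up/compactness argument (Proposition~\ref{prop:bound-wth-corrector}, Corollary~\ref{cor:Holder-reg-Global}, Theorem~\ref{th:abs-res-Propo}): one assumes the excess decay fails, rescales, extracts a limit with controlled $L^2$-growth, shows the limit is $\cL_{\mu_b}$-harmonic, and invokes the Liouville theorem (Lemma~\ref{lem:Liouville}). Both are standard templates, and your structural observations are right: the $C^{0,\a}_{1,2}$-regularity of $\cA_K$ gives $|K-K_{x_0}|\lesssim(|x-x_0|+|x-y|)^\a\mu_1(x,y)$ near the diagonal; condition $(iv)$ of Definition~\ref{def:Kernel-not-reg-the} makes the frozen angular profile even, so $\cL_{K_{x_0}}$ annihilates affine functions; and the $C^{1,\bar\a}$ interior estimates for general even stable operators in \cite{RS2} supply $\bar\a$ arbitrarily close to $2s-1>\a'$. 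What the blow-up argument buys is that it works directly with weak solutions and with $u\in L_s(\R^N)$: the growth estimate \eqref{eq:groht-w-n-abs} on the rescaled solutions controls the tail automatically, and no approximation-by-smooth-data step is needed.

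Precisely for this reason, the gap I see is the one you yourself call ``the main technical burden,'' and as sketched it does not close. The exterior datum $w=u$ on $\R^N\setminus B_\rho(x_0)$ and the far part of the error $(\cL_{K_{x_0}}-\cL_K)u$ depend on the global $L_s(\R^N)$-size of $u$, a $\rho$-independent quantity, and because the kernel difference is only $O(1)$ far from $x_0$ (the $C^{0,\a}_{1,2}$ bound gives no smallness there), these contributions are not of the form ``excess of $u$ at a larger scale'': the iteration inequality you want does not come out. The paper avoids the problem before the perturbation even starts: Lemma~\ref{lem:estim-G_Ku} (used inside Theorem~\ref{th:abs-res-Propo} together with Lemma~\ref{lem:cut-reg-kernel}) replaces $u$ by $\vp_R u$ and trades the exterior mass for a right-hand-side error $G_{K,u,R}$ controlled by $\|u\|_{L_s(\R^N)}$; once $u$ is compactly supported and (by part~$(i)$) bounded, the exterior data and error tails are controlled by a single global norm and the excess-sum bookkeeping becomes the standard one from the local theory. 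I would fold this reduction into your argument; without it, the Campanato step is not self-improving. Separately, the mollification step needs justification: you must check that the smoothing preserves \eqref{eq:Kernel-satisf}$(i)$--$(ii')$ and Definition~\ref{def:Kernel-not-reg-the}$(iii)$--$(iv)$ with uniform constants \emph{and} that the approximating solutions converge to $u$ in the relevant norms, neither of which is automatic; the paper bypasses this entirely by arguing with weak solutions throughout.
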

%
Our next result is concerned with $C^{m+2s+\a}$  Schauder estimates.   
  \begin{theorem}\label{th:Schauder-0}
Let $N\geq 1$ and  $s\in (0,1)$. Let $\k>0$,  $\a\in (0,1)$ and $m\in \N$. Let $K\in \ti  \scrK_{\b}^s(\k,m+ \a, Q_2)$,  with $\b=\min (\a+(2s-1)_+,1)$. Let   $u\in H^s(B_2)\cap L_s(\R^N)$ and $f\in C^{m,\a}(B_2)$ such that 
 $$
 \cL_K u= f\qquad \textrm{in $B_2$}.
 $$
 \begin{itemize}
 \item[$(i)$] If  $u\in   C^{m,\a}(\R^N)$ and  $2s+\a<1$, then
$$
\|u\|_{C^{m,2s+\a}(B_1)}\leq C(\|u\|_{C^{m,\a}(\R^N)}+ \|f\|_{C^{m,\a}(B_2)}).
$$
 \item[$(ii)$] If  $u\in   C^{m,\a}(\R^N)$, $2s\not=1$ and  $1<2s+\a<2$, then
$$
\|u\|_{C^{m+1, 2s+\a-1}(B_1)}\leq C(\|u\|_{C^{m,\a}(\R^N)}+ \|f\|_{C^{m,\a}(B_2)}).
$$
 \item[$(iii)$] If  $u\in   C^{m,\a}(\R^N)$,  $2<2s+\a$ and $K \in  \ti \scrK_0^{s}(\k,m+ 2s-1+ \a, Q_2)$, then
$$
\|u\|_{C^{m+2, 2s+\a-2}(B_1)}\leq C(\|u\|_{C^{m,\a}(\R^N)}+ \|f\|_{C^{m,\a}(B_2)}).
$$
\item[$(iv)$] If  $u\in   C^{m,\a}(\R^N)$,  $2s=1$ and $K \in  \ti \scrK_\t^{s}(\k,m+ \a, Q_2)$, for some $\t>\a$, then
$$
\|u\|_{C^{m+1,\a}(B_1)}\leq C(\|u\|_{C^{m,\a}(\R^N)}+ \|f\|_{C^{m,\a}(B_2)}).
$$
 \end{itemize}
If  moreover    $\|{\cA}_K\|_{C^{m,\a}(Q_2 \times S^{N-1})}\leq \frac{1}{\k}$,  then we can replace $ \|u\|_{C^{m,\a}(\R^N)}$ with $ \|u\|_{L^\infty(\R^N)} $.
Here $C=C(N,s,\k,\a, m,\t).$
 \end{theorem}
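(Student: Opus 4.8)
I would prove Theorem~\ref{th:Schauder-0} by induction on $m$, the case $m=0$ carrying the real difficulty and the inductive step being an incremental quotient argument. The base case I would handle by blow-up and compactness (equivalently, by a Campanato iteration comparing $u$ to solutions of the frozen, translation invariant equation). The one external ingredient is the translation invariant version: for $K_0(x,y)=J(x-y)$ in the (anisotropic, possibly merely $L^\infty$ in $\th$) stable class, a solution of $\cL_{K_0}v=0$ in $\R^N$ with growth $o(|x|^{2s+\a})$, $2s+\a\notin\N$, is a polynomial of degree $\le k_0$, where I write $2s+\a=k_0+\sigma$ with $k_0\in\{0,1,2\}$ and $\sigma\in(0,1)$. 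This Liouville statement, uniform over the class, follows from the interior estimates of Ros-Oton and Serra \cite{RS2} iterated over dyadic annuli (it is classical for $\cL_K=\Ds$, see \cite{DSV,Sil,Grubb1,RS2}).

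\emph{Base case $m=0$.} If the a priori estimate fails there are kernels $K_j$ in the relevant class, solutions $u_j$ normalized in $C^{0,\a}(\R^N)$ and data $f_j$ normalized in $C^{0,\a}(B_2)$ for which the Campanato quantity measuring the $L^\infty$-distance of $u_j$ from its best fitting degree-$k_0$ polynomial at scale $\rho$, taken over $B_1$, is unbounded. Choosing the worst point $x_j$ and worst scale $\rho_j\to 0$ by a monotonicity-in-scale selection, subtracting the fitting polynomial and rescaling yields functions $v_j$ with $\cL_{\widetilde K_j}v_j=\widetilde f_j$, where $\widetilde K_j=(K_j)_{x_j,\rho_j}$ stays in the class by the scaling invariance recorded after Definition~\ref{def:Kernel-not-reg-the}; with $\widetilde f_j\to 0$ and a uniform growth bound $\|v_j\|_{L^\infty(B_R)}\lesssim R^{2s+\a}$ coming from the worst-scale selection together with the global normalization; and with nonzero $C^{k_0,\sigma}$-seminorm at unit scale. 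Theorem~\ref{th:main-th1} (applicable, for a possibly smaller Hölder exponent, since the class sits inside $\ti\scrK_0^s$) gives uniform $C^{1,\gamma}_{loc}$ bounds, hence $v_j\to v_\infty$ locally, with $\cL_{K_\infty}v_\infty=0$ for a translation invariant $K_\infty$, the same growth bound, and nonzero seminorm — contradicting the Liouville statement. The regimes are proved in the order $(i)\to(ii)\to(iii)$, each using the previous one for the compactness input needed for the higher order polynomial subtraction; the case $2s=1$ requires a separate preliminary $C^{1,\gamma}$ estimate before the same scheme applies.

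\emph{Where the structural hypotheses enter.} The delicate point is to see that, after rescaling to scale $\rho_j\to 0$, neither the $x$-dependence of $K_j$ nor the odd part of its angular profile survives. Writing $\cA_{K_j}=\cA_{e,K_j}+\cA_{o,K_j}$, the even part is controlled by the $C^{0,\a}_{1,2}(Q_2)$ modulus in $x$, which rescales to $O(\rho_j^{\a})$. The odd part $\cA_{o,K_j}$, responsible for the first order behaviour once $2s\ge 1$, is where $\|\cA_{o,K_j}\|_{\cC^0_\t(Q_2)}\le 1/\k$ together with $\cA_{o,K_j}(x,0,\th)=0$ is used: these give $|\cA_{o,K_j}(x,r,\th)|\lesssim r^\t$ with a matching $x$-modulus, so that — since $\t=\min(\a+(2s-1)_+,1)>2s-1$ makes $\int_0^{1}r^{\t-2s}\,dr$ finite — the induced fractional drift $b_j\cdot\n v_j$ is well defined, $b_j$ becomes a constant vector in the limit, and the drift is absorbed into $K_\infty$. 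The regime $2s=1$, case $(iv)$, is exactly where this drift is scaling critical, which is why there one must assume $\t>\a$: the extra decay closes the blow-up and lets one retain the exponent $\a$; the strengthened hypotheses in case $(iii)$ play the analogous role for the second order term.

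\emph{Inductive step and the last assertion.} Assuming the statement for $m-1$, given $\cL_K u=f$ with $f\in C^{m,\a}(B_2)$, $u\in C^{m,\a}(\R^N)$ and $K$ in the relevant class, I would first apply the $(m-1)$-statement to $u$ itself (as $C^{m,\a}\subset C^{m-1,\a}$) to obtain the intermediate regularity of $u$, one order below the target. Then, for small $h$ and $e\in S^{N-1}$,
\[
\cL_K u_h = f_h + \cL_{D_h K}\big(u(\cdot+he)\big),\qquad u_h:=\tfrac{u(\cdot+he)-u(\cdot)}{|h|},\quad f_h:=\tfrac{f(\cdot+he)-f(\cdot)}{|h|},
\]
with $D_h K:=\frac{K-K(\cdot+he,\cdot+he)}{|h|}$. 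The even part of $D_hK$ lies in an anisotropic kernel class with one $x$-derivative fewer, uniformly in $h$, while the odd part must be reorganized into a drift $b_h(\cdot)\cdot\n u(\cdot+he)$, and here the full bound $\|\cA_{o,K}\|_{\cC^m_\t(Q_2)}\le 1/\k$ is precisely what makes $b_h$ bounded in $C^{m-1,\t}$ uniformly in $h$; together with the intermediate regularity of $u$ this puts the whole right-hand side in $C^{m-1,\a}$ uniformly in $h$ (the choice $\b=\min(\a+(2s-1)_+,1)$ keeps the Hölder exponents involved $\ge\a$). Applying the $(m-1)$-statement to $u_h$ and letting $h\to 0$ along $e$ upgrades $\n u$, hence $u$, by one order. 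Finally, when $\|\cA_K\|_{C^{m,\a}(Q_2\times S^{N-1})}\le 1/\k$ the full (angular included) regularity of $K$ holds, so Theorems~\ref{th:main-th10} and \ref{th:Schauder-0-intro} bound $\|u\|_{C^{m,\a}(B_{3/2})}$ by $\|u\|_{L^2(B_2)}+\|u\|_{L_s(\R^N)}+\|f\|_{C^{m,\a}(B_2)}\lesssim\|u\|_{L^\infty(\R^N)}+\|f\|_{C^{m,\a}(B_2)}$, and a covering replaces $\|u\|_{C^{m,\a}(\R^N)}$ by $\|u\|_{L^\infty(\R^N)}$ throughout. The main obstacle is the base case: controlling the rescaled odd/drift part of the kernel with only $L^\infty$ angular regularity, and verifying that conditions $(iii)$–$(iv)$ of Definition~\ref{def:Kernel-not-reg-the} are exactly what is needed for the blow-up limit to be a polynomial of the expected degree.
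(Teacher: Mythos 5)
Your overall architecture matches the paper's: a blow-up/compactness argument with polynomial subtraction at the worst scale for the base case, a Liouville classification of the limit, and induction on $m$ via incremental quotients; this is exactly the structure of Proposition~\ref{prop:bound-wth-corrector-nab}, Theorem~\ref{th:Schauder-000} and Theorem~\ref{th:C-2s-k-al-reg-nonreg-theta}, combined through Lemma~\ref{lem:cut-reg-kernel} and Lemma~\ref{lem:estim-G_Ku} to pass from $Q_2$ to $Q_\infty$. Three points where your framing diverges from the paper's and is imprecise deserve flagging.

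First, the ``fractional drift $b_j\cdot\n v_j$'' picture is not what the paper does, and the claim that ``$b_j$ becomes a constant vector in the limit, and the drift is absorbed into $K_\infty$'' is actually incorrect. Because $\cA_{o,K_j}(\cdot,0,\cdot)\equiv 0$ and $\|\cA_{o,K_j}\|_{\cC^0_\t}\leq 1/\k$, the rescaled odd part obeys $|\cA_{o,(K_j)_{x_j,\rho_j}}(x,r,\th)|\lesssim(\rho_j r)^\t\to 0$, so the blow-up limit is the purely even, translation invariant $\cL_{\mu_b}$ with no drift at all; the odd contribution enters only as a vanishing error term, estimated via Lemma~\ref{lem:2sp-alph-estim-F_Keo} and the exact splitting \eqref{eq:Operator-splitting}, not as a drift to be reabsorbed. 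If you retained a nonvanishing constant drift in the limit, the affine Liouville (Lemma~\ref{lem:Liouville}) would not apply as stated.

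Second, you appeal to a degree-$\le 2$ Liouville for growth $O(|x|^{2s+\a})$ when $2<2s+\a<3$, whereas the paper only ever uses the affine Liouville (Lemma~\ref{lem:Liouville}, which is valid for weak-star limits $b$ of the angular densities $a_n$ and proved in \cite{Fall-Reg}). The paper reduces everything to growth below $2s$ by two devices: taking incremental quotients $w_n=(v_n)_{h,0}$ of the rescaled sequence, and, in the case $2s+\a>2$, measuring the Campanato quantity against the quadratic projection $\textbf{Q}_{r,g}$. If you want the direct degree-$\le 2$ Liouville you would need to establish it for the weak-star-limit kernels $\mu_b$ (not just the classes in \cite{RS2}); the paper sidesteps this.

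Third, the last assertion (replacing $\|u\|_{C^{m,\a}(\R^N)}$ by $\|u\|_{L^\infty(\R^N)}$) cannot be obtained by invoking Theorem~\ref{th:Schauder-0-intro}, since that theorem is itself a corollary of the present one; this would be circular. The paper's route is that the extra angular regularity $\|\cA_K\|_{C^{m,\a}(Q_2\times S^{N-1})}\leq 1/\k$ activates Lemma~\ref{lem:estim-G_Ku}$(iii)$, which controls the tail $G_{K,v,R}$ in $C^{k+\a}$ by $\|v\|_{L_s(\R^N)}$ alone, after which adimensional H\"older norms and interpolation absorb the interior $C^{m,\a}$-norm of $u$.
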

 We point out the remarkable     differences  between the last assertion in Theorem \ref{th:Schauder-0} and the results in  Theorem \ref{th:Schauder-0-intro}. Indeed, in  the former, $\cA_K$ is only required to be in $C^{m,\a} (Q_2\times S^{N-1})$, when   $2s+\a<2$, instead of $C^{m,\a+(2s-1)_+} (Q_2 \times S^{N-1})$ which was assumed in the latter.  Moreover, Theorem \ref{th:Schauder-0}-$(iv)$, for $s=1/2, $ provides the optimal estimate which covers the case   $\cL_K=\Ds_a$,  the anisotropic fractional Laplacian  i.e. when $K(x,y)=a((x-y)/|x-y|)|x-y|^{-N-2s}$,   while Theorem \ref{th:Schauder-0-intro} does not if $a$ is not smooth enough.   In fact the results in Theorem \ref{th:Schauder-0}   were known for  the anisotropic fractional Laplacian when $a$ is a measure on the unit sphere $S^{N-1}$, see Ros-Oton and Serra \cite{RS2}  and when $a\in C^\infty(S^{N-1})$, see Grubb \cite{ Grubb2}. \\
 Interior regularity and  Harnack inequality for linear and fully nonlinear nonlocal equations have been intensively  investigated in  last decades by many authors, see e.g. \cite{FK,BL,Kassm,Barrios,CS1,CS2,CS3,BC,KM1,Jin-Xiong,Kriventsov,SS,Serra-OK, {Silv-1},DK,Ab-L} and the references therein. \\
 
 Next, we observe  that  Theorem \ref{th:main-th10} and \ref{th:Schauder-0-intro} are immediate consequences of Theorem \ref{th:main-th1} and \ref{th:Schauder-0}, respectively. The proof of  Theorem \ref{th:main-th1} and \ref{th:Schauder-0} uses a blow up analysis and compactness method for weak and classical solutions, partly inspired by \cite{Serra-OK} and \cite{Fall-Reg}, see also \cite{Serra,RS2,RS4,Ros-Real,Ros-Real-2} for  translation invariant problems. Indeed, we use a fine scaling argument to  balance, in an optimal manner,  
the norm of the right hand side and the homogeneity of the equation. The scaling parameter is chosen so that the limit of the  rescaled solution, after subtracting a polynomial,   satisfies an equation for which all solutions with such  growth are explicitly known,  thanks to a Liouville type theorem. To obtain H\"older, gradient and second order derivative estimates,  the subtracted polynomial are, respectively given by the projection, with respect to the $L^2(B_r)$ scalar product, of the weak solution $u$ on constant functions, affine functions and second order polynomials. More precisely,  our primary goal is to    show the  growth estimates (or Taylor expansion in $L^2$-sense)  
$$
\|u-P_r\|_{L^2(B_r)}\leq C r^{\frac{N}{2}+\textrm{deg}(P_r)+\g},
$$
where $P_r$ is a suitable  polynomial and the parameter $\g\in (0,1)$ is determined by the regularity of the entries $V,f$ and $\cA_K$ the coefficient of the operator. The above expansion  leads to $u\in C^{m,\g }$, with $m=\textrm{deg}(P_r)$.\\
To carry over the blow up argument and to use compact Sobolev embedding or the  Arzel\'{a}-Ascoli theorem, after subtracting polynomials,  rescaling and normalization,  it  is  necessary to  derive a priori H\"older  estimate  for functions $v$ solving  the  more general equation 
\be \label{eq:intro-100}
\cL_Kv +\cL_{K'} U=F \qquad\textrm{ in $\O$.}
\ee 
Actually, in the counter part of \eqref{eq:intro-100} in the local case reads as 
\be \label{eq:intro-10000}
-\textrm{div}(A(x)\n v)+ \textrm{div} U=F,
\ee
for some potential $U$. The study of \eqref{eq:intro-100} is typically essential for the proof of  Theorem \ref{th:main-th1}-$(ii)$, where   $U=p_r$  is a first order polynomial and $\cL_{K'}$ is a non-translation invariant operator.  Recall here that obtaining gradient estimates for solutions to equations involving  divergence operators is more subtle than those involving operators in non-divergence form, since the  latter annihilate affine functions, while the  former  do not and so the study of \eqref{eq:intro-10000} becomes useful. 
 The same difficulty is of course faced here since we are dealing with  non-translation invariant variational solutions. \\
The core of the paper, from which we derive all the results,  is Proposition \ref{prop:bound-Kato-abstract} below, where we   prove H\"older continuity of the solutions to   \eqref{eq:intro-100}, under mild regularity assumptions on  $K,K', U$ and $F$, and we believe that the argument of proof and the result itself could be of independent interest. \\
We finally remark that the Schauder estimates in the present paper remains stable as $s\to 1$, provided we replace the kernel $K$, with $(1-s)K$ and $\a$ is such that $2<2s_0+\a$, for some $s_0\in (0,1)$.\\

 The paper is organized as follows. In Section  \ref{s:NotPrem}, we collect some preliminary result and notations. Section \ref{s:AprioriEstim} contains the   regularity estimates for solutions to \eqref{eq:intro-100}.   Now Theorem \ref{th:main-th1}-$(ii)$ is proved in Section \ref{s:GradEstim} and  Theorem \ref{th:Schauder-0} in Section \ref{s:Shaud}. Finally the proof of the main results are gathered in Section \ref{s:proofMainResults}.  
  
 \section{Notations and  preliminary results}\label{s:NotPrem}
\subsection{Notations}
In this paper, the ball centred at  $z\in\R^N$ with radius $r>0$ is denoted by $B(z,r)$ and $B_r:=B_r(0)$. 
Here and in the following, we let $\vp_1 \in C^\infty_c(B_2)$ such that  $\vp_1 \equiv 1$ on $B_{1}$ and $0\leq \vp_1\leq  1$ on $\R^N$.  We put  $\vp_R(x):=\vp(x/R)$.  For $b\in L^\infty(S^{N-1})$,   we define   $
\mu_b(x,y)= |x-y|^{-N-2s} b\left(\frac{x-y}{|x-y|} \right).$\\
Given $\s>0$, we define the space 
$$
L_\s(\R^N):=\left\{u\in L^1_{loc}(\R^N)\,:\, \|u\|_{L_\s(\R^N)}:= \int_{\R^N}{|u(x)|} (1+|x|^{N+2\s})^{-1}\,dx  <\infty  \right\}.
$$
Throughout this paper, for the   seminorm of  the fractional Sobolev spaces, we adopt the notation
$$
[u]_{H^s(\O)}:=\left(\int_{\O\times\O} {|u(x)-u(y)|^2}\mu_1(x,y)\, dxdy\right)^{1/2}.
$$
We will, sometimes use the notation 
$$
[u]_{H^s_{K}(\O)}:=\left(\int_{\O\times\O} {|u(x)-u(y)|^2}|K(x,y)|\, dxdy\right)^{1/2},
$$
for a function $K:\O\times \O\to [-\infty,+\infty]$. For  the   H\"older and Lipschitz seminorm, we write
$$
[u]_{C^{0,\a}(\O)}:=\sup_{x\not=y\in\O} \frac{|u(x)-u(y)|}{|x-y|^\a},
$$ 
for $\a\in (0,1]$. If there is no ambiguity, when   $\a\in (0,1)$, we will write $[u]_{C^{\a}(\O)}$ instead of $[u]_{C^{0,\a}(\O)}$. 
If $m\in \N$ and $\a\in (0,1)$, the H\"older space $\|u\|_{C^{m,\a}(\O)}$  is given by the set of functions in $C^m(\O)$ such that 
$$
\|u\|_{C^{m+\a}(\O)}:=\|u\|_{C^{m,\a}(\O)}=  \sup_{\g\in \N^N, |\g|\leq  m}   \| \de^\g  u\|_{ L^{\infty}(\O)}+ \sup_{\g\in \N^N, |\g|= m}  \|\de^\g u\|_{ C^{\a}(\O)}<\infty  .
$$

Letting $u\in L^1_{loc}(\R^N)$, the mean value of $u$ in $ B_r(z)$ is denoted by 
$$
u_{B_r(z)}=(u)_{B_r(z)}:=\frac{1}{|B_r|}\int_{B_r(z)}u(x)\, dx. 
$$
%
%
 For $\a\in [0,1]$, $h\in \R^N\setminus \{0\}$ and $f\in C^{0,\a}_{loc}(\R^N)$, we define
\be \label{eq:def-f-h-alph}
f_{h,\a}(x):=\frac{f(x+h)-f(x)}{|h|^\a}.
\ee

\subsection{Preliminary results}
We gather in this paragraph some results which we will frequently use in the following of the paper.
%
%
%
Let  
 $K: \R^N\times \R^N\to [0,\infty]$  satisfy the following properties:
 \be \label{eq:K-Kernel-satisf}
 \begin{aligned}
(i)\,& K(x,y)=K(y,x) \qquad\textrm{ for all $x,y\in\R^N$, }\\
(ii)\,& \k  \mu_1(x,y)\leq K(x,y)\leq \frac{1}{\k} \mu_1(x,y) \qquad\textrm{ for all $x, y\in\R^N$.}
 %
 \end{aligned}
 \ee
For $\a'\geq 0$, we let $K':\R^N\times \R^N\to [-\infty,+\infty]$  satisfy
\be \label{eq:K'-Kernel-satisf} 
 \begin{aligned}
(i)\,& K'(x,y)=K'(y,x) \qquad\textrm{ for all $x, y\in\R^N$, }\\
 %
 %
(ii)\,& |K'(x,y)|\leq \frac{1}{\k} (|x|+|y|+1)^{\a'} \mu_1(x,y) \qquad\textrm{for all $x, y\in \R^N$}.
 \end{aligned}
 \ee
%
%
 %
 %
 %
 %
%
  Let     $U\in H^s_{loc}(\O)\cap L_{(\a'+2s)/2}(\R^N)$ and  $f\in L^1_{loc}(\R^N)$. We say that $u\in H^s_{loc}(\O)\cap {L_s(\R^N)}$ is a (weak) solution to
\be\label{eq:cL_K-eq-V-f-K'}
\cL_{K}  u+ \cL_{K'}U=  f \qquad\textrm{ in $  \O$,}
\ee
if,     for every $\psi\in C^\infty_c(\O)$, 
\begin{align*}
\int_{\R^{2N}}(u(x)-u(y))(\psi(x)-\psi(y))K(x,y)\,dxdy&+ \int_{\R^{2N}}(U(x)-U(y))(\psi(x)-\psi(y))K'(x,y)\,dxdy\\
&=  \int_{\R^N} f(x)\psi(x)\, dx.
\end{align*}
We note that each of the  terms in the above identity is finite. 
For $\b\in [0,2s)$,  we define the Morrey space $\cM_\b$ by    the set of functions $f \in L^1_{loc} (\R^N)\ $ such that 
$$
 \|f\|_{\cM_\b}:= \sup_{\stackrel{x\in \R^N}{r\in (0,1)}} r^{\b-N}\int_{  B_r(x)} |f(y)| \, dy<\infty,
$$
with  $\cM_0:=L^\infty(\R^N)$, and  we note that $ \|f\|_{\cM_{N/p}}\leq C(N,p) \|f\|_{L^p(\R^N)}$. We have the following  coercivity property, see  \cite{Fall-Reg}, 
\be\label{eq:coerciv}
\||f|^{1/2}v\|_{L^2(\R^N)}^2\leq C(N,s,\b) \|f\|_{\cM_\b}\|v\|_{H^s(\R^N)}^2 \qquad\textrm{for all $v\in H^s(\R^N)$.}
\ee
We prove our a priori estimates for   right hand in $\cM_\b$. Recall that $\cM_{N/p}$ contains strictly $\|f\|_{L^p(\R^N)} $. \\
  The following energy estimate can be seen as a  nonlocal Caccioppoli  inequality.
\begin{lemma} \label{lem:from-caciopp-ok}
Let $N\geq 1$,  $s\in (0,1)$ and $\k>0$. We consider  $K$ satisfying  \eqref{eq:K-Kernel-satisf} and $K'$ satisfying \eqref{eq:K'-Kernel-satisf}, for some $\a'\geq 0$.
Let $v\in H^s (\R^N)  $  and $U\in H^s_{loc}(\R^N)\cap L_{(\a'+2s)/2}(\R^N)$ and  $f\in\cM_\b$ satisfy
\be\label{eq:Dsv-eq-V-f}
\cL_{K}  v+ \cL_{K'}U =  f \qquad\textrm{ in $  B_{2R}$.}
\ee
Then    for every $\e>0$,  there exist $\ov C=\ov C(s,N,\k,R)$ and  $C=C(\e, s,N, \k,R)$ such that   
\begin{align*}
\left \{\k-  \e \ov C   \|f\|_{\cM_\b}    \right\} &\int_{\R^N\times \R^N}(v(x)-v(y))^2 \vp_R^2(y)  \mu_1(x,y)\,dxdy\\
&\leq   C( \|f\|_{ \cM_\b}  +1)  \|v \|_{L^2(\R^N)}^2 + C \|f\|_{\cM_\b}   \|\vp_R\|^2_{H^s(\R^N)} \nonumber\\
%
%
 &\quad + C[U]_{H^s_{K'}(B_{4R})} ^2 +C\int_{\R^N}  \vp_R^2(y)|v(y)|   \left( \int_{ \R^N\setminus B_{4R}}|U(x)-U(y)||K'(x,y)|\,dx\right)dy    . 
\end{align*}
\end{lemma}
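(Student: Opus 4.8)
\textbf{Proof strategy for Lemma \ref{lem:from-caciopp-ok}.}

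The plan is to test the weak formulation of \eqref{eq:Dsv-eq-V-f} with $\psi = v\,\vp_R^2$, which is a legitimate test function since $v\in H^s(\R^N)$ and $\vp_R\in C^\infty_c(B_{2R})$ (so $\psi\in H^s$ with compact support in $B_{2R}$). The guiding identity is the standard algebraic one used for nonlocal Caccioppoli estimates: for the bilinear form attached to $K$,
\begin{align*}
(v(x)-v(y))(\psi(x)-\psi(y)) &= (v(x)-v(y))\big(v(x)\vp_R^2(x)-v(y)\vp_R^2(y)\big)\\
&\geq \tfrac12 (v(x)-v(y))^2\big(\vp_R^2(x)+\vp_R^2(y)\big) - C\,(v(x)^2+v(y)^2)(\vp_R(x)-\vp_R(y))^2,
\end{align*}
so that the diagonal term dominates $\tfrac12\int (v(x)-v(y))^2\vp_R^2(y)\mu_1\,dxdy$ (after symmetrizing in $x\leftrightarrow y$) up to the commutator error $\int (v(x)^2+v(y)^2)(\vp_R(x)-\vp_R(y))^2\mu_1\,dxdy$, which is controlled by $C(N,s,R)\|v\|_{L^2(\R^N)}^2$ using $\|\vp_R\|_{C^2}\leq C(R)$ and the boundedness of $\int(\vp_R(x)-\vp_R(y))^2\mu_1(x,y)dx$ uniformly in $y$. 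Using \eqref{eq:K-Kernel-satisf}-$(ii)$ to pass from $K$ to $\mu_1$ (losing a factor $\k$) gives the left-hand side of the claimed inequality minus absorbed error terms.

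Next I would handle the right-hand side $\int f\psi = \int f\,v\,\vp_R^2$. Here the coercivity property \eqref{eq:coerciv} is exactly what is needed: writing $|f\,v\,\vp_R^2|\leq |f|^{1/2}\cdot|f|^{1/2}|v|\vp_R^2$ and applying Cauchy--Schwarz with a parameter $\e$, one bounds $\int|f||v|\vp_R^2$ by $\e\||f|^{1/2}(v\vp_R)\|_{L^2}^2 + \tfrac{1}{4\e}\||f|^{1/2}v\vp_R\|_{L^2}^2$ — more cleanly, by $\e\,C\|f\|_{\cM_\b}\|v\vp_R\|_{H^s(\R^N)}^2$ plus a multiple of $\|f\|_{\cM_\b}\|v\|_{L^2}^2$ for the low-order part; expanding $\|v\vp_R\|_{H^s}^2\leq C\big(\int(v(x)-v(y))^2\vp_R^2(y)\mu_1\,dxdy + \|v\|_{L^2}^2 + \|f\|_{\cM_\b}\|\vp_R\|_{H^s}^2\big)$ (again the commutator splitting, now keeping the $\|\vp_R\|_{H^s}^2$ term explicit because it is weighted by $\|f\|_{\cM_\b}$), the $\e C\|f\|_{\cM_\b}$ multiple of the full Dirichlet-type quantity is moved to the left side, producing the coefficient $\k - \e\ov C\|f\|_{\cM_\b}$. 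This is where the constants $\ov C=\ov C(s,N,\k,R)$ and $C=C(\e,s,N,\k,R)$ arise.

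Finally, the term involving $K'$: $\int(U(x)-U(y))(\psi(x)-\psi(y))K'(x,y)\,dxdy$ must be split as an integral over $B_{4R}\times B_{4R}$ plus the complementary region. On $B_{4R}\times B_{4R}$ one uses Cauchy--Schwarz against $[U]_{H^s_{K'}(B_{4R})}$ and the bound $[\psi]_{H^s_{K'}(B_{4R})}^2\leq C(\a',R)[\psi]_{H^s(B_{4R})}^2$, then $[\psi]_{H^s}^2\leq C(R)(\text{Dirichlet term}+\|v\|_{L^2}^2)$ as above, generating $C[U]_{H^s_{K'}(B_{4R})}^2$ plus another small multiple of the Dirichlet term to be absorbed. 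For the part where one of $x,y$ lies outside $B_{4R}$: since $\psi$ is supported in $B_{2R}$, the factor $\psi(x)-\psi(y)$ reduces (after symmetrization) to $\psi(y)=v(y)\vp_R^2(y)$ with $y\in B_{2R}$ and $x\notin B_{4R}$, yielding exactly the last term $\int_{\R^N}\vp_R^2(y)|v(y)|\big(\int_{\R^N\setminus B_{4R}}|U(x)-U(y)||K'(x,y)|dx\big)dy$, possibly after also controlling a symmetric contribution $\int_{\R^N\setminus B_{4R}}|U|$-type piece by the same expression (using $\vp_R^2(y)|v(y)|\leq \tfrac12(\vp_R^4(y)+v(y)^2)$ and Young if one wants it exactly in this shape, though the displayed form is already what the bilinearity produces). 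Collecting all absorbed terms onto the left gives the coefficient $\k-\e\ov C\|f\|_{\cM_\b}$ and the stated right-hand side.

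\textbf{Main obstacle.} The delicate point is the bookkeeping in the $f$-term: the coercivity estimate \eqref{eq:coerciv} is applied to $v\vp_R$, but $\|v\vp_R\|_{H^s}^2$ is \emph{not} bounded by the weighted Dirichlet quantity $\int(v(x)-v(y))^2\vp_R^2(y)\mu_1\,dxdy$ alone — it requires the commutator term $\int(v(x)^2+v(y)^2)(\vp_R(x)-\vp_R(y))^2\mu_1$, which contributes both a $\|v\|_{L^2}^2$ piece and, crucially, a piece that when multiplied by $\e C\|f\|_{\cM_\b}$ must be absorbed or controlled; tracking that the $\e$-small absorbable part is precisely a multiple of the quantity on the left (and not something larger) is what forces the specific form of the coefficient $\k-\e\ov C\|f\|_{\cM_\b}$ and is the step most prone to circularity if done carelessly. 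Everything else is a routine, if somewhat lengthy, splitting-and-Young's-inequality computation.
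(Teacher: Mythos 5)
Your proposal follows essentially the same route as the paper: the paper invokes an external Caccioppoli-type lemma (its Lemma~9.1 from \cite{Fall-Reg}) for the step you derive directly from the algebraic identity, then splits the $K'$-integral over $B_{4R}\times B_{4R}$ and its complement and handles the $f$-term via the coercivity estimate~\eqref{eq:coerciv} plus Young's inequality, exactly as you describe. One minor slip in your last paragraph: the term $\|f\|_{\cM_\b}\|\vp_R\|_{H^s}^2$ cannot appear inside an expansion of $\|v\vp_R\|_{H^s}^2$ (that norm does not involve $f$); it actually comes from the other branch of Young applied to $\int|f|\,|v|\vp_R^2$, namely $\||f|^{1/2}\vp_R\|_{L^2}^2\le C\|f\|_{\cM_\b}\|\vp_R\|_{H^s}^2$, while the Dirichlet-type quantity and $\|v\|_{L^2}^2$ come from the first branch via $\||f|^{1/2}v\vp_R\|_{L^2}^2\le C\|f\|_{\cM_\b}\|v\vp_R\|_{H^s}^2$.
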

\begin{proof}
Applying  \cite[Lemma 9.1]{Fall-Reg},  we  get
\begin{align}\label{eq:from-cacciop-to-coerciv}
 (\k-\e)\int_{\R^{2N}}(v(x)-v(y))^2\vp^2_R(y)& \mu_1(x,y)\,dxdy\leq        \int_{\R^N} |f(x)| |v(x)| \vp_R^2(x)\, dx    \nonumber\\
&+ C  \int_{\R^{2N}}(\vp_R(x)-\vp_R(y))^2v^2(y) \mu_1(x,y)\,dxdy \nonumber\\
& + \int_{\R^{2N}}|U(x)-U(y)||\vp_R^2(x) v(x)-\vp_R^2(y)v(y)||K'(x,y)|\,dxdy . 
\end{align}
We now estimate
\begin{align} \label{eq:from-cacciop-to-coerciv1}
 \int_{\R^N\times \R^N}|U(x)-U(y)||\vp_R^2(x)& v(x)-\vp_R^2(y)v(y)||K'(x,y)|\,dxdy   \nonumber\\
& = \int_{B_{4R}\times B_{4R}}|U(x)-U(y)||\vp_R^2(x) v(x)-\vp_R^2(y)v(y)||K'(x,y)|\,dxdy  \nonumber\\
 &+ 2\int_{\R^N}  \vp_R^2(y)|v(y)|   \left( \int_{ \R^N\setminus B_{4R}}|U(x)-U(y)||K'(x,y)|\,dx\right)dy \nonumber\\
 &\leq \e/\k (2(4R)+1)^{\a'}  [\vp_R^2 v]_{H^s(B_{4R})}^2 + C[U]_{H^s_{K'}(B_{4R})} ^2  \nonumber\\
 & +C\int_{\R^N}  \vp_R^2(y)|v(y)|   \left( \int_{ \R^N\setminus B_{4R}}|U(x)-U(y)||K'(x,y)|\,dx\right)dy.
\end{align}
We recall that 
\be \label{eq:Op-vp2}
\int_{\R^N}(\vp_1(x)-\vp_1(y))^2\mu_1(x,y)\,dy\leq C(N,s)(1+|x|^{-N-2s}) \qquad \textrm{ for every $x\in \R^N$.}
\ee
Therefore
\begin{align*}
[\vp_R^2 v]_{H^s(B_{4R})}^2&\leq 2\int_{\R^{2N}}(v(x)-v(y))^2\vp^4_R(y) \mu_1(x,y)\,dxdy+ 2 \int_{\R^{2N}}(\vp_R^2(x)-\vp_R^2(y))^2v^2(y) \mu_1(x,y)\,dxdy\\
&\leq 2\int_{\R^{2N}}(v(x)-v(y))^2\vp^2_R(y) \mu_1(x,y)\,dxdy+    \|v \|^2_{L^2(\R^N)}  .
\end{align*}
 Using this in \eqref{eq:from-cacciop-to-coerciv1}, we get
\begin{align} \label{eq:from-cacciop-to-coerciv00}
& \int_{\R^N\times \R^N}|U(x)-U(y)||\vp_R^2(x) v(x)-\vp_R^2(y)v(y)||K'(x,y)|\,dxdy   \nonumber\\
& \leq \e \ov C \int_{\R^{2N}}(v(x)-v(y))^2\vp^2_R(y) \mu_1(x,y)\,dxdy+   C \|v \|^2_{L^2(\R^N)}   \nonumber\\
&+ C[U]_{H^s_{K'}(B_{4R})} ^2 +C\int_{\R^N}  \vp_R^2(y)|v(y)|   \left( \int_{ \R^N\setminus B_{4R}}|U(x)-U(y)||K'(x,y)|\,dx\right)dy.
\end{align} 
Next, from \eqref{eq:coerciv},  Young's inequality and \eqref{eq:Op-vp2}, we deduce that    
\begin{align*}
  \int_{\R^N} |  f(x)| |\vp_R(x)|^2|v(x)| \,dx  
& \leq     \e  \ov C   \|f\|_{\cM_\b}     \int_{\R^{2N}}(v(x)-v(y))^2\vp^2_R(y) \mu_1(x,y)\,dxdy\\
&     +   C   \|f\|_{\cM_\b}     \|v \|^2_{L^2(\R^N)}  + C    \|f\|_{\cM_\b}   \|\vp_R\|^2_{H^s(\R^N)}.
\end{align*}
Using this and \eqref{eq:from-cacciop-to-coerciv00}  in \eqref{eq:from-cacciop-to-coerciv}, we get the result.
\end{proof}
We state the  following result.
 \begin{lemma} \label{lem:convergence-very-weak}
Let $N\geq 1$,  $s\in (0,1)$ and $\k>0$. We consider  $K$ satisfying  \eqref{eq:K-Kernel-satisf} and $K'$ satisfying \eqref{eq:K'-Kernel-satisf}, for some $\a'\geq 0$.
Let $v\in H^s (\R^N)  $  and $U\in H^s_{loc}(\R^N)\cap L_{(\a'+2s)/2}(\R^N)$ and  $f\in\cM_\b$ satisfy
$$
\cL_{K}  v+ \cL_{K'}U =  f \qquad\textrm{ in $  B_{2R}$.}
$$
Then there exists $C=C(N,s,\k,\a',R)$ such that  for every $\psi\in C^\infty_c(B_{R}  )$, we have 
\begin{align} 
& \left|\int_{\R^{2N}}(v(x)-v(y))(\psi(x)-\psi(y))K(x,y)\, dxdy \right| \leq    C   \|f\|_{\cM_\b}   \left(  1 + \|\psi\|_{H^s(\R^N)}^2     \right) \label{eq:express1}\\
&+ C [U]_{H^s_{K'}(B_{4R})} [\psi]_{H^s(B_{4R})} +   C\int_{\R^N} | \psi(y)|   \left( \int_{ \R^N\setminus B_{4R}}|U(x)-U(y)||K'(x,y)|\,dx\right)dy. \label{eq:express2}
\end{align}
%
\end{lemma}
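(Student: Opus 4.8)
The plan is to test the weak formulation of \eqref{eq:Dsv-eq-V-f} against $\psi$ directly, but this is not immediately legitimate since $\psi \in C^\infty_c(B_R)$ is an admissible test function and $v \in H^s(\R^N)$, $U \in H^s_{loc}(\R^N)\cap L_{(\a'+2s)/2}(\R^N)$, so every term in the identity $\int_{\R^{2N}}(v(x)-v(y))(\psi(x)-\psi(y))K\,dxdy + \int_{\R^{2N}}(U(x)-U(y))(\psi(x)-\psi(y))K'\,dxdy = \int_{\R^N}f\psi\,dx$ is finite. The point of the lemma is to bound the first (``$K$-term'') by everything else, with constants independent of $v$ and $\psi$ except through the listed norms. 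So I would start from this identity, move the $K'$-term and the $f$-term to the right, and estimate each of the two resulting contributions.

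\textbf{Key steps.} First, the $f$-term: write $\int_{\R^N} f\psi\,dx$ and split $f\psi = (|f|^{1/2})(|f|^{1/2}\psi)$, then apply Young's inequality followed by the coercivity bound \eqref{eq:coerciv} to get $\left|\int f\psi\right| \leq C\|f\|_{\cM_\b}(1 + \|\psi\|_{H^s(\R^N)}^2)$, which is exactly the content of line \eqref{eq:express1} (the ``$1$'' absorbing the $L^2$ piece of $\psi$, which is controlled by $\|\psi\|_{H^s}^2$ up to the support being bounded, or more simply one keeps $\|\psi\|_{H^s(\R^N)}^2$ large enough to dominate). Second, the $K'$-term: split the double integral over $\R^{2N}$ as $B_{4R}\times B_{4R}$ plus the complement. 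On $B_{4R}\times B_{4R}$, since $|K'(x,y)|\leq \frac{1}{\k}(|x|+|y|+1)^{\a'}\mu_1(x,y)\leq \frac{1}{\k}(8R+1)^{\a'}\mu_1(x,y)$, Cauchy--Schwarz gives $\leq C[U]_{H^s_{K'}(B_{4R})}[\psi]_{H^s(B_{4R})}$ — here one uses that $|K'|\le \frac1\kappa(8R+1)^{\a'}\mu_1$ also lets the $\psi$-factor be measured in $[\psi]_{H^s}$ rather than $[\psi]_{H^s_{K'}}$, at the cost of an $R$-dependent constant. On the complementary region, use $\psi$ supported in $B_R$ so that $\psi(x)=0$ when $|x|\ge 4R$, leaving only the term $2\int_{\R^N}|\psi(y)|\big(\int_{\R^N\setminus B_{4R}}|U(x)-U(y)||K'(x,y)|\,dx\big)dy$, which is precisely the last term in \eqref{eq:express2}. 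Collecting these gives the claimed inequality.

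\textbf{Main obstacle.} The only delicate point is bookkeeping on the $K'$-term near the diagonal: one must be careful that $\psi(x)-\psi(y)$ is measured against $\mu_1$ (giving $[\psi]_{H^s}$) and not against $|K'|$ itself, so that the $(|x|+|y|+1)^{\a'}$ weight is genuinely bounded on $B_{4R}\times B_{4R}$ and can be pulled out as a constant; and on the far region, that cutting off at $4R$ (rather than at $R=\operatorname{supp}\psi$) is what makes the tail integral over $\R^N\setminus B_{4R}$ have a uniformly positive distance to $\operatorname{supp}\psi$, so that $|x-y|^{-N-2s}$ is harmless there — although in fact for the statement as written one does not even need that, since the tail term is left in the conclusion. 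A minor subtlety is that $U$ need only be in $L_{(\a'+2s)/2}(\R^N)$, not in $H^s$, which is why the far-field term must be kept as is rather than absorbed; one checks that finiteness of that term follows from $U\in L_{(\a'+2s)/2}$ together with $|K'(x,y)|\le\frac1\kappa|x|^{\a'}|x|^{-N-2s}$ for $|x|$ large. Everything else is a routine combination of Young's inequality, Cauchy--Schwarz, and the already-proved estimates \eqref{eq:coerciv} and \eqref{eq:Op-vp2}.
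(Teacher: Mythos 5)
Your proof is correct and follows essentially the same route the paper uses: test the equation against $\psi$, move the $K'$- and $f$-terms to the right, bound the $f$-term by Young's inequality plus the coercivity estimate \eqref{eq:coerciv}, and handle the $K'$-term by splitting $\R^{2N}$ into $B_{4R}\times B_{4R}$ (Cauchy--Schwarz with the $(|x|+|y|+1)^{\a'}$ weight bounded by an $R$-dependent constant) and its complement, where only the cross terms survive since $\supp\psi\subset B_R\subset\subset B_{4R}$. One small inaccuracy in your side remark: for $\a'>0$, membership $U\in L_{(\a'+2s)/2}(\R^N)$ alone does not in general give finiteness of the tail $\int_{\R^N\setminus B_{4R}}|U(x)||K'(x,y)|\,dx$, since $|K'(x,y)|\lesssim|x|^{\a'-N-2s}$ there while $L_{(\a'+2s)/2}$ only integrates $|U|$ against the faster-decaying weight $\sim|x|^{-N-\a'-2s}$; this does not affect your proof, however, since the inequality is stated with that term kept on the right and holds whether or not it is finite, and in the applications the extra H\"older assumption on $U$ supplies the needed decay.
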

\begin{proof}
Using the weak formulation of the equation and  \eqref{eq:coerciv}, we get the  expression on the left hand side in \eqref{eq:express1}. Now expression \eqref{eq:express2} appears after decomposing the domain of integration and using H\"older's inequality as in the beginning of the proof of Lemma \ref{lem:from-caciopp-ok}.
\end{proof} 

We close this section with the following  result.
\begin{lemma}\label{lem:estim-G_Ku}
Let   $K$ satisfy \eqref{eq:Kernel-satisf}$(i)$-$(ii)$. 
Let $v\in H^s_{loc}(B_{2R})\cap {L_s(\R^N)} $  and $f\in L^1_{loc}(\R^N)$ satisfy
$$
\cL_{K}  v= f \qquad\textrm{ in $  B_{2R} $,} 
$$
for  some $R>0$. We let $v_R:=\vp_R v$. Then 
\be\label{eq:v-cat-vpR} 
\cL_{K} v_R  =  f+     G_{K,v,R} \qquad\textrm{in $B_{R/2} $},
\ee
where
\be \label{eq:def-GKvR}
G_{K,v,R}(x)=  \int_{\R^N} v(y)  (\vp_R(x)-\vp_R(y)) K(x,y) \, dy.
\ee
Moreover,  $G_{K,v,R}$ satisfies the following properties.
\begin{itemize}
\item[$(i)$]   There exists   $C=C(N,s,  R)$ such that 
$$
\|G_{K,v,R}\|_{L^\infty(B_{R/2})}\leq C\sup_{x\in B_{R/2}}\int_{|y|\geq R}|v(y)| |K(x,y)|\, dy.
$$
\item[$(ii)$] If $v\in C^{k+\a}(\R^N)$ and $ \|{\cA}_K \|_{C^{k,\a}(Q_{\infty} )\times L^\infty(S^{N-1})}\leq c_0$, then there exists  $C=C(N,s,\a,c_0, R,k)$ such that 
$$
\|G_{K,v,R}\|_{C^{k+\a}(B_{R/2})}\leq C \|v\|_{C^{k+\a}(\R^N)}.
$$
\item[$(iii)$] If   $\|{\cA}_K\|_{C^{k+\a}( Q_\infty\times S^{N-1})}\leq c_0$, then   there exists  $C=C(N,s,\a,c_0, R,k)$ such that
$$
\|G_{K,v,R}\|_{C^{k+\a}(B_{R/4})}\leq C \|v\|_{{L_s(\R^N)}}  .
$$
\end{itemize}
\end{lemma}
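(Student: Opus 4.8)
The identity \eqref{eq:v-cat-vpR} is the nonlocal Leibniz rule: writing $v_R=\vp_R v$ one has, for $\psi\in C^\infty_c(B_{R/2})$, that $(v_R(x)-v_R(y))(\psi(x)-\psi(y)) = \vp_R(y)(v(x)-v(y))(\psi(x)-\psi(y)) + v(y)(\vp_R(x)-\vp_R(y))(\psi(x)-\psi(y))$ modulo symmetrization; since $\psi$ is supported in $B_{R/2}$ and $\vp_R\equiv 1$ there, $\vp_R\psi=\psi$, so the first term reproduces $\la\cL_K v,\psi\ra=\int f\psi$, and the second term, after integrating out $x$, produces exactly $\int_{\R^N} G_{K,v,R}(x)\psi(x)\,dx$ with $G_{K,v,R}$ as in \eqref{eq:def-GKvR}. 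One has to check that all integrals are absolutely convergent: $v\in L_s(\R^N)$ and $|K(x,y)|\le\k^{-1}|x-y|^{-N-2s}$ guarantee this once $x$ ranges in $B_{R/2}$ (away from the support tail of $\vp_R$), using that $\vp_R(x)-\vp_R(y)=0$ unless $|x-y|\gtrsim R$ when $x\in B_{R/2}$ and $y\notin B_R$, or $|x-y|\lesssim R$ and $\vp_R$ Lipschitz otherwise. I would record these two regimes explicitly since they drive all three estimates.

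\textbf{Part (i).} Split $G_{K,v,R}(x)=\int_{|y|<R}+\int_{|y|\ge R}$. For $x\in B_{R/2}$ and $|y|<R$ the factor $(\vp_R(x)-\vp_R(y))$ vanishes unless $|x-y|\ge R/2$ (both points are then where... actually $\vp_R\equiv 1$ on $B_R$, so this integral is identically zero). Hence only the tail $\int_{|y|\ge R}v(y)(\vp_R(x)-\vp_R(y))K(x,y)\,dy$ survives, and $|\vp_R(x)-\vp_R(y)|\le 1$ gives $|G_{K,v,R}(x)|\le\int_{|y|\ge R}|v(y)||K(x,y)|\,dy$, which is the claim.

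\textbf{Parts (ii) and (iii).} Here the point is to differentiate $G_{K,v,R}$ in $x$ up to order $k$ and estimate the $C^\a$-seminorm of the top derivatives. The natural move is to pass to polar coordinates centered at $x$: $G_{K,v,R}(x)=\int_{S^{N-1}}\int_0^\infty v(x+r\th)(\vp_R(x)-\vp_R(x+r\th))\,r^{N-1}K(x,x+r\th)\,dr\,d\th = \int_{S^{N-1}}\int_0^\infty v(x+r\th)(\vp_R(x)-\vp_R(x+r\th))\,r^{-1-2s}\cA_K(x,r,\th)\,dr\,d\th$, using the extension $\cA_K$ from Definition \ref{def:nonloc-very-reg}/the anisotropic class. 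In this form the integrand is a smooth-in-$x$ object: $v(x+r\th)$ contributes $\|v\|_{C^{k+\a}}$, $\vp_R$ is $C^\infty$, and $\cA_K(x,r,\th)$ contributes its $C^{k,\a}_{1,2}$ norm in $(x,r)$ (for (ii)) or $C^{k+\a}$ norm in $(x,r,\th)$ (for (iii)). Differentiating under the integral $k$ times in $x$ and applying the product rule, each resulting term is controlled, and the $r$-integral converges because $(\vp_R(x)-\vp_R(x+r\th))$ is supported in $\{r\gtrsim R\}$ where $r^{-1-2s}$ is integrable, \emph{and}, when $r$ is bounded, $|\vp_R(x)-\vp_R(x+r\th)|\le\|\nabla\vp_R\|_\infty r$ absorbs the mild singularity of $r^{-1-2s}$ near $r=0$ since $-1-2s+1=-2s>-1$. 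This is exactly why the polar splitting of the kernel is the right device and why no extra regularity of the kernel at the diagonal is needed beyond what the class provides. For the $C^\a$-seminorm of the $k$-th derivative one either differentiates once more formally (if $\cA_K\in C^{k,\a}$ one uses the Hölder bound on $\de^k_x\cA_K$ directly) or estimates the increment $\de^k_x G_{K,v,R}(x_1)-\de^k_x G_{K,v,R}(x_2)$ by splitting into $|x_1-x_2|$ small vs.\ comparable to $R$, in the usual Hölder-interpolation fashion. The difference between (ii) and (iii) is merely the region over which one needs to track $\th$-regularity: in (iii) the stronger hypothesis on $\cA_K$ (jointly Hölder including the sphere) lets one bound by the weaker norm $\|v\|_{L_s(\R^N)}$ on the slightly smaller ball $B_{R/4}$, essentially because one may integrate by parts / exploit cancellation on the sphere, or more simply because on $B_{R/4}$ the cutoff difference is supported at distance $\gtrsim 3R/4$ so the whole expression is a genuinely smooth convolution-type kernel against $v$ weighted by $(1+|y|)^{-N-2s}$.

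\textbf{Main obstacle.} The only real subtlety is the behavior near $r=0$ (the diagonal $x=y$) when differentiating in $x$: naively $\de^k_x\cA_K(x,r,\th)$ times $r^{-1-2s}$ is not integrable for $2s\ge 1$. The resolution is that the cutoff difference $\vp_R(x)-\vp_R(x+r\th)$ is either identically zero for small $r$ (when $x\in B_{R/2}$, since $\vp_R\equiv 1$ on $B_R\supset B(x,R/2)$) or, after a derivative hits $\vp_R$, gains a factor $r$; either way the near-diagonal part contributes nothing or is harmless. I would state this cancellation as a short lemma-internal remark and then the estimates in (ii), (iii) follow by routine differentiation under the integral sign and the triangle/Hölder inequalities, with constants depending only on $N,s,\a,c_0,R,k$ as claimed.
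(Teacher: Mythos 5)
Your plan reproduces the paper's route for the identity, for part $(i)$, and for part $(ii)$: the crucial structural observation is that for $x\in B_{R/2}$ one has $\vp_R(x)=1$, hence $\vp_R(x)-\vp_R(x+r\th)=1-\vp_R(x+r\th)=0$ for $r<R/2$, which kills the near-diagonal part of the polar integral outright. Your ``Main obstacle'' paragraph identifies exactly this and it is exactly what the paper uses. (The earlier aside that $|\vp_R(x)-\vp_R(x+r\th)|\lesssim r$ ``absorbs the mild singularity of $r^{-1-2s}$\ldots\ since $-2s>-1$'' is false as soon as $s\ge 1/2$, but it is never actually needed: the integrand vanishes identically on that region, so nothing has to be absorbed. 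Drop that remark.)

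Where the proposal diverges from what is actually required is part $(iii)$. You write the polar-coordinate formula with $v(x+r\th)$ inside and then gesture at ``cancellation on the sphere'' or ``a smooth convolution-type kernel.'' But in $(iii)$ the only hypothesis on $v$ is $v\in L_s(\R^N)$, so you may not differentiate $v(x+r\th)$ in $x$ at all; the polar change of variables, which in $(ii)$ shifted the $x$-dependence onto the regular function $v$, is precisely the tool that is unavailable here. The correct move — and the paper's — is to keep $y$ as the integration variable and differentiate in $x$ only the Cartesian kernel $\cA_K(x,|x-y|,(x-y)/|x-y|)\,|x-y|^{-N-2s}$. That is exactly why $(iii)$ needs $\cA_K\in C^{k+\a}(Q_\infty\times S^{N-1})$ rather than the milder $C^{k,\a}_{1,2}$ of $(ii)$: the $x$-derivative now falls on the angular slot $(x-y)/|x-y|$. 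The paper makes this quantitative through the elementary inequality $\big||x_1-y|^{-\varrho}-|x_2-y|^{-\varrho}\big|\le C|x_1-x_2|^\a\{|x_1-y|^{-\varrho-\a}+|x_2-y|^{-\varrho-\a}\}$, which for $x_1,x_2\in B_{R/2}$ and $|y|\ge R$ collapses to $C|x_1-x_2|^\a\,|y|^{-N-2s}$, and then applies the Leibniz rule; the weight $|y|^{-N-2s}$ is what turns the bound into one by $\|v\|_{L_s(\R^N)}$. Your sketch of $(iii)$ would not compile into a proof without this change of variables-of-differentiation and the explanation of why the angular regularity of $\cA_K$ enters.
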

\begin{proof}
For \eqref{eq:v-cat-vpR}, see \cite[Lemma 9.2]{Fall-Reg}. Statement $(i)$ follows  easily, thanks to the definition of $\vp_R$.
 To prove $(ii)$, we write
\begin{align*}
G_{K,v,R}(x)=  \int_{S^{N-1}}\int_0^\infty v(x+r\th)(1-\vp_R(x+r\th))  {\cA}_{K}(x,r,\th) r^{-1-2s}\, dr d\th.
\end{align*}
Since $1-\vp_R(x+ r\th )=0 $ for all  $x\in B_{R/2}$, $r\in (0,R/2)$ and $\th\in S^{N-1}$, then $(ii)$ follows.\\
To prove $(iii)$, we note that
%
\begin{align*}
G_{K,v,R}(x)&:=   \int_{\R^N}  v(y)  (\vp_R(x)-\vp_R(y)) K(x,y) \, dy\\
&=  \int_{|y|\geq R} v(y)  (1-\vp_R(y)) {\cA}_{K}(x,|x-y|,(x-y)/|x-y|)|x-y|^{-N-2s} \, dy.
\end{align*}
We recall that (see e.g. \cite{FW})
for every $x_1,x_2,y\in\R^N$, $\varrho>0$ and $\a\in (0,1)$, 
$$
||x_1-y|^{-\varrho}- |x_2-y|^{-\varrho} |\leq C(\a,\varrho)|x_1-x_2|^{\a}\{
|x_1-y|^{-(\varrho+\a)}+ |x_2-y|^{-(\varrho+\a)}  \}.
$$
Hence for all $x_1,x_2\in B_{R/2}$, $y\in \R^N\setminus B_R$, $\varrho\geq N+2s$ and $\a\in (0,1)$,  we get 
$$
||x_1-y|^{-\varrho}- |x_2-y|^{-\varrho} |\leq C(\a,\varrho,R)|x_1-x_2|^{\a} |y|^{-N-2s}.
$$
Using this and   the Leibniz formula for higher order derivatives of the product of functions,  we get  $(iii)$.
\end{proof}

\section{A priori    estimates} \label{s:AprioriEstim}

%
In this section, we prove a priori estimates for solutions to \eqref{eq:cL_K-eq-V-f-K'},  provided $\cL_K$ is close to the translation invariant operator  $\cL_{\mu_a}$,  
with $a: S^{N-1}\to \R$ satisfies
\be\label{eq:def-a-anisotropi}
a(-\theta)=a(\theta) \qquad\textrm{ and } \qquad\k \leq a(\theta)\leq   \frac{1}{\k}\ \qquad\textrm{ for all $\theta\in S^{N-1}$}.
\ee 
We now recall two results from \cite{Fall-Reg} that will be needed in the following of the paper.
\begin{lemma} 
\label{lem:Ds-a-n--to-La}
Let $b\in L^\infty(S^{N-1})$. Suppose that there exists a sequence of functions $(a_n)_n$ satisfying \eqref{eq:def-a-anisotropi} and such that  $a_n\stackrel{*}{\rightharpoonup} b$ in $L^\infty(S^{N-1})$.    Let $\l_n: \R^N\times \R^N\to [0,\k^{-1}] $, with $\l_n\to 0 $ pointwise on $\R^N\times \R^N$. Let  $(K_n)_n$ be a sequence of  symmetric kernels satisfying 
$$
|K_n(x,y)-\mu_{a_n}(x,y)|\leq \l_n(x,y)\mu_1(x,y)  \qquad \textrm{  for all  $x\not=y\in \R^N $ and for all $n\in \N$. }
$$
If   $(v_n)_n$ is  a bounded  sequence in ${L_s(\R^N)}\cap H^s_{loc}(\R^N)$ such that $v_n\to v$ in ${L_s(\R^N)}$,   then 
$$
\int_{\R^{N}}v(x)\cL_{\mu_b}\psi(x)\,dx=\frac{1}{2} \lim_{n\to\infty}\int_{\R^{2N}}(v_n(x )-v_n(y))(\psi(x)-\psi(y)) {K_n(x,y)}\,dxdy    \quad\textrm{ for all  $\psi\in C^\infty_c(\R^N)$.}
$$
\end{lemma}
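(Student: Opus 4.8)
The plan is to pass to the limit in the bilinear form on the right-hand side and identify the limit with the action of the limiting operator $\cL_{\mu_b}$ on $v$ in the very weak (distributional) sense. First I would fix $\psi \in C^\infty_c(\R^N)$ and choose $R>0$ with $\supp \psi \subset B_R$. Write $a_{K_n}(x,y) := K_n(x,y) - \mu_{a_n}(x,y)$, so that $|a_{K_n}(x,y)| \le \lambda_n(x,y)\mu_1(x,y)$ with $\lambda_n \to 0$ pointwise and $0 \le \lambda_n \le \k^{-1}$. The natural decomposition is
\[
\int_{\R^{2N}}(v_n(x)-v_n(y))(\psi(x)-\psi(y))K_n(x,y)\,dxdy = I_n + II_n,
\]
where $I_n$ uses $\mu_{a_n}$ and $II_n$ uses $a_{K_n}$.

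For $I_n$: since $\mu_{a_n}$ is translation invariant with even density, integration by parts (moving the second difference off $v_n$ onto $\psi$, which is legitimate because $\psi$ is smooth compactly supported and $v_n\in{L_s(\R^N)}$) gives $I_n = 2\int_{\R^N} v_n(x)\,\cL_{\mu_{a_n}}\psi(x)\,dx$, where $\cL_{\mu_{a_n}}\psi$ is the pointwise-defined singular integral. The key points are: (a) the functions $\cL_{\mu_{a_n}}\psi$ are uniformly bounded and decay like $|x|^{-N-2s}$ at infinity, with a modulus of decay independent of $n$ because $a_n \le \k^{-1}$ — one can see this by splitting the principal value integral near the diagonal (using $|\psi(x+z)+\psi(x-z)-2\psi(x)| \le \|D^2\psi\|_\infty |z|^2$) and away from it; (b) by the weak-$*$ convergence $a_n \stackrel{*}{\rightharpoonup} b$, one has $\cL_{\mu_{a_n}}\psi(x) \to \cL_{\mu_b}\psi(x)$ for each fixed $x$ (write the kernel in polar coordinates so the $\theta$-integral is tested against $a_n$), and in fact this convergence is dominated by an integrable weight. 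Combining with $v_n \to v$ in ${L_s(\R^N)}$ and the uniform bound $\|v_n\|_{{L_s(\R^N)}}\le C$, a dominated-convergence / product-of-weak-and-strong argument yields $I_n \to 2\int_{\R^N} v(x)\,\cL_{\mu_b}\psi(x)\,dx$.

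For $II_n$: I would show $II_n \to 0$. Bound
\[
|II_n| \le \int_{\R^{2N}} |v_n(x)-v_n(y)|\,|\psi(x)-\psi(y)|\,\lambda_n(x,y)\mu_1(x,y)\,dxdy,
\]
and split the domain into $B_{2R}\times B_{2R}$ and its complement. On $B_{2R}\times B_{2R}$, by Cauchy–Schwarz this is controlled by $[v_n]_{H^s(B_{2R})}[\sqrt{\lambda_n}\,\psi]_{H^s_{\mu_1}(B_{2R})}$-type quantities — here I would use that $[v_n]_{H^s(B_{2R})}$ is bounded (this needs the Caccioppoli/coercivity machinery, i.e.\ Lemma \ref{lem:from-caciopp-ok}, or is part of the standing hypothesis that $(v_n)$ is bounded in $H^s_{loc}$), while $\int_{B_{2R}^2}\lambda_n^2|\psi(x)-\psi(y)|^2\mu_1(x,y)\to 0$ by dominated convergence since $\lambda_n\to 0$ pointwise, $\lambda_n \le \k^{-1}$, and $|\psi(x)-\psi(y)|^2\mu_1(x,y)$ is integrable on $B_{2R}^2$. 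On the complementary region $\psi(y)=0$ or $\psi(x)=0$, so one gets $\int_{B_R}|\psi(x)|\big(\int_{|y|\gtrsim R}|v_n(x)-v_n(y)|\lambda_n(x,y)\mu_1(x,y)\,dy\big)dx$, which after using $\mu_1(x,y)\lesssim (1+|y|)^{-N-2s}$ for $x\in B_R$, $|y|\ge 2R$, is bounded by $C(\|v_n\|_{L^2(B_R)}+\|v_n\|_{{L_s(\R^N)}})$ times $\sup_{x\in B_R, |y|\ge 2R}\lambda_n(x,y)$-type factors — more carefully, by dominated convergence again using the uniform bound $\|v_n\|_{{L_s(\R^N)}}\le C$ (to dominate) and $\lambda_n\to 0$ pointwise.

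The main obstacle I anticipate is the justification of passing to the limit in $II_n$ on the non-compact ``tail'' region and in $I_n$ simultaneously in both the weak-$*$ convergence of $a_n$ and the strong ${L_s}$ convergence of $v_n$: one must be careful that the dominating functions are genuinely $n$-independent and integrable against the ${L_s}$ weight, so that $\lambda_n\to0$ pointwise really forces the tail contributions to vanish. This is exactly where the uniform bound $\|v_n\|_{{L_s(\R^N)}}\le C$ and the uniform ellipticity bound $a_n, \lambda_n \le \k^{-1}$ are essential. Once these dominated-convergence steps are set up correctly, dividing by $2$ gives the claimed identity
\[
\int_{\R^N} v(x)\,\cL_{\mu_b}\psi(x)\,dx = \tfrac12 \lim_{n\to\infty}\int_{\R^{2N}}(v_n(x)-v_n(y))(\psi(x)-\psi(y))K_n(x,y)\,dxdy.
\]
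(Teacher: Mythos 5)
The paper does not supply a proof of this lemma; it is explicitly ``recorded'' from \cite{Fall-Reg} at the start of Section 4, so there is no in-text argument to compare your proposal against. What you propose is nevertheless the natural and essentially correct route: split off the translation-invariant part $\mu_{a_n}$, rewrite $I_n=2\int v_n\,\cL_{\mu_{a_n}}\psi\,dx$ (the self-adjointness step, justified by a truncation $|x-y|>\e$ and dominated convergence), observe that for each fixed $x$ the quantity $\cL_{\mu_{a_n}}\psi(x)=\int_{S^{N-1}} a_n(\th)\, g_x(\th)\,d\th$ with $g_x\in L^1(S^{N-1})$, so weak-$*$ convergence gives pointwise convergence, uniformly dominated by $C\k^{-1}(1+|x|)^{-N-2s}$ (since $a_n\le\k^{-1}$); combined with $v_n\to v$ in $L_s(\R^N)$ this yields $I_n\to 2\int v\,\cL_{\mu_b}\psi\,dx$. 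The remainder $II_n$ is then made to vanish using $0\le\l_n\le\k^{-1}$ and $\l_n\to0$ pointwise.

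Two points to tighten. First, the $H^s(B_{2R})$ boundedness of $(v_n)$ you invoke for the near-diagonal piece of $II_n$ is already part of the standing hypothesis (boundedness in $H^s_{loc}(\R^N)$), so no appeal to the Caccioppoli estimate of Lemma \ref{lem:from-caciopp-ok} is needed or appropriate here. Second, in the tail piece of $II_n$ the integrand involves $v_n$, which depends on $n$, so a single application of dominated convergence against $\l_n\to0$ is not quite rigorous as written. The clean fix is to split $v_n=v+(v_n-v)$: the $(v_n-v)$-contribution is bounded by $C\,\|v_n-v\|_{L_s(\R^N)}\to0$ because the inner $\l_n\mu_1$-weighted integral is uniformly bounded on $B_R\times(\R^N\setminus B_{2R})$, while the $v$-contribution tends to $0$ by dominated convergence (the integrand is majorized by $\k^{-1}|v(y)|(1+|y|)^{-N-2s}\in L^1$ since $v\in L_s(\R^N)$, and $\l_n\to0$ pointwise). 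With these adjustments your argument closes.
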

\begin{lemma} \label{lem:Liouville}
Let $b\in L^\infty(S^{N-1})$. Suppose that there exists a sequence of functions $(a_n)_n$ satisfying \eqref{eq:def-a-anisotropi} and such that  $a_n\stackrel{*}{\rightharpoonup} b$ in $L^\infty(S^{N-1})$. Consider $u\in H^s_{loc}(\R^N)$  satisfying 
\begin{align*}
\begin{cases}
\cL_{\mu_b} u=0  \qquad \textrm{ in $\R^N$},  \vspace{3mm}\\
\|u\|_{L^2(B_R)}^2 \leq  R^{N+2\g} \qquad \textrm{ for  some $\g<2s$ and for every $R\geq 1$}.
\end{cases}
\end{align*}
Then $u$ is an affine function.
  \end{lemma}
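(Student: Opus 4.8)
\textbf{Proof plan for Lemma \ref{lem:Liouville}.}

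The plan is to reduce the statement to a known Liouville-type theorem for the translation invariant anisotropic operator $\cL_{\mu_b}$ by combining an incremental-quotient (difference quotient) argument with the growth bound. First I would fix an arbitrary vector $h\in\R^N$ and consider the difference quotient $u_h(x):=u(x+h)-u(x)$. Since $\cL_{\mu_b}$ is translation invariant and linear, $u_h$ solves $\cL_{\mu_b}u_h=0$ in $\R^N$ in the weak sense. Using the elementary inequality $\|u_h\|_{L^2(B_R)}\le\|u\|_{L^2(B_{R+|h|})}+\|u\|_{L^2(B_R)}$ together with the hypothesis $\|u\|_{L^2(B_R)}^2\le R^{N+2\g}$, I get $\|u_h\|_{L^2(B_R)}^2\le C|h|^{?}R^{N+2\g}$ for $R$ large; the point is that $u_h$ still has subquadratic $L^2$ growth of the same order $\g<2s$. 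The aim is then to show $u_h$ is \emph{constant}; iterating over a basis of directions $h=te_i$ and using that the constant depends affinely on $h$ forces $u$ to be affine.

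To prove that a weak solution $v$ of $\cL_{\mu_b}v=0$ on $\R^N$ with $\|v\|_{L^2(B_R)}^2\le C R^{N+2\g}$, $\g<2s$, must be constant, I would run a blow-down (or rather rescaling-and-compactness) argument: set $v_R(x):=v(Rx)/R^{\g}$ (suitably renormalized by its own $L^2$ norm on $B_1$ if needed). Each $v_R$ still solves $\cL_{\mu_b}v_R=0$ on $\R^N$ because $\mu_b$ is homogeneous of degree $-N-2s$ and $\cL_{\mu_b}$ scales accordingly, and the growth bound becomes uniform: $\|v_R\|_{L^2(B_\rho)}^2\le C\rho^{N+2\g}$ for all $\rho\ge1$. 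By the nonlocal Caccioppoli inequality of Lemma \ref{lem:from-caciopp-ok} (applied with $K=\mu_b$, $K'=0$, $f=0$) the sequence $(v_R)$ is bounded in $H^s_{loc}$, hence by Rellich and a diagonal argument a subsequence converges in $L^2_{loc}$ and in $L_s(\R^N)$ (the tail is controlled by the growth bound) to some $v_\infty$. Lemma \ref{lem:Ds-a-n--to-La} (with $a_n\equiv$ a fixed approximating sequence for $b$, $K_n=\mu_b$, $\l_n\to0$) passes the equation to the limit, so $\cL_{\mu_b}v_\infty=0$ with $v_\infty$ having the same subquadratic growth. One then extracts the leading order behavior, concludes $v_\infty$ is a polynomial of degree $\le\lfloor\g\rfloor<1$, hence constant, and transfers the conclusion back by a contradiction/compactness argument on the normalized quotients $[v]_{C^\g}$-type quantities.

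The main obstacle I expect is the last step — actually classifying the homogeneous (bounded-growth) solutions of the translation invariant equation $\cL_{\mu_b}v=0$, since $b\in L^\infty(S^{N-1})$ need not be smooth or even continuous, so Fourier-analytic arguments (the symbol is merely an $L^\infty$-type function homogeneous of degree $2s$, bounded above and below) require care to show it is elliptic enough that tempered distributional solutions with polynomial growth of order $<2s$ are genuine polynomials. The cleanest route is probably: take any test function $\psi$, note $v*\check\psi$ (mollification in the distributional sense, using translation invariance) is smooth, solves the same equation, still has the growth bound, and is therefore forced by a direct Liouville argument on the symbol side to be affine; then let the mollification parameter go to zero. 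Alternatively one invokes the interior estimates proved elsewhere in the paper to bootstrap regularity of $v$ first and then run the blow-down in $C^{1,\alpha}$ rather than $L^2$. I would present the $L^2$-blow-down version as primary since it only uses Lemma \ref{lem:from-caciopp-ok}, Lemma \ref{lem:Ds-a-n--to-La}, and elementary scaling, keeping the proof self-contained within the tools already assembled in this section.
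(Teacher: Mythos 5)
The paper does not actually prove this lemma: it is recorded from the reference \cite{Fall-Reg} (``We now record two results from \cite{Fall-Reg}\dots''), so there is no in-paper argument to compare against. Judging the proposal on its own merits, I see genuine gaps, and the version you say you would present as primary does not close.

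The $L^2$ blow-down step is circular. After rescaling by $v_R(x)=v(Rx)/R^\gamma$ and renormalizing, the limit $v_\infty$ produced by Caccioppoli, Rellich, and Lemma \ref{lem:Ds-a-n--to-La} solves $\cL_{\mu_b}v_\infty=0$ on $\R^N$ with \emph{exactly the same} subcritical growth $\|v_\infty\|_{L^2(B_R)}^2\lesssim R^{N+2\gamma}$. That is precisely the hypothesis of the lemma you are trying to prove; nothing has been simplified. Blow-downs only gain information when they produce homogeneous limits (via a monotonicity formula) or limits satisfying an easier equation, neither of which is available here. The phrase ``concludes $v_\infty$ is a polynomial of degree $\le\lfloor\gamma\rfloor<1$'' also implicitly restricts to $\gamma<1$, whereas the lemma allows any $\gamma<2s$, so when $2s>1$ (which is exactly the range where the affine conclusion is non-trivial) you also need to handle $\gamma\in[1,2s)$, for which degree-$1$ solutions are admissible. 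Relatedly, the opening difference-quotient step does not buy what you claim: $u_h$ inherits the same $L^2$-growth exponent $\gamma$ as $u$, so ``show $u_h$ is constant'' is, for $\gamma\ge 1$, false as a general statement about solutions with that growth (linear functions are counterexamples), and to improve the growth of $u_h$ to $\gamma-1$ one needs a pointwise gradient estimate --- i.e., an interior $C^{1,\alpha}$ bound --- which is itself a nontrivial step not assembled in this section.

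The route you flag as the ``obstacle'' is the actual content. A correct proof either (i) runs the Fourier argument carefully: a solution with polynomial growth is a tempered distribution; mollifying by a Schwartz function commutes with $\cL_{\mu_b}$ by translation invariance and produces smooth solutions with the same growth; the symbol of $\cL_{\mu_b}$ is comparable to $|\xi|^{2s}$ from above and below since $b$ is bounded between $\kappa$ and $1/\kappa$; hence the Fourier transform is supported at the origin, forcing a polynomial, and the growth bound $\gamma<2s<2$ caps the degree at $1$; or (ii) first proves translation-invariant interior $C^{\beta}$ estimates for some $\beta\in(\gamma,2s)$, applies them on $B_R$ to get $[u]_{C^\beta(B_R)}\le CR^{\gamma-\beta}\to 0$, and concludes that $u$ is a polynomial of degree $<\beta\le 2$. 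Either of these would work, and (ii) has the advantage of giving the affine conclusion directly when $\beta>1$; but as written, neither is carried out, and the $L^2$ compactness argument you propose to rely on instead does not reach the conclusion.
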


  \subsection{A priori    estimates and consequences}
We now state the main result of  the present section.
 \begin{proposition} \label{prop:bound-Kato-abstract}
Let  $s\in (0,1)$, $\b\in [0,2s)$,  $\s\in(s,1]$ and $\k>0$. Let $\a'\geq 0$, with  $\a'+\s\in (0,2s)$.  Pick $$\g\in (0,1)\cap(0,\s]\cap (0,2s-\b].$$
 Then there exist $\e_0>0$  and $C  >0$ such that if
\begin{itemize}
\item $a$ satisfies \eqref{eq:def-a-anisotropi},  $K_a$ satisfies \eqref{eq:K-Kernel-satisf} with
$$
|K_a-\mu_a|<\e_0\mu_1(x,y) \qquad\textrm{ on $B_2\times B_2\setminus \{x=y\}$}, 
$$
\item $K'$ satisfies \eqref{eq:K'-Kernel-satisf},   

\item $ f\in \cM_\b$,  $ g\in H^s(\R^N)$, $U\in C^{0,\s}_{loc}(\R^N)\cap L_{(\a'+2s)/2}(\R^N)$  are such that  
$$
 \cL_{K_a}  g+\cL_{K'} U=f \qquad\textrm{ in $B_2$,} 
$$
\end{itemize}  
 then     
$$
\sup_{r>0} r^{-2\g-N}  \|g-g_{B_r}\|_{L^2(B_r)}^2 \leq  C    (  \|g\|_{  L^2(\R^N) }+[ U]_{C^{0,\s}(\R^N) } +  \|f\|_{ \cM_\b } )^2.
$$
\end{proposition}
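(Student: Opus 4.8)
The plan is to argue by contradiction via a blow-up/compactness scheme, which is the standard tool in this circle of ideas (cf.\ \cite{Serra-OK,Fall-Reg}). Suppose the estimate fails. Then there exist sequences $a_n$ satisfying \eqref{eq:def-a-anisotropi}, kernels $K_{a_n}$ with $|K_{a_n}-\mu_{a_n}|<\e_n\mu_1$ on $B_2\times B_2$ and $\e_n\to 0$, kernels $K'_n$ satisfying \eqref{eq:K'-Kernel-satisf}, data $f_n\in\cM_\b$, $g_n\in H^s(\R^N)$, $U_n\in C^{0,\s}_{loc}(\R^N)\cap L_{(\a'+2s)/2}(\R^N)$ with $\cL_{K_{a_n}}g_n+\cL_{K'_n}U_n=f_n$ in $B_2$, normalized so that $\|g_n\|_{L^2(\R^N)}+[U_n]_{C^{0,\s}(\R^N)}+\|f_n\|_{\cM_\b}\le 1$, yet
\[
\theta_n:=\sup_{r>0} r^{-2\g-N}\|g_n-(g_n)_{B_r}\|_{L^2(B_r)}^2\to\infty.
\]
First I would introduce the monotone-in-radius quantity (the standard ``$\theta_n(r)$'' device): define $\theta_n(r):=\sup_{\rho\ge r}\rho^{-2\g-N}\|g_n-(g_n)_{B_\rho}\|_{L^2(B_\rho)}^2$. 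Since $g_n\in H^s(\R^N)\subset L^2$, $\theta_n(r)\to 0$ as $r\to\infty$ for each fixed $n$, and $\theta_n(r)\nearrow$ as $r\searrow 0$; so one can pick $r_n\to 0$ and a radius $\rho_n\ge r_n$ realizing the sup up to a factor $2$ with $\rho_n^{-2\g-N}\|g_n-(g_n)_{B_{\rho_n}}\|_{L^2(B_{\rho_n})}^2\to\infty$; one checks $\rho_n\to 0$ using the normalization and $\g<2s$. Then set the blow-up
\[
v_n(x):=\frac{g_n(\rho_n x)-(g_n)_{B_{\rho_n}}}{\rho_n^{\g}\,\sqrt{\theta_n(\rho_n)}},
\]
so that $\|v_n\|_{L^2(B_1)}=1$ (up to the factor), $v_n$ has zero mean on $B_1$, and by the choice of $\rho_n$ as near-maximizer one gets the crucial growth control $\|v_n\|_{L^2(B_R)}^2\le C R^{N+2\g}$ for all $R\ge 1$ (this is where monotonicity of $\theta_n$ enters).

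Next I would pass to the rescaled equation. Writing $K_{a_n}^{\rho_n}(x,y):=\rho_n^{N+2s}K_{a_n}(\rho_n x,\rho_n y)$ and similarly for $K'_n$, $U_n$, $f_n$, the function $v_n$ solves $\cL_{K_{a_n}^{\rho_n}}v_n+\cL_{(K'_n)^{\rho_n}}U_n^{\rho_n}=f_n^{\rho_n}$ in $B_{2/\rho_n}$, where the rescaled right-hand side carries a factor $\rho_n^{2s-\g}/\sqrt{\theta_n(\rho_n)}$ on $f_n$ and, crucially, $[U_n^{\rho_n}]_{C^{0,\s}}$ picks up $\rho_n^{\s-\g}/\sqrt{\theta_n(\rho_n)}\to 0$ since $\s\ge\g$ and $\theta_n\to\infty$; similarly $\|f_n^{\rho_n}\|_{\cM_\b}\le \rho_n^{2s-\b-\g}/\sqrt{\theta_n(\rho_n)}\|f_n\|_{\cM_\b}\to 0$ because $\g\le 2s-\b$. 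The scaling invariance $\k\le a_n\le\k^{-1}$ is preserved and $|K_{a_n}^{\rho_n}-\mu_{a_n}|<\e_n\mu_1$ on the expanding ball $B_{2/\rho_n}$. Up to a subsequence $a_n\stackrel{*}{\rightharpoonup}b$ in $L^\infty(S^{N-1})$ with $b$ still satisfying \eqref{eq:def-a-anisotropi}. Applying Lemma~\ref{lem:from-caciopp-ok} (the nonlocal Caccioppoli inequality) to $v_n$ on balls $B_{2R}$, together with the growth bound $\|v_n\|_{L^2(B_R)}\le CR^{N/2+\g}$ and the vanishing of the rescaled data, I would get uniform bounds on $[v_n]_{H^s(B_R)}$ for each fixed $R$; combined with the tail control (which follows from the polynomial growth and $\g<2s$, giving uniform bounds in $L_s(\R^N)$ on compact pieces) this yields, via compact fractional Sobolev embedding and a diagonal argument, $v_n\to v$ strongly in $L^2_{loc}(\R^N)$, in $L_s(\R^N)$, and weakly in $H^s_{loc}(\R^N)$.

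Then I would identify the limit equation. By Lemma~\ref{lem:Ds-a-n--to-La} (with $\l_n(x,y)=\e_n+$ the perturbation coming from $(K'_n)^{\rho_n}$ being negligible in the limit, which needs a short check using $\a'+\s<2s$ so the $U_n$-term's contribution to any test pairing tends to $0$), the limit $v$ satisfies $\cL_{\mu_b}v=0$ in $\R^N$ in the weak/distributional sense, and inherits $\|v\|_{L^2(B_R)}^2\le CR^{N+2\g}$ with $\g<2s$. Lemma~\ref{lem:Liouville} then forces $v$ to be affine; but $v$ has zero mean on $B_1$ and, by the near-maximality of $\rho_n$ passed to the limit, one also has that the affine (degree-$\le1$) part of $v$ at scale $1$ is essentially killed by the normalization — more precisely, $\|v-\ell\|_{L^2(B_1)}$ is comparable to $\|v\|_{L^2(B_1)}=1$ for the best affine $\ell$, yet $v=\ell$, a contradiction. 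The subtraction-of-affine-parts bookkeeping (ensuring that the correct moments of $g_n$ are removed so the limit's affine part vanishes and $\|v\|_{L^2(B_1)}$ stays bounded below) is the delicate point: one should define $v_n$ by subtracting the $L^2(B_{\rho_n})$-projection of $g_n$ onto \emph{affine} functions, not just the mean, so that the Liouville rigidity contradicts the normalization $\|v_n\|_{L^2(B_1)}=1$. I expect the main obstacle to be exactly this compatibility between the scaling exponent $\g$, the choice of the polynomial subtracted, and the a posteriori tail/energy estimates — i.e.\ making the Caccioppoli estimate of Lemma~\ref{lem:from-caciopp-ok} survive the blow-up uniformly, controlling the nonlocal tails of $v_n$ from the polynomial growth, and checking that the $\cL_{K'}U$ term genuinely disappears under the chosen scaling (this uses $\a'+\s<2s$ and $\s\ge\g$ in an essential way).
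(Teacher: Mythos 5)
Your blow-up scheme is essentially the one the paper uses — same monotone $\Theta_n$ device, same rescaling, same Caccioppoli argument via Lemma~\ref{lem:from-caciopp-ok}, same passage to the Liouville limit via Lemma~\ref{lem:Ds-a-n--to-La}. The issue is in the endgame, where you leave a genuine gap and then propose a fix that would not work here.

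You worry that Liouville only gives $v$ affine, so the zero-mean normalization is not obviously inconsistent, and you suggest instead subtracting the $L^2$-projection of $g_n$ onto \emph{affine} functions. But this proposition assumes $\g<1$, and that hypothesis is exactly what closes the contradiction with no extra subtraction: the limit $w$ inherits the polynomial growth $\|w\|_{L^2(B_R)}^2\le CR^{N+2\g}$ for all $R\ge 1$, and an affine function with nonvanishing linear part has $\|w\|_{L^2(B_R)}^2\gtrsim R^{N+2}$, so $\g<1$ forces the linear part to vanish. Hence $w$ is a \emph{constant}, and then $w_{B_1}=0$ gives $w\equiv 0$, contradicting $\|w\|_{L^2(B_1)}^2\ge 1/2$. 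So subtracting the mean suffices.

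Your proposed fix would actually create new trouble rather than resolve any. If you subtract the affine projection $\mathbf{P}_{r,g_n}$ instead of the mean, the rescaled equation picks up a term $\Theta_n(r_n)^{-1/2}r_n^{1-\g}T^{r_n,g_n}\cdot\cL_{\overline K_n}x$. This is not zero because $\overline K_n$ is not translation-invariant, and controlling it requires an a priori bound on $T^{r_n,g_n}$, which is \emph{not} available from $\|g_n\|_{L^2(\R^N)}\le1$ alone. This is precisely why the gradient-estimate version (Proposition~\ref{prop:bound-wth-corrector}) carries the extra hypothesis $|T^{r,g}|\le c_0 r^{-\varrho}$; importing that machinery here is unnecessary and, absent that hypothesis, the extra term cannot be disposed of. In short: you need $\g<1$ to kill the linear part of the limit, not an affine subtraction at the blow-up stage.

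A minor point: you claim $\|v_n\|_{L^2(B_1)}=1$ after normalization, but the scale $r_n$ is only a near-maximizer of $\Theta_n$ (within a factor of $2$), so one gets $\|v_n\|_{L^2(B_1)}^2\ge 1/2$, not equality. This is enough, but be precise, since the growth bound $\|v_n\|_{L^2(B_R)}^2\le CR^{N+2\g}$ is obtained from the \emph{other} direction of the same near-maximality, via the monotonicity of $\Theta_n$.
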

 \begin{proof}
Assume that  the assertion in the proposition does not hold, then   for every   $n\in \N$,  there exist:
\begin{itemize}
\item $a_n$ and $K_{a_n}$ satisfying  \eqref{eq:def-a-anisotropi} and  \eqref{eq:K-Kernel-satisf} respectively, with 
\be \label{eq:K-a-n-mu-an}
|K_{a_n}-\mu_{a_n}|<\frac{1}{n}  \mu_1(x,y) \qquad\textrm{ on $B_2\times B_2 \setminus \{x=y\}$}, 
\ee
\item $K'_n$ satisfying \eqref{eq:K'-Kernel-satisf},  $ f_n\in \cM_\b$, $U_n\in C^{0,\s}_{loc}(\R^N)\cap L_{(\a'+2s)/2}(\R^N)$ and  $g_n \in H^s(\R^N)$,      with   $\|g_n\|_{L^2(\R^N)}+\|f_n\|_{\cM_\b}+ \| U_n\|_{C^{\s}(\R^N) } \leq 1 $,
 \be
 \cL_{K_{a_n}}  g_n+\cL_{K'_ n} U_n =f_n \qquad\textrm{ in $B_2$}, 
\ee
\end{itemize}  
with the property  that    
$$
\sup_{r>0} r^{-N-2\g }  \|g_n-(g_n)_{B_{ r}}\|_{L^2(B_{r})}^2 > n. 
$$
Consequently,  there exists $\ov r_n>0$ such that 
\be\label{eq:ov-r-n-un-larger-n-eps}
 \ov r^{-N-2\g }_n   \|g_n-(g_n)_{B_{\ov r_n}}\|_{L^2(B_{\ov r_n})}^2 > n/2  .
\ee
We consider the (well defined, because $\|g_n\|_{ L^2(\R^N)} \leq 1$) nonincreasing function $\Theta_n: (0,\infty)\to [0,\infty)$ given by
$$
\Theta_n(\ov r)=\sup_{ r\geq \ov r }    r^{-N-2\g}   \|g_n-(g_n)_{B_{ r}}\|_{L^2(B_{ r})}^2  .
$$
Obviously, for $n\geq 2$, by \eqref{eq:ov-r-n-un-larger-n-eps},   
\be\label{eq:The-n-geq-n}
  \Theta_n(\ov r_n)> n/2\geq 1 .
\ee
Hence, provided $n\geq 2$,  there exists $r_n\in [\ov r_n,\infty)$ such that 
\begin{align*}
\Theta_n( r_n)&\geq   r^{-N-2\g }_n   \|g_n - (g_n)_{ B_{ r_n}   } \|_{L^2(B_{ r_n})}^2  \geq \Theta_n(\ov r_n)-1/2\geq (1-1/2)\Theta_n(\ov r_n)\geq \frac{1}{2}\Theta_n( r_n),
\end{align*}
where we used the monotonicity of $\Theta_n$ for the last inequality, while  the first inequality  comes from the definition of $\Theta_n$.
In particular, thanks to  \eqref{eq:The-n-geq-n},  $\Theta_n( r_n)\geq  n/4$. Now  since $ \|g_n \|_{L^2(\R^N)} \leq 1$,  we have that $  r^{-N-2\g }_n  \geq n/8$, so that $r_n\to 0$ as $n\to \infty$.  
 We now   define the   sequence  of functions
$$
 {w}_n(x)=  \Theta_n(r_n)^{-1/2} r_n^{-\g}   \left\{g_n(r_n x)   - \frac{1}{|B_1|} \int_{B_1} g_n(r_n x) \, dx  \right\}, 
$$
which,    satisfies
\be \label{eq:w-n-nonzero}
 \|w_n\|_{L^2(B_{1})}^2 \geq \frac{1}{2}, \qquad\int_{B_1}w_n(x)\,dx =0 \qquad\textrm{ for every $n\geq2$.}
\ee
%
 %
 Using that, for every $r>0$,  $    \|g_n-(g_n)_{B_{ r}}\|_{L^2(B_{ r})}^2\leq r^{N+2\g}\Theta_n(r)$  and the monotonicity of $\Theta_n$, by   \cite[Lemma 3.1]{Fall-Reg}, we find that 
\be\label{eq:groht-w-n-abs}
 \|w_n\|_{L^2(B_{R})}^2 \leq  C   R^{N+2\g}  \qquad\textrm{ for every $R\geq 1$ and $n \geq 2$,}
\ee
for some constant $C=C(N,\g)>0$.\\
We define
$$
\ov K_n(x,y)=r_n^{N+2s}K_{a_n}(r_nx ,r_ny ), \qquad \ov K_n'(x,y)=r_n^{N+2s}K_{n}'(r_nx ,r_ny )
$$
 and
$$
\ov U_n(x)=   U_n (r_n x), \qquad  \ov f_n(x)= r_n^{2s} f_n (r_n x).
$$
It is plain that 
\be  \label{eq:w_n-solves}
\cL_{\ov K_{n}} w_n   +r_n ^{-\g}  \Theta_n(r_n)^{-1/2} \cL_{\ov K_n'}\ov U_n= r^{-\g}_n \Theta_n(r_n)^{-1/2}  \ov f_n  \qquad\textrm{ in $B_{2/{r_n}} $.}
\ee
We fix $M>1$ and let $n\geq 2$ large, so that $1<M<\frac{1}{8r_n}$.
Therefore, letting $w_{n,M}:=\vp_{4M} w_n\in H^s(\R^N)$, we   apply Lemma \ref{lem:estim-G_Ku}$(i)$ to   get
\be \label{eq:eq-for-w-n-cut-Cacciopp}
\cL_{\ov K_{n}}w_{n,M} +  r_n ^{-\g}  \Theta_n(r_n)^{-1/2} \cL_{\ov K_n'}\ov U_n= r^{-\g}  \Theta_n(r_n)^{-1/2} \ov f_n+G_{K_n,w_n,M}   \qquad\textrm{ in $B_{2M}$,}
\ee
with $\|G_{\ov K_n,w_n,M}\|_{L^\infty(B_{M/2})}\leq C \|w_n\|_{{L_s(\R^N)}}\leq C$,  by \eqref{eq:groht-w-n-abs}.    We also note that 
\be \label{eq:estim-3}
\|\ov f_n\|_{\cM_\b}  \leq   r_n^{2s-\b}.
\ee
  Clearly $\ov K_n $ satisfies  \eqref{eq:Kernel-satisf}.
   Applying  Lemma \ref{lem:from-caciopp-ok} to the equation \eqref{eq:eq-for-w-n-cut-Cacciopp} and using \eqref{eq:groht-w-n-abs} together with \eqref{eq:estim-3}, we find a constant $\ov C $ such that    for every $\e>0$,  there exists    $C$ satisfying    
\begin{align}
&\left \{\k-  \e \ov C  \Theta_n(r_n)^{-1/2}  r_n^{2s-\b-\g}     \right\} \int_{B_{M/8}\times B_{M/8}}(w_{n,M}(x)-w_{n,M}(y))^2   \mu_1(x,y)\,dxdy \nonumber\\
&\leq   C( \Theta_n(r_n)^{-1/2} r_n^{2s-\b-\g} + 1)   +   C  r_n ^{-\g}  \Theta_n(r_n)^{-1/2} [\ov U_n]_{H^s_{\ov K'_n}(B_{4M})} ^2  \nonumber\\
 &\quad  +C  r_n ^{-\g}  \Theta_n(r_n)^{-1/2} \int_{\R^N}  \vp_M^2(y)|w_n(y)|   \left( \int_{ \R^N\setminus B_{4M}}|\ov U_n(x)-\ov U_n(y)||\ov K'_n(x,y)|\,dx\right)dy    .  \label{eq:estim-Ccciopp--111}
\end{align}   
%
We observe that
$$
|\ov K_n'(x,y)|\leq \frac{1}{\k} (r_n |x|+r_n|y|+1)^{\a'}   \mu_1(x,y).
$$
From this and the fact that $[\ov U_n]_{C^{\s}(\R^N)}\leq r_n^\s$ , we  have the following estimate:
\begin{align}\label{eq:estim-1}
 [\ov U_n]_{H^s_{\ov K'_n}(B_{4M})} ^2& =\int_{B_{4M}\times B_{4M}}(\ov U_n(x)-\ov U_n(y))^2\ov K_n'(x,y)\, dxdy \nonumber\\
 &\leq C(M)   \int_{B_{4M}\times B_{4M}}(\ov U_n(x)-\ov U_n(y))^2 \mu_1(x,y)\, dxdy \nonumber\\
 &  \leq C(M) r_n^{2\s}   \int_{B_{4M}\times B_{4M}}|x-y|^{-N-2s+2\s}\, dxdy \nonumber\\
&\leq C(M) r_n^{2\s},
\end{align}
because $\s>s$. 
In addition, since $\a'+\s<2s$, we get
\begin{align} \label{eq:estim-2}
\sup_{y\in B_M} \int_{ \R^N\setminus B_{4M}}|\ov U_n(x)-\ov U_n(y)| |\ov K_n'(x,y)|\,dx  &\leq   C(M)  r_n^{\s}     \int_{|x|\geq 2M}(1+|x|^{\a'}) |x|^{-N-2s+\s} dx \nonumber\\
& \leq C(M)   r_n^{\s} .
\end{align}
Now using \eqref{eq:estim-1} and  \eqref{eq:estim-2} in   \eqref{eq:estim-Ccciopp--111} and the fact that $\g\leq \min (2s-\b,\s)$, we find that
\begin{align*}
&\left \{\k-  \e \ov C  \Theta_n(r_n)^{-1/2}       \right\} [w_n]_{H^s(B_{M/8})}^2 \leq   C( \Theta_n(r_n)^{-1/2}   + 1)     .
\end{align*}   
%
Therefore, since $ \Theta_n(r_n)^{-1}\leq 1$, then   provided $\e$ is small  enough, by  \eqref{eq:groht-w-n-abs},  we deduce that $w_n$ is bounded in $H^s_{loc}(\R^N)$. Hence by Sobolev embedding and  \eqref{eq:groht-w-n-abs}, there exists $w\in H^s_{loc}(\R^N)\cap {L_s(\R^N)}$ such that,  up to a subsequence,  $w_n\to w$    in $L^2_{loc}(\R^N)\cap {L_s(\R^N)}$.     
Moreover, by \eqref{eq:w-n-nonzero}    we deduce that 
\be\label{eq:w-nonzero-eps-pp}
  \|w\|_{L^ 2(B_1)}^2  \geq \frac{1}{2} \qquad\textrm{ and } \qquad w_{B_1}= 0.  
\ee
In addition by \eqref{eq:groht-w-n-abs}, we have 
\be \label{eq:w-W-growth-ok}
  \|w\|_{L^2(B_{R})}^2  \leq  C  R^{N+2\g}  \qquad\textrm{ for every $R\geq 1$.}
\ee
Now applying Lemma \ref{lem:convergence-very-weak} to the equation \eqref{eq:w_n-solves} and using \eqref{eq:estim-1} together with  \eqref{eq:estim-2}, we get 
\begin{align*} 
& \left|\int_{\R^{2N}}(w_n(x)-w_n(y))(\psi(x)-\psi(y))\ov K_n(x,y)\, dxdy \right|\leq C  \Theta_n(r_n)^{-1/2} \qquad\textrm{ for all $\psi\in C^\infty_c(B_M)$.}
\end{align*}
Since $|\ov K_n-\mu_{a_n}|\leq \frac{\mu_1(x,y) }{n}$ almost everywhere in $B_{1/r_n}\times B_{1/r_n}$ and $\Theta_n(r_n)\to\infty$ as $n\to\infty$, we can apply Lemma \ref{lem:Ds-a-n--to-La} to deduce that 
  $\cL_{\mu_b}  w= 0 \quad\textrm{ in $\R^N$},$ 
 where $b$ is the weak-star limit of $a_n$ (which satisfies  \eqref{eq:def-a-anisotropi} for all $n\in \N$).
In  view of \eqref{eq:w-W-growth-ok},  by    Lemma \ref{lem:Liouville}, we deduce that   $w$ is  equivalent to a  constant function, since $\g<1$. This is clearly in contradiction   with       \eqref{eq:w-nonzero-eps-pp}. 
\end{proof} 
 As a consequence,  we get the following result.
 \begin{corollary}\label{cor:C-gamm-near-flat}
Let  $s\in (0,1)$, $N\geq 1$,  $\b\in [0,2s)$,  $\s\in(s,1]$, $\a'\geq 0$, with  $\a'+\s\in (0,2s)$, and $\k>0$. 
Let $a$ satisfy \eqref{eq:def-a-anisotropi}.
Consider $K$ and    $K'$ satisfying \eqref{eq:K-Kernel-satisf} and \eqref{eq:K'-Kernel-satisf}, respectively.   
 Let     $g\in H^s(B_2)\cap {L_s(\R^N)}$, $U\in C^{0,\s}_{loc}(\R^N)\cap  L_{(\a'+2s)/2}(\R^N)$ and $f\in \cM_\b$ satisfy 
 \be
 \cL_K  g+\cL_{K'} U =f \qquad\textrm{ in $B_2$.} 
\ee 
Then, for  $\g\in (0,1)\cap (0, 2s-\b]\cap(0,\s]$, there exist $\e_0, C>0$, such that if  $|K-\mu_a|<\e_0 \mu_1(x,y) $ in $B_2\times B_2\setminus \{x=y\}$,  we have   
$$
\|g\|_{C^{0,\g}(B_{1/4})}\leq C (\|g\|_{L^2(B_2)}+\|g\|_{{L_s(\R^N)}}+ [U ]_{C^{0,\s}(\R^N)}+ \|f\|_{\cM_\b}),
$$
with $C,\e_0>0$ depend only on $s,N,\b,\s,\a',\k$  and $\g$.
 \end{corollary}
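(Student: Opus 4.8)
The corollary should follow from Proposition~\ref{prop:bound-Kato-abstract} by combining the $L^2$-Campanato growth estimate with the Caccioppoli inequality of Lemma~\ref{lem:from-caciopp-ok} and a Morrey–Campanato characterization of Hölder continuity. First I would localize: set $g_R:=\vp_R g$ with $R$ small, and use Lemma~\ref{lem:estim-G_Ku} to write $\cL_K g_R = f + G_{K,g,R} + \cL_{K'}(\text{tail corrections})$, reducing to a function in $H^s(\R^N)$ with a bounded right-hand side. Since the hypothesis $|K-\mu_a|<\e_0\mu_1$ only holds on $B_2\times B_2$, and $\vp_R$ is supported well inside $B_2$ for $R$ chosen small (say $R=1$ suffices, or one rescales), the closeness hypothesis of Proposition~\ref{prop:bound-Kato-abstract} is available on the relevant region. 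The extra term $G_{K,g,R}$ is in $L^\infty(B_{R/2})$ with norm controlled by $\|g\|_{L_s(\R^N)}$ via Lemma~\ref{lem:estim-G_Ku}$(i)$, hence it is in $\cM_0\subset\cM_\b$, so it can be absorbed into the $\|f\|_{\cM_\b}$-type term without changing the structure of the equation.

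Having arranged that, I would apply Proposition~\ref{prop:bound-Kato-abstract} to $g$ (or to the localized $g_R$, translated to be centered at an arbitrary point $z\in B_{1/2}$ — this is where one needs the estimate to hold around every center, so one applies the proposition to $g(z+\cdot)$ after a translation, noting the hypotheses are translation-stable on a slightly smaller ball). This yields
$$
\sup_{r>0} r^{-2\g-N}\,\|g - g_{B_r(z)}\|_{L^2(B_r(z))}^2 \;\le\; C\bigl(\|g\|_{L^2(\R^N)} + [U]_{C^{0,\s}(\R^N)} + \|f\|_{\cM_\b}\bigr)^2
$$
uniformly in $z$ ranging over a small ball, with the $L^2(\R^N)$-norm on the right replaced (after localization) by $\|g\|_{L^2(B_2)}+\|g\|_{L_s(\R^N)}$ once the cutoff/tail terms are accounted for. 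This is precisely a Campanato-space bound on $B_{1/4}$; by the classical Campanato embedding $\mathcal{L}^{2,N+2\g}\hookrightarrow C^{0,\g}$ (for $\g\in(0,1)$), it gives $\|g\|_{C^{0,\g}(B_{1/4})}\le C(\|g\|_{L^2(B_2)}+\|g\|_{L_s(\R^N)}+[U]_{C^{0,\s}(\R^N)}+\|f\|_{\cM_\b})$, which is the claim. The constraints $\g\in(0,1)\cap(0,2s-\b)\cap(0,\s]$ are exactly those under which Proposition~\ref{prop:bound-Kato-abstract} applies, so nothing new is needed there.

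The main obstacle is bookkeeping rather than conceptual: one must verify that after the localization $g\mapsto \vp_R g$ and the translation of the center $z$, all the quantitative hypotheses of Proposition~\ref{prop:bound-Kato-abstract} — the closeness $|K-\mu_a|<\e_0\mu_1$ on the relevant ball, the membership $U\in C^{0,\s}_{loc}\cap L_{(\a'+2s)/2}$, and $f\in\cM_\b$ — survive with constants depending only on the stated parameters, and that the new right-hand side terms ($G_{K,g,R}$ and the exterior $K'$-integrals against $\vp_R g$) are genuinely controlled by $\|g\|_{L^2(B_2)}+\|g\|_{L_s(\R^N)}+[U]_{C^{0,\s}(\R^N)}$. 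In particular one must check that the growth estimate of Proposition~\ref{prop:bound-Kato-abstract} holds with $z$-uniform constants, i.e. that the proposition, stated at the origin, yields the estimate at every nearby center by applying it to the translated data — this is immediate since translation does not affect $\mu_1$, $\mu_a$, the $\cM_\b$-norm, or the $C^{0,\s}$-seminorm, but it must be invoked. Once this is in place, the Campanato-to-Hölder step is standard and the proof closes.
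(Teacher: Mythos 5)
Your proposal takes essentially the same route as the paper's proof: translate $g$ (and the data) to center at an arbitrary $z\in B_{1/2}$, localize via the cutoff $\vp_1 g_z$ using Lemma~\ref{lem:estim-G_Ku} to absorb the tail term $G_{K,g_z,1}$ into a new right-hand side $\ti f_z\in\cM_\b$, apply Proposition~\ref{prop:bound-Kato-abstract} to obtain the Campanato-type growth $\|g-(g)_{B_r(z)}\|_{L^2(B_r(z))}\le Cr^{N/2+\g}$ uniformly in $z$, and conclude by the Campanato-to-H\"older embedding (the paper invokes \cite[Lemma~3.1]{Fall-Reg} for that last step). Two small remarks on your write-up: $U$ is kept whole (it is globally in $C^{0,\s}_{loc}\cap L_{(\a'+2s)/2}$) so the $\cL_{K'}$-term does not actually generate tail corrections — only the cutoff on $g$ does, via $G_{K,g,R}$; and the radius at which one applies the proposition needs a harmless rescaling since after translation and cutoff the equation holds on a smaller ball, a point you flag and the paper treats implicitly.
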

 \begin{proof}
 Without loss of generality, we may assume that 
\be\label{eq:estim-tif-zmm}
 \|g\|_{L^2(B_2)}+\|g\|_{{L_s(\R^N)}}+ [U ]_{C^{0,\s}(\R^N)}+ \|f\|_{\cM_\b} \leq1.
\ee
 Let $z\in B_{1/2}$ and define $g_z:=g(x+z)$, $K_z(x,y)=K(x+z,y+z)$, $K_z'(x,y)=K(x+z,y+z)$,  $f_z(x)=f(x+z)$, $f_z(x)=f(x+z)$, and  $U_z(x)=U(x+z)$. We then have 
$$
 \cL_{K_z}  g_z+\cL_{K'_z} U_z =f_z \qquad\textrm{ in $B_1$.} 
$$
On the other hand by Lemma \ref{lem:estim-G_Ku}, 
  \be\label{eq:K-zvp-1gz}
 \cL_{K_z}  (\vp_1 g_z)+\cL_{K'_z} U_z  (\vp_1 g_z) =\ti f_z \qquad\textrm{ in $B_{1/2}$,} 
\ee
for some function $\ti f_z  $ 	satisfying
\be\label{eq:estim-tif-z}
 \|\ti f_z \|_{\cM_\b}\leq \| f_z \|_{\cM_\b} + C \| g_z \|_{{L_s(\R^N)}} \leq  C,
\ee
where we used \eqref{eq:estim-tif-zmm} for the last inequality.
By \eqref{eq:K-zvp-1gz} and Proposition \ref{prop:bound-Kato-abstract}, there exist $\e_0, C>0$, only depending on $s,N,\b,\s,\a',\k,\s$  and $\g$,  such that if $|K_z-\mu_a|<\e_0$ in $B_2\times B_2\setminus \{x=y\}$,  we get 
$$
\|g_z-(g_z)_{B_r}\|_{L^2(B_r)}= \|g-(g)_{B_r(z)}\|_{L^2(B_r(z))} \leq C r^{N/2+\g} \qquad\textrm{for every $r>0.$}
$$
It then follows, from \cite[ Lemma 3.1]{Fall-Reg}, that
$$ 
 \|g-g(z)\|_{L^2(B_r(z))} \leq C r^{N/2+\g}  \qquad\textrm{for every $z\in B_1$ and  $r\in (0,1).$}
 $$
 This implies that $\|g\|_{C^{\g}(B_{1/4})}\leq C$.   The proof is thus finished.
 \end{proof}
By  scaling and covering, we have the
  \begin{corollary}\label{cor:Holder-cont-coeff}
Let  $s\in (0,1)$, $N\geq 1$,   $\b\in [0,2s)$, $\s\in(s,1]$, $\a'\geq 0$, with  $\a'+\s\in (0,2s)$,  $\k>0$ and $\g\in (0,1)\cap(0,\s]\cap (0,2s-\b]$. 
Consider $K\in \ti \scrK_0^s(\k,0, Q_\infty)$ and  $K'$ satisfies \eqref{eq:K'-Kernel-satisf}.   
 Let  $g\in H^s(B_2)\cap {L_s(\R^N)}$, $U\in C^{0,\s}_{loc}(\R^N)\cap   L_{(\a'+2s)/2}(\R^N)$ and $f\in \cM_\b $ satisfy 
 \be\label{eq:clK-g-corrol}
 \cL_K  g+\cL_{K'} U =f \qquad\textrm{ in $B_2$.} 
\ee 
Then there exists   $C>0$,  only  depending   on $N,s,\a',\b, \k$ and $\g$, such that 
$$
\|g\|_{C^{0,\g}(B_1)}\leq C (\|g\|_{L^2(B_2)}+\|g\|_{{L_s(\R^N)}}+ [U ]_{C^{0,\s}(\R^N)}+  \|f\|_{\cM_\b}).
$$
 \end{corollary}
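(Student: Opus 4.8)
The plan is to deduce Corollary \ref{cor:Holder-cont-coeff} from Corollary \ref{cor:C-gamm-near-flat} by a rescaling-and-covering argument, the underlying point being that a kernel in $\ti\scrK_0^s(\k,0,Q_\infty)$, once dilated to a sufficiently small scale about a point $z$, becomes $\e_0$-close to the frozen translation-invariant kernel $\mu_{a_z}$ with $a_z(\th):=\cA_K(z,0,\th)$. After normalizing so that $\|g\|_{L^2(B_2)}+\|g\|_{{L_s(\R^N)}}+[U]_{C^{0,\s}(\R^N)}+\|f\|_{\cM_\b}\le 1$, I would fix $z\in B_1$ and $\rho\in(0,1)$ and set $g_\rho(x):=g(z+\rho x)$, $U_\rho(x):=U(z+\rho x)$, $f_\rho(x):=\rho^{2s}f(z+\rho x)$, $K_\rho(x,y):=\rho^{N+2s}K(z+\rho x,z+\rho y)$, and likewise for $K'$; then $\cL_{K_\rho}g_\rho+\cL_{K'_\rho}U_\rho=f_\rho$ in $B_{1/\rho}$. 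All the relevant quantities transform controllably: $\cA_{K_\rho}(x,r,\th)=\cA_K(z+\rho x,\rho r,\th)$, so $K_\rho$ again lies in $\ti\scrK_0^s(\k,0,Q_\infty)$ (as recorded just before the corollary); $K'_\rho$ satisfies \eqref{eq:K'-Kernel-satisf} with $\k$ replaced by a dimensional multiple since $\rho<1$; $[U_\rho]_{C^{0,\s}(\R^N)}=\rho^{\s}[U]_{C^{0,\s}(\R^N)}$ and $\|f_\rho\|_{\cM_\b}\le\rho^{2s-\b}\|f\|_{\cM_\b}$, both of which tend to $0$ as $\rho\to0$ because $\s>0$ and $2s>\b$; and $\|g_\rho\|_{L^2(B_2)}$, $\|g_\rho\|_{{L_s(\R^N)}}$ are bounded by a constant depending on $\rho$ (and on $z$) alone.

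Next I would freeze the coefficient. Since $(x,r)\mapsto\cA_K(x,r,\th)$ is continuous on $Q_\infty$ and $\cA_{o,K}(z,0,\th)=0$, one has $\cA_{K_\rho}(x,r,\th)=\cA_K(z+\rho x,\rho r,\th)\to\cA_K(z,0,\th)=:a_z(\th)$ as $\rho\to0$, the function $a_z$ being even in $\th$ by condition $(iv)$ of Definition \ref{def:Kernel-not-reg-the} and satisfying $\k\le a_z\le\k^{-1}$ by \eqref{eq:Kernel-satisf}$(ii)$--$(ii')$; thus $a_z$ satisfies \eqref{eq:def-a-anisotropi}. The delicate step is to upgrade this to convergence that is uniform on $B_2\times B_2$ (equivalently, uniform in $\th$), which I would extract from the uniform-in-$\th$ continuity encoded in the bound $\|\cA_K\|_{C^{0,0}_{1,2}(Q_\infty)}\le\k^{-1}$: then there is $\rho_0>0$ such that $|K_{\rho_0}-\mu_{a_z}|<\e_0\,\mu_1(x,y)$ on $B_2\times B_2\setminus\{x=y\}$, where $\e_0$ is the threshold provided by Corollary \ref{cor:C-gamm-near-flat}. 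One technical point must be dealt with here: a kernel in $\ti\scrK_0^s$ need only be elliptic near the diagonal on $B_\d\times B_\d$ and may change sign in the far field, so $K_{\rho_0}$ does not automatically satisfy the global two-sided bound \eqref{eq:K-Kernel-satisf}; I would handle this either by a routine extension of Corollary \ref{cor:C-gamm-near-flat} allowing ellipticity only near the diagonal (the far-field pairs then contribute only tail terms controlled by $\|g\|_{{L_s(\R^N)}}$, in the spirit of Lemma \ref{lem:estim-G_Ku}), or equivalently by localizing $g_{\rho_0}$ via a cutoff $\vp_R$ and moving the resulting potential term $G_{K_{\rho_0},g_{\rho_0},R}\in L^\infty\subset\cM_\b$ to the right-hand side while replacing $K_{\rho_0}$ outside a ball by $\mu_1$.

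With these reductions in place, the hypotheses of Corollary \ref{cor:C-gamm-near-flat} hold for the rescaled (and localized) equation with the frozen kernel $\mu_{a_z}$, so $\|g_{\rho_0}\|_{C^{0,\g}(B_{1/4})}\le C(N,s,\b,\s,\a',\k,\g)$; undoing the dilation gives $\|g\|_{C^{0,\g}(B_{\rho_0/4}(z))}\le C$, the Hölder seminorm absorbing the factor $\rho_0^{-\g}$. I would then cover $\ov B_1$ by finitely many balls $B_{\rho_0/4}(z_i)$ --- their number being controlled once $\rho_0$ is universal --- and sum the local estimates to obtain the asserted bound with $C$ depending only on $N,s,\a',\b,\k$ and $\g$. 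I expect the main obstacle to be exactly the bookkeeping that keeps $\rho_0$, and hence the covering number and the final constant, depending only on the structural parameters: this rests on the uniform-in-$\th$ modulus of continuity of $\cA_K$ built into the definition of $\ti\scrK_0^s$, together with the careful tracking, under rescaling, of the tail contributions (the $\|g\|_{{L_s(\R^N)}}$-norm and the far-field part of $K$). If one prefers to avoid the uniformity issue altogether, the same conclusion follows from a direct contradiction-compactness argument paralleling the proof of Proposition \ref{prop:bound-Kato-abstract}, in which Lemma \ref{lem:Ds-a-n--to-La} only requires pointwise convergence of the rescaled kernels, which $\ti\scrK_0^s$ supplies for free.
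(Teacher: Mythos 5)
Your proposal is correct and follows essentially the same line as the paper's proof: rescale about each $z\in B_1$, freeze the coefficient at $(z,0)$ to obtain the even elliptic density $a_z(\th)=\cA_K(z,0,\th)$ (even by Definition \ref{def:Kernel-not-reg-the}$(iv)$), shrink the dilation scale until the rescaled kernel is within $\e_0\mu_1$ of $\mu_{a_z}$ on $B_2\times B_2$, apply Corollary \ref{cor:C-gamm-near-flat}, and conclude by covering $\ov B_1$. The digression about far-field ellipticity is unnecessary here, since membership in $\ti\scrK^s_0(\k,0,Q_\infty)$ is read as carrying the global two-sided bound \eqref{eq:K-Kernel-satisf} --- precisely what the paper uses when it asserts that the rescaled kernel $K_\d$ satisfies \eqref{eq:K-Kernel-satisf}; the rest of your argument, including the compactness covering and the reliance on the uniform-in-$\th$ continuity of $\cA_K$ near the diagonal, mirrors the paper's.
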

 \begin{proof}
Pick $x_0\in B_{3/2}$.   By   the continuity of $\cA_K(\cdot,\cdot,\th)$ (uniformly with respect to $\th$),  for every $\e>0$ there exists $\d=\d_{x_0,\e}\in (0,1/100)$ such that,  for every $x\in B_{4\d}(x_0)$,  $ r\in (0, 4\d)$ and $\th\in S^{N-1}$, we have 
%
\begin{align*}
 \left| K(x,x+r\th )-{\cA}_{K}(x_0,0,\th)  r^{-N-2s} \right |\leq \e r^{-N-2s} .
\end{align*}
Therefore,   for every $x\in B_{4\d}(x_0)$ and   $0<|z|<4\d$,
\begin{align*}
 \left| K(x,x+z)- {\cA}_{K}(x_0,0,z/|z|)   |z|^{-N-2s} \right |\leq  \e   |z|^{-N-2s} 
\end{align*}
and thus,   for every $x,y\in B_{2\d}(x_0)$, with $x\not=y$,
\begin{align}\label{eq:K-close-to-mu-a}
 \left| K(x,y)-  \mu_{ a}(x,y)  \right |\leq  \e   \mu_{ 1}(x,y),
\end{align}
where $  a(\th):=   {\cA}_{K}(x_0,0,\th)  $. By Definition \ref{def:Kernel-not-reg-the},  ${\cA}_{o,K}(x_0,0,\th)=0$ and thus  $  a$  satisfies \eqref{eq:a-satisf-elliptic}.
We now let  $K_{\d}(x,y)=\d^{N+2s}K( \d x+ x_0,\d y+ x_0)$ and  $K_{\d}'(x,y)=\d^{N+2s}K'( \d x+ x_0,\d y+ x_0)$, which satisfy \eqref{eq:K-Kernel-satisf} and \eqref{eq:K'-Kernel-satisf}, respectively. 

For $x\in B_{2}$, we define $g_\d(x)=g(\d x+ x_0)$, $U_\d(x)=U(\d x+ x_0)$ and  $f_\d(x)=\d^{2s}f(\d x+ x_0)$.  Since $\d\in (0,1/16)$,  by a change of variable in \eqref{eq:clK-g-corrol}, we get
\be \label{eq:u-delta-satisf-final}
\cL_{K_{\d}} g_\d+\cL_{K'_\d} U_\d = f_\d \qquad\textrm{ in $B_{8}$}.
\ee
On the other hand  \eqref{eq:K-close-to-mu-a} becomes
\begin{align*}
 \left| K_\d(x,y)-  \mu_{ a}(x,y)  \right |\leq  \e   \mu_{ 1}(x,y) \qquad\textrm{ for $x\not=y\in B_2$.}
\end{align*}
From this and \eqref{eq:u-delta-satisf-final}, then  provided $\e>0$ small,    by  Corollary \ref{cor:C-gamm-near-flat} and a change of variable,   we get
$$
\|g\|_{C^{\a}(B_{\d_{x_0,\e}}(x_0))}   \leq   C(x_0)  \left(    \|  g\|_{L^2(B_{2})}+   \|g\|_{{L_s(\R^N)}} +  \|f\|_{\cM_\b}+[U]_{C^{0,\s}(\R^N)} \right) ,
$$
where $C(x_0)$ is a constant, only depending on $N,s,c_0,\d_{x_0},\k,\t,\a,\a',\s,\g$ and $ x_0$.  Next, we cover $\ov B_{1}$ with  a finite number of balls $B_{\frac{1}{2}\d_{x_i,\e}}(x_i)\subset\subset B_{3/2}$, for $i=1,\dots,n$, with $x_i\in \ov B_{1}$.   It then follows that  
$$
\|g\|_{C^{\a}(B_1)} \leq   C'  \left(    \|  g\|_{L^2(B_{2})}+   \|g\|_{{L_s(\R^N)}} +  \|f\|_{\cM_\b}+[U]_{C^{0,\s}(\R^N)} \right) ,
$$
\end{proof}
We have  the following generalization.
  \begin{corollary} \label{cor:Holder-many-K'}
Let  $s\in (0,1)$, $\b\in [0,2s)$,   $\s_i\in(s,1]$. Let   $\k>0$ and 
$$
\g\in (0,1)\cap(0,\min_{1\leq i\leq \ell }\s_i]\cap (0,2s-\b].
$$
Consider $K\in\ti  \scrK_0^s(\k,0,Q_\infty)$ and   $K'_i$ satisfying \eqref{eq:K'-Kernel-satisf}, for $i=1,\dots,\ell$, and for some, $\a_i'\geq 0$, with  $\a_i'+\s_i\in (0,2s)$.
 Let  $g\in H^s (B_2)\cap L_s(\R^N)$, $U_i\in C^{0,\s_i}_{loc}(\R^N)\cap  L_{(\a'+2s)/2}(\R^N)$ and $ f\in \cM_\b  $ satisfy 
 \be
 \cL_K  g+\sum_{i=1}^\ell \cL_{K_i'} U_i  =f \qquad\textrm{ in $B_2$.} 
\ee 
Then,   there exists $C>0$, only  depending  on $s,N,\b, \a_i,\s_i , \k,\ell$ and $\g$, such that 
$$
\|g\|_{C^{0,\g}(B_1)}\leq C ( \|  g\|_{L^2(B_{2})}+   \|g\|_{{L_s(\R^N)}}+\sum_{i=1}^\ell   [U_i ]_{C^{0,\s_i}(\R^N)}+ \|f\|_{\cM_\b}).
$$
 \end{corollary}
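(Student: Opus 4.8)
The plan is to bootstrap the three preceding results from a single pair $(K',U)$ to a finite family $(K_i',U_i)_{1\leq i\leq\ell}$. Concretely, I would first prove the direct analogue of Proposition~\ref{prop:bound-Kato-abstract} in which $\cL_{K'}U$ is replaced by $\sum_{i=1}^{\ell}\cL_{K_i'}U_i$, under the hypotheses $\a_i'+\s_i\in(0,2s)$ for each $i$ and $\g\in(0,1)\cap(0,\min_i\s_i]\cap(0,2s-\b]$; then deduce the multi-term version of Corollary~\ref{cor:C-gamm-near-flat}; and finally repeat the localization-and-covering argument of Corollary~\ref{cor:Holder-cont-coeff} verbatim.

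For the first step I would run the same blow-up/compactness scheme as in the proof of Proposition~\ref{prop:bound-Kato-abstract}. Arguing by contradiction, the rescaled functions $w_n$ now solve $\cL_{\ov K_n}w_n+r_n^{-\g}\Theta_n(r_n)^{-1/2}\sum_{i=1}^{\ell}\cL_{\ov K_{i,n}'}\ov U_{i,n}=r_n^{-\g}\Theta_n(r_n)^{-1/2}\ov f_n$ in $B_{2/r_n}$. The point is that the estimates \eqref{eq:estim-1}--\eqref{eq:estim-2} hold for each summand separately, with $\s$ replaced by $\s_i$ --- using $\s_i>s$ for the local energy contribution and $\a_i'+\s_i<2s$ for the tail --- so each term is bounded by $C(M)\,r_n^{\s_i}$; since $\g\leq\min_i\s_i$ and $\Theta_n(r_n)\to\infty$, the sum of the $\ell$ rescaled perturbation terms still tends to $0$. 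The Caccioppoli bound (Lemma~\ref{lem:from-caciopp-ok}), applied for a fixed index $i$ and then summed over $i$, again yields uniform boundedness of $w_n$ in $H^s_{loc}(\R^N)$, and Lemma~\ref{lem:convergence-very-weak} together with Lemma~\ref{lem:Ds-a-n--to-La} and Lemma~\ref{lem:Liouville} forces the limit $w$ to be constant, contradicting \eqref{eq:w-nonzero-eps-pp}. The passage to the multi-term Corollary~\ref{cor:C-gamm-near-flat} is then immediate: translate by $z\in B_{1/2}$, cut off with $\vp_1$ via Lemma~\ref{lem:estim-G_Ku} (which only alters the right-hand side $f$ into some $\ti f_z\in\cM_\b$ with controlled norm, leaving all the $U_i$-terms intact), and apply the multi-term Proposition to get the $L^2$-decay $\|g-(g)_{B_r(z)}\|_{L^2(B_r(z))}\leq Cr^{N/2+\g}$ and hence the $C^{0,\g}$ bound under the near-flatness hypothesis.

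Finally, exactly as in the proof of Corollary~\ref{cor:Holder-cont-coeff}, for each $x_0\in\ov B_1$ the continuity of $\cA_K(\cdot,\cdot,\th)$, uniform in $\th$, together with $\cA_{o,K}(x_0,0,\th)=0$ (which holds since $K\in\ti\scrK_0^s(\k,0,Q_\infty)$), yields for small $\e>0$ a radius $\d=\d_{x_0,\e}$ such that $|K-\mu_a|<\e\,\mu_1$ on $B_{2\d}(x_0)\times B_{2\d}(x_0)$ with $a(\th):=\cA_K(x_0,0,\th)$ satisfying \eqref{eq:def-a-anisotropi}. Rescaling $x\mapsto\d x+x_0$ puts the equation into the same form on a larger ball, with $K_{i,\d}'$ still obeying \eqref{eq:K'-Kernel-satisf} and $[U_{i,\d}]_{C^{0,\s_i}(\R^N)}\leq\d^{\s_i}[U_i]_{C^{0,\s_i}(\R^N)}$, so the multi-term Corollary~\ref{cor:C-gamm-near-flat} gives a $C^{0,\g}$ bound near $x_0$; covering $\ov B_1$ by finitely many such balls completes the argument. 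I expect the only genuine difficulty to be the bookkeeping in the first step --- checking that the Caccioppoli inequality and the tail estimates sum over $i=1,\dots,\ell$ with constants depending only on $\ell$ and the data, and that $\g\leq\min_i\s_i$ is precisely what makes every one of the $\ell$ perturbation terms vanish in the blow-up limit. Once this is verified, the remaining two steps are purely formal repetitions of the single-term arguments.
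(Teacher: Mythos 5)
Your proposal is correct and is precisely the argument the paper implicitly relies on: the corollary is stated without proof after Corollary~\ref{cor:Holder-cont-coeff} precisely because it follows by repeating Proposition~\ref{prop:bound-Kato-abstract}, Corollary~\ref{cor:C-gamm-near-flat} and Corollary~\ref{cor:Holder-cont-coeff} with the single perturbation term replaced by the sum, estimating each $\cL_{K_i'}U_i$ separately via the analogues of~\eqref{eq:estim-1}--\eqref{eq:estim-2} and using $\g\leq\min_i\s_i$ to kill each rescaled term in the blow-up limit. You have correctly identified both the structure of the reduction and the one point that must be checked (that the Caccioppoli and tail bounds sum over $i$ with constants depending only on $\ell$ and the data).
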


\section{Gradient estimates} \label{s:GradEstim}
In this section, we consider the fractional parameter $s\in (1/2,1)$ and we prove H\"older estimates of $\n u$.
For $g\in L^2_{loc}(\R^N)$ and     $r>0$,  we define
\be \label{eq:def:Pru-x}
 \textbf{P}_{r,g}(x)=g_{B_r}+ T^{r,g}\cdot x= g_{B_r}+ \sum_{i=1}^N T^{r,g}_i x_i,
\ee
where
\be \label{eq:def-Tui}
T^{r,g}_i=\frac{\la g, x_i\ra_{L^2(B_r)}}{\|x_i\|_{L ^2(B_r)}^2}.
\ee
Note that  $\textbf{P}_{r,g} $ is the  $L^2(B_r)$-projection of $g$ on the space of affine functions.\\
%
In view of Corollary  \ref{cor:Holder-cont-coeff}, we know that the  solutions $u$ to $\cL_K u=f$ in $B_2$ are of class $C^{1-\varrho}(B_1)$ for every small $\varrho>0$, provided $K\in \ti  \scrK_0^s(\k,0, Q_\infty)$   and  $f\in \cM_\b$, with   $\b\in [0,2s)$. In particular $|T^{r,u-u(0)}|\leq C  r^{-\varrho}$.  The   result  below improves this to H\"older regularity estimates of the gradient of $u$ when $2s-\b>1$ and $K\in \ti  \scrK_0^s(\k,\a, Q_\infty)$, with  $\a>0$. 
%
 \begin{proposition} \label{prop:bound-wth-corrector}
Let $N\geq 1$,   $s\in (1/2,1)$, $\k,c_0>0$. Consider $\a\in (0,2s-1)$,  $\b\in [0,2s)$,  $\varrho\in [0,1)$ such that 
$$
 \g:= \min (1-\varrho+\a, 2s-\b) >1.
$$
 Then there exists  $C=C(N,s,\a,\b,\k,c_0,\varrho)  >0$ such that if:
\begin{itemize}
\item  $K\in \ti  \scrK_0^s(\k,\a, Q_\infty)$,  
\item   $g\in H^s(B_2)\cap L^{\infty}(\R^N)$  and $f\in  \cM_\b$ with 
$$
\|g\|_{L^\infty(\R^N)}+  \|f\|_{\cM_\b}\leq 1, 
$$
$$
 | T^{r,g}| \leq c_0 r^{-\varrho}\qquad\textrm{ for all $r>0$}
$$
  are such that  
$$
 \cL_K  g =f \qquad\textrm{ in $B_2$}, 
$$
\end{itemize}  
 then     we have 
$$
 \sup_{r>0} r^{-\g}     \left\| g-  \textrm{\em \textbf{P}}_{r,g}\right\|_{L^\infty(B_r)}\leq  C  .
$$
\end{proposition}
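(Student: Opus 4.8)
\emph{Proof sketch.}
The plan is to argue by contradiction and blow-up, following the scheme of Proposition \ref{prop:bound-Kato-abstract}, the new feature being that at each scale we subtract the full $L^2(B_r)$-projection $\textbf{P}_{r,g}$ onto affine functions (see \eqref{eq:def:Pru-x}--\eqref{eq:def-Tui}) rather than a constant. Suppose the statement fails; then there are sequences $K_n\in\ti\scrK_0^s(\k,\alpha,Q_\infty)$, $g_n$ and $f_n$ satisfying all the hypotheses with $\|g_n\|_{L^\infty(\R^N)}+\|f_n\|_{\cM_\beta}\le 1$, $|T^{r,g_n}|\le c_0 r^{-\varrho}$, $\cL_{K_n}g_n=f_n$ in $B_2$, but $\sup_{r>0}r^{-\gamma}\|g_n-\textbf{P}_{r,g_n}\|_{L^\infty(B_r)}>n$. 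As a preliminary, Corollary \ref{cor:Holder-cont-coeff} (with no corrector) gives $\|g_n\|_{C^{0,\gamma_0}(B_1)}\le C$ for every $\gamma_0<1$ (here $2s-\beta>1$), so that in particular $\|g_n-\textbf{P}_{r,g_n}\|_{L^\infty(B_r)}\le Cr^{\gamma_0}$ for $r\le 1$. Introduce the nonincreasing quantity $\Theta_n(\ov r):=\sup_{r\ge\ov r}r^{-\gamma}\|g_n-\textbf{P}_{r,g_n}\|_{L^\infty(B_r)}$; using $\varrho<1\le\gamma$ and the bound just recalled, $\Theta_n$ is finite and, exactly as in Proposition \ref{prop:bound-Kato-abstract}, there is a near-optimal radius $r_n\to 0$ with $\Theta_n(r_n)\ge n/4$ and $r_n^{-\gamma}\|g_n-\textbf{P}_{r_n,g_n}\|_{L^\infty(B_{r_n})}\ge\tfrac12\Theta_n(r_n)$.

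Set $c_n:=(\Theta_n(r_n)r_n^\gamma)^{-1}$, $\bar g_n(x):=g_n(r_nx)$, $\bar K_n(x,y):=r_n^{N+2s}K_n(r_nx,r_ny)$, $\bar f_n(x):=r_n^{2s}f_n(r_nx)$, $b_n:=r_n\,T^{r_n,g_n}$ (so $|b_n|\le c_0 r_n^{1-\varrho}$), and
\[
w_n(x):=c_n\big(\bar g_n(x)-\textbf{P}_{r_n,g_n}(r_nx)\big).
\]
By construction $\|w_n\|_{L^\infty(B_1)}\in[\tfrac12,1]$, the $L^2(B_1)$-projection of $w_n$ onto affine functions vanishes, and—by the monotonicity of $\Theta_n$ and a dyadic telescoping as in \cite{Fall-Reg}—one gets $\|w_n\|_{L^\infty(B_R)}\le CR^\gamma$ for $R\ge 1$; since $\gamma\le 2s-\beta<2s$, $(w_n)$ is bounded in $L_s(\R^N)$. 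Constants are killed by $\cL_{\bar K_n}$, and, after splitting off the part of $\bar K_n$ that is even under $y\mapsto 2x-y$, affine functions are killed too; hence $w_n$ solves
\[
\cL_{\bar K_n}w_n+\cL_{K'_n}U_n=c_n\bar f_n\qquad\textrm{in }B_{2/r_n},
\]
where $U_n(x):=c_n\,b_n\cdot x$ and $K'_n$ is the part of $\bar K_n$ odd under $y\mapsto 2x-y$. Because $K_n\in\ti\scrK_0^s(\k,\alpha,Q_\infty)$ (Definition \ref{def:Kernel-not-reg-the}) we have $\cA_{o,K_n}(\cdot,0,\cdot)=0$ and $\|\cA_{o,K_n}\|_{\cC^0_\alpha}\le\k^{-1}$, so $|K'_n(x,y)|\le\k^{-1}r_n^{\alpha}|x-y|^{\alpha}\mu_1(x,y)\le\k^{-1}(|x|+|y|+1)^{\alpha}\mu_1(x,y)$ (using $r_n\le 1$), i.e. $K'_n$ satisfies \eqref{eq:K'-Kernel-satisf} with $\alpha'=\alpha$. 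The decisive point is that $\alpha'+\sigma=\alpha+1<2s$—this is precisely the hypothesis $\alpha<2s-1$—so Proposition \ref{prop:bound-Kato-abstract} and Corollaries \ref{cor:Holder-cont-coeff}--\ref{cor:Holder-many-K'} apply with $\sigma=1$ (admissible since $s<1$) to the linear corrector $U_n$, which lies in $C^{0,1}_{loc}(\R^N)\cap L_{(\alpha+2s)/2}(\R^N)$ (the latter because $(\alpha+2s)/2>1/2$).

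From here the argument is parallel to Proposition \ref{prop:bound-Kato-abstract}: cut off with Lemma \ref{lem:estim-G_Ku}, apply the Caccioppoli estimate of Lemma \ref{lem:from-caciopp-ok} together with Corollary \ref{cor:Holder-cont-coeff} to conclude that $(w_n)$ is bounded in $C^{0,\gamma'}_{loc}(\R^N)$ for any $\gamma'<\min(1,2s-\beta)$, the scaled data being controlled since $\|c_n\bar f_n\|_{\cM_\beta}\le\Theta_n(r_n)^{-1}r_n^{2s-\beta-\gamma}\to0$ (as $\gamma\le 2s-\beta$), the corrector contribution staying bounded because the identity $\gamma=\min(1-\varrho+\alpha,2s-\beta)$ balances the normalization $c_n$, the decay $|b_n|\le c_0 r_n^{1-\varrho}$ and the vanishing $|\cA_{o,K_n}(x,r,\theta)|\le\k^{-1}r^{\alpha}$ of the odd part, and the tails being controlled by the $L_s$-growth of $w_n$. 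Passing to a subsequence, $w_n\to w$ in $C^{0,\gamma''}_{loc}(\R^N)\cap L_s(\R^N)$. Writing $a_n:=\cA_{K_n}(0,0,\cdot)$—which is even and satisfies \eqref{eq:def-a-anisotropi} by \eqref{eq:Kernel-satisf}—the $C^{0,\alpha}$-regularity of $\cA_{K_n}$ yields $|\bar K_n-\mu_{a_n}|\le\lambda_n\mu_1$ with $\lambda_n\to0$ pointwise, while $\cL_{K'_n}U_n\to0$ and $c_n\bar f_n\to0$ in $\calD'$; hence Lemma \ref{lem:Ds-a-n--to-La} gives $\cL_{\mu_b}w=0$ in $\R^N$ for the weak-$*$ limit $b$ of a subsequence of $(a_n)$. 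Since $\|w\|_{L^2(B_R)}\le CR^{N/2+\gamma}$ with $\gamma<2s$, Lemma \ref{lem:Liouville} forces $w$ to be affine; but the $L^2(B_1)$-affine projection of $w$ is $0$, so $w\equiv0$, contradicting $\|w\|_{L^\infty(B_1)}\ge\tfrac12$.

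The hard part is the uniform control of the rescaled corrector $\cL_{K'_n}U_n$: it must at once (a) not spoil the uniform compactness bounds for $(w_n)$ and (b) vanish in the blow-up limit. Both rest on the delicate matching of the normalization factor $c_n$, the decay $|b_n|\le c_0 r_n^{1-\varrho}$ forced by the hypothesis $|T^{r,g}|\le c_0 r^{-\varrho}$, and the gain $|\cA_{o,K_n}(x,r,\theta)|\lesssim r^{\alpha}$ of the odd part of the kernel, the exponents fitting together precisely because $\gamma=\min(1-\varrho+\alpha,2s-\beta)$. Closely related is the structural observation that $\cL_{K'}$ applied to a linear function is meaningful (in the weak sense we use) only because $\alpha'+1<2s$, which is exactly why the restriction $\alpha<2s-1$ appears in the statement.
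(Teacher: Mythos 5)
Your blow-up scheme (contradiction, near-optimal radius $r_n$, normalization $w_n$ with vanishing $L^2(B_1)$-affine projection, growth bound $\|w_n\|_{L^\infty(B_R)}\leq CR^\gamma$, and the Liouville argument at the end) is exactly the paper's, and the arithmetic balancing $c_n\cdot|b_n|\cdot r_n^\a\lesssim\Theta_n(r_n)^{-1}$ is also the right computation. The gap is in the rewriting of $\cL_{\bar K_n}\textbf{P}_{r_n,g_n}$. You split $\bar K_n$ into parts even and odd under $y\mapsto 2x-y$, set $K_n'$ equal to the odd part, and claim $\cL_{\bar K_n}w_n+\cL_{K_n'}U_n=c_n\bar f_n$. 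This cannot be run through Lemma \ref{lem:from-caciopp-ok}, Lemma \ref{lem:convergence-very-weak}, or Corollary \ref{cor:Holder-many-K'}: the odd part in $\th$ is \emph{not} a symmetric kernel. Indeed $K_n'(x,y)=\cA_{o,\bar K_n}\bigl(x,|x-y|,\tfrac{y-x}{|y-x|}\bigr)|x-y|^{-N-2s}$, and symmetry of $\bar K_n$ relates $\cA_{\bar K_n}(x,r,\th)$ to $\cA_{\bar K_n}(x+r\th,r,-\th)$, a constraint which the odd-in-$\th$ component does not inherit; so $K_n'(x,y)\neq K_n'(y,x)$ in general and \eqref{eq:K'-Kernel-satisf}$(i)$ fails. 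Worse, if you try instead to read $\cL_{K_n'}U_n$ as the classical integral $\cO^s_{\cA_{o,\bar K_n},U_n}$ applied to the affine $U_n$, it diverges precisely in the regime $\a<2s-1$: near $r=0$ the integrand is of size $r\cdot(r_nr)^{\a}\cdot r^{-1-2s}=r_n^{\a}r^{\a-2s}$, and $\a-2s<-1$. This is exactly the reason the $\cE^s/\cO^s$ decomposition of \eqref{eq:Operator-splitting} is used by the paper only in the Schauder Proposition \ref{prop:bound-wth-corrector-nab}, where $\cA_{o,K}\in\cC^0_\tau$ with $\tau=\a+(2s-1)_+>2s-1$, not in the present proposition where $\tau=\a<2s-1$.

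The paper avoids both obstructions by a different decomposition: with $a_n(\th):=\cA_{\bar K_n}(0,0,\th)$ (even and uniformly elliptic by \eqref{eq:def-a-anisotropi}), set $\ov K_n':=r_n^{-\a}\bigl(\bar K_n-\mu_{a_n}\bigr)$. This is symmetric because it is a difference of symmetric kernels, satisfies $|\ov K_n'(x,y)|\leq C(|x|+|y|)^\a\mu_1(x,y)$ by the $C^{0,\a}_{1,2}$-regularity of $\cA_{K_n}$ in $(x,r)$, and the affine part is killed by the variational identity $\cL_{\mu_{a_n}}x_i=0$ (a consequence of translation invariance and evenness of $a_n$, valid in the weak sense even though the classical PV integral of an affine function is not absolutely convergent). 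One then rewrites the corrector as $\Theta_n(r_n)^{-1}\sum_i\ov T_n^i\,\cL_{\ov K_n'}x_i$ with $\ov T_n^i=r_n^{1+\a-\g}T^{r_n,g_n}_i$ bounded, which is an honest $\cL_{K'}U$-term of the form covered by Corollary \ref{cor:Holder-many-K'} with $\a'=\a$, $\s=1$, $\a'+\s=1+\a<2s$. So your identification of $\a<2s-1$ as the decisive hypothesis is correct, but the mechanism is the integrability of the symmetric bilinear form (and of $x_i\in L_{(\a+2s)/2}$), not of the odd classical integral. To repair your argument you should replace the odd/even split of $\bar K_n$ by the subtraction of the frozen-coefficient kernel $\mu_{a_n}$.
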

\begin{proof}
Suppose on the contrary that the assertion in the proposition does not hold. Then as in  the proof of  Proposition \ref{prop:bound-Kato-abstract},  for all $n\geq 2$, there exist     
\begin{itemize}
\item $r_n>0$,    $K_n\in \ti  \scrK_0^s(\k,\a,Q_\infty)$,  
\item   $g_n\in H^s(B_2)\cap L^{\infty}(\R^N)$  and $f_n\in  \cM_\b$ satisfying 
\be \label{eq:g-n-T_n-satis}
\|g_n\|_{L^\infty(\R^N)}+   \|f_n\|_{\cM_\b}\leq 1, \qquad  | T^{r,g_n}| \leq c_0 r^{-\varrho} \quad\textrm{ for all $r\in (0,\infty)$,}
\ee
 \be\label{eq:g-nsolves-Grad-estim}
 \cL_{K_n}  g_n  =f_n \qquad\textrm{ in $B_2$},
\ee
\item a nonincreasing  function  $\Theta_n: (0,\infty)\to [0,\infty)$ satisfying
\be \label{eq:sup-r-ok-Morrey-hi-Int}
\Theta_n( r)\geq  r^{-\g}   \| g_n- \textbf{P}_{r, g_n} \|_{L^\infty(B_{r})}   \qquad\textrm{ for every $r\in (0,\infty)$ and $n\geq 2$,}
\ee
\end{itemize}  
with the properties  that      $r_n\to 0$ as $n\to\infty$ and 
$$
 r^{-\g}_n   \| g_n- \textbf{P}_{r_n,g_n} \|_{L^\infty(B_{r_n})} \geq \frac{1}{2}\Theta_n(r_n) \geq\frac{n}{4} .
$$
 %
%
 We   define  
$$
 {v}_n(x)=  \Theta_n(r_n)^{-1}  r^{-\g}_n  [g_n(r_n x) - \textbf{P}_{r_n,g_n}(r_n x) ], 
$$
so that 
\be \label{eq:E1}
 \|v_n\|_{L^\infty(B_{1})}\geq \frac{1}{2}.
\ee
In addition, by a change of variable, we get 
\be\label{eq:E2}
 \int_{B_1} v_n(x)  \, dx= \int_{B_1} v_n(x) x_i\, dx=0 \qquad\textrm{ for every $i\in \{1,\dots,N\}$.}
\ee
 Since  $\Theta_n$ is nonincreasing and $\g>1$ then see e.g. \cite{Fall-Reg,Serra-OK}, inequality \eqref{eq:sup-r-ok-Morrey-hi-Int} always implies that 
\be\label{eq:groht-w-n-Morrey-HI-Grad}
 \|v_n\|_{L^\infty(B_{R})}\leq  C   R^{\g}  \qquad\textrm{ for every $R\geq 1$,}
\ee
for some constant $C=C(N,\g)$.\\
From \eqref{eq:g-nsolves-Grad-estim}, we deduce that 
\be \label{eq:eq-K-n-v-n}
 \cL_{\ov K_n} v_n+  \Theta_n(r_n)^{-1}  r^{1-\g}_n  T^{r_n,g_n}\cdot \cL_{\ov K_n} x  =\ov f_n \qquad\textrm{ in $B_{2/r_n}$},
\ee
where  $\ov K_n(x,y):=r_n^{N+2s}K_n(r_n x,r_n y)$ and $\ov f_n(x)=r_n^{2s} f_n(r_n x)$. 
Then, since $\cA_{\ov K_n}(x,r,\th)= {\cA}_{K_n}(r_n x,r_n r,\th)$ and ${\cA}_{ K_n}\in C^{0,\a}(Q_\infty)\times L^\infty(S^{N-1} )$, we get
$$
| {\cA}_{\ov K_n}(x,r,\th)- {\cA}_{ K_n}(0,0,\th)|\leq C \min( r_n^\a (|x|+r)^\a ,1)\qquad\textrm{ for all $x\in \R^N$, $\th\in S^{N-1}$ and $r>0$.}
$$
Moreover, recalling Definition \ref{def:Kernel-not-reg-the},  we have  ${\cA}_{o, K_n}(0,0,\th)=0 $ for all $\th\in S^{N-1}$.
Letting $a_n(\th):= {\cA}_{\ov K_n}(0,0,\th)$ and $ \ov K_n'(x,y):=r_n^{-\a}(\ov K_n(x,y)-\mu_{a_n}(x,y))$,  we immediately see that 
\be\label{eq:oK_n-close-to mua_n}
| \ov K_n(x,y)-\mu_{a_n}(x,y)|\leq C\min( r_n^\a (|x|+|y|)^\a ,1) \mu_1(x,y) 
\ee
and 
\be\label{eq:K-nprimes-Grad-estim}
| \ov K_n'(x,y)|\leq C (|x|+|y|)^\a\mu_1(x,y).
\ee
Since $\cL_{\mu_{a_n} } x_i =0$  on $\R^N$, we can rewrite \eqref{eq:eq-K-n-v-n} as 
\be\label{eq:eq-K-n-v-n-ok}
 \cL_{\ov K_n} v_n+\Theta_n(r_n)^{-1}  \sum_{i=1}^N    \ov T_n^i  \cL_{\ov K'_n} x_i   = r_n^{-\g}  \Theta_n(r_n)^{-1}   \ov f_n  \qquad\textrm{ in $B_{2/r_n}$},
\ee
where (recall \eqref{eq:def:Pru-x}) $\ov T_n^i:=    r^{1+\a -\g }_n T^{r_n,g_n}_i$. Note that $x_i\in L_{(\a+2s)/2}(\R^N)$, provided $\a\in (0, 2s-1)$ and $[x_i]_{C^{0,1}(\R^N)}\leq 1$. Clearly by \eqref{eq:g-n-T_n-satis}, 
\be \label{eq:E10} 
\|\ov f_n\|_{\cM_\b}\leq r_n^{2s-\b} \qquad \textrm{ and } \qquad  |\ov T_n^i|\leq  c_0 r_n^{1+\a-\g-\varrho}\leq c_0 .
\ee
Since $\ov K_n\in \ti  \scrK_0^s(\k,0,Q_\infty)$ and $\Theta_n(r_n)\to \infty$ as $n\to\infty$,  applying Corollary \ref{cor:Holder-many-K'} to \eqref{eq:eq-K-n-v-n-ok} and using \eqref{eq:E10} together with \eqref{eq:K-nprimes-Grad-estim}, we find that $v_n$ is bounded in $C^{1-\d}_{loc}(\R^N)$, for all $\d\in (0,1)$. Hence, provided $\d$ is small,  there exists $v\in C^{s+\d}_{loc}(\R^N)$ such that, up to a subsequence,   $v_n\to v$ in $C^0_{loc}(\R^N)$.  
  Hence by \eqref{eq:groht-w-n-Morrey-HI-Grad}, up to a subsequence,  $v_n$ converges strongly,  in $  {L_s(\R^N)}$,  to   $v\in H^s_{loc}(\R^N)\cap  {L_s(\R^N)}$.   
Moreover, by \eqref{eq:E1},   we deduce that 
\be\label{eq:w-nonzero-eps}
  \|v\|_{L^ \infty(B_1)}  \geq \frac{1}{2} \qquad\textrm{ and }  \qquad \int_{B_1} v(x)  \, dx= \int_{B_1} v(x) x_i\, dx=0 \quad\textrm{ for every $i\in \{1,\dots,N\}$.} 
\ee
In addition, passing to the limit in  \eqref{eq:groht-w-n-Morrey-HI-Grad}, we have 
\be \label{eq:w-WW-zero-mean}
 \|v\|_{L^\infty(B_{R})}  \leq    R^{\g}  \qquad\textrm{ for every $R\geq 1$.}
\ee
We observe that $a_n$ satisfies \eqref{eq:def-a-anisotropi} for all $n$.  By \eqref{eq:oK_n-close-to mua_n}, Lemma \ref{lem:convergence-very-weak} and  Lemma \ref{lem:Ds-a-n--to-La}, we can pass to the limit in \eqref{eq:eq-K-n-v-n-ok}, to  get 
  $\cL_{\mu_b} v= 0 \quad\textrm{ in $\R^N$},$ 
 where $b$ is the weak-star limit of $a_n$.
Now, since $v\in H^s_{loc}(\R^N)$ and satisfies \eqref{eq:w-WW-zero-mean},  by   Lemma \ref{lem:Liouville} we deduce that    $v$ is    an affine function,  because $\g <2s$. This is clearly in contradiction   with       \eqref{eq:w-nonzero-eps}.

\end{proof}
A first consequence of the previous result is the 
%
\begin{corollary}\label{cor:Holder-reg-Global}
Let  $s\in (1/2,1)$, $\b\in[0,2s-1)$,   $N\geq1$ and  $\k>0$. Let   $\a\in (0,2s-1)$ and $\varrho\in [0,\a)$. 
Let   $K\in \ti  \scrK_0^s(\k,\a, Q_\infty)$,    $g\in H^s(B_2)\cap C^{0,1-\varrho}(\R^N) $  and $f\in  \cM_\b$  satisfy
 \be
 \cL_K  g =f \qquad\textrm{ in $B_2$}.
\ee
Then,  there exists $C>0$, only depending  on $s,N,\b,\a,\k,\varrho$, such that 
$$
\| g\|_{C^{1,\min( 1+\a-\varrho ,2s-\b) -1 }(B_{1})}\leq C ( \|g\|_{  C^{0,1-\varrho}(\R^N) }+   \|f\|_{  \cM_\b }).
$$
\end{corollary}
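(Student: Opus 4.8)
The plan is to derive Corollary~\ref{cor:Holder-reg-Global} from Proposition~\ref{prop:bound-wth-corrector} by a routine translation--dilation argument, combined with the pointwise Campanato characterization of $C^{1,\g-1}$ already used inside the proof of that proposition. By the linearity of $\cL_K$ and homogeneity I would first reduce to the normalized case $\|g\|_{C^{0,1-\varrho}(\R^N)}+\|f\|_{\cM_\b}\le 1$, and set $\g:=\min(1+\a-\varrho,2s-\b)$; note that $1<\g<2$, since $0\le\varrho<\a<2s-1<1$ and $0\le\b<2s-1$, so $C^{1,\g-1}$ is a meaningful target. The goal is then to exhibit, for every $z\in B_1$ and every $\rho>0$, an affine function $\ell_{z,\rho}$ with $\|g-\ell_{z,\rho}\|_{L^\infty(B_\rho(z))}\le C\rho^{\g}$; once this growth estimate is available, the $C^{1,\g-1}$ characterization (see \cite{Fall-Reg,Serra-OK}, exactly as invoked in Proposition~\ref{prop:bound-wth-corrector}) yields $\|g\|_{C^{1,\g-1}(B_1)}\le C$, and undoing the normalization gives the stated inequality.

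The first preliminary step is the bound on the affine coefficients $T^{r,\cdot}$ required by Proposition~\ref{prop:bound-wth-corrector}. Since $\textbf{P}_{r,g}$ is the $L^2(B_r)$-orthogonal projection of $g$ onto affine functions, replacing $g$ by $g-g(0)$ does not change its linear part, so $T^{r,g}_i=\la g-g(0),x_i\ra_{L^2(B_r)}/\|x_i\|_{L^2(B_r)}^2$. Using $|g(x)-g(0)|\le [g]_{C^{0,1-\varrho}(\R^N)}|x|^{1-\varrho}\le|x|^{1-\varrho}$ and computing the scaling in $r$ of numerator and denominator, one gets $|T^{r,g}|\le c_N r^{-\varrho}$ for all $r>0$, with $c_N$ depending only on $N$ and $\varrho$; the same computation applies verbatim to any dilate and translate of $g$.

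The main step is the localization. Fixing $z\in B_1$ and dilation factor $\tfrac12$, I would set $g_z(x):=g(\tfrac{x}{2}+z)$, $K_z(x,y):=2^{-(N+2s)}K(\tfrac{x}{2}+z,\tfrac{y}{2}+z)$ and $f_z(x):=2^{-2s}f(\tfrac{x}{2}+z)$. By the dilation--translation invariance of $\ti \scrK_0^s(\k,\a,Q_\infty)$ recorded in Section~\ref{ss:General-nonlocaloperator}, $K_z\in \ti \scrK_0^s(\k,\a,Q_\infty)$; since $z\in B_1$ forces $\tfrac{x}{2}+z\in B_2$ whenever $x\in B_2$, a change of variables in the weak formulation of $\cL_Kg=f$ in $B_2$ gives $\cL_{K_z}g_z=f_z$ in $B_2$. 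Moreover $\|g_z\|_{L^\infty(\R^N)}=\|g\|_{L^\infty(\R^N)}\le1$, $[g_z]_{C^{0,1-\varrho}(\R^N)}=2^{-(1-\varrho)}[g]_{C^{0,1-\varrho}(\R^N)}\le1$ (hence $|T^{r,g_z}|\le c_N r^{-\varrho}$ by the previous step), $g_z\in H^s(B_2)$ because $g\in H^s(B_1(z))\subset H^s(B_2)$, and $\|f_z\|_{\cM_\b}\le C(N,s,\b)\|f\|_{\cM_\b}\le C(N,s,\b)$ by the scaling of the Morrey norm. Thus all hypotheses of Proposition~\ref{prop:bound-wth-corrector} hold for $g_z$, with $c_0:=\max(c_N,C(N,s,\b))$, and the proposition yields $\sup_{r>0}r^{-\g}\|g_z-\textbf{P}_{r,g_z}\|_{L^\infty(B_r)}\le C$. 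Because the affine change $x\mapsto\tfrac{x}{2}+z$ sends affine functions to affine functions, undoing the dilation turns this into the desired estimate $\|g-\ell_{z,\rho}\|_{L^\infty(B_\rho(z))}\le 2^{\g}C\rho^{\g}$ for all $\rho>0$.

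Finally I would feed this growth estimate, uniform in $z\in B_1$, into the Campanato-type lemma to conclude $\|g\|_{C^{1,\g-1}(B_1)}\le C$ and rescale away the normalization. Since Proposition~\ref{prop:bound-wth-corrector} is already established, this corollary presents no real difficulty: the only points deserving care are the invariance of the kernel class under the affine change of variables (already available) and the bookkeeping of the $L^\infty$, $C^{0,1-\varrho}$ and Morrey norms and of the $T^{r,\cdot}$ bound under dilation --- in particular checking that the dilation factor $\tfrac12$ is small enough that the rescaled equation still holds on all of $B_2$.
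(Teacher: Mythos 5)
Your proposal is correct and follows essentially the same route as the paper's own proof: translate to center the point $z\in B_1$, verify the $T^{r,\cdot}$ growth bound from the $C^{0,1-\varrho}$ seminorm, invoke Proposition~\ref{prop:bound-wth-corrector}, and finish with the standard Campanato-type iteration to pass from the $L^\infty$ growth estimate to the $C^{1,\g-1}$ bound. Your extra dilation by the factor $\tfrac12$ is a small but welcome tidying step (the paper translates only, leaving the equation on $B_1$ before citing a proposition stated on $B_2$), but it does not change the substance of the argument.
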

\begin{proof}
Put $A:=    \|g\|_{   C^{0,1-\varrho}(\R^N) } +  \|f\|_{\cM_\b}.$  We define  $ G_z(x):=g(x+z)-g(z)$, for $z\in B_{1}$.  Since $G_z(0)=0$,  we have    $|T^{r,G_z}|\leq C(\varrho,N) r^{-\varrho} [g]_{C^{0,1-\varrho}(\R^N)} \leq C A$, for $r>0$. Obviously, $\cL_{K_z} G_z=f(\cdot +z)$ in $B_1$, where $K_z(x,y)=K(x+z,y+z)$.
We then  apply Proposition \ref{prop:bound-wth-corrector}   to  get  a constant $C>0$, only depending  on $s,N,\b,\a,\k,\varrho$, such that
$$
 \sup_{r>0} r^{-\g}     \left\| G_z-  \textbf{P}_{r, G_z} \right\|_{L^\infty(B_r)}\leq  C  A,
$$
where $\g:=\min ( 1+\a-\varrho ,2s-\b) >1$.   
By a well known iteration argument (see e.g \cite{Serra-OK}), we find that 
$$
|g(x)- g(z)-    T(z)\cdot( x-z) |\leq C A |x-z|^{\g} \qquad\textrm{ for every $x,z\in B_{1/2}$,}
$$
for some $T$, satisfying $\|T\|_{L^\infty(B_1)}\leq C A$. Since $\g>1$,    then $\n u(z)= T(z)$. 
By   a classical extension theorem (see e.g. \cite{Stein}[Page 177], we deduce that 
  $u\in C^{1, \g-1}(\ov {B_{1/2}})$.    Moreover
$$
\|u \|_{C^{1,\g-1}(\ov {B_{1/2}})}\leq C A.
$$
\end{proof}
By a bootstrap argument, we have the following result.
 \begin{theorem}  \label{th:abs-res-Propo}
Let  $s\in (1/2,1)$, $\b\in [0,2s-1)$,  $\a\in (0,2s-1)$ and $\k>0$.  Let  $K\in  \ti \scrK_0^s(\k,\a,Q_\infty)$,     $u\in H^s(B_2)\cap {L_s(\R^N)}$ and $V,  f\in  \cM_\b$   such that  
$$
 \cL_K  u+ Vu  =f \qquad\textrm{ in $B_2$}.
$$
Then     
$$
 \|u\|_{C^{1,\min (\a, 2s-\b-1)}(B_{1})} \leq  C  (  \|u\|_{  L^2(B_2) }+ \|u\|_{{L_s(\R^N)}}+    \|f\|_{  \cM_\b}),
$$
where $C>0$ only depends on $s,N,\k,\a,\b$  and $\|V\|_{\cM_\b}$.
\end{theorem}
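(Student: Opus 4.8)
The plan is to bootstrap Corollary \ref{cor:Holder-reg-Global}, absorbing the zeroth-order term $Vu$ into the right-hand side at each stage. First I would reduce to the case $V\equiv 0$ and a global function. By Corollary \ref{cor:Holder-cont-coeff} applied with $U=0$ and the right-hand side $f-Vu$ (which lies in $\cM_{\b}$ since $\|Vu\|_{\cM_\b}\le \|V\|_{\cM_\b}\|u\|_{L^\infty}$ once we know $u$ is bounded — and boundedness follows first from $\cM_\b\subset\cM_{N/p}$-type embeddings and the De Giorgi--Nash--Moser estimate, cf.\ \cite{DK}, or directly from Corollary \ref{cor:Holder-cont-coeff} with $\g$ close to $\min(1,2s-\b)$), we obtain $u\in C^{0,\g_0}(B_{3/2})$ for some small $\g_0>0$, with the quantitative bound
\[
\|u\|_{C^{0,\g_0}(B_{3/2})}\le C(\|u\|_{L^2(B_2)}+\|u\|_{L_s(\R^N)}+\|f\|_{\cM_\b}).
\]
Then I would replace $u$ by $\vp_{3/2}u$, so that Lemma \ref{lem:estim-G_Ku} gives a new equation $\cL_K(\vp_{3/2}u)=f-Vu+G_{K,u,3/2}$ in $B_{3/4}$, with $G_{K,u,3/2}$ bounded in $L^\infty$ by $C\|u\|_{L_s(\R^N)}$; this makes the right-hand side globally in $\cM_\b$ and the solution globally in $C^{0,\g_0}(\R^N)$ (extending by the cutoff). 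From now on I may work with a compactly supported $u\in C^{0,\g_0}(\R^N)\cap H^s$.

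Next comes the iteration. Set $\varrho_0:=1-\g_0\in[0,1)$, so $u\in C^{0,1-\varrho_0}(\R^N)$. If $\varrho_0<\a$, Corollary \ref{cor:Holder-reg-Global} applied (on a slightly smaller ball, with right-hand side $f-Vu\in\cM_\b$ — note $Vu\in\cM_\b$ because $u\in L^\infty$) yields
\[
u\in C^{1,\min(1+\a-\varrho_0,2s-\b)-1}(B_{r_1}),
\]
which is a strictly better Hölder exponent unless we have already reached the target. Quantitatively the norm is controlled by $\|u\|_{C^{0,1-\varrho_0}(\R^N)}+\|f-Vu\|_{\cM_\b}\le C(\dots)$. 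Relabel the new exponent, re-cut off $u$ via Lemma \ref{lem:estim-G_Ku} to restore a global bound (now in $C^{1,\cdot}(\R^N)$, using part $(ii)$ of that lemma since $K\in\ti\scrK_0^s(\k,\a,Q_\infty)$ and hence $\cA_K\in C^{0,\a}_{1,2}(Q_\infty)$, so $G_{K,u,r}\in C^{0,\a}$ with the appropriate norm bound), and repeat. Each step decreases the "deficit" $\varrho$ by a fixed amount $\a$ (as long as $\varrho$ stays positive), so after finitely many steps $\varrho$ drops to $0$ or below; at that point Corollary \ref{cor:Holder-reg-Global} with $\varrho=0$ (or any $\varrho\in[0,\a)$) gives $u\in C^{1,\min(\a,2s-\b-1)}(B_1)$, which is the claimed regularity and exponent since $\min(1+\a-0,2s-\b)-1=\min(\a,2s-\b-1)$.

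The main obstacle is making the cutoff/rescaling bookkeeping in the bootstrap uniform: at each stage one must (i) verify that after multiplying by $\vp_r$ the new source $G_{K,u,r}$ has the right regularity class — $L^\infty$ in the first step, then $C^{0,\a}$ (hence a fortiori $\cM_\b$, since $\b<1$) in later steps — and (ii) track the constants so that the final estimate still closes with only $\|u\|_{L^2(B_2)}+\|u\|_{L_s(\R^N)}+\|f\|_{\cM_\b}$ on the right, the $Vu$ contributions being reabsorbed via $\|Vu\|_{\cM_\b}\le\|V\|_{\cM_\b}\|u\|_{L^\infty}$ together with the $L^\infty$ bound from the previous stage. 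A minor point to check separately is the very first $L^\infty$ bound on $u$: Corollary \ref{cor:Holder-cont-coeff} already delivers $C^{0,\g}$ for any $\g<\min(1,2s-\b)$, which in particular gives the needed $\|u\|_{L^\infty(B_{3/2})}\le C(\|u\|_{L^2(B_2)}+\|u\|_{L_s(\R^N)}+\|f\|_{\cM_\b})$, closing the circularity with $Vu$. Since $\b<2s-1$ and $\a<2s-1$, the relevant minima are always $>1$ at the gradient stage, so Corollary \ref{cor:Holder-reg-Global} is applicable throughout and the scheme terminates.
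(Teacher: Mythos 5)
Your approach is essentially the same as the paper's: start from a sub-Lipschitz H\"older estimate, then bootstrap through Corollary~\ref{cor:Holder-reg-Global}, using Lemma~\ref{lem:estim-G_Ku} to re-localize at each stage. Two points of divergence from the paper's proof are worth flagging.

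First, the iteration bookkeeping ``decrease the deficit $\varrho$ by a fixed amount $\a$'' misdescribes what Corollary~\ref{cor:Holder-reg-Global} does. That corollary requires $\varrho\in[0,\a)$ as a hypothesis, so it cannot be entered with $\varrho\geq\a$ and ``worked down''; and once its hypothesis is met, it does not shave $\a$ off the deficit---it jumps straight past $C^{0,1}$ to $C^{1,\min(1+\a-\varrho,2s-\b)-1}$. The correct starting point (and what the paper uses, citing \cite{Fall-Reg}) is that since $2s-\b>1$, the de Giorgi--Nash--Moser estimate already gives $u\in C^{0,1-\varrho}(B_1)$ quantitatively for \emph{every} $\varrho\in(0,1)$, so $\varrho$ can be taken below $\a$ from the outset. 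The scheme is then exactly two applications of Corollary~\ref{cor:Holder-reg-Global}: one with small $\varrho<\a$ to reach $C^{0,1}$, and one with $\varrho=0$ to reach the sharp exponent $\min(\a,2s-\b-1)$---not a finite loop of $O(1/\a)$ steps.

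Second, for controlling the commutator term $G_{K,u,R}$ you invoke Lemma~\ref{lem:estim-G_Ku}$(ii)$, which requires matching regularity of $u$ and $\cA_K$ (both $C^{k,\a}$). This is unnecessary and introduces an exponent-matching constraint: since the right-hand side of the equation only needs to live in $\cM_\b$, the crude $L^\infty$ bound from Lemma~\ref{lem:estim-G_Ku}$(i)$ is enough at every stage and is what the paper uses. With those two corrections the argument closes exactly as in the paper; the circularity around $Vu\in\cM_\b$ is resolved precisely as you note, by first obtaining $u\in L^\infty$ from the nonlocal de Giorgi--Nash--Moser theory with potential.
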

\begin{proof}
Since $2s-\b>1$, by  \cite{Fall-Reg}, for every $\varrho\in (0,1)$,    there exists $C=C(N,s,\b,\a,\|V\|_{  \cM_\b},\varrho)>0$ such that 
$$
 \|u\|_{C^{1-\varrho}(B_{1})} \leq  C  (  \|u\|_{  L^2(B_2) }+ \|u\|_{{L_s(\R^N)}}+    \|f\|_{  \cM_\b}).
$$
Using Lemma \ref{lem:estim-G_Ku}$(i)$,  we apply first Corollary \ref{cor:Holder-reg-Global}     to get 
$$
 \|\vp_{1/2} u\|_{C^{0,1}(B_{1/8})} \leq  C  (  \| \vp_{1/2}  u\|_{  L^2(\R^N) }+  \|\vp_{1/2} u\|_{C^{1-\varrho}(\R^N)} + \|  u\|_{{L_s(\R^N)}}+ \|\vp_{2} Vu \|_{  \cM_\b} +   \|f\|_{  \cM_\b}).
$$
Therefore, 
$$
 \|u\|_{C^{0,1}(B_{2^{-3}})} \leq  C  (  \|u\|_{  L^2(B_2) }+ \|u\|_{{L_s(\R^N)}}+    \|f\|_{  \cM_\b}).
$$
We then apply once more  Corollary \ref{cor:Holder-reg-Global} (with   $\varrho=0$) and use  Lemma \ref{lem:estim-G_Ku}$(i)$  to obtain
$$
 \|\vp_{2^{-4}} u\|_{C^{1, \min(2s-\b-1,\a)}(B_{2^{-8}})} \leq  C  (  \|\vp_{2^{-4}} u\|_{  L^2(\R^N) }+  \|\vp_{2^{-4}} u\|_{C^{0,1}(\R^N)} + \|u\|_{{L_s(\R^N)}}+ \|\vp_{2^{-4}}Vu \|_{  \cM_\b} +  \|f\|_{  \cM_\b}),
$$
so that 
$$
 \| u\|_{C^{1, \min(2s-\b-1,\a)}(B_{2^{-8}})} \leq  C  (  \|u\|_{  L^2(B_2) }  + \|u\|_{{L_s(\R^N)}}+  \|f\|_{  \cM_\b}),
$$
with $C$ as in the statement of the theorem.
 After a covering argument, we obtain the result.
\end{proof}

  \section{Schauder estimates} \label{s:Shaud}
Here and in the following, given $u\in C^{2s+\a}_{loc}(\R^N)$, with $\a>0$, we let
\be\label{eq:delta-eo}
\d^e u(x,r,\th):=\frac{1}{2}(2u(x)-u(x+r\th)-u(x-r\th)),\qquad \d^o u(x,r,\th):=\frac{1}{2}(u(x+r\th)-u(x-r\th) ).
\ee
For   $A\in C^{m,\a}(Q_\infty)\times L^\infty(S^{N-1} )$,  we define 
\be \label{eq:defEs}
\cE^s_{A,u}(x):=\int_{S^{N-1}}\int_0^\infty \d^e u(x,r,\th)A(x,r,\th) r^{-1-2s}\, dr d\th
\ee
and  for $B\in \cC^{m}_{\min(1,\a+(2s-1)_+)}(Q_\infty)\times L^\infty(S^{N-1} )$,  we define
\be \label{eq:defOs}
\cO^s_{B,u}(x):=\int_{S^{N-1}}\int_0^\infty \d^o u(x,r,\th)B(x,r,\th) r^{-1-2s}\, dr d\th.
\ee
We observe that,  using the symmetry of $K\in \ti \scrK^s_{\min(1,\a+(2s-1)_+)}(\k,\a,Q_\infty)$ and a change of variables, we get 
\begin{align}\label{eq:Operator-splitting}
&\frac{1}{2}\int_{\R^{2N} }(u(x)-u(y))(\psi(x)-\psi(y))K(x,y)\, dxdy =\int_{\R^N}\psi(x) \cE^s_{{\cA}_{e,K},u}(x) \, dx +\int_{\R^N}\psi(x)\cO^s_{{{\cA}_{o,K},u}}(x)   \, dx ,
\end{align}
 where 
$$
{\cA}_{e,K}(x,r,\th):=\frac{1}{2} ({\cA}_{K}(x,r,\th) +{\cA}_{K}(x,r,-\th)) , \qquad{\cA}_{o,K}(x,r,\th):=\frac{1}{2} ({\cA}_{K}(x,r,\th) -{\cA}_{K}(x,r,-\th)) .
$$

We have the following result which will be proved in Section \ref{s:Appendix}.
\begin{lemma}\label{lem:2sp-alph-estim-F_Keo}
Let $s\in (0,1)$ and $\a\in (0,1)$. Let $\k>0$,    $m\in \N$, $A\in C^{m+\a}(Q_\infty) \times L^\infty(S^{N-1} )$ and $B\in \cC^{m}_{\t}(Q_\infty) \times L^\infty(S^{N-1} )$, with $\t:=\min(\a+(2s-1)_+,1)$.
\begin{itemize}
\item 
 Let $u\in C^{2s+\a+m}(\R^N) $ and $2s\not=1$. If $2s+\a< 1$ or $1<2s+\a<2$,  then  
$$
\| \cE^s_{A,u} \|_{C^{m+\a}(\R^N)} \leq C  \|A\|_{ C^{m+\a}(Q_\infty)\times L^\infty(S^{N-1} )}\|u\|_{C^{2s+\a+m}(\R^N) }
$$
and 
$$
 \| \cO^s_{B,u} \|_{C^{m+\a}(\R^N)}\leq  C  \|B\|_{ \cC^{m}_{\t}(Q_\infty)\times L^\infty(S^{N-1} )} \|u\|_{C^{2s+\a+m}(\R^N) },
$$
with $C=C(N,s,\a,m) $.
\item   Let $u\in C^{1+\a+m+\e}(\R^N) $, for some $\e\in (0,1-\a)$.  If $2s=1$ and $B\in \cC^{m}_{\a+\e}(Q_\infty)\times L^\infty(S^{N-1} )$,  then  
$$
\| \cE^s_{A,u} \|_{C^{m+\a}(\R^N)} \leq C  \|A\|_{ C^{m,\a}(Q_\infty)\times L^\infty(S^{N-1} )}\|u\|_{C^{2s+\a+m+\e}(\R^N) }
$$
and 
$$
 \| \cO^s_{B,u} \|_{C^{m+\a}(\R^N)}\leq  C  \|B\|_{ \cC^{m}_{\a+\e}(Q_\infty)\times L^\infty(S^{N-1} )} \|u\|_{C^{2s+\a+m+\e}(\R^N) },
$$
with $C=C(N,\a,m,\e) $.
\item 
 Let $u\in C^{2s+\a+m}(\R^N) $ and $2s+\a>2$. If  $A,B\in C^{m+2s-1+\a}(Q_\infty)\times L^\infty(S^{N-1} )$ and $B\in \cC^{m+1}_{2s+\a-2}(Q_\infty)\times L^\infty(S^{N-1} )$ ,  then  
$$
\| \cE^s_{A,u} \|_{C^{m+\a}(\R^N)} \leq C  \|A\|_{ C^{m+2s-1+\a}(Q_\infty) \times L^\infty(S^{N-1} )}\|u\|_{C^{2s+\a+m}(\R^N) }
$$
and 
$$
 \| \cO^s_{B,u} \|_{C^{m+\a}(\R^N)}\leq  C \left( \|B\|_{ \cC^{m+1}_{2s+\a-2}(Q_\infty)\times L^\infty(S^{N-1} )}  +  \|B\|_{ C^{m+2s-1+\a}(Q_\infty)\times L^\infty(S^{N-1} )} \right)\|u\|_{C^{2s+\a+m}(\R^N) },
$$
with $C=C(N,s,\a,m) $.
\end{itemize}

\end{lemma}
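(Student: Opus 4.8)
The proof of Lemma~\ref{lem:2sp-alph-estim-F_Keo} proceeds by reducing the estimates for $\cE^s_{A,u}$ and $\cO^s_{B,u}$ to pointwise and H\"older bounds on the integrands, after an appropriate splitting of the $r$-integral into a near-diagonal part $r<1$ and a tail part $r>1$. The tail part is harmless: on $r>1$ the weight $r^{-1-2s}$ is integrable and one simply uses $\|\de^e u\|_{L^\infty}, \|\de^o u\|_{L^\infty}\lesssim \|u\|_{C^0}$ together with the $C^{m,\a}_{1,2}$ bound on $A$ (resp.\ the $\cC^m_\t$ bound on $B$) and differentiation under the integral sign in $x$, noting that for the angular variable there is no differentiation required since the $\theta$-integral is over the compact set $S^{N-1}$. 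The substance is the near-diagonal part.

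\medskip

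\noindent\textbf{Near-diagonal estimate for the even operator.} For $\cE^s_{A,u}$ I would use the second-order Taylor control $|\de^e u(x,r,\th)|\le C r^{2}\|u\|_{C^{2}}$ when $2s<2$, and more generally, after differentiating $|\g|\le m$ times in $x$ and using that $\de^e$ kills linear functions, $|\de_x^\g\de^e u(x,r,\th)|\le C r^{\min(2,2s+\a)}[\de_x^\g D^{?}u]$-type bounds, so that $\de_x^\g\de^e u(x,r,\th)\, r^{-1-2s}$ is integrable near $r=0$ precisely because $2s+\a<2$ in cases (i)--(ii) (and because one has the extra $2s-1+\a$ regularity on $A$ in case (iii), which is exactly what compensates the failure of integrability when $2s+\a>2$). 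For the H\"older seminorm of order $\a$ of $\de_x^\g\cE^s_{A,u}$ in the variable $x$, I would split into the two standard regimes $r<|x-x'|$ and $r>|x-x'|$: in the first regime use the triangle inequality plus the size bound, in the second regime use the difference bound $|\de_x^\g\de^e u(x,r,\th)-\de_x^\g\de^e u(x',r,\th)|\le C|x-x'|^{\a}r^{\min(2,2s+\a)-\a}\,$ together with the H\"older-in-$x$ control on $A$ coming from $\|A\|_{C^{m,\a}_{1,2}}$. In each regime the $r$-integral converges because of the strict inequalities on $2s+\a$ in the relevant case, and the $|x-x'|$-powers combine to give exactly the claimed $\a$-seminorm bound. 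When $2s=1$ the na\"ive bound $r^{-1-2s}=r^{-2}$ sits exactly at the borderline, so one trades a little interior regularity of $u$: using $|\de^e u|\le C r^{1+\a+\e}\|u\|_{C^{1+\a+\e}}$ restores integrability, which is why the hypothesis $u\in C^{1+\a+m+\e}$ (and $B\in\cC^m_{\a+\e}$) appears in the second bullet.

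\medskip

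\noindent\textbf{Near-diagonal estimate for the odd operator.} The odd operator is the delicate one, since $\de^o u(x,r,\th)=\tfrac12(u(x+r\th)-u(x-r\th))$ is only $O(r)$, so $\de^o u\, r^{-1-2s}$ fails to be integrable at $r=0$ when $2s\ge 1$ and one must exploit the structure of $B=\cA_{o,K}$. The crucial point is the antisymmetry hypothesis \eqref{eq:def-ti-l-oK}: $B(x,0,\th)=0$, so that, combined with $B\in\cC^m_\t$, one has $|\de_x^\g B(x,r,\th)|\le C r^\t$ with $\t=\min(\a+(2s-1)_+,1)$; this extra factor $r^\t$ is precisely what is needed so that $r\cdot r^\t\cdot r^{-1-2s}=r^{\t-2s}$ is integrable near $0$ when $2s>1$ (here $\t-2s>-1$ since $\t\ge 2s-1$, with strict unless $\a\ge 1$, which is excluded). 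For the $\a$-H\"older control I would again split on $r\lessgtr|x-x'|$, now using the H\"older-in-$x$ bound built into $\cC^0_\t$, namely $|B(x,r,\th)-B(x',r,\th)|\le C\min(r,|x-x'|)^\t$, which is tailored to make exactly this two-regime argument work. The $2s>2$ case (third bullet) requires one more derivative: one writes $\de^o u$ via a higher Taylor expansion and uses $B\in\cC^{m+1}_{2s+\a-2}$ together with $B\in C^{m+2s-1+\a}_{1,2}$ to push the integrability and the H\"older estimate through; the two norms of $B$ appearing in the statement correspond to these two distinct mechanisms. The main obstacle throughout is bookkeeping: one must track, for each case and each of the two operators, the exact exponent of $r$ produced by (a) the Taylor order of $\de^e$ or $\de^o$, (b) the vanishing/H\"older order of the coefficient at $r=0$, and (c) the weight $r^{-1-2s}$, and verify that in every regime of the $r\lessgtr|x-x'|$ split the resulting integral converges and produces the advertised power of $|x-x'|$; all the hypotheses on $\a,\t,m$ and on the regularity class of $u$ are chosen to be exactly those that make these exponents land strictly in the convergent range.
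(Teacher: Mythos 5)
Your overall skeleton is the same as the paper's: bound $\d^e u$ and $\d^o u$ pointwise and in H\"older seminorm, exploit the vanishing of $B$ at $r=0$ encoded in $\cC^m_\t$ (giving $|B(x,r,\th)|\lesssim r^\t$ and $|B(x_1,r,\th)-B(x_2,r,\th)|\lesssim\min(r,|x_1-x_2|)^\t$), and split the $r$-integral at $r=|x_1-x_2|$. Your identification of the odd operator as the delicate one, and of the two-regime split as the mechanism, is exactly right. However, two of your diagnoses of where the hypotheses come from are wrong, and if you carried out the computation following your plan you would hit a contradiction with them.

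First, you claim that $\de_x^\g\de^e u\,r^{-1-2s}$ fails to be integrable near $r=0$ when $2s+\a>2$, and that the extra $C^{m+2s-1+\a}_{1,2}$ regularity of $A$ in the third bullet compensates this. In fact, with $u\in C^{2s+\a}$ one always has $|\d^e u(x,r,\th)|\leq C\min(1,r^{2s+\a})$ (write $2\,\d^e u(x,r,\th)=r\int_0^1(\n u(x+tr\th)-\n u(x-tr\th))\cdot\th\,dt$ and use the H\"older continuity of $\n u$), and $\int_0^1 r^{2s+\a}r^{-1-2s}\,dr=\int_0^1 r^{\a-1}\,dr<\infty$ regardless of whether $2s+\a$ is above or below $2$. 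In the paper's proof of the third bullet the $L^\infty$ bound on $\cE^s_{A,u}$ uses only $\|A\|_{C^0_{1,2}}$, and the $\a$-seminorm of $\cE^s_{A,u}$ is obtained from $A\in C^{\a}_{1,2}$ alone via the two representations
$\d^e u(x,r)=r^2\int_0^1 t\int_0^1 D^2_x\d^o u(x,\varrho tr,\th)[\th,\th]\,d\varrho\,dt$
and
$\d^e u(x_1,r)-\d^e u(x_2,r)=r\int_0^1\int_0^1 D^2_x\d^o u(\varrho x_1+(1-\varrho)x_2,tr,\th)[x_1-x_2,\th]\,dt\,d\varrho$,
combined into $|\d^e u(x_1,r)-\d^e u(x_2,r)|\lesssim\min(r^{2s+\a},r^{2s+\a-1}|x_1-x_2|)$. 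The extra regularity assumptions in the third bullet are used for $\cO^s_{B,u}$, where one plays $D_x B$ (quantified by $\|B\|_{C^{2s-1+\a}_{1,2}}$) against $D_r B$ with H\"older continuity in $x$ (quantified by $\|B\|_{\cC^1_{2s+\a-2}}$), in the same "two representations give a $\min$" pattern.

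Second, your explanation of the case $2s=1$ as a borderline of integrability is also off. For $s=1/2$ the integrals $\int_0^1 r^{\min(2s+\a,1)}r^\t r^{-1-2s}\,dr$ and $\int_0^1 r^{2s+\a}r^{-1-2s}\,dr$ both still converge with the naive exponents. The true obstruction appears when you do the two-regime split for the $\a$-seminorm of $\cO^s_{B,u}$: in the intermediate range $r\in(|x_1-x_2|,1)$ one is led to $|x_1-x_2|^\t\int_{|x_1-x_2|}^1 r^{\min(2s+\a,1)-1-2s}\,dr$, and for $s=1/2$ the integrand is exactly $r^{-1}$, producing $|x_1-x_2|^\t\log(1/|x_1-x_2|)$, which is not $O(|x_1-x_2|^\a)$ unless $\t>\a$. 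That is why the statement requires $\t=\a+\e$ (and hence $u\in C^{1+\a+\e}$) when $2s=1$ — it is a logarithmic loss in the H\"older seminorm, not a failure of convergence at $r=0$.

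Fixing these two diagnoses and carrying out the bookkeeping you describe would yield a correct proof along the same lines as the paper's.
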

We remark that under the assumptions on $A$ and $B$,  for $2s=1$, the first assertion of Lemma \ref{lem:2sp-alph-estim-F_Keo}  does not in general hold.  
\subsection{Schauder estimates}
The following result is intended to the $C^{2s+\a}$ regularity estimates, for $2s+\a\not\in \N$. To deal with the case $2s+\a>2$, we look for optimal growth estimate of the difference between $u$ a   second order polynomial  that is close to $u$ in the $L^2$-norm.\\
 For $g\in L^2(B_r)$ and  $i,j=1,\dots,N$,  we define
 $$
T_{ij}^{r,g}=\frac{1}{ \|y_iy_j\|_{L^2(B_r)}^2}\int_{B_r} y_iy_jg(y)\, dy
$$
and 
$$
\textbf{Q}_{r,g}(x)=\sum_{i,j=1}^N T_{ij}^{r,g} x_ix_j.
$$
We note that $\textbf{Q}_{r,g} $ is nothing but the $L^2(B_r)$-projection of $g$ on the  space of  homogeneous quadratic polynomials. We now state the main result of this section.
%
 \begin{proposition} \label{prop:bound-wth-corrector-nab}
Let $N\geq 1$,   $s\in (0,1)$, $\k>0$, $\a\in (0,1)$, $\b\in (0,\a)$ and $\e>0$.  Let  $K\in\ti \scrK_{\t}^s(\k, \a, Q_\infty)$, with $\t:=\min (\a+(2s-1)_+,1)$.  Let
  $f\in C^\a(\R^N)$ and      $g\in C^{2s+\b}(\R^N)$, for $2s+\b\not\in \N$,   
such that 
$$
 \cL_K  g =f \qquad\textrm{ in $B_2$}, 
$$
\begin{itemize}
\item[$(i)$] If $1<2s+\a<2$ and $2s\not=1$,      then       there exists $C=C(N,s,\k,\a,\b)>0$ such that
$$
 \sup_{r>0} r^{-(2s-1+\a-\b)}     [ \n g ]_{C^{\b}(B_r)}\leq  C (\|g\|_{C^{2s+\b}(\R^N)}+   [f]_{C^\a(\R^N)})  .
$$
\item[$(ii)$] If $2s+\a>2 >2s+\b\geq 1+\a$, $K\in\ti \scrK_{0}^s(\k, 2s-1+\a, Q_\infty)$ and $g(0)=|\n g(0)|=0$,      then       there exists $C=C(N,s,\k,\a,\b)>0$ such that
$$
 \sup_{r>0} r^{-(2s-1+\a-\b)}     [ \n g -\n \textrm{\em\textbf{Q}}_{r,g} ]_{C^{\b}(B_r)}\leq  C (\|g\|_{C^{2s+\b}(\R^N)}+   [f]_{C^\a(\R^N)})  .
$$
\item[$(iii)$] If $2s=1$ and $K\in \ti \scrK_{\a+\e}^s(\k, \a, Q_\infty)$,     then   there exists $C=C(N,s,\k,\a,\b,\e)>0$ such that     
$$
 \sup_{r>0} r^{-(2s-1+\a-\b)}     [ \n g ]_{C^{\b}(B_r)}\leq  C (\|g\|_{C^{2s+\b}(\R^N)}+   [f]_{C^\a(\R^N)})  .
$$
\item[$(iv)$] If $2s+\a<1$,  then     there exists $C=C(N,s,\k,\a,\b)>0$ such that   
$$
 \sup_{r>0} r^{-(\a-\b)}     [  g ]_{C^{2s+\b}(B_r)}\leq  C  (\|g\|_{C^{2s+\b}(\R^N)}+   [f]_{C^\a(\R^N)}).
$$
\end{itemize}

\end{proposition}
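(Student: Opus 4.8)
The plan is to run the contradiction--compactness scheme already used for Proposition \ref{prop:bound-Kato-abstract} and Proposition \ref{prop:bound-wth-corrector}, handling the four cases in parallel and only changing which Taylor polynomial is subtracted off. Assume one of the four estimates fails. Then there are sequences $K_n$ in the relevant class, $f_n\in C^\a(\R^N)$ and $g_n\in C^{2s+\b}(\R^N)$ with $\|g_n\|_{C^{2s+\b}(\R^N)}+[f_n]_{C^\a(\R^N)}\le 1$, solving $\cL_{K_n}g_n=f_n$ in $B_2$, together with radii at which the corresponding seminorm --- $[\n g_n]_{C^\b(B_{r})}$ in (i) and (iii), $[\n g_n-\n\textbf{Q}_{r,g_n}]_{C^\b(B_{r})}$ in (ii), $[g_n]_{C^{2s+\b}(B_{r})}$ in (iv) --- divided by $r^{2s-1+\a-\b}$ (resp.\ $r^{\a-\b}$) exceeds $n$. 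Introducing the nonincreasing majorant $\Theta_n$ of these normalized seminorms, exactly as in the earlier proofs one extracts $r_n\to0$ with $\Theta_n(r_n)\to\infty$ and a near-maximizing radius, and sets
$$
v_n(x):=\Theta_n(r_n)^{-1}\,r_n^{-(2s+\a)}\bigl(g_n(r_n x)-P_n(r_n x)\bigr),
$$
where $P_n$ is the mean $(g_n)_{B_{r_n}}$ in (iv), the affine projection $\textbf{P}_{r_n,g_n}$ in (i) and (iii), and the quadratic projection $\textbf{Q}_{r_n,g_n}$ in (ii); in (ii) the hypothesis $g(0)=|\n g(0)|=0$ makes $v_n(0)=|\n v_n(0)|=0$. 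Monotonicity of $\Theta_n$ yields the growth bound $\|v_n\|_{L^\infty(B_R)}\le C R^{2s+\a}$ for $R\ge1$ (as in \cite[Lemma 3.1]{Fall-Reg}), while the normalization makes the relevant seminorm of $v_n$ on $B_1$ bounded below and forces $v_n$ to be $L^2(B_1)$-orthogonal to polynomials up to the subtracted degree.

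The second step is to write the rescaled equation. With $\ov K_n(x,y):=r_n^{N+2s}K_n(r_n x,r_n y)$, $\ov f_n:=r_n^{2s}f_n(r_n\cdot)$ and $a_n(\th):=\cA_{\ov K_n}(0,0,\th)=\cA_{K_n}(0,0,\th)$, which satisfies \eqref{eq:def-a-anisotropi}, the $C^{m,\a}_{1,2}(Q_\infty)$ and $\cC^m_\t(Q_\infty)$ control of $\cA_{K_n}$ gives $|\ov K_n-\mu_{a_n}|\le C\min(r_n^\a(|x|+|y|)^\a,1)\,\mu_1(x,y)$, and subtracting $\mu_{a_n}$ produces an auxiliary kernel $\ov K'_n$ satisfying \eqref{eq:K'-Kernel-satisf} with growth exponent $\a$ (and $2s-1+\a$ in case (ii), which is why one must there upgrade to $K\in\ti\scrK_0^s(\k,2s-1+\a,Q_\infty)$). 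Since $\cL_{\mu_{a_n}}$ annihilates affine functions and, by $\cA_{o,K_n}(x,0,\th)=0$ and \eqref{eq:def-ti-l-oK}, the odd part vanishes on the diagonal, the equation becomes
$$
\cL_{\ov K_n}v_n+\Theta_n(r_n)^{-1}\cL_{\ov K'_n}\bigl(\text{rescaled low-order part of }P_n\bigr)=r_n^{-(2s+\a)}\Theta_n(r_n)^{-1}\,\ov f_n\qquad\textrm{in }B_{2/r_n}.
$$
Here the splitting \eqref{eq:Operator-splitting} into $\cE^s$ and $\cO^s$ together with Lemma \ref{lem:2sp-alph-estim-F_Keo} is used to see that $\cL_{\ov K'_n}$ applied to the (rescaled, possibly polynomially growing) correction lies in $\cM_\b$ --- respectively $C^\a$ --- with norm bounded uniformly in $n$, so that the left-over term and the right-hand side are $o(1)$; in the borderline case (iii), $2s=1$, this is exactly where the extra regularity $\t=\a+\e>\a$ of $\cA_{o,K}$ is consumed, through the middle bullet of Lemma \ref{lem:2sp-alph-estim-F_Keo}.

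With the equation available, Corollary \ref{cor:Holder-many-K'} applied on fixed balls gives a uniform $C^{1-\d}_{loc}(\R^N)$ bound on $v_n$ in cases (i)--(iii) and a uniform $C^{2s+\d}_{loc}$ bound in case (iv); combined with the polynomial growth this produces, along a subsequence, $v_n\to v$ in $C^1_{loc}$ (resp.\ $C^0_{loc}$) and in ${L_s(\R^N)}$, with $\cL_{\mu_b}v=0$ in $\R^N$ by Lemma \ref{lem:convergence-very-weak} and Lemma \ref{lem:Ds-a-n--to-La}, where $b$ is the weak-star limit of $a_n$. The limit $v$ still satisfies $\|v\|_{L^\infty(B_R)}\le C R^{2s+\a}$, has the seminorm nondegeneracy on $B_1$, and is $L^2(B_1)$-orthogonal to all polynomials of degree up to that of $P_n$ (and vanishes together with its gradient at the origin in case (ii)). To conclude I would apply Lemma \ref{lem:Liouville} to finite differences of $v$ of order one (case (iv)), two (cases (i), (iii)) or three (case (ii)): each such difference has growth exponent $2s+\a-\deg<2s$ and is therefore affine, so $v$ is a polynomial of degree at most $\deg+1$, and the growth bound $R^{2s+\a}$ with $2s+\a<\deg+2$ kills the top-degree coefficient, leaving $v$ a polynomial of degree $\le\deg$; the orthogonality constraints then force $v\equiv0$, contradicting the nondegeneracy. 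After this contradiction, a covering argument identical to the one in Corollary \ref{cor:Holder-cont-coeff} upgrades the growth estimate at the origin to one centered at every point of $B_1$, giving the statement.

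The step I expect to be the main obstacle is the middle one: precisely identifying the class of the auxiliary kernel $\ov K'_n$ after peeling off the constant-coefficient model $\mu_{a_n}$, and proving that $\cL_{\ov K'_n}$ applied to the subtracted polynomial --- the homogeneous quadratic in (ii), which forces the stronger hypothesis $K\in\ti\scrK_0^s(\k,2s-1+\a,Q_\infty)$ there, and the affine/constant corrections in the degenerate case $2s=1$ --- generates an error that is genuinely $o(1)$ in $\cM_\b$ (or $C^\a$). This is where Lemma \ref{lem:2sp-alph-estim-F_Keo} and the decomposition \eqref{eq:Operator-splitting} carry the argument, and it is the matching of the homogeneity of these correction terms against $\Theta_n(r_n)\,r_n^{2s+\a}$ that dictates the precise exponents $2s-1+\a-\b$ and $\a-\b$ appearing in the statement.
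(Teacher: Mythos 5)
Your blow-up scheme is the single-level one from Proposition \ref{prop:bound-Kato-abstract} and Proposition \ref{prop:bound-wth-corrector}, whereas the quantity controlled here is a H\"older seminorm, not an $L^2$-projection distance, and a single-level scheme cannot close the argument. Concretely, your $v_n$ satisfies the normalization $[\n v_n]_{C^\b(B_1)}\ge 1/2$ (resp.\ $[g_n\text{-type}]_{C^{2s+\b}}\ge 1/2$) together with the growth bound $[\n v_n]_{C^\b(B_R)}\le CR^{2s-1+\a-\b}$; by Arzel\`a--Ascoli this only gives a subsequence converging in $C^{1,\b'}_{loc}$ for $\b'<\b$, and the $C^\b$ seminorm is \emph{not} upper semicontinuous along such convergence (take $v_n(x)=n^{-1-\b}\sin(nx)$: $[\n v_n]_{C^\b}\gtrsim 1$ yet $v_n\to 0$ in any $C^{1,\b'}$, $\b'<\b$). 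So the nondegeneracy evaporates in the limit and the Liouville step never fires. Your appeal to $L^2(B_1)$-orthogonality does not rescue this: that device is tied to the $L^2$-growth setting of Proposition \ref{prop:bound-Kato-abstract}, and since a $C^\b$ seminorm is insensitive to the addition of polynomials, the choice of $P_n$ is irrelevant to the normalization.

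The missing idea is the second, point-scale blow-up that the paper actually uses (borrowed from \cite{Serra-OK}). After the first rescaling $u_n(x)=\Theta_n(r_n)^{-1}r_n^{-(2s+\a)}g_n(r_nx)$, the normalization $[\n u_n]_{C^\b(B_1)}\ge 1/2$ is made pointwise by choosing $x_n,h_n\in B_1$ with $|h_n|^{-\b}|\n u_n(x_n+h_n)-\n u_n(x_n)|\ge 1/4$ and setting
\begin{align*}
v_n(x)=\frac{u_n(x_n+|h_n|x)-u_n(x_n)-|h_n|\n u_n(x_n)\cdot x}{|h_n|^{1+\b}}.
\end{align*}
Then $v_n(0)=|\n v_n(0)|=0$, and the nondegeneracy becomes $|\n v_n(h_n/|h_n|)|\ge 1/4$, a pointwise statement that \emph{is} preserved under $C^1_{loc}$ convergence. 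The two-parameter rescaling centered at $z_n=r_nx_n$ with scale $\rho_n=r_n|h_n|$ also changes the kernel bookkeeping: the auxiliary kernel is $\ov K_n'(x,y)=r_n^{-\a}|h_n|^{-\b}[\ov K_n(x+h,y+h)-\ov K_n(x,y)]$, and the decomposition into $\cE^s$ and $\cO^s$ from \eqref{eq:Operator-splitting} together with the $\cC^m_\t$ control of $\cA_{o,K}$ is what makes the error term $o(1)$ (and is where the hypotheses $\t=\a+(2s-1)_+$, resp.\ $\t>\a$ when $2s=1$, are consumed). Two further slips: in all four cases only a \emph{single} first-order finite difference $v_{h,0}$ is fed into Lemma \ref{lem:Liouville} (its growth $R^{2s-1+\a}$, resp.\ $R^{2s+\a}$ in (iv), is already below $R^{2s}$), not iterated differences of order two or three; and the final step is not a covering argument but an iteration producing the centered Taylor expansion from the one-scale estimate. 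The key advance needed in your write-up is the second zoom; without it the compactness machine cannot see the H\"older seminorm.
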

\begin{proof}
We  start with $(i)$.
%
Assume that the assertion in  $(i)$ does not hold, then arguing as in the proof of Proposition \ref{prop:bound-Kato-abstract},       we can find   sequences
\begin{itemize}

\item  $r_n> 0$,   $K_n\in \ti \scrK_{\a+2s-1}^s(\k, \a , Q_\infty)$    $g_n\in C^{2s+\b}(\R^N)$  and $f_n\in C^{\a}(\R^N)$ with 
$
\|g_n\|_{C^{2s+\b}(\R^N)}+   [f_n]_{C^\a(\R^N)}\leq 1, 
$
%
 \be\label{eq:g-nsolves-Schauder}
 \cL_{K_n}  g_n  =f_n \qquad\textrm{ in $B_2$},
\ee

\item $\Theta_n:(0,\infty)\to [0,\infty)$,  nonincreasing,  
\end{itemize} 
with the properties  that  $r_n\to 0$ as $n\to \infty$,     
\be \label{eq:sup-r-ok-Morrey-hi-Int-nab}
\Theta_n( r)\geq  r^{-(2s-1+\a-\b)}  [ \n g_n ]_{C^{\b}(B_{r})}   \qquad\textrm{ for every $r>0$ and $n\geq 2$}
\ee
and  
\be \label{eq:non-degn-u-14}
 r^{-(2s-1+\a-\b)}_n [ \n g_n ]_{C^{\b}(B_{r_n})}\geq \frac{1}{2}\Theta_n(r_n)\geq \frac{n}{4} .
\ee
We define 
$$
u_n(x):=\frac{1}{r_n^{2s+\a}\Theta_n(r_n)} g(r_n x). 
$$
 By  \eqref{eq:sup-r-ok-Morrey-hi-Int-nab}, for $R\geq 1$, we have 
\begin{align*}
[ \n u_n ]_{C^{\b}(B_{R})}= \frac{ r_n^{1+\b} }{r_n^{2s+\a}\Theta_n(r_n)} [  \n g_n ]_{C^{\b}(B_{r_n R})}\leq  R^{ 2s-1+\a-\b} \frac{ \Theta_n(Rr_n) }{ \Theta_n(r_n)} .
\end{align*} 
Hence by the  monotonicity of $\Theta_n$, we have 
\be\label{eq:un-semi-norm-groth} 
[ \n u_n ]_{C^{\b}(B_{R})}  \leq  R^{2s-1+\a-\b}  \qquad\textrm{ for all $R\geq 1$.}
\ee
 In addition, by \eqref{eq:non-degn-u-14}, we get 
$$
   [ \n u_n ]_{C^{\b}(B_{1})} \geq \frac{1}{2} .
$$
 This then implies that there exists $x_n, h_n\in B_{1}$, with $h_n\not=0$, such that 
\be \label{eq:non-deg-u-n}
  |h_n|^{-\b} | \n u_n(x_n+ h_n)-\n u_n(x_n)| \geq \frac{1}{4} .
 \ee
 We define the new sequence 
$$
 v_n(x):= \frac{ u_n(x_n+ |h_n| x)-  u_n(x_n)- |h_n|\n u_n(x_n)\cdot x}{  |h_n|^{1+\b} }.
$$
By construction,  we have   that
 \be \label{eq:v_n0-nv_n0}
v_n(0)=|\n v_n(0)|=0
\ee
and by \eqref{eq:non-deg-u-n}, 
\be\label{nonded-v-n}
|\n v_n (h_n/|h_n|)|  \geq \frac{1}{4} .
\ee
Moreover by \eqref{eq:un-semi-norm-groth}, for $R\geq 1$ and  $x, y\in B_R$, 
\begin{align*}
|\n v_n(x)-\n v_n (y)|&=   |h_n|^{-\b}  | \n u_n(x_n+ |h_n| x)-  \n u_n(x_n+ |h_n| y)  |\\
& \leq     |h_n|^{-\b}  |h_n|^{\b}|x-y|^{\b}  (1+R)^{2s-1+\a-\b}\leq 2^{2s-1+\a-\b} |x-y|^{\b}R^{2s-1+\a-\b}.
\end{align*}
Combining this with \eqref{eq:v_n0-nv_n0},  we get, for all $R\geq1$,
\be \label{eq:groht-w-n-Morrey-HI}
 \|\n v_n\|_{L^{\infty}(B_R)}\leq CR^{2s-1+\a}
\ee
and 
$$
 \| v_n\|_{C^{1,\b}(B_R)}\leq C R^{2s+\a},
$$
for some $C=C(s,\a,\b)$.
This latter  estimate implies that there exists $v\in C^{1,\b}_{loc}(\R^N)$ such that, up to a subsequence,  
\be\label{eq:lim-v-n}
v_n \to v \qquad\textrm{in $C^{1}_{loc}(\R^N)$}. 
\ee
Moreover by \eqref{nonded-v-n} and \eqref{eq:v_n0-nv_n0}, there exists $e\in S^{N-1}$ such that  
\be\label{eq:E1-nab00}
|\n v(e)|\geq \frac{1}{4} \qquad\textrm{ and } \qquad \n v(0)=0.
\ee
We shall show that   $\n v\equiv 0$ on $\R^N$, which leads to a contradiction. Indeed, given $h\in \R^N$,   we define $w_n(x)=(v_n)_{h,0}(x)=v_n(x+h)-v_n(x)$.  It follows from \eqref{eq:groht-w-n-Morrey-HI}  and the fundamental theorem of calculus that 
\be\label{eq:groht-w-n-Morrey-HI-nab}
 \| w_n\|_{L^\infty(B_{R})}\leq  C   R^{2s-1+\a}  \qquad\textrm{ for every $R\geq 1$,}
\ee
where here and in the following of the proof,  the letter  $C$ is a positive constant only depending on $h,\k,N,s,\b$ and $\a$.
We put $\rho_n:=r_n |h_n|$, $z_n:=r_n x_n$ and we define 
$$
\ov K_n(x,y)=\rho_n^{N+2s}  K_n( z_n+ \rho_n x,  z_n+ \rho_n  y) ,
$$
$$
\ov K_n'(x,y):=r_n^{-\a}|h_n|^{-\b}  \left[\ov K_n(x+h,y+h)-\ov K_n(x,y) \right],
$$
$$
U_n(x):= r_n^{-2s}|h_n|^{-1} g_n(z_n+ \rho_n x+ \rho_n  h )
$$
and 
$$
   \ov f_n(x):= r_n^{-\a-2s} |h_n|^{-\b-1}\rho_n^{2s} \left[  f_n(z_n+\rho_n x+\rho_n h )  - f_n(z_n+ \rho_n x  ) \right].
$$
For $h\in \R^N\setminus \{0\},$ we let $n$ large so that  $z_n+ h\in  B_{1/2r_n}$, by  changing variables and using \eqref{eq:g-nsolves-Schauder},  we then have that
$$
 \cL_{\ov K_n} w_n+\frac{1}{\Theta_n(r_n)} \cL_{\ov K'_n} U_n  = \frac{1}{\Theta_n(r_n)}\ov  f_n \qquad\textrm{ in $B_{1/2r_n}$.}
$$
Therefore by \eqref{eq:Operator-splitting}, 
\be \label{eq:eqw_n--ok}
 \cL_{\ov K_n} w_n  =F_n  \qquad\textrm{ in $B_{1/2r_n}$,}
\ee
where 
\be \label{eq:def-F_n-Shaud}
 F_n(x):=\frac{1}{\Theta_n(r_n)}\ov  f_n- \frac{1}{\Theta_n(r_n)}\cE^s_{{\cA}_{e,\ov K_n'},U_n} - \frac{1}{\Theta_n(r_n)}\cO^s_{{\cA}_{o,\ov K_n'},U_n}.
\ee
By a change of variable, we get
\begin{align*}
\cE^s_{{\cA}_{e,\ov K_n'},U_n}(x)
%
&=|h_n|^{2s-1}\int_{S^{N-1}}\int_0^\infty\d^e g_n(z_n+\rho_n x +\rho_n h,t,\th ) {\cA}_{e,\ov K_n'}(x,t/\rho_n,\th) t^{-1-2s}\, dt d\th
\end{align*}
and 
\begin{align*}
\cO^s_{{\cA}_{o,\ov K_n'},U_n}(x)
%
&=|h_n|^{2s-1}\int_{S^{N-1}}\int_0^\infty\d^o g_n(z_n+\rho_n x +\rho_n h,t,\th ) {\cA}_{o,\ov K_n'}(x,t/\rho_n,\th) t^{-1-2s}\, dt d\th.
\end{align*}
We recall  that
$$
{\cA}_{\ov K_n}(x,r,\th):= {\cA}_{ K_n}( z_n+\rho_n x,\rho_n r,\th)
$$
and 
\be \label{eq:will-saveme}
{\cA}_{\ov K_n'}(x,r,\th):=r_n^{-\a}|h_n|^{-\b}\{ {\cA}_{ K_n}(z_n+  \rho_n x+\rho_n h,\rho_n r,\th)- {\cA}_{ K_n}(z_n+  \rho_n x,\rho_n r,\th)   \}.
\ee
Since $K_n\in\ti \scrK_{\a+2s-1} ^s(\k,\a,Q_\infty)$ (recall Definition \ref{def:Kernel-not-reg-the}),  for all $x,y\in \R^N$, $\th \in S^{N-1}$ and $r>0$,  
\be\label{eq:assump-on-ti-l-Ko-inProp-n}
\begin{aligned}
&|  {\cA}_{o,K_n}(x,r,\th) |\leq \frac{1}{\k}\min (r,  1)^{2s-1 +\a }, \\
&|  {\cA}_{o,K_n}(x,r,\th)- {\cA}_{o,K_n}(y,r,\th)|\leq \frac{1}{\k} \min (r, |x -y|)^{2s-1 +\a }
\end{aligned}
\ee
and
$$
|  {\cA}_{K_n}(x,r,\th)- {\cA}_{K_n}(y,r,\th)|\leq \frac{1}{\k} |x-y|^{\a}.
$$
Therefore,
$$
 |{\cA}_{e,\ov K_n'}(x,r,\th)|\leq C  |h_n|^{\b}\leq C \qquad\textrm{ for all $x\in \R^N$, $r>0$ and $\th\in S^{N-1}$.}
$$
Consequently, since $\|g_n\|_{C^{2s+\b}}\leq1$ and recalling \eqref{eq:delta-eo}, we have that 
 $$|\d^e g_n(z_n+\rho_n x +\rho_n h,t,\th ) |\leq C\min (1,t^{2s+\b}) $$
  and thus 
\be \label{eq:estime-ovF--e}
\|\cE^s_{{\cA}_{e,\ov K_n'},U_n} \| _{L^\infty(\R^N)}\leq C.
\ee
Moreover by \eqref{eq:assump-on-ti-l-Ko-inProp-n}, for all $x\in \R^N$, $r>0$ and $\th\in S^{N-1}$, we have 
$$
 |{\cA}_{o,\ov K_n'}(x,r,\th)|\leq C  r_n^{-\a}|h_n|^{-\b} \min ( \rho_n ,  \rho_n r)^{2s-1+\a } .
 $$
Since $|h_n| \leq 1$ and $\|g_n\|_{C^{0,1}(\R^N)}\leq 1$ (recalling \eqref{eq:delta-eo}),  the above estimate implies that 
\begin{align*}
|\cO^s_{{\cA}_{o,\ov K_n'},U_n}(x)|&\leq Cr_n^{-\a}|h_n|^{-\b} \int_0^\infty\min (1,t) \min ( \rho_n,  t)^{2s-1 +\a } t^{-1-2s }\, dt \\
&\leq  Cr_n^{-\a}|h_n|^{-\b} \int_0^{\rho_n}t^{2s+\a } t^{-1-2s}\, dt\\
&+Cr_n^{-\a}|h_n|^{-\b} \rho_n^{2s-1+\a }  \int_{\rho_n}^1  t^{-2s }\, dt  +C r_n^{-\a}|h_n|^{-\b} \rho_n^{2s-1+\a} \int_1^\infty   t^{-1-2s}\, dt .
\end{align*}
Using that $2s>1$ and recalling  that $\rho_n=r_n |h_n|$,  we then conclude that 
\be \label{eq:estime-ovF--o}
\|\cO^s_{{\cA}_{o,\ov K_n'},U_n} \| _{L^\infty(\R^N)}\leq C.
\ee
Because  $[f_n]_{C^\a(\R^N)}\leq 1$,  it is plain  that
\be \label{eq:estime-ovf_n-U_n}
\|\ov f_n\|_{L^\infty(\R^N)}\leq C.
\ee
Recalling \eqref{eq:def-F_n-Shaud}, it follow from \eqref{eq:estime-ovF--e},  \eqref{eq:estime-ovF--o} and \eqref{eq:estime-ovf_n-U_n} that 
 \be \label{eq:estime-F_n-U_n}
\|F_n\|_{L^\infty(\R^N)}\leq \frac{C}{\Theta_n(r_n)}.
\ee
In view of \eqref{eq:lim-v-n} and \eqref{eq:groht-w-n-Morrey-HI-nab},      for  every $h\in \R^N$,   we have that
\be \label{eq:w_n-to-h-h-0}
 w_n=   (v_n)_{h,0}\to  v_{h,0}    \qquad \textrm{ in  $L_s(\R^N)\cap H^s_{loc}(\R^N)$}. 
\ee
Letting $a_n(\th):={\cA}_{\ov K_n}(z_n,0,\th)$, we have that 
$$
| {\cA}_{\ov K_n}(x,r,\th)- a_n(\th) |\leq C \min( \rho_n^{\a} (|x|+r)^{\a} ,1)\qquad\textrm{ for all $x\in \R^N$,  $r>0$ and $\th \in S^{N-1}$,}
$$
so that 
\be \label{eq:ov-not-far-mu-a-n}
|\ov K_n(x,y)-\mu_{a_n}(x,y) |\leq C  \min( \rho_n^{\a} (|x|+|x-y|)^{\a} ,1) \mu_1(x,y) \qquad \textrm{ for all $x\not=y\in \R^N$.}
\ee
 Moreover   $a_n$ satisfies \eqref{eq:def-a-anisotropi} for all $n$. 
Therefore in view of Lemma \ref{lem:Ds-a-n--to-La}, Lemma \ref{lem:convergence-very-weak},   \eqref{eq:w_n-to-h-h-0}, \eqref{eq:ov-not-far-mu-a-n} and \eqref{eq:estime-F_n-U_n},    passing to the limit in \eqref{eq:eqw_n--ok}, we   deduce that 
\be \label{eq:eq-Lb-v-h0}
\cL_{\mu_b}  v_{h,0}=0 \qquad\textrm{ in $\R^N$ },
\ee
were $b$ is the weak-star limit of $a_n $ in $L^\infty(S^{N-1})$. Furthermore by \eqref{eq:groht-w-n-Morrey-HI}, 
$$
\|v_{h,0}\|_{L^\infty(B_R)}\leq C R^{2s-1+\a}, 
$$
Thanks to \eqref{eq:eq-Lb-v-h0} and since  $2s-1+\a<1$, we can apply Lemma \ref{lem:Liouville} to get a constant $c=c(h,\a,\b,N,s,\k)$ such that $v_{h,0}(x)=v(x+h)-v(x)= c$  for all  $x,h\in \R^N$. Hence, since $\n v(0)=0$, we find that $\n v(h)=0$ for all $h\in \R^N$.     This contradicts the first inequality in \eqref{eq:E1-nab00}. The proof of $(i)$ is thus finished.\\

The proof of $(ii)$ is similar to the one of $(i)$, we therefore give a sketch below,  emphasizing the main differences. Indeed,  following  the proof, we put
$$
\ov u_n (x)=\frac{1}{r_n^{2s+\a} \Theta_n(r_n) }\left\{g_n(r_n x)- \textbf{Q}_{r_n,g_n}(r_n x) \right\},
$$
with $\Theta_n(r)$ is  a nonincreasing  function as above, with $g_n$ replaced with $g_n-\textbf{Q}_{r,g_n}$.
From the definition of $\textbf{Q}_{r,g_n}$, the monotonicity of $\Theta_n$  and    the fact that $2s-1+\a>1$, we then get  $\|\n \ov u_n\|_{C^\b(B_R)}\leq C R^{2s-1+\a}$  for all $R\geq 1$. On the other hand,  there are $x_n\in B_1$ and $h_n\in B_1\setminus\{0\}$ such that  $|\n \ov  u_n(x_n+ h_n)-\n  \ov u_n(x_n)| |h_n|^{-\b}\geq \frac{1}{4}$. Similarly as above, we   define 
$$
 \ov v_n(x):= \frac{ \ov u_n(x_n+ |h_n| x)-  \ov u_n(x_n)- |h_n|\n \ov u_n(x_n)\cdot x}{  |h_n|^{1+\b} },
$$
so that  $\|\n \ov v_n\|_{L^\infty(B_R)}\leq C R^{2s-1+\a}$  for all $R\geq 1$. Moreover, 
$\ov v_n$ is bounded in $C^{1,\b}_{loc}(\R^N)$, so that  $\ov v_n\to \ov v$ in $C^{1}_{loc}(\R^N)$. In addition,  
\be \label{eq:to-contradict-ov-v}
\n\ov v(0)=0 \qquad \textrm{ and } \qquad |\n \ov v(e)|\geq \frac{1}{4},\quad\textrm{ for some $e\in S^{N-1}$.}
\ee
and $\|\ov v(\cdot +h)- \ov v\|_{L^\infty(B_R)}\leq C R^{2s-1+\a}$ for all $R\geq 1$.
Next, letting, $\ov w_n(x)= (\ov v_n)_{h,0}(x)=\ov v_n(x+h)-\ov v_n(x)$ we find that
$$
 \cL_{\ov K_n} \ov w_n - \frac{2\rho_n^2}{ r_n^{2s+\a} |h_n|^{1+\b}\Theta_n(r_n)} \sum_{i,j=1}^N T_{ij}^{r_n,g_n}  h_j   \cL_{\ov K_n} x_i =F_n \qquad\textrm{ in $B_{1/2r_n}$,}
$$
where   $F_n$ is given by \eqref{eq:def-F_n-Shaud}.  
 By \eqref{eq:Operator-splitting} and the fact that $\cE^s_{{\cA}_{e,\ov K_n},x_i}\equiv 0$ on $\R^N$, we then get 
\be \label{eq:eq-ovw_n--ok}
 \cL_{\ov K_n} \ov w_n =F_n+ \frac{2\rho_n^2}{ r_n^{2s+\a} |h_n|^{1+\b}\Theta_n(r_n)} \sum_{i,j=1}^N T_{ij}^{r_n,g_n}  h_j  \cO^s_{{\cA}_{o,\ov K_n},x_i} \qquad\textrm{ in $B_{1/2r_n}$.}
\ee
We start by estimating  $F_n$ defined in \eqref{eq:def-F_n-Shaud}.  Thanks to \eqref{eq:will-saveme},  we have 
\begin{align*}
&{\cA}_{o,\ov K_n'}(x,r,\th)= r_n^{-\a}|h_n|^{-\b} \rho_n\int_0^1D_x {\cA}_{o, K_n}(z_n+  \rho_n x+\varrho \rho_n h,\rho_n r,\th)h d\varrho\\
&=  r_n^{-\a}|h_n|^{-\b} \rho_n r\int_0^1 \left[D_r {\cA}_{ o, K_n}(z_n+  \rho_n x+  \rho_n h,\varrho \rho_n r,\th)- D_r {\cA}_{ o, K_n}(z_n+  \rho_n x,\varrho \rho_n r,\th) \right] d\varrho.
\end{align*}
Recall that $ K_n\in \ti \scrK^s_0(\k, 1+ 2s-2+\a, Q_\infty)$ with  $1>2s-2+\a>0$, and so by definition, 
$$
  \|D_r \cA_{K_n}\|_{ C^{2s+\a-2}(Q_\infty)\times L^\infty(S^{N-1} )}+ \|D_x \cA_{o,K_n}\|_{L^\infty_{2s+\a-2}(Q_\infty)\times L^\infty(S^{N-1} )}\leq C. 
$$ 
It follows that 
$$
| \cA_{o,\ov K_n'}(x,r,\th) |\leq C r_n^{-\a}|h_n|^{-\b} \min ( \rho_n r \rho_n^{2s+\a-2}, \rho_n (\rho_n r)^{2s+\a-2}). 
$$
From this and the fact that $|\d^og_n(y,t,\th)|\leq t$,  we deduce that 
\begin{align*}
|\cO^s_{{\cA}_{o,\ov K_n'},U_n}(x)|& \leq Cr_n^{-2s-\a}|h_n|^{-1-\b} \int_0^\infty\rho_n r \min ( \rho_n r  \rho_n^{2s+\a-2}, \rho_n (\rho_n r)^{2s+\a-2})r^{-1-2s }\, dr \\
&= Cr_n^{-2s-\a}|h_n|^{-1-\b}\rho_n^{2s} \int_0^\infty  \min ( t  \rho_n^{2s+\a-2}, \rho_n t^{2s+\a-2})t^{-2s }\, dt  \\
&\leq C r_n^{-2s-\a}|h_n|^{-1-\b}\rho_n^{2s} \int_0^{\rho_n}  \rho_n^{2s+\a-2}    t^{1-2s }\, dt+ C r_n^{-2s-\a}|h_n|^{-1-\b}\rho_n^{2s} \int_{\rho_n}^\infty \rho_n    t^{\a-2} \, dt    .
\end{align*}
Therefore, $\|\cO^s_{{\cA}_{o,\ov K_n'},U_n} \| _{L^\infty(\R^N)}\leq C$. By combining this with  \eqref{eq:estime-ovF--e} and   \eqref{eq:estime-ovf_n-U_n},  we get $$\|F_n\|_{L^\infty(\R^N)}\leq \frac{C}{\Theta_n(r_n)}.$$
Next, we estimate the last term in the right hand in \eqref{eq:eq-ovw_n--ok}.   From the first inequality in \eqref{eq:assump-on-ti-l-Ko-inProp-n}, we get  
\be  \label{eq:estim-cOs-Schauder}
 |\cO^s_{{\cA}_{o,\ov K_n},x_i}  (x)|\leq C \int_0^\infty \min (\rho_n r, 1) r^{-2s}\, dr\leq C \rho_n^{2s-1}.
\ee
  Since $g_n(0)=|\n g_n(0)|=0$ and $\|g_n\|_{C^{2s+\b}(\R^N)}\leq 1$, we then have that $|  T_{ij}^{r_n,g_n} |\leq C r_n^{-2+2s+\b}$.  Hence, since $2s+\b\geq 1+\a$, by \eqref{eq:estim-cOs-Schauder}, the last term in the right hand in \eqref{eq:eq-ovw_n--ok} is   bounded by $\frac{C}{\Theta_n(r_n)}$. Since $\frac{1}{\Theta_n(r_n)}$ tends to zero as $n\to\infty$,      thanks to  Lemma \ref{lem:convergence-very-weak}, we can pass to the limit in  \eqref{eq:eq-ovw_n--ok}, to get   $\cL_{\mu_b} \ov v_{h,0}=0 $ in $\R^N$.  Hence, since $2s-1+\a<2s$,  Lemma \ref{lem:Liouville}  implies that, there exist $\ov c\in \R$ and $\ov d\in \R^N$, only depending on $h,\a,\b,N,s$ and $\k$,  such that $ \ov v_{h,0}(x)= \ov v(x+h)-\ov v(x)=\ov d\cdot x+\ov c $  for all  $x,h\in \R^N$. Now, since $\n \ov v(0)=0$, we find   $\n \ov v\equiv 0$ on $\R^N$. This  contradicts \eqref{eq:to-contradict-ov-v} and the proof of $(ii)$ is finished.\\

The proof of  $(iii)$  follows (verbatim) the same argument as the one of $(i)$. The  fact that $K_n\in \ti \scrK_{\a+\e}^{1/2}(\k,\a,Q_\infty)$ is   only needed to deduce  the uniform bound  $\|\cO^{1/2}_{{\cA}_{o,\ov K_n'},U_n}\|_{L^\infty(\R^N)}\leq C$ from the uniform estimate $
 |{\cA}_{o,\ov K_n'}(x,r,\th)|\leq C  r_n^{-\a}|h_n|^{-\a/2} \min ( \rho_n ,  \rho_n r)^{ \a+\e } .
 $\\

The proof of $(iv)$ does not differ much from the one of $(i)$. We skip the details.
\end{proof}
As a consequence of the previous result, we have  the following 
   \begin{theorem}\label{th:Schauder-000}
Let $s\in (0,1)$, $N\geq 1$, $\k >0$ and $\a\in (0,1)$.   Let $K\in \ti \scrK_{\t}^s(\k, \a, Q_\infty)$ with $\t=\min(\a+(2s-1)_+,1)$.
  Let $u\in H^s(B_2)\cap  C^\a(\R^N)$ and $f\in C^\a(B_2)$ satisfy
 $$
 \cL_K u= f\qquad \textrm{in $B_2$}.
 $$
\begin{itemize}
\item[$(i)$]  If $2>2s+\a>1$, $2s+\a\not=2$ and $2s\not=1 $,    then  there exists $C=C(N,s,\k,\a)>0$ such that
$$
\|u\|_{C^{2s+\a}(B_1)}\leq C(\|u\|_{C^\a(\R^N)}+ \|f\|_{C^\a(B_2)}).
$$
\item[$(ii)$]  If $2s+\a>2$ and  $K\in \ti \scrK_{0}^s(\k, 2s-1+\a, Q_\infty)$   then  there exists $C=C(N,s,\k,\a)>0$ such that
$$
\|u\|_{C^{2s+\a}(B_1)}\leq C(\|u\|_{C^\a(\R^N)}+ \|f\|_{C^\a(B_2)}).
$$
 \item[$(iii)$]   If $2s=1$ and  $K\in \ti  \scrK_{\a+\e}^s(\k, \a, Q_\infty)$, for some $\e>0$,  then  there exists $C=C(N,s,\k,\a,\e)>0$ such that
$$
\|u\|_{C^{1+\a}(B_1)}\leq C(\|u\|_{C^\a(\R^N)}+ \|f\|_{C^\a(B_2)}).
$$
\item[$(iv)$]     If $2s+\a <1$,    then  there exists $C=C(N,s,\k,\a)>0$ such that
$$
\|u\|_{C^{2s+\a}(B_1)}\leq C(\|u\|_{C^\a(\R^N)}+ \|f\|_{C^\a(B_2)}).
$$
\end{itemize}

 \end{theorem}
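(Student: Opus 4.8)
\textbf{Overview and localization.} The plan is to deduce Theorem~\ref{th:Schauder-000} from the Campanato-type growth estimates of Proposition~\ref{prop:bound-wth-corrector-nab}, after first placing $u$ in a regularity class to which that proposition applies. Since the equation holds only in $B_2$ while $u$ is merely $C^\a$ outside (recall that $\|u\|_{C^\a(\R^N)}$ controls $\|u\|_{L^\infty(\R^N)}$, hence $u\in {L_s(\R^N)}$), I would first cut off. By Lemma~\ref{lem:estim-G_Ku}, $\vp_{1/2}u$ solves $\cL_K(\vp_{1/2}u)=f+G_{K,u,1/2}$ in $B_{1/4}$, and Lemma~\ref{lem:estim-G_Ku}(ii), applied with $k=0$ using $\cA_K\in C^{0,\a}_{1,2}(Q_\infty)$ (which holds since $K\in\ti\scrK_{\t}^s(\k,\a,Q_\infty)$), gives $\|G_{K,u,1/2}\|_{C^\a(B_{1/4})}\le C\|u\|_{C^\a(\R^N)}$. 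Rescaling $B_{1/4}$ to $B_2$, this reduces matters to a globally defined, compactly supported solution, which is what makes the \emph{global} hypothesis $g\in C^{2s+\b}(\R^N)$ of Proposition~\ref{prop:bound-wth-corrector-nab} available once the corresponding interior regularity is in hand; this localization is to be repeated whenever the interior regularity is improved.

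\textbf{Initial regularity bootstrap.} To invoke Proposition~\ref{prop:bound-wth-corrector-nab} one needs $u\in C^{2s+\b}_{loc}(B_2)$ for a suitable $\b\in(0,\a)$ (in case~(ii) one moreover takes $2s+\b\ge 1+\a$, which is possible since $2s>1$ there). If $\a>2s$ this holds already; otherwise I would iterate the interior estimates on incremental quotients. For $h\ne0$ small and $0<\g'$ below the current regularity of $u$, the quotient $u_{h,\g'}$ of \eqref{eq:def-f-h-alph} solves
$$
\cL_K u_{h,\g'}+\cL_{K^{(h)}}\,u(\cdot+h)=f_{h,\g'}\qquad\textrm{on a slightly smaller ball,}
$$
with $K^{(h)}(x,y):=|h|^{-\g'}\bigl(K(x+h,y+h)-K(x,y)\bigr)$ satisfying \eqref{eq:K'-Kernel-satisf} precisely because $\cA_K$ is H\"older in the $x$-variable. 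Applying Corollary~\ref{cor:Holder-cont-coeff} (or Corollary~\ref{cor:Holder-many-K'}) uniformly in $h$ raises the H\"older exponent of $u$ by a fixed amount; once the regime where $2s>1$ permits it is reached, Corollary~\ref{cor:Holder-reg-Global} and Theorem~\ref{th:abs-res-Propo} take over on first- and second-order quotients. Finitely many such steps, together with the localization, yield $\vp_{1/2}u\in C^{2s+\b}(\R^N)$ with norm controlled by $\|u\|_{C^\a(\R^N)}+\|f\|_{C^\a(B_2)}$; in case~(ii) this in particular produces $u\in C^{1,\g_0}_{loc}(B_2)$ for some $\g_0>0$, so that $\n u(z)$ is defined for $z\in B_{1/2}$.

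\textbf{From growth estimates to Schauder bounds, and covering.} Fix $z\in B_{1/2}$ and translate/rescale so the equation is posed on $B_2$ centred at $z$ (the classes $\ti\scrK_{\t}^s(\k,\cdot,Q_\infty)$ are invariant under translations and dilations, as recorded in Section~\ref{ss:General-nonlocaloperator}). Apply the matching part of Proposition~\ref{prop:bound-wth-corrector-nab}: to $g=u(z+\cdot)$ in cases~(i), (iii), (iv), and to $g(x):=u(z+x)-u(z)-\n u(z)\cdot x$ in case~(ii) (this $g$ has vanishing value and gradient at the origin, as required there). One obtains, uniformly for $z\in B_{1/2}$ and $r\in(0,1/2)$,
\begin{gather*}
[\n u]_{C^\b(B_r(z))}\le Cr^{2s-1+\a-\b}\ \ \textrm{(cases (i), (iii))},\qquad
[\n^2 u]_{C^\b(B_r(z))}\le Cr^{2s+\a-2-\b}\ \ \textrm{(case (ii))},\\
[u]_{C^{2s+\b}(B_r(z))}\le Cr^{\a-\b}\ \ \textrm{(case (iv))},
\end{gather*}
with $C\le C(\|u\|_{C^\a(\R^N)}+\|f\|_{C^\a(B_2)})$. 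A standard iteration of these decay estimates, exactly as in the proof of Corollary~\ref{cor:Holder-reg-Global}, cf.\ \cite{Serra-OK,Stein}, converts them into the pointwise bounds $\n u\in C^{2s-1+\a}(\overline{B_{1/2}})$, i.e.\ $u\in C^{2s+\a}(\overline{B_{1/2}})$, in (i) and (iii); $\n^2 u\in C^{2s+\a-2}(\overline{B_{1/2}})$, i.e.\ $u\in C^{2s+\a}(\overline{B_{1/2}})$, in (ii); and $u\in C^{2s+\a}(\overline{B_{1/2}})$ in (iv). A finite covering of $\overline{B_1}$ by such balls then concludes.

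\textbf{Main difficulty.} The substantive input is Proposition~\ref{prop:bound-wth-corrector-nab}; in this deduction the delicate point is the bootstrap step, namely keeping all constants uniform through the incremental-quotient iteration and checking at each stage that the correction kernels $K^{(h)}$ and the right-hand sides stay in the classes required by Corollaries~\ref{cor:Holder-cont-coeff}--\ref{cor:Holder-many-K'} and Theorem~\ref{th:abs-res-Propo}. The borderline case $2s=1$ is the most sensitive: since the first assertion of Lemma~\ref{lem:2sp-alph-estim-F_Keo} fails there, one must exploit the stronger hypothesis $K\in\ti\scrK_{\a+\e}^s(\k,\a,Q_\infty)$ to control the odd part of the operator, which is precisely the mechanism already built into Proposition~\ref{prop:bound-wth-corrector-nab}(iii), so no new idea is needed beyond feeding the right objects into it.
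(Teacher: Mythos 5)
Your plan is the same as the paper's: both proofs hinge on the Campanato--type growth estimates of Proposition~\ref{prop:bound-wth-corrector-nab}, after a cutoff via Lemma~\ref{lem:estim-G_Ku} and a preliminary promotion of $u$ to $C^{2s+\b}$ for some $\b\in(0,\a)$. Your treatment of cases~(i), (iii), (iv) is in fact a small simplification of the paper's: you apply the proposition directly to (the cutoff of) $u(z+\cdot)$ and read off $C^{2s+\a}$ by taking $r\sim|x-y|$ in the $C^\b$-seminorm bound, whereas the paper additionally subtracts a cutoff affine corrector $U=\vp_4\,\n u(z)\cdot x$ (to force $\n g_z(0)=0$ and get an $L^\infty$ growth bound) and then must control $\cL_{K_z}U$ via the operator-splitting identity \eqref{eq:Operator-splitting} and Lemma~\ref{lem:2sp-alph-estim-F_Keo}. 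Your route avoids that machinery in those three cases.

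However, in case~(ii) this shortcut is not available and your proposal has a genuine gap. You correctly note that Proposition~\ref{prop:bound-wth-corrector-nab}(ii) requires $g(0)=|\n g(0)|=0$, and accordingly set $g(x)=u(z+x)-u(z)-\n u(z)\cdot x$. But this $g$ grows linearly, so it is not in $C^{2s+\b}(\R^N)$ in the sense required (the $L^\infty$ part of the norm is infinite), and after the necessary cutoff the equation $\cL_K g=F$ picks up the term $-\cL_{K_z}\bigl(\vp\,\n u(z)\cdot x\bigr)$ on the right-hand side, which you never estimate. This is precisely where the paper invokes the even/odd splitting $\cL_K U=\cE^s_{\cA_{e,K},U}+\cO^s_{\cA_{o,K},U}$ together with Lemma~\ref{lem:2sp-alph-estim-F_Keo} to show $F\in C^\a$ with the right bound; without this step, the hypothesis $f\in C^\a(\R^N)$ of Proposition~\ref{prop:bound-wth-corrector-nab}(ii) is not verified, and the argument does not close. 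A secondary, softer issue is the initial bootstrap to $C^{2s+\b}$: the paper simply cites \cite{Fall-Reg} for it, while you sketch an incremental-quotient iteration through Corollaries~\ref{cor:Holder-cont-coeff}--\ref{cor:Holder-many-K'}, Corollary~\ref{cor:Holder-reg-Global} and Theorem~\ref{th:abs-res-Propo}; the sketch is plausible but not carried out (one has to check that the initial H\"older exponent can be pushed above $s$, that the gains per step accumulate past the integer threshold using second-order quotients, and that the corrector kernels $K^{(h)}$ stay in the classes \eqref{eq:K'-Kernel-satisf} throughout). Neither of these is a conceptual wrong turn, but both need to be filled in for a complete proof.
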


\begin{proof}
From \cite{Fall-Reg},  there exists   $\b\in (0,\a)$, only depending on $N,s,\k$ and $\a$ such that, if $2s+\b\not \in \N$,
\be \label{eq:u-2s-b-proof}
 \|u\|_{C^{2s+\b}(B_1)}\leq C (\|u\|_{C^{\a}(\R^N)}+   \|f\|_{C^{\a}{(B_2)}} ),
\ee
for some $C=C(N,s,\a,\b,\k)$.\\
\noindent
\textbf{Case 1: $1<2s+\a<2$.}
For $z\in B_{1}$, we define
\be \label{eq:defin-g_z}
 g_z(x)= u(x+z)-u(z)- \vp_4(x) \n u(z)\cdot x 
\ee
which satisfies $g_z(0)=|\n g_z(0)|=0$.
 We introduce the cut-off function $\vp_4$ only because  the functions $x\mapsto x_i$, for $i=1,\dots,N$, do not belong to ${L_s(\R^N)}$ when $2s=1$.
 By construction and   \eqref{eq:u-2s-b-proof},  we have
\be  \label{eq:estim-d-C2s-e_0}
 \|g_z\|_{C^{2s+\b}(\R^N)}\leq C \|u\|_{C^{2s+\b}(B_1)}\leq   C (\|u\|_{C^{\a}(\R^N)}+   \|f\|_{C^{\a}{(B_2)}} ).
\ee
 In addition, by Lemma \ref{lem:estim-G_Ku}$(iii)$,
 $$
 \cL_{K_z}  g_z = f(\cdot + z)  -\cL_{K_z} U \qquad\textrm{ in $B_{1}$},
$$
where $K_z(x,y)=K(x+z,y+z)$,  $U(x):= \vp_4(x) \n u(z)\cdot x$. From Lemma \ref{lem:estim-G_Ku}$(iii)$, we get
 $$
 \cL_{K_z}  (\vp_{1/2}g_z) =\ti  f -\cL_{K_z} U \qquad\textrm{ in $B_{1/8}$},
$$
 for some function $\ti f$, satisfying 
\be \label{eq:ti-Calpba}
\|\ti f\|_{C^{\a}(\R^N)}\leq C  (\|u\|_{C^{\a}(\R^N)}+   \|f\|_{C^{\a}{(B_2)}} ).
\ee
Using \eqref{eq:Operator-splitting}, we then have that
\be \label{eq:clK_zcatg-z}
 \cL_{K_z} (\vp_{1/2}g_z) =F \qquad\textrm{ in $B_{1/8}$,}
\ee
  where $F:=\ti f  -\cE^s_{{\cA}_{e,K}, U} -\cO^s_{{\cA}_{o,K}, U}$.
Because $K_z\in   \ti \scrK_{\a+2s-1}^s(\k,\a,Q_\infty)$ for $2s> 1$ and  $K_z\in \ti \scrK_{\a+\e}^s(\k,\a,Q_\infty)$ for $2s=1$ and   since $U\in C^2_c(\R^N)$, we   deduce from Lemma \ref{lem:2sp-alph-estim-F_Keo} that
$$
\|\cE^s_{{\cA}_{e,K_z}, U}\|_{C^\a(\R^N)}+ \|\cO^s_{{\cA}_{o,K_z}, U}\|_{C^\a(\R^N)} \leq C(N,s,\a) \|\n u\|_{L^\infty(B_1)}.
$$
This with \eqref{eq:ti-Calpba} and \eqref{eq:u-2s-b-proof} imply that 
\be\label{eq:estim-F-a-Shaud}
 \|F\|_{C^{\a}(\R^N)}\leq C  (\|u\|_{C^{\a}(\R^N)}+   \|f\|_{C^{\a}{(B_2)}} ).
\ee
Thanks to \eqref{eq:clK_zcatg-z}, applying Proposition  \ref{prop:bound-wth-corrector-nab}$(i)$ and $(iii)$ and using \eqref{eq:estim-F-a-Shaud}, provided $1<2s+\a<2$, we get
$$
\|\n (\vp_{1/2}g_z)\|_{L^\infty(B_r)}\leq C r^{2s-1+\a}\left( \|\vp_{1/2}g_z\|_{C^{2s+\b}(\R^N)}+  \|u\|_{C^{\a}(\R^N)}+   \|f\|_{C^{\a}{(B_2)}}\right).
$$
As a consequence,   by   \eqref{eq:estim-d-C2s-e_0},  for all $r\in (0,1/2)$ and all $z\in B_{1}$, 
$$
\|\n u- \n u(z)\|_{L^\infty(B_r(z))}\leq C r^{2s-1+\a}\left( \|u\|_{C^{\a}(\R^N)}+   \|f\|_{C^{\a}{(B_2)}} \right),
$$
for some $C=C(N,s,\a,\k)$.
We then conclude that 
$$
\|\n u\|_{C^{2s-1+\a}(B_{1/8})}\leq C \left( \|u\|_{C^{\a}(\R^N)}+   \|f\|_{C^{\a}{(B_2)}}\right).
$$
Therefore $(i)$ and $(iii)$   follow from    a covering and scaling argument.\\
\noindent
\textbf{Case 2: $2s+\a>2$.} We know from \textbf{Case 1} that  for all  $\b\in (0, 2-2s)$,    
\be \label{eq:u-2s-b-proof00}
 \|u\|_{C^{2s+\b}(B_1)}\leq C (\|u\|_{C^{\a}(\R^N)}+   \|f\|_{C^{\a}{(B_2)}} ).
\ee
We then consider the function $g_z$  defined in \eqref{eq:defin-g_z}. Hence,  thanks to \eqref{eq:clK_zcatg-z},  by Proposition  \ref{prop:bound-wth-corrector-nab}$(ii)$, \eqref{eq:u-2s-b-proof00} and  \eqref{eq:estim-F-a-Shaud}  we get   
\be \label{eq:eq-to-iter-C2}
\|\n (\vp_{1/2}g_z)-\n \textbf{Q}_{r, \vp_{1/2}g_z}\|_{L^\infty(B_r)}\leq C r^{2s-1+\a} (\|u\|_{C^{\a}(\R^N)}+   \|f\|_{C^{\a}{(B_2)}} ),
\ee
provided  $2s+\b\geq 1+\a$. 
 In view of \eqref{eq:eq-to-iter-C2}, we can use an iteration argument to obtain, for all $r\in (0,1/2)$, 
$$
\|\n u-\n u(z)-M_z (\cdot-z)\|_{L^\infty(B_r(z))}\leq C r^{2s-1+\a} (\|u\|_{C^{\a}(\R^N)}+   \|f\|_{C^{\a}{(B_2)}} ).
$$
for some $(N\times N)$-matrix $M_z$ satisfying $|M_z|\leq C(\|u\|_{C^{\a}(\R^N)}+   \|f\|_{C^{\a}{(B_2)}} )$. Since $2s-1+\a>1$, we deduce that $M_z = D^2 u(z) $.   
Using now an  extension theorem, see e.g.  \cite{Stein}[Page 177],  we conclude that 
$$
\|\n u\|_{C^{1,2s-2+\a}(B_{1/8})}\leq C \left( \|u\|_{C^{\a}(\R^N)}+   \|f\|_{C^{\a}{(B_2)}}\right).
$$
We thus get  $(iii)$ after a covering and scaling argument.\\
\noindent
\textbf{Case 3: $2s+\a<1$.} Here,  we argue as in \textbf{Case 1},   by applying Proposition  \ref{prop:bound-wth-corrector-nab}$(iii)$ to  the function $  g_z(x)= \vp_{1/2}(x)\{u(x+z)-u(z)\} $. We skip the details. 
\end{proof}


By an induction argument, we have the following result.
\begin{theorem}\label{th:C-2s-k-al-reg-nonreg-theta}
Let $N\geq 1$,   $s\in (0,1)$ and $\a\in (0,1)$. Let  $\k>0$,  $k\in\N$ and   $K\in\ti \scrK_{\t} ^s (\k,k+ \a , Q_\infty)$, with $\t=\min(1,\a+(2s-1)_+)$.
Let  $u\in H^s(B_2)\cap C^{k+\a}(\R^N)$   and $f\in C^{k+\a}(\R^N)$  
  such that  
$$
 \cL_K  u =f \qquad\textrm{ in $B_2$}. 
$$
\begin{itemize}
\item[$(i)$] If $2s\not=1$ and  $2s+\a<2$, then there exists $C=C(N,s,k,\k,\a)$  such that 
 $$
 \|u\|_{C^{k+2s+\a}(B_1)}\leq C (\|u\|_{C^{k+\a}(\R^N)}+   \|f\|_{C^{k+\a}(\R^N)} ).
 $$
 \item[$(ii)$]  If $2s=1$ and $K\in\ti \scrK_{\a+\e}^{1/2}(\k,k+ \a , Q_\infty)$, for some $\e>0$,   then   there exists $C=C(N,k,\k,\a,\e)$  such that 
 $$
 \|u\|_{C^{k+1+\a}(B_1)}\leq C (\|u\|_{C^{k+\a}(\R^N)}+   \|f\|_{C^{k+\a}(\R^N)} ).
 $$
  \item[$(iii)$]  If $2s+\a>2$ and  $K\in \ti \scrK_{0}^s(\k, k+2s-1+\a, Q_\infty)$,   then  there exists $C=C(N,s,\k,\a)>0$ such that
$$
\|u\|_{C^{k+2s+\a}(B_1)}\leq C(\|u\|_{C^\a(\R^N)}+ \|f\|_{C^\a(B_2)}).
$$
\end{itemize}

\end{theorem}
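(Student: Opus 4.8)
The proof is by induction on $k$, using the base case $k=0$ already established in Theorem \ref{th:Schauder-000}. Fix a multi-index $\gamma\in\N^N$ with $|\gamma|=1$, write $\partial_\ell$ for the corresponding first-order derivative, and consider the difference quotient $u_{h,1}(x)=\big(u(x+h)-u(x)\big)/|h|$ for small $h\in\R^N\setminus\{0\}$. Differentiating the equation $\cL_K u=f$ in the weak sense, $u_{h,1}$ solves an equation of the form $\cL_{K_h}u_{h,1}+\cL_{K'_h}(\tau_h u)=f_{h,1}$ in a slightly smaller ball, where $K_h(x,y)=K(x+h,y+h)$, $K'_h$ is the difference quotient of the kernel in the $x$-variable, and $\tau_h u=u(\cdot+h)$. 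The key point is that because $K\in\ti\scrK^s_\t(\k,k+\a,Q_\infty)$, its coefficient $\cA_K$ lies in $C^{k,\a}_{1,2}(Q_\infty)$ with the odd part in $\cC^k_\t(Q_\infty)$, so the difference quotients of $\cA_K$ in $x$ give a kernel $K_h$ still in $\ti\scrK^s_\t(\k',(k-1)+\a,Q_\infty)$ (uniformly in $h$), and the "error" kernel $K'_h$ satisfies a bound of the type \eqref{eq:K'-Kernel-satisf} with $\a'$ controlled. Passing $h\to 0$ along the coordinate directions, one sees formally that $v=\partial_\ell u$ ought to satisfy $\cL_{K}v = \partial_\ell f - (\text{commutator terms involving }\partial_\ell\cA_K\text{ acting on }u)$ in $B_{3/2}$; the commutator term is, via the splitting \eqref{eq:Operator-splitting}, of the form $\cE^s_{\partial_\ell\cA_{e,K},u}+\cO^s_{\partial_\ell\cA_{o,K},u}$, which by Lemma \ref{lem:2sp-alph-estim-F_Keo} lies in $C^{(k-1)+\a}$ with norm controlled by $\|u\|_{C^{k+2s+\a-\text{something}}}$ — but at this stage we only know $u\in C^{k-1+2s+\a}$ from the inductive step, plus the base-case regularity, and one must organize the bootstrap carefully so that each application of Lemma \ref{lem:2sp-alph-estim-F_Keo} uses only regularity already proven.

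Concretely, the plan is: first apply the inductive hypothesis (case $k-1$) to the original equation to get $u\in C^{(k-1)+2s+\a}(B_{3/2})$ with the quantitative bound; then localize (multiply by a cutoff $\vp_\rho$ and use Lemma \ref{lem:estim-G_Ku} to absorb the tail contribution into a $C^{k-1+\a}$, hence $C^{k+\a}$ after re-examination, right-hand side) so that we may work with $v=\partial_\ell u$ on all of $\R^N$ after cutting off. Next, establish rigorously that $\vp_{1/2}v$ solves $\cL_K(\vp_{1/2}v)=\tilde f_\ell$ in $B_{1/8}$, where $\tilde f_\ell = \vp_{1/2}\partial_\ell f + \cE^s_{\partial_\ell\cA_{e,K},u}+\cO^s_{\partial_\ell\cA_{o,K},u} + G$ with $G$ the Lemma \ref{lem:estim-G_Ku}-type cutoff error; by Lemma \ref{lem:2sp-alph-estim-F_Keo} applied with $m=k-1$ (note $\partial_\ell\cA_K$ loses one derivative, landing in $C^{(k-1)+\a}_{1,2}$, and the relevant odd part is still in $\cC^{k-1}_\t$), together with $u\in C^{(k-1)+2s+\a}$ already known, we get $\tilde f_\ell\in C^{(k-1)+\a}(\R^N)$ with norm $\le C(\|u\|_{C^{k+\a}(\R^N)}+\|f\|_{C^{k+\a}(\R^N)})$. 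Now $v$ solves a $C^{(k-1)+\a}$-type Schauder problem with a kernel in $\ti\scrK^s_\t(\k',(k-1)+\a,Q_\infty)$, so by the inductive hypothesis applied to $v$ (this is the crux — it requires $2s+\a$ in the same regime, which holds since $s,\a$ are fixed), $v\in C^{(k-1)+2s+\a}(B_{1/16})$, i.e.\ $u\in C^{k+2s+\a}$ near that point. A covering and scaling argument upgrades this to the estimate on $B_1$. The three cases $(i),(ii),(iii)$ are handled uniformly, taking care in case $(iii)$ that the hypothesis $K\in\ti\scrK^s_0(\k,k+2s-1+\a,Q_\infty)$ is exactly what is needed so that $\partial_\ell\cA_K$ still has enough regularity ($(k-1)+2s-1+\a$ in the $\cC$-scale) to feed into Lemma \ref{lem:2sp-alph-estim-F_Keo} in the $2s+\a>2$ branch, and in case $(ii)$ that the extra $\e$-room in $\t=\a+\e$ is inherited by the difference-quotient kernel.

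The main obstacle is bookkeeping the regularity of the commutator/error terms so that the induction closes without a circular gain of derivatives: one must verify that differentiating the kernel coefficient in $x$ produces exactly a coefficient of one lower order in each of the relevant function spaces $C^{m,\a}_{1,2}$ and $\cC^m_\t$, and that the resulting forcing term $\cE^s_{\partial_\ell\cA_{e,K},u}+\cO^s_{\partial_\ell\cA_{o,K},u}$ requires only $\|u\|_{C^{(k-1)+2s+\a}}$ (supplied by the previous step), not $\|u\|_{C^{k+2s+\a}}$ (the quantity we are trying to bound). A secondary technical point is justifying the passage to the limit $h\to0$ in the weak formulation, i.e.\ showing $\cL_{K'_h}(\tau_h u)\to \cE^s_{\partial_\ell\cA_{e,K},u}+\cO^s_{\partial_\ell\cA_{o,K},u}$ in an appropriate sense; this is handled by the same kernel-continuity and dominated-convergence estimates used repeatedly in Section \ref{s:Shaud}, combined with the fact that $u\in C^{(k-1)+2s+\a}$ already has enough smoothness to make $\d^e u$ and $\d^o u$ decay like $\min(1,r)^{2s+\text{(lower order)}}$ so the defining integrals of $\cE^s,\cO^s$ converge absolutely. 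Once these two points are in place, the rest is routine localization, cutoff-error control via Lemma \ref{lem:estim-G_Ku}, and the standard covering/scaling reduction, exactly paralleling the proof of Theorem \ref{th:Schauder-000}.
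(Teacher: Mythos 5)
Your overall architecture matches the paper's: induct on $k$ starting from Theorem \ref{th:Schauder-000}, take a difference quotient $u_{h,1}$, rewrite the resulting equation via the splitting \eqref{eq:Operator-splitting} to expose a commutator of the form $\cE^s_{\partial_\ell\cA_{e,K},u}+\cO^s_{\partial_\ell\cA_{o,K},u}$, control it with Lemma \ref{lem:2sp-alph-estim-F_Keo}, cut off with Lemma \ref{lem:estim-G_Ku}, and close. The organizational choice is slightly different — you differentiate once and feed $v=\partial_\ell u$ into the inductive hypothesis at order $k-1$, whereas the paper differentiates $k$ times at once and applies Theorem \ref{th:Schauder-000} (i.e.\ $m=0$) to $\partial^k u$; both are valid bookkeeping, and both ultimately invoke the order-$(k-1)$ estimate on $u$ to control the commutator.

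There are, however, two genuine gaps. First, before you can assert that $v=\partial_\ell u$ (after a cutoff) is a weak solution of the differentiated equation, you must know $\partial_\ell u\in H^s_{loc}$, and this is not automatic: the inductive hypothesis yields $u\in C^{(k-1)+2s+\a}_{loc}$, so for $k=1$ one only has $\nabla u\in C^{2s+\a-1}$, and when $s+\a\le 1$ this exponent is $\le s$, which does not place $\nabla u$ in $H^s_{loc}$. Your remark that $u\in C^{(k-1)+2s+\a}$ makes the $\cE^s,\cO^s$ integrals converge absolutely addresses only the pointwise evaluation of the commutator, not the variational well-posedness of $\cL_K(\partial_\ell u)$. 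The paper closes this by first applying Corollary \ref{cor:Holder-cont-coeff} to the difference quotient $u^1_{h,1}$ (treating the kernel difference quotient as the $K'$-term), obtaining a bound in $C^\sigma$ for some $\sigma\in(s,\min(1,2s))$ uniformly in $h$; this step is absent from your plan and cannot be skipped. Second, you claim the three cases are handled uniformly, but in case $(ii)$ ($2s=1$) the relevant bullet of Lemma \ref{lem:2sp-alph-estim-F_Keo} requires $u\in C^{m+1+\a+\e'}$ (an $\e'$-excess) to deliver $C^{m+\a}$ control of the commutator, while the inductive hypothesis gives only $u\in C^{(k-1)+1+\a}$. This forces a preliminary pass that produces $u\in C^{k+1+\a-\d}$ for small $\d$, followed by a re-cutoff and a second application that removes the $\d$-loss; the paper does exactly this, and your proposal does not account for it.
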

\begin{proof}
We will prove $(i)$ and $(ii)$, since the one of $(iii)$ will follow the same arguments.
The case $k=0$, that $u\in C^{2s+\a}(B_1)$, is proved  in Thoerem \ref{th:Schauder-000}. We prove the statement first for $k=1$. 
By Lemma \ref{lem:estim-G_Ku}$(ii)$, we have that  $u^1:=\vp_{1} u\in C^{2s+\a}(\R^N)\cap C^{1,\a}(\R^N)$ and 
$$
 \cL_K  u^1=f^1 \qquad\textrm{ in $B_{2^{-2}}$,}
$$
for some function $f^1$ satisfying
\be\label{eq:estim--nab-f--1-C-a}
\| f^1\|_{C^{1+\a}(B_{2^{-2}})}\leq C (\|u\|_{C^{1+\a}(\R^N)}+ \| f\|_{C^{1+\a}(\R^N)}) .
\ee
Let $h \in B_{2^{-4}}$, with $h\not=0$. Then (recalling \eqref{eq:def-f-h-alph})
$$
 \cL_K   u^1_{h,1}-\cL_{K_{h,1}} u^1(\cdot+h) =  f_{h,1}^1 \qquad\textrm{ in $B_{2^{-4}}$}. 
$$
We note that $K_{h,1} $ satisfies \eqref{eq:K'-Kernel-satisf}, with $\a'=0$. 
By Corollary \ref{cor:Holder-cont-coeff}, we obtain $u_{h,1}^1$ is uniformly bounded in $C^\s(B_{2^{-5}})$, for some $\s\in (s,2s)$. Therefore $\n u^1\in C^{0,\s}(B_{2^{-5}}) \subset H^s( B_{2^{-5}})$. Using \eqref{eq:Operator-splitting}, we then have that
$$
 \cL_K   u^1_{h,1}-   \cE^s_{{\cA}_{e, K_{h,1}}, u^1_h}- \cO^s_{{\cA}_{o,K_{h,1}}, u^1_h}=  f_{h,1}^1 \qquad\textrm{ in $B_{2^{-4}}$}. 
$$
Letting $h\to 0$, we see that, for all $i_1\in\{1,\dots,N\}$, 
$$
 \cL_K  \de_{i_1} u^1=  \cE^s_{\de_{i_1}{\cA}_{e,K} , u^1}+ \cO^s_{\de_{i_1}{\cA}_{o,K} , u^1} +  \de_{i_1} f^1=:f^1_{i_1} \qquad\textrm{ in $B_{2^{-5}}$}. 
$$
By Lemma \ref{lem:2sp-alph-estim-F_Keo} and \eqref{eq:estim--nab-f--1-C-a}, if $2s\not=1$, the right hand-side in the above display belongs to $C^\a(\R^N)$  and satisfies
$$
\|f^1_{i_1} \|_{C^\a(B_{2^{-2}})}\leq C (\|u\|_{C^{1+\a}(\R^N)}+ \|f\|_{C^\a(B_2)}) .
$$
On the other hand if $2s=1$, then  Lemma \ref{lem:2sp-alph-estim-F_Keo} and \eqref{eq:estim--nab-f--1-C-a} yield
$$
\|f^1_{i_1} \|_{C^{\a-\d}(B_{2^{-2}})}\leq C (\|u\|_{C^{1+\a}(\R^N)}+ \|f\|_{C^\a(B_2)}) ,
$$
for all $\d\in (0,1-\a)$.
 It follows from Theorem \ref{th:Schauder-000} that if $2s\not=1$, then 
$$
\|\de_{i_1} u^1\|_{C^{2s+\a}(B_{2^{-6}})} \leq C (\|u\|_{C^{1+\a}(\R^N)}+ \|f\|_{C^\a(B_2)}) 
$$ 
and for $2s=1$, 
$$
\|\de_{i_1} u^1\|_{C^{2s+\a-\d}(B_{2^{-6}})} \leq C (\|u\|_{C^{1+\a}(\R^N)}+ \|f\|_{C^\a(B_2)}) .
$$ 
We now remove the $\d$ in the above estimate (for $2s=1$). Indeed,   we define $\ov u^1:=\vp_{2^{-7}} u^1 \in C^{2s+\a+(1-\d)}(\R^N)$ which, by  Lemma \ref{lem:estim-G_Ku}$(iii)$,  satisfies
$$
\cL_K \ov u^1=\ov f^1   \qquad\textrm{ in $B_{2^{-8}}$},
$$
with 
$$
\|\ov f^1\|_{C^\a(\R^N)}\leq C ( \|u\|_{C^\a(\R^N)} + \|f\|_{C^\a(\R^N)}  ).
$$
Therefore proceeding as above, we have 
\be \label{eq:cL_K-dui1ovu}
 \cL_K  \de_{i_1} \ov u^1=  \cE^s_{\de_{i_1}{\cA}_{e,K} , \ov u^1}+ \cO^s_{\de_{i_1}{\cA}_{o,K} , \ov u^1} +  \de_{i_1} \ov f^1=:\ov f^1_{i_1} \qquad\textrm{ in $B_{2^{-9}}$}. 
\ee
Since $K\in \ti \scrK_{\a+\e} ^{1/2}(\k,\a,Q_\infty)$ and $\ov u^1:=\vp_{2^{-7}} u^1 \in C^{2s+\a+1-\d}(\R^N)$,  Lemma \ref{lem:2sp-alph-estim-F_Keo} yields
$$
\| \cE^s_{\de_{i_1}{\cA}_{e,K} , \ov u^1}  \|_{C^{\a}(\R^N)}+\| \cO^s _{\de_{i_1}{\cA}_{o,K} , \ov u^1} \|_{C^{\a}(\R^N)}\leq C \|\ov u^1 \|_{C^{2s+\a+(1-\d)}(\R^N)}.
$$
Applying Theorem \ref{th:Schauder-000} to the equation \eqref{eq:cL_K-dui1ovu},  we then get
$$
\|\de_{i_1} \ov u^1\|_{C^{1+\a}(B_{2^{-10}})} \leq C (\| \ov u^1 \|_{C^{1+\a}(\R^N)}+ \|\ov f^1_{i_1}\|_{C^\a(B_2)} ) .
$$
The theorem is thus proved for $k=2.$\\

Let $k> 2$.
We now prove by induction that for every $(i_1,i_2,\dots, i_k)\in \{1,\dots,N \}^k$ there exist a constant $r_k$, only depending on $k$,  and a constant $ C_k>0$, only depending on $N,s,\k,\a$ and $k$,  such that
\be \label{eq:u-C2s-k-a-induction}
\|\de_{i_1i_2\dots i_k}^k u\|_{C^{2s+\a+k}(B_{r_k})} \leq C_k  (\|u\|_{C^{k+\a}(\R^N)}+ \|f\|_{C^{k+\a}(B_2)}).
\ee
We assume, as induction hypothesis that,   the result is true up order $k-1$. That is,  there exist $r_{k-1}, C_{k-1}>0$, as above,  such that
\be\label{eq:induc-hyp} 
\|  u\|_{C^{2s+k-1+\a}(B_{r_{k-1}})} \leq C_{k-1}(\|u\|_{C^{k-1+\a}(\R^N)}+ \|f\|_{C^{k-1+\a}(B_2)}).
\ee
 We then consider 
$$
u^k:=\vp_{r_{k-1}/2} u\in C^{2s+\a+k-1}(\R^N)\cap  C^{k,\a}(\R^N). 
$$
 By Lemma \ref{lem:estim-G_Ku}$(ii)$, we then have that 
$$
  \cL_K   u^k  =  f^k \qquad\textrm{ in $B_{r_{k-1}/4}$},
$$
for some function 
\be\label{eq:f-k-hold-k-a}
\|f^k \|_{C^{k+\a}(B_{r_{k-1}/4})}\leq C_k' (\|u\|_{C^{k+\a}(\R^N)}+ \|f\|_{C^{k+\a}(B_2)}) ,
\ee
where, unless otherwise stated,  $C_k'$ denotes a positive   constant,    only depending on $N,s,\k,\a$ and $k$.  
Proceeding as above, we can differentiate the equation $k$ times to deduce that  for all $(i_1,i_2,\dots, i_k)\in \{1,\dots,N \}^k$, 
\be \label{eq:cL-k-u-k-induction}
  \cL_K   \de_{i_1i_2\dots i_k}^k u^k  =  g^k+   \de_{i_1i_2\dots i_k}^k f^k \qquad\textrm{ in $B_{r_{k}'}$},
\ee
for constant $r_k'<r_k$, only depending on $r_k$ and $k$,  and for some function $g^k:=\sum_{j=1}^m c_j^e \cE^s_{a_j^e, v_j}+ \sum_{j=1}^m c_j^o \cO^s_{a_j^o, w_j}$ where $c_j^e,c_j^o$ are real numbers, $a_j^e$, $a_j^o$, $v_j$ and $w_j$ are respectively given by the partial derivatives in $x$ of ${\cA}_{e,K}$, ${\cA}_{o,K}$ up to order $k$ and  $v_j$ together with $w_j$  are given by partial derivatives of $u^k$ up to order $k-1$. Therefore, provided $2s\not=1$, by Lemma \ref{lem:2sp-alph-estim-F_Keo}, 
$$
\|g^k\|_{C^\a(\R^N)}\leq C_k'\|u^k\|_{C^{2s+\a+k-1}(\R^N)}\leq C ( \|u\|_{C^{k-1+\a}(\R^N)}+\|f^k\|_{C^{k,\a}(\R^N)}).
$$
 Now Theorem \ref{th:Schauder-000} implies that, for $2s\not=1$, 
$$
\| \de_{i_1i_2\dots i_k}^k u^k\|_{C^{2s+\a}(B_{r_k'/2})} \leq C_k' ( \|\de_{i_1i_2\dots i_{k-1}}^{k}  u^k\|_{C^{ \a}(\R^N)}+\|g^k\|_{C^{\a}(\R^N)}+\|f^k\|_{C^{k,\a}(\R^N)}).
$$
By \eqref{eq:f-k-hold-k-a},  we get \eqref{eq:u-C2s-k-a-induction} in the case $2s\not=1$. Therefore $(i)$   follows by a covering and scaling argument.\\

Now when $2s=1$, then we can argue similarly as above, noticing that, under the induction hypothesis \eqref{eq:induc-hyp},  by Lemma \ref{lem:2sp-alph-estim-F_Keo}, the function $g^k$   in the right hand side of   \eqref{eq:cL-k-u-k-induction}  belongs to $C^{\a-\d}(\R^N) $, for all $\d\in (0,\a)$. Hence Theorem \ref{th:Schauder-000} implies that, for all $\d\in (0,\a)$,
$$
\| \de_{i_1i_2\dots i_k}^k u^k\|_{C^{2s+\a-\d}(B_{r_k'/2})} \leq C_k'' ( \|\de_{i_1i_2\dots i_{k-1}}^{k}  u^k\|_{C^{ \a}(\R^N)}+\|g^k\|_{C^{\a-\d}(\R^N)}+\|f^k\|_{C^{k,\a}(\R^N)}),
$$
where, unless otherwise stated,  $C_k''$ denotes a positive   constant,    only depending on $N,\k,\a,\d,\e$ and $k$.
To remove the parameter $\d$, we consider 
$$
\ov u^k:=\vp_{r_{k}'/4} u\in C^{k+1+\a-\d}(\R^N) ,
$$
which satisfies
\be\label{eq:estim-ov-u-k}
\|\ov u^k\|_{ C^{k +\a+ (1-\d)}(\R^N)} \leq C_k'' ( \|u\|_{C^{k+\a}(\R^N)}+  \|f\|_{C^{k+\a}(\R^N)}).
\ee
 By Lemma \ref{lem:estim-G_Ku}$(ii)$, we then have that 
$$
  \cL_K   \ov u^k  =  \ov f^k \qquad\textrm{ in $B_{r_{k}'/8}$},
$$
for some function 
\be\label{eq:ov-f-k-hold-k-a}
\|\ov f^k \|_{C^{k+\a}(B_{r_k'/8})}\leq C_k'' (\|u\|_{C^{k+\a}(\R^N)}+ \|f\|_{C^{k+\a}(\R^N)}) .
\ee
As above, we then differentiate the equation $k$ times to deduce that  for all $(i_1,i_2,\dots, i_k)\in \{1,\dots,N \}^k$, 
\be \label{eq:cL-k-u-k-induction}
  \cL_K   \de_{i_1i_2\dots i_k}^k \ov u^k  =  \ov g^k+   \de_{i_1i_2\dots i_k}^k \ov f^k \qquad\textrm{ in $B_{r_{k}''}$},
\ee
for constant $r_k''$, only depending on $k$,  and for some function $\ov g^k(x):=\sum_{j=1}^m c_j^e \cE^{1/2}_{a_j^e, v_j}+ \sum_{j=1}^m c_j^o \cO^{1/2}_{a_j^o, w_j}$ where $c_j^e,c_j^o$ are real numbers and $a_j^e$, $a_j^o$  (resp. $v_j$ and $w_j$) are respectiveley given by the partial derivatives in $x$-variable of ${\cA}_{e,K}$, ${\cA}_{o,K}$ up to order $k$    (resp. $v_j$ together with $w_j$  are given by partial derivatives of $\ov u^k$ up to order $k-1$). Therefore by Lemma \ref{lem:2sp-alph-estim-F_Keo}, \eqref{eq:estim-ov-u-k},  and since $K\in \ti  \scrK_{\a+\e}^{1/2}(\k,k+\a,\R^N)$, we obtain
\be \label{eq:ov-g-k-hold-k-a}
\|\ov g^k\|_{C^\a(\R^N)}\leq C_k''\|\ov u^k\|_{C^{1+\a+k-\d}(\R^N)}\leq C_k''  ( \|u\|_{C^{k+\a}(\R^N)}+  \|f\|_{C^{k+\a}(\R^N)}) .
\ee
Applying Theorem \ref{th:Schauder-000}, we conclude that 
$$
\| \de_{i_1i_2\dots i_k}^k \ov u^k\|_{C^{1+\a}(B_{r_k''/2})} \leq C_k'' ( \|\de_{i_1i_2\dots i_{k-1}}^{k-1}  \ov u^k\|_{C^{ \a}(\R^N)}+\| \ov g^k\|_{C^ \a(\R^N)}+\| \ov f^k\|_{C^{k,\a}(\R^N)}).
$$
Hence, since $\ov u^k=u$ on $B_{r_k''/2}$,  by \eqref{eq:estim-ov-u-k},  \eqref{eq:ov-f-k-hold-k-a} and \eqref {eq:ov-g-k-hold-k-a} with then obtain  \eqref{eq:u-C2s-k-a-induction}. Now $(ii)$ follows by scaling and covering.
\end{proof}

 \section{Proof of the main results}\label{s:proofMainResults}
We start this section with  the following result  which shows how to pass from a nonlocal equation with kernels  in $\ti \scrK_{\t}^s(\k,m+\a,Q_{\d})$ to a nonlocal equation driven by kernels in $\ti \scrK^s_\t(\k,m+\a,Q_\infty ) $. 
 \begin{lemma}\label{lem:cut-reg-kernel}
 Let $K\in  \ti \scrK_{\t}^s(\k,m+\a,Q_{4R})$, for some  $\a\in [0,1)$, $\t\in [0,1]$, $m\in \N$ and $R>0$. Let $v\in H^s(B_{4R})\cap {L_s(\R^N)} $ and $f\in L^1_{loc}(B_{4R})$ satisfy
 $$
 \cL_K v= f\qquad \textrm{in $B_{4R}$.}
 $$
 Let 
 $$
 \ov K(x,y)=\vp_{2R}(x)\vp_{2R}(y)K(x,y)+(2-\vp_{R}(x)-\vp_{R}(y)) \mu_1(x,y).
$$
Then
\be \label{eq:new-eq-v}
 \cL_{\ov  K} v+ \ov Vv= f+\ov  f\qquad \textrm{in $B_{R/4}$,}
\ee
 where, for $x\in B_{R/4}$, 
 $$
 \ov V(x)=G_{1,K,2R}(x)-G_{1,\mu_1,R}(x), \qquad   \ov  f(x)=G_{v,K,2R}(x)-G_{v,\mu_1,R}(x),
 $$
and $G_{v,K,\rho }$ is given by \eqref{eq:def-GKvR}. In particular, $\ov K\in \ti \scrK_{\t}^s(\ov \k,m+\a,Q_{\infty})$, for some constant $\ov \k=\ov  \k(\k,\a,m,R,\t,s,N)$.
 \end{lemma}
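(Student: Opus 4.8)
\textbf{Proof plan for Lemma \ref{lem:cut-reg-kernel}.}

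The plan is to verify the equation \eqref{eq:new-eq-v} by a direct computation, splitting $\ov K$ into the ``interior'' piece $\vp_{2R}(x)\vp_{2R}(y)K(x,y)$ and the ``far'' piece $(2-\vp_R(x)-\vp_R(y))\mu_1(x,y)$, and then reading off the membership $\ov K\in\ti\scrK_\t^s(\ov\k,m+\a,Q_\infty)$ from the two separate pieces. First I would test the weak formulation of $\cL_{\ov K}v$ against $\psi\in C^\infty_c(B_{R/4})$ and observe that for such $\psi$ and $x$ in the support of $\psi$ one has $\vp_{2R}(x)=1$ and $\vp_R(x)=1$, so that modulo the symmetrization of the bilinear form the weight $\vp_{2R}(x)\vp_{2R}(y)$ contributes $K(x,y)$ itself plus a correction coming from the factor $\vp_{2R}(y)$ which differs from $1$ only for $|y|\ge 2R$, i.e. exactly the term that Lemma \ref{lem:estim-G_Ku}, equations \eqref{eq:v-cat-vpR}--\eqref{eq:def-GKvR}, packages as $G_{v,K,2R}$ (with a $G_{1,K,2R}\,v$ contribution that I move to the left as the potential $\ov V$). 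The same bookkeeping applied to $(1-\vp_R(x))$ and $(1-\vp_R(y))$ against $\mu_1$ produces $-G_{v,\mu_1,R}$ and $-G_{1,\mu_1,R}v$; note the sign comes from writing $2-\vp_R(x)-\vp_R(y)=(1-\vp_R(x))+(1-\vp_R(y))$ and using $\vp_R-1$ in place of $\vp_R$ inside the identity $G_{K,v,\rho}(x)=\int v(y)(\vp_\rho(x)-\vp_\rho(y))K(x,y)\,dy$, which on $B_{R/4}$ equals $-\int v(y)(1-\vp_\rho(y))K(x,y)\,dy$. Collecting the four correction terms gives precisely $\ov V=G_{1,K,2R}-G_{1,\mu_1,R}$ on the left and $\ov f=G_{v,K,2R}-G_{v,\mu_1,R}$ on the right, which is \eqref{eq:new-eq-v}; all integrals involved are finite because $v\in L_s(\R^N)$ and $|K|\le\k^{-1}\mu_1$.

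Next I would check the kernel conditions. Symmetry of $\ov K$ is immediate since $K$, $\mu_1$ and the weight $\vp_{2R}(x)\vp_{2R}(y)$, $(2-\vp_R(x)-\vp_R(y))$ are all symmetric. For the upper bound in \eqref{eq:Kernel-satisf}$(ii)$, $|\ov K(x,y)|\le \vp_{2R}(x)\vp_{2R}(y)|K(x,y)|+(2-\vp_R(x)-\vp_R(y))\mu_1(x,y)\le(\k^{-1}+2)\mu_1(x,y)$; for the lower bound \eqref{eq:Kernel-satisf}$(ii')$ one uses that on a suitable small ball $B_{\d'}\subset B_\d\cap B_R$ the factor $\vp_{2R}\equiv1$ while $2-\vp_R(x)-\vp_R(y)=0$, so $\ov K=K\ge\k\mu_1$ there, or alternatively off that ball the $\mu_1$-part already gives a positive lower bound — in either case a constant $\ov\k$ depending on $\k,R,N,s$ works. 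Finally, for the regularity condition \eqref{eq:NL-Holder00}$(iii)$ (the $C^{m,\a}_{1,2}$ and $\cC^m_\t$ bounds in Definition \ref{def:Kernel-not-reg-the}) I would write, in polar coordinates, $\cA_{\ov K}(x,r,\th)=\vp_{2R}(x)\vp_{2R}(x+r\th)\cA_K(x,r,\th)+(2-\vp_R(x)-\vp_R(x+r\th))$, since $r^{N+2s}\mu_1(x,x+r\th)=1$; the first summand is a product of the given $C^{m,\a}_{1,2}$ (resp. $C^{m,\a}(Q_{4R}\times S^{N-1})$) function $\cA_K$ with the smooth cutoffs $x\mapsto\vp_{2R}(x)$ and $(x,r,\th)\mapsto\vp_{2R}(x+r\th)$ — whose $C^{m,\a}$ norms on $Q_\infty\times S^{N-1}$ are bounded in terms of $R$ and $m$ — so by the Leibniz rule it lies in $C^{m,\a}_{1,2}(Q_\infty)$ with norm $\le C(\k,\a,m,R,N,s)$, and the second summand is a smooth compactly-localized function of $(x,r,\th)$ contributing likewise. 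For the odd part, $\cA_{o,\ov K}(x,r,\th)=\tfrac12\{\cA_{\ov K}(x,r,\th)-\cA_{\ov K}(x,r,-\th)\}$; the cutoff term $(2-\vp_R(x)-\vp_R(x+r\th))$ is not even in $\th$, but its odd part is a smooth function vanishing at $r=0$ (since at $r=0$ it reduces to $2-2\vp_R(x)$, which is even), hence it sits in $\cC^m_\t(Q_\infty)$ for any $\t\le1$ with controlled norm, while the odd part of $\vp_{2R}(x)\vp_{2R}(x+r\th)\cA_K$ is handled by writing it as (even $\th$-symmetric factor)$\times\cA_{o,K}$ plus ($\th$-odd cutoff)$\times\cA_{e,K}$ and invoking that $\cA_{o,K}\in\cC^m_\t(Q_{4R})$ with $\cA_{o,K}(x,0,\th)=0$; multiplication by the cutoffs preserves $\cC^m_\t$ and the vanishing at $r=0$, so \eqref{eq:def-ti-l-oK}$(iv)$ persists, i.e. $\cA_{o,\ov K}(x,0,\th)=0$.

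The main obstacle I anticipate is not any single estimate but the careful bookkeeping of signs and domains in the first paragraph — making sure that when the bilinear form $\tfrac12\int(v(x)-v(y))(\psi(x)-\psi(y))\ov K\,dxdy$ is symmetrized, the asymmetric weights $\vp_{2R}(x)\vp_{2R}(y)$ and $2-\vp_R(x)-\vp_R(y)$ produce exactly the combinations $G_{v,K,2R}$, $G_{1,K,2R}v$ (and their $\mu_1$ analogues) rather than some variant with a stray factor of $2$ or an extra self-interaction term. This is precisely the computation already carried out for the single-cutoff case in \cite[Lemma 9.2]{Fall-Reg} (quoted here as \eqref{eq:v-cat-vpR}), so the honest work is to apply that identity twice — once to $\vp_{2R}$ and $K$, once to $\vp_R$ and $\mu_1$ — on the region $B_{R/4}$ where $\psi$ lives, and then add. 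The regularity claim for $\ov K$, by contrast, is routine once one notes that $r^{N+2s}\mu_1(x,x+r\th)\equiv1$ makes $\cA_{\mu_1}$ the constant $1$ and that all the cutoffs are globally smooth with norms controlled by $R$. Tracking how $\ov\k$ depends on the parameters $(\k,\a,m,R,\t,s,N)$ is then just a matter of recording the constants from the Leibniz rule and from \eqref{eq:Op-vp2}-type estimates for the cutoff seminorms.
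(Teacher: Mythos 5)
Your proposal reproduces the paper's own argument, filling in precisely the details the author declares ``elementary'' or ``not difficult to check.'' The paper simply records the polar-coordinate identity $\cA_{\ov K}(x,r,\th)=\vp_{2R}(x)\vp_{2R}(x+r\th)\cA_K(x,r,\th)+\bigl(2-\vp_R(x)-\vp_R(x+r\th)\bigr)$ together with the two-sided bound $\min(\k,1)\le\cA_{\ov K}\le\max(1/\k,4)$, and leaves both the verification of \eqref{eq:new-eq-v} and the $C^{m,\a}_{1,2}/\cC^m_\t$ norm estimates to the reader; your expanded weak-formulation computation (applying \cite[Lemma 9.2]{Fall-Reg} once to $\vp_{2R}$ and $K$ and once to $\vp_R$ and $\mu_1$) and your Leibniz-rule bookkeeping for $\cA_{\ov K}$ match the intended argument exactly.
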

 \begin{proof}
The proof of \eqref{eq:new-eq-v} is elementary, and we skip it. Next, we observe that 
$$
{\cA}_{\ov K}(x,r,\th)=\vp_{2R}(x)\vp_{2R}(x+r\th ){\cA}_{ K}(x,r,\th)+ (2-\vp_{R}(x)-\vp_{R}(x+r\th)).
$$
Recalling the definition of the cut-off function $\vp_R$ in the beginning of Section \ref{s:NotPrem}, we easily deduce that  
$$
\min( \k ,1 )  \leq {\cA}_{\ov K}(x,r,\th)\leq \max (1/k, 4) \qquad\textrm{ for all $x\in \R^N$, $r\geq 0$, $\th\in S^{N-1}$.}
$$
This in particular implies that  $\ov K$ satisfies \eqref{eq:K-Kernel-satisf}.
Moreover, it is also not difficult to check that 
$$
\| {\cA}_{\ov K}\|_{C^{m,\a}(Q_\infty) \times L^\infty(S^{N-1} )}+ \|{\cA}_{o,\ov K} \|_{ \cC^{m}_\t(Q_\infty) \times L^\infty(S^{N-1} )}\leq C( \k,m,\a,\t,R).
$$
%
 \end{proof} 

 \begin{proof}[Proof of Theorem \ref{th:main-th1} ]
 As mentioned in the first section, the case $2s\leq 1$ was already proven in \cite{Fall-Reg}. Now the  case $2s>1$ follows from Theorem  \ref{th:abs-res-Propo},  Lemma \ref{lem:cut-reg-kernel}, Lemma \ref{lem:estim-G_Ku} and the fact that $L^p(\R^N) \hookrightarrow \cM_{N/p}$.
 \end{proof}
 %
 %
 
 %
   \begin{proof}[Proof of Theorem \ref{th:Schauder-0}]
First applying  Theorem \ref{th:Schauder-000} and using  Lemma \ref{lem:cut-reg-kernel} together with Lemma \ref{lem:estim-G_Ku}, we get the   estimates.  
\end{proof}
 \begin{proof}[Proof of Theorem \ref{th:main-th10} ]
 It follows from  Theorem \ref{th:main-th1}.
 \end{proof}
   \begin{proof}[Proof of Theorem \ref{th:Schauder-0-intro}  ]
By Lemma \ref{lem:cut-reg-kernel}, we have  that 
\be \label{eq:new-eq-for-u}
 \cL_{\ov  K}  u =f+\ov f-\ov Vu \qquad\textrm{ in $B_{1/2}$},
\ee
with $\ov  K\in  \ti \scrK_{\t_s}^s(\ov \k,m+\a+(2s-1)_+, Q_\infty)$ with $\t_s:= \a+(2s-1)_+$ for $2s+\a<2$ and  $\t_s=0$ for $2s+\a>2$.
In addition, by  Lemma \ref{lem:estim-G_Ku}$(iii)$, we have
\be\label{eq:estimV-fin}
\|\ov V\|_{C^{m+\a}(B_{1/2})}\leq C 
\ee
and 
\be\label{eq:estimovf-fin}
\|\ov f\|_{C^{m+\a}(B_{1/2})}\leq C \|u\|_{{L_s(\R^N)}   }.
\ee
We consider first the case $2s\not=1$.
 Since $u$ satisfies \eqref{eq:new-eq-for-u}, applying Theorem \ref{th:C-2s-k-al-reg-nonreg-theta} and using  Lemma \ref{lem:estim-G_Ku}$(iii)$, we get 
  $$
 \|\vp _{1/2}u\|_{C^{2s+m+\a}(B_{2-{4}})}\leq C ( \|\vp _{1/2}u\|_{C^{m+\a}(\R^N)}+ \| u\|_{L^{\infty}(\R^N)}  + \|F\|_{C^{m+\a}(B_{1/2})} ),
 $$
 where $F:=f+\ov f-\ov Vu$.
Consequently, by \eqref{eq:estimV-fin} and \eqref{eq:estimovf-fin}
   $$
 \| u\|_{C^{2s+m+\a}(B_{2^{-4}})}\leq C ( \| u\|_{C^{m+\a}(B_1)}+ \| u\|_{L^{\infty}(\R^N)}  + \|f\|_{C^{m+\a}(B_2)} ).
 $$
 Using now adimentional H\"older norms and interpolation (see e.g. \cite{GT, Barrios}), we can absorb the $C^{m+\a}(B_1)$-norm of $u$ to deduce that 
    $$
 \| u\|_{C^{2s+m+\a}(B_{2^{-5}})}\leq C ( \| u\|_{L^{\infty}(\R^N)}  + \|f\|_{C^{m+\a}(B_2)} ).
 $$
If now $2s=1$, then since $\ov  K\in  \ti \scrK_{\a }^{1/2}(\ov \k,m+\a, Q_\infty)$ and  in view of Theorem \ref{th:C-2s-k-al-reg-nonreg-theta}, the same arguments as above yield
    $$
 \| u\|_{C^{1+m+\a-\e}(B_{2^{-5}})}\leq C ( \| u\|_{L^{\infty}(\R^N)}  + \|f\|_{C^{m+\a-\e}(B_2)} ),
 $$
 for all $\e\in (0,\a)$. Now by scaling and covering, we get the result.
 \end{proof}

\subsection{Proof of Theorem \ref{eq:thm-nmc-reg}}
The following fundamental lemma allows, in particular, to consider truncation of the nonlocal mean curvature kernel   $1_{B_r}(x) 1_{B_r}(y)\cK_u(x,y)$ without any assumption on $u$ in the exterior of $B_r$.
\begin{lemma}\label{lem:Lem-Gamma-u-nmc}
Let $u:\R^N\to \R$ be a measurable function  and  $\G^{u,R}: B_{R/2}\to \R$  be  given by
\begin{align}\label{eq:def-g-nmc}
\G^{u,R}(x)
&:= \int_{\R^N} (1-   1_{B_{R}}(y)) \frac{\cF_s(p_u({x},{y}))- \cF_s(-p_u({x},{y})) }{|{x}-{y}|^{N+2s-1}} \, dy.
\end{align}
If $u\in C^{k,\a}(B_{R/2})$, for $k\geq 1$ and $\a\in [0,1]$,  then, there exists a constant $C=C(N,s,k,\a,R)$ such that 
\be \label{eq:estimd-Gu-Holder-NMC}
\|\G^{u,R}\|_{C^{k,\a}(B_{R/2})}\leq C(1+\|u\|_{C^{k,\a}(B_{R/2})})^{2k}.
\ee
If $u\in C^{0,1}(B_{R/2})$  then, there exists a constant $C=C(N,s,R)$ such that 
\be \label{eq:Lip-impL-infty-NMC-fund-lem}
\|\G^{u,R}\|_{C^{0,1}(B_{R/2})}\leq C(1+\|u\|_{C^{0,1}(B_{R/2})}).
\ee
\end{lemma}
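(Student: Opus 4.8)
The plan is to recognise $\G^{u,R}$ as the restriction to the graph of $u$ of a fixed convolution-type kernel that does not see $u$ inside $B_R$. Starting from the elementary identity
$$
\cF_s(p)-\cF_s(-p)=\int_{\R}\operatorname{sign}(\tau-p)\,(1+\tau^2)^{-\frac{N+2s}{2}}\,d\tau ,\qquad p\in\R,
$$
inserting it into \eqref{eq:def-g-nmc}, and substituting $\tau=(t-u(x))/|x-y|$ in the resulting inner integral --- the same change of variables underlying \eqref{eq:NMC_curve-E1} --- one obtains, for $x\in B_{R/2}$,
$$
\G^{u,R}(x)=\Xi(x,u(x)),\qquad
\Xi(X):=\int_{(\R^N\setminus B_R)\times\R}\frac{\operatorname{sign}(t-u(y))}{|(y,t)-X|^{N+2s}}\,d(y,t),
$$
the interchange of integrals being justified by absolute convergence, which uses $s>1/2$. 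The structural point is that the integrand of $\Xi$ involves $u$ only through the bounded factor $\operatorname{sign}(t-u(y))$ with $|y|\ge R$, hence not through $u|_{B_R}$ at all; the whole dependence of $\G^{u,R}|_{B_{R/2}}$ on $u|_{B_{R/2}}$ is carried by the evaluation point $\iota_u(x):=(x,u(x))$.

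Next I would record the regularity of $\Xi$. For $X=(z,t)$ with $z\in B_{R/2}$ and $Y=(y,t')$ with $|y|\ge R$ one has $|Y-X|\ge R/2$, so $X\mapsto|Y-X|^{-N-2s}$ is smooth with $|\partial_X^\gamma|Y-X|^{-N-2s}|\le C_\gamma|Y-X|^{-N-2s-|\gamma|}$; integrating first in $t'$ and then in $y$, and using $s>1/2$ only when $\gamma=0$, gives
$$
\int_{(\R^N\setminus B_R)\times\R}|Y-X|^{-N-2s-m}\,d(y,t')\le C(m,N,s,R)\qquad\text{for every }m\ge 0,
$$
uniformly over all such $X$ (the bound depends only on $\dist(z,\partial B_R)\ge R/2$). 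Differentiation under the integral sign is therefore legitimate, and $\Xi\in C^\infty(\{z\in B_{R/2}\}\times\R)$ with $\|\partial_X^\gamma\Xi\|_{L^\infty}+\|\nabla_X\partial_X^\gamma\Xi\|_{L^\infty}\le C(|\gamma|,N,s,R)$; in particular every $\partial_X^\gamma\Xi$ is bounded and globally Lipschitz on that strip, with constants independent of $u$.

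The two assertions then follow from composition estimates for $\G^{u,R}=\Xi\circ\iota_u$ on $B_{R/2}$. If $u\in C^{0,1}(B_{R/2})$, then $[\iota_u]_{C^{0,1}}\le 1+[u]_{C^{0,1}(B_{R/2})}$, and boundedness and uniform Lipschitz continuity of $\Xi$ give $\|\G^{u,R}\|_{C^{0,1}(B_{R/2})}\le C(1+\|u\|_{C^{0,1}(B_{R/2})})$, i.e.\ \eqref{eq:Lip-impL-infty-NMC-fund-lem}. If $u\in C^{k,\a}(B_{R/2})$ with $k\ge1$, then the derivatives of $\iota_u$ up to order $k$ are controlled by $1+\|u\|_{C^{k,\a}(B_{R/2})}$, and the Fa\`a di Bruno formula writes $\partial_x^\gamma(\Xi\circ\iota_u)$, $|\gamma|\le k$, as a finite sum of terms $((\partial_X^\delta\Xi)\circ\iota_u)\prod_{j}\partial_x^{\gamma_j}\iota_u$ with $|\delta|\le|\gamma|$, $\sum_j|\gamma_j|=|\gamma|$ and at most $k$ factors. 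Estimating each $(\partial_X^\delta\Xi)\circ\iota_u$ in $C^0$ by a constant and in $C^{0,\a}$ via the uniform Lipschitz bound on $\partial_X^\delta\Xi$ together with $\iota_u\in C^{0,1}$, each $\partial_x^{\gamma_j}\iota_u$ in $C^{k-|\gamma_j|,\a}\subset C^{0,\a}$ by $1+\|u\|_{C^{k,\a}(B_{R/2})}$, and using submultiplicativity of the $C^{0,\a}$-norm, one gets $\|\G^{u,R}\|_{C^{k,\a}(B_{R/2})}\le C(1+\|u\|_{C^{k,\a}(B_{R/2})})^{k+1}\le C(1+\|u\|_{C^{k,\a}(B_{R/2})})^{2k}$ since $k\ge1$, which is \eqref{eq:estimd-Gu-Holder-NMC}.

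The one genuine difficulty is locating the representation $\G^{u,R}=\Xi\circ\iota_u$: differentiating $\cF_s(p_u(x,y))-\cF_s(-p_u(x,y))$ directly in $x$ is awkward because $p_u(x,y)=(u(y)-u(x))/|x-y|$ is governed by the uncontrolled exterior values $u(y)$ and is unbounded on the relevant range of $y$. Lifting to $\R^{N+1}$ trades this singular quotient for the bounded factor $\operatorname{sign}(t-u(y))$, after which everything is elementary: the point $(x,u(x))$ stays at distance $\ge R/2$ from the (unbounded) region of integration, and $s>1/2$ supplies the tail integrability. A more computational proof is also available --- writing $\partial_x^\gamma p_u=A_\gamma+B_\gamma\,p_u$ with $A_\gamma,B_\gamma$ controlled on $B_{R/2}$ and using that $p\mapsto\Phi_s^{(\ell)}(p)\,p^{\,j}$ has all derivatives bounded for $0\le j\le\ell$, where $\Phi_s(p):=\cF_s(p)-\cF_s(-p)$ and $\Phi_s^{(\ell)}(p)=O((1+|p|)^{-N-2s-\ell+1})$ --- but it is distinctly more cumbersome, and I would only fall back on it if the change of variables above turned out to need more care than expected.
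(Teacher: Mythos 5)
Your proposal is correct, and it takes a genuinely different route from the paper's proof. The paper estimates $\G^{u,R}$ directly from the definition: it applies the Fa\`a di Bruno formula to $x\mapsto\cF_s(p_u(x,y))$, uses the decay $|p|^j|\cF_s^{(j)}(p)|\le C$ together with the key inequality $|u(y)||y|^{-1}\le C(|p_u(z,y)|+\|u\|_{L^\infty(B_{R/2})})$ (for $z\in B_{R/2}$, $|y|\ge R$) to tame the otherwise uncontrolled exterior values $u(y)$ entering $\partial_x^\mu p_u(x,y)$, and then differentiates under the integral sign, invoking Rademacher's theorem for the Lipschitz assertion. You instead lift to $\R^{N+1}$: starting from $\cF_s(p)-\cF_s(-p)=\int_\R\operatorname{sign}(\tau-p)(1+\tau^2)^{-(N+2s)/2}\,d\tau$ and changing variables $\tau=(t-u(x))/|x-y|$ --- precisely inverting the graph-flattening substitution that leads to \eqref{eq:NMC_curve-E1} --- you write $\G^{u,R}=\Xi\circ\iota_u$ with $\iota_u(x)=(x,u(x))$ and $\Xi(X)=\int_{(\R^N\setminus B_R)\times\R}\operatorname{sign}(t-u(y))\,|(y,t)-X|^{-N-2s}\,dy\,dt$, a function that depends on $u$ only through the bounded factor $\operatorname{sign}(t-u(y))$ at $|y|\ge R$ and whose domain of integration stays at distance $\ge R/2$ from the strip $B_{R/2}\times\R$, hence is $C^\infty$ there with $u$-independent bounds ($2s>1$ is needed only at order zero for tail integrability). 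The lemma then becomes a standard chain-rule and composition estimate. What your route buys is conceptual transparency --- the statement becomes ``a smooth function of $(x,u(x))$ is as regular as $u$'' --- and it disposes of the Lipschitz case without Rademacher and without separate bookkeeping of the exterior growth of $p_u$; what the paper's more computational route buys is that it stays in $\R^N$ and makes the polynomial exponent visible term by term in the Fa\`a di Bruno expansion.
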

\begin{proof}
For simplicity, we assume that $R=2$, and to alleviate the notations, we put $\G^u:=\G^{u, R}$.
We first observe, from \eqref{eq:def-of-F},  that $\cF_s'(p)=-(1+p^2)^{(-N-2s)/2}$, so that for all $j\in \N$,
\be \label{eq:pjFi-nmc}
 |p|^j| \cF_s^{(j)}(p)|\leq C(N,s,j) \qquad\textrm{ for all $p\in \R$. }
\ee 
In particular, since $2s>1$, 
\be \label{eq:Gu-L-infty}
 \|\G^u\|_{L^\infty(B_{1})}\leq C(N,s). 
\ee
Next, for all $(x,y)\in  B_{1} \times  \R^N\setminus B_2$,  we have
\begin{align}\label{eq:deriv-p-u-x-y-nmc-00}
|\de_{x}^\mu p_u(x,y)|\leq C(k)( |u(y)| |y|^{-1}+\|u\|_{C^{k-1,1}(B_1)} ) \qquad\textrm{ for $\mu\in \N^N$ with $|\mu|\leq k$}.
\end{align} 
On the other hand, by writing $u(y)=(u(y)-u(z))+ u(z)$, we easily deduce that 
\be \label{eq:u-1-nmc}
|u(y)||y|^{-1}\leq C (|p_u(z,y)|+ \|u\|_{L^\infty(B_1)})  \qquad\textrm{for   all $z\in  B_{1}$ and  $y\in  \R^N\setminus B_2$.}
\ee
Using this in \eqref{eq:deriv-p-u-x-y-nmc-00}, we see that, for $\mu\in \N^N$ with $|\mu|\leq k$, 
\begin{align}\label{eq:deriv-p-u-x-y-nmc}
|\de_{x}^\mu p_u(x,y)|\leq C(k)( |p_u(z,y)|+\|u\|_{C^{k-1,1}(B_1)} ) \qquad\textrm{for   all $x,z\in  B_{1}$ and  $y\in  \R^N\setminus B_2$  .}
\end{align}
 By  the  {Fa\`{a} di Bruno formula} (see e.g. \cite{FaadeBruno-JW}), for $|\g|= k$ and    $(x,y)\in B_{1} \times  \R^N\setminus B_2$,  we get 
\be \label{eq:Faa-de-Bruno}
\de^\g_x \cF_s(p_u(x,y)) = \sum_{\Pi\in\scrP_k} \cF_s^{ (\left|\Pi\right|)}(p_u(x,y)) \prod_{\mu \in\Pi} \de_x^{\mu } p_u(x,y) ,
\ee
 where $\scrP_k$ denotes the set of all partitions of  $\left\{ 1,\dots, k \right\}$. 
 Hence, for   $x\in B_{1}$ and $y\in \R^N\setminus B_2$, by  \eqref{eq:deriv-p-u-x-y-nmc},     we have that  
 \begin{align*}
 |\de^\g_x \cF_s(p_u(x,y) )| 
 &\leq  C \sum_{\Pi\in\scrP_k}  2^{\Pi-1} \left(  |p_u(x,y)|^{\left|\Pi\right|} \left|\cF_s^{ (\left|\Pi\right|)}(p_u(x,y)) \right|+\|u\|_{C^{k-1,1}(B_1)}^{|\Pi|} \left|\cF_s^{ (\left|\Pi\right|)}(p_u(x,y))\right|\right)\\
 &\leq   C\left(1+ \|u\|_{C^{k-1,1}(B_1)}^{k}+ \sum_{\Pi\in\scrP_k}  2^{\Pi-1}  |p_u(x,y)|^{\left|\Pi\right|} |\cF_s^{ (\left|\Pi\right|)}(p_u(x,y))| \right).
 \end{align*}
From this and \eqref{eq:pjFi-nmc},   we deduce that, for all $\g\in \N^N$ with $|\g|=k$,  
\be \label{es:estim-integrn-NMC-fund-lem}
\sup_{(x,y)\in B_{1}\times  \R^N\setminus B_2 } |\de^\g_x \cF_s(p_u(x,y))|+\sup_{(x,y)\in B_{1}\times  \R^N\setminus B_2} |\de^\g_x \cF_s(-p_u(x,y))|\leq C(1+   \|u\|_{C^{k-1,1}(B_{1})})^k,
\ee
with $C=C(s,N,k)$.
Since $2s>1$,  from the above estimate,  \eqref{eq:Gu-L-infty}  and the dominated convergence theorem, we can differentiate under the integral sign in \eqref{eq:def-g-nmc} to deduce that
\be 
\|\G^u\|_{C^{k-1,1}(B_{1})}\leq C(1+\|u\|_{C^{k-1,1}(B_{1})})^k.
\ee
Moreover, to see  \eqref{eq:Lip-impL-infty-NMC-fund-lem}, we note that if $u\in C^{0,1}(B_{1})$,  then Rademarcher's theorem implies that $u$ is equivalent to a  differentiable function. Therefore    \eqref{es:estim-integrn-NMC-fund-lem} holds (with $k=1$) and replacing "$\sup$" with "essup". Now  by the dominated convergence theorem, we get \eqref{eq:Lip-impL-infty-NMC-fund-lem}.\\
Let us now fix    $x_1,x_2\in B_{1}$ and $y\in    \R^N\setminus B_2$. Direct computations yield 
$$
|\de_{x}^\mu p_u(x_1,y)-\de_{x}^\mu p_u(x_2,y)|\leq C |x_1-x_2|^\a( |u(y)| |y|^{-1}+\|u\|_{C^{k,\a}(B_1)} ) \qquad\textrm{ for $\mu\in \N^N$ with $|\mu|\leq k$}.
$$
Note that,    \eqref{eq:u-1-nmc} implies that 
$$
|u(y)||y|^{-1}\leq C \{\min (|p_u(x_1,y)| , |p_u(x_2,y)|)+  \|u\|_{L^\infty(B_1)}\} .
$$
Therefore,  for all   $\mu\in \N^N$ with $|\mu|\leq k$, we get 
\begin{align}\label{eq:deriv-p-u-x-y-nmc-Hold}
|\de_{x}^\mu p_u(x_1,y)-\de_{x}^\mu p_u(x_2,y)|\leq C |x_1-x_2|^\a\{ \min (|p_u(x_1,y)| , |p_u(x_2,y)|)+\|u\|_{C^{k,\a}(B_1)} \} 
\end{align} 
and, by \eqref{eq:deriv-p-u-x-y-nmc},  
\begin{align}\label{eq:deriv-p-u-x-y-nmc-Bound}
|\de_{x}^\mu p_u(x_1,y) |\leq C  \{ \min (|p_u(x_1,y)| , |p_u(x_2,y)|)+\|u\|_{C^{k}(B_1)} \} .
\end{align}
Next, we define   
$$
g_s\in C^\infty(\R_+, \R), \qquad 
g_s(r)= -r^{-(N+2s-1)/2},
$$
so that  $\cF_s'(p )=g_s(1+ p ^2)$ for all $p\in \R$.  Moreover,  for $r>0$,  
\be \label{eq:higher-deriv-g-j}
  g^{(j)}_s(r)=(-1)^{j+1}2^{-j} \prod_{i=0}^{j-1 } (N+2s-1 +2 i) r^{-\frac{N+2s-1+2j}{2}}.
\ee
From this and the generalized  chain rule for higher derivatives, we get 
 \begin{align}
\cF^{(j+1)}_s(p)  = &\sum_{(m_1,m_2)\in \cN_j}\t_j(m_1,m_2) p^{m_1} g_s^{(m_1+m_2)}(1+p^2),
\label{eq:Dk-K-s_1s_2-0-pp}
 \end{align}
 where  $\t_j(m_1,m_2)=\frac{j! 2^{m_1}}{m_1! m_2!}$ and $\cN_j:=\{(m_1,m_2)\in \N^2\,:\,m_1+ 2^{m_2}m_2=j\}$.
Hence, for all $p_1,p_2\in \R$, 
  \begin{align*}
|\cF^{(j+1)}_s(p_1)-&\cF^{(j+1)}_s(p_2)  |\leq  \sum_{(m_1,m_2)\in \cN_j}\t_j(m_1,m_2) |p^{m_1}_1-p_2^{m_1}| |g_s^{(m_1+m_2)}(1+p^2_1)|\\
& +  \sum_{(m_1,m_2)\in \cN_j }\t_j(m_1,m_2)| p_2^{m_1}| |p_1^2-p_2^2| \int_{0}^1| g_s^{(m_1+m_2+1)}(1+t p^2_1+(1-t)p_2^2) \, | dt.
 \end{align*}
 It then follows from, \eqref{eq:deriv-p-u-x-y-nmc-Hold} and  \eqref{eq:deriv-p-u-x-y-nmc-Bound}, that 
\begin{align}
&\left|\cF^{(j+1)}_s(p_u(x_1,y)) -\cF^{(j+1)}_s(p_u(x_2,y))\right| \nonumber\\
&\leq C |x_1-x_2|^\a  \sum_{(m_1,m_2)\in \cN_j} \t_j(m_1,m_2)  \frac{ \left(\min (|p_u(x_1,y)| , |p_u(x_2,y)| )+\|u\|_{C^{k,\a}(B_1)}  \right)^{ m_1}}{   (1+  \min (|p_u(x_1,y)| , |p_u(x_2,y)| )^2)^{\frac{N+2s-1+2(m_1+m_2)}{2}} }  \nonumber\\
&+C |x_1-x_2|^\a  \sum_{(m_1,m_2)\in \cN_j} \t_j(m_1,m_2)  \frac{ \left(\min (|p_u(x_1,y)| , |p_u(x_2,y)| )+\|u\|_{C^{k,\a}(B_1)}  \right)^{ m_1+2}}{   (1+  \min (|p_u(x_1,y)| , |p_u(x_2,y)| )^2)^{\frac{N+2s-1+2(m_1+m_2+1)}{2}} }  .
\label{eq:estim-cFs-jplus-1-nmc}
\end{align}
On the other hand, it is immediate,  from  \eqref{eq:higher-deriv-g-j} and \eqref{eq:Dk-K-s_1s_2-0-pp},   that 
\be \label{eq:pjFi-nmc-ok}
|\cF^{(j+1)}_s(p_u(x_2,y)) | \leq  \frac{C}{(1+  \min (|p_u(x_1,y)| , |p_u(x_2,y)| )^2)^{\frac{N+2s-1+2(j+1)}{2}}}.
\ee
Using \eqref{eq:deriv-p-u-x-y-nmc-Bound},   \eqref{eq:deriv-p-u-x-y-nmc-Hold} and an induction argument,   we get 
\begin{align}\label{eq:prod-p-u-CNMC}
&\left|\prod_{\mu \in\Pi} \de_x^{\mu } p_u(x_1,y)- \prod_{\mu \in\Pi} \de_x^{\mu } p_u(x_1,y) \right| \leq  C   |x_1-x_2|^\a\{ \min (|p_u(x_1,y)| , |p_u(x_2,y)| )+\|u\|_{C^{k,\a}(B_1)} \}^{|\Pi|}.
\end{align}
Moreover  \eqref{eq:deriv-p-u-x-y-nmc-Bound} yields
\be \label{eq:Prodp-x2-y-nmc}
\left|\prod_{\mu \in\Pi} \de_x^{\mu } p_u(x_2,y)  \right| \leq C (    \min (|p_u(x_1,y)| , |p_u(x_2,y)|)+\|u\|_{C^{k}(B_1)} )^{|\Pi|}.
\ee
We have, from \eqref{eq:Faa-de-Bruno},  that 
\begin{align} \label{eq:Faa-de-Bruno-fund-calcul-nmc}
&\left| \de^\g_x \cF_s(p_u(x_1,y))- \de^\g_x \cF_s(p_u(x_2,y)) \right| \leq \sum_{\Pi\in\scrP_k} \left|\cF_s^{ (\left|\Pi\right|)}(p_u(x_1,y))  - \cF_s^{ (\left|\Pi\right|)}(p_u(x_2,y)) \right|  \left|\prod_{\mu \in\Pi} \de_x^{\mu } p_u(x_1,y) \right|  \nonumber \\
&\hspace{3cm}+ \sum_{\Pi\in\scrP_k} \left|\cF_s^{ (\left|\Pi\right|)}(p_u(x_2,y)) \right| \left|\prod_{\mu \in\Pi} \de_x^{\mu } p_u(x_1,y)- \prod_{\mu \in\Pi} \de_x^{\mu } p_u(x_2,y) \right|.
\end{align}
Next, we observe that for $(m_1,m_2)\in \cN_{|\Pi|-1}$,  then  
$$
|\Pi|+ m_1-2(m_1+m_2)-(N+2s-1)< 0.
$$
Now from this,  \eqref{eq:estim-cFs-jplus-1-nmc},  \eqref{eq:pjFi-nmc-ok}, \eqref{eq:prod-p-u-CNMC}, \eqref{eq:Prodp-x2-y-nmc} and \eqref{eq:Faa-de-Bruno-fund-calcul-nmc}, we deduce that,  for all  $x_1,x_2\in B_{1}$ and $y\in    \R^N\setminus B_2$,  
\begin{align*}
\left| \de^\g_x \cF_s(p_u(x_1,y) )- \de^\g_x \cF_s(p_u(x_2,y) )\right| \leq  C |x_1-x_2|^\a (   1+\|u\|_{C^{k,\a}(B_1)} )^{2k}.
\end{align*}
Combining this with  \eqref{es:estim-integrn-NMC-fund-lem}, we get $\sup_{y\in \R^N\setminus B_2}\|\cF_s(p_u(\cdot,y) )\|_{C^{k,\a}(B_1)}\leq 
 C (   1+\|u\|_{C^{k,\a}(B_1)} )^{2k}.$ Since the same estimates remains valid when $p_u$ is replaced with $-p_u$, then \eqref{eq:estimd-Gu-Holder-NMC} follows.
\end{proof}
%
%
%
We will need the following elementary  result which follows from the fact that $\cF'_s$ is even on $\R$ and the fundamental theorem of calculus.
\begin{lemma}\label{lem:local-comparison-NMC-graphs} For all $a,b\in \R$, we have 
$$
[\cF_s(a)-\cF_s(b)]- [\cF_s(-a)-\cF_s(-b)]=2(a-b)\int_0^1 \cF'_s\left(b+\rho(a-b)\right) d\rho.
$$
\end{lemma}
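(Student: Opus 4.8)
This is an elementary identity, so the proof is short. The plan is to reduce everything to a one-variable statement and then apply the fundamental theorem of calculus.

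\medskip

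First I would fix $a,b\in\R$ and introduce the auxiliary function $\Phi:[0,1]\to\R$ defined by $\Phi(\rho):=\cF_s(b+\rho(a-b))-\cF_s(-b-\rho(a-b))$; here I am using that $\cF_s$ is of class $C^1$ on $\R$ (indeed $C^\infty$, since $\cF_s'(p)=-(1+p^2)^{-(N+2s)/2}$ by \eqref{eq:def-of-F}), so $\Phi$ is continuously differentiable on $[0,1]$. Note $\Phi(1)-\Phi(0)$ is exactly the left-hand side $[\cF_s(a)-\cF_s(b)]-[\cF_s(-a)-\cF_s(-b)]$, after rearranging terms. Then I would compute $\Phi'(\rho)$ by the chain rule:
$$
\Phi'(\rho)=(a-b)\cF_s'(b+\rho(a-b))+(a-b)\cF_s'(-b-\rho(a-b)),
$$
the second term picking up a sign from differentiating the inner argument $-b-\rho(a-b)$, which cancels the overall minus sign in front of $\cF_s(-b-\rho(a-b))$. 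Since $\cF_s'$ is even (because $\cF_s'(p)=-(1+p^2)^{-(N+2s)/2}$ depends only on $p^2$), we have $\cF_s'(-b-\rho(a-b))=\cF_s'(b+\rho(a-b))$, hence $\Phi'(\rho)=2(a-b)\cF_s'(b+\rho(a-b))$.

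\medskip

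Finally I would apply the fundamental theorem of calculus, $\Phi(1)-\Phi(0)=\int_0^1\Phi'(\rho)\,d\rho$, which gives precisely
$$
[\cF_s(a)-\cF_s(b)]-[\cF_s(-a)-\cF_s(-b)]=2(a-b)\int_0^1\cF_s'\!\left(b+\rho(a-b)\right)d\rho,
$$
as claimed. There is no real obstacle here; the only points requiring a word are the evenness of $\cF_s'$ (immediate from the explicit formula in \eqref{eq:def-of-F}) and the sign bookkeeping when differentiating the argument $-b-\rho(a-b)$, which is what makes the two terms add rather than cancel. The differentiation under no integral sign and the regularity of $\cF_s$ are both trivial consequences of the explicit expression for $\cF_s$.
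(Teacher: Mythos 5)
Your proof is correct and uses exactly the two ingredients the paper flags — the evenness of $\cF_s'$ and the fundamental theorem of calculus — so it coincides with the paper's (one-line) argument. No issues.
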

We now  complete the
\begin{proof}[Proof of Theorem \ref{eq:thm-nmc-reg}] In view of \eqref{eq:decom-NMC-intro}, we have 
\be \label{eq:Ck_u-u-sol-NMC}
\cL_{\ti \cK_u} u= f-\G^u  \qquad\textrm{ in $B_{1/2}$,}
\ee
where  
$$
\ti\cK_u(x,y):=1_{B_1}(x) 1_{B_1}(y)\cK_u(x,y) \qquad\textrm{ for all $x\not=y\in \R^N$}
$$
and, for $x\in B_{1/2}$, 
\begin{align*} 
\G^u(x)
&:=  \int_{\R^N} (1-   1_{B_1}(y)) \frac{\cF_s(p_u({x},{y}))- \cF_s(-p_u({x},{y})) }{|{x}-{y}|^{N+2s-1}} \, dy.
\end{align*}
We recall from the fundamental theorem of calculus that 
\be \label{eq:lin-to-Quasilin}
 (u(x)-u(y)) \cK_u(x,y)=  [\cF_s(p_u({x},{y}))- \cF_s(-p_u({x},{y}))] {|{x}-{y}|^{-(N+2s-1)}}.
\ee
Let $h\in B_{1/4} $ with $h\not=0$.  Then, recalling the notation in \eqref{eq:def-f-h-alph},  by Lemma \ref{lem:local-comparison-NMC-graphs}, \eqref{eq:lin-to-Quasilin} and \eqref{eq:Ck_u-u-sol-NMC},  
$$
\cL_{K^u_{h}} u_{h,1}= f_{h,1}+\G^u_{h,1} \qquad\textrm{ in $B_{1/4}$},
$$
where
\be \label{eq:def-Kuh}
  K^u_{h}(x,y):= 1_{B_1}(x) 1_{B_1}(y)  \frac{1}{{|x-y|^{N+2s}}}  q^u_{h}(x,y)  
\ee 
and 
$$
q^u_{h}(x,y):=-2  \int_{0}^{1}\cF'_s\left(p_{u(\cdot+h)}(x,y)+\rho p_{u-u(\cdot+h)}(x,y)\right)\, d\rho.
$$
Since $\cF'_s$ is  even and $p_w(x,y)=-p_w(y,x)$, we see that $K^u_{h}(x,y)=K^u_{h}(y,x)$. Moreover
\be\label{eq:K-u-h-lower-bound}
 K^u_h(x,y)\geq C |x-y|^{-N-2s}\qquad\textrm{$x\not=y\in B_1 $, }
\ee
for some constant  $C>0$, only depending on $N,s$ and $  \| u\|_{C^{0,1} (B_1) }$.
Letting $v:=\vp_{1/8}u_{h,1}$  and using   Lemma  \ref{lem:estim-G_Ku}$(i)$,   we have that 
\be\label{eq:Kuh-v-e-NMC} 
\cL_{K^u_{h}} v= f_{h,1}+ \G^u_{h,1}+ G_h,   \qquad\textrm{ in $B_{2^{-4}}$},
\ee
with  $G_h:=G_{ K^u_{h},u_{h,1},1/4}$  satisfying (note that $K^u_h$ is  supported in $B_1\times B_1$ and $|q^u_{h}|\leq 2$)
\be \label{eq:estim-G-cnmc}
\|G_h \|_{L^{\infty}(B_{2^{-5}})}\leq C(N,s)\|u_{h,1}\|_{L^\infty(B_1)}\leq C\|\n u\|_{L^\infty(B_2)}.
\ee
 We would like to apply \cite[Theorem 2.4]{Cozzi} to get the $C^{\a_0}$ bound of $v$, but our kernel $K^u_{h}$, which is compactly supported might vanish at some  diagonal points $\{x=y\}$.  A way out to such difficulty, is to use the argument in \cite[Remark 2.1]{Fall-Reg} (see also Lemma \ref{lem:cut-reg-kernel}) by considering 
 $$\ov K^u_h(x,y)=K^u_h(x,y)+ (2-1_{B_{1/2}}(x)-1_{B_{1/2}}(y)) |x-y|^{-N-2s}.$$ We then  deduce, from  \eqref{eq:Kuh-v-e-NMC},   that 
\be\label{eq:Kuh-v-e-NMC-first} 
\cL_{\ov K^u_{h}} v+ \ov V v= f_{h,1}+ \G^u_{h,1}+ G_h+ \ov f,   \qquad\textrm{ in $B_{2^{-4}}$},
\ee 
for some functions $\|\ov V\|_{L^\infty(B_{ 2^{-4} })}\leq C(N,s)$ and $ \|\ov f\|_{L^\infty(B_{ 2^{-4} })}\leq C(N,s)\|v\|_{L^\infty(B_1)}$.  Since $K^u_h(x,y) $ satisfies \eqref{eq:K-u-h-lower-bound}, we  find that   $\ov K^u_{h}$ satisfies \eqref{eq:K-Kernel-satisf}.
Therefore, since $v\in H^s(\R^N)\cap L^\infty(\R^N)$,  by  \cite[Theorem 2.4]{Cozzi}, we have that
$$
\|v\|_{C^{0,\a_0}(B_{2^{-5}})} \leq C ( \|v\|_{L^\infty (\R^N) }  + \|f_{h,1} \|_{L^\infty(\R^N)}+  \|\G^u_{h,1} \|_{L^\infty(B_{2^{-4}})}+ \|G_h\|_{L^\infty  (B_{2^{-4}}   ) } ),
$$
for some $\a_0>0$ and $C>0$, only depending on $N,s$ and $  \| u\|_{C^{0,1} (B_1) }$. From  \eqref{eq:estim-G-cnmc} and the fact that $v=u_{h,1}$ on $B_{1/8}$, we  get
$$
\| u_{h,1}\|_{C^{0,\a_0}(B_{2^{-5}})} \leq C ( \|u_{h,1}\|_{L^\infty (B_1) }+  \|f_{h,1} \|_{L^\infty(\R^N)}+  \|\G^u_{h,1} \|_{L^\infty(B_{2^{-4}})} ).
$$
This and Lemma \ref{lem:Lem-Gamma-u-nmc} imply that
\be \label{eq:esimt-u-CNM1}
\|u\|_{C^{1,\a_0}(B_{2^{-6}})} \leq C (1+ \| u\|_{C^{0,1} (B_1) } + \|f \|_{C^{0,1}(\R^N)}),
\ee
which proves \eqref{eq:estimu-NMC-first}. \\
To obtain the gradient estimate of $v$ from Theorem \ref{th:main-th10}, we   check that $\cL_{K^u_h}$ is a  $C^{0,\a_0}$-nonlocal operator. To this scope,   for every  $w\in C^{0,1}(B_1)$, we define  $Z_w: B_{1/2}\times [0,1/2)\times S^{N-1}\to \R$ by 
$$
Z_w(x,r,\th): =-\int_0^1\n w(x+rt\th)\cdot \th dt  \qquad \textrm{ for $r\in [0, 1/2)$, $x\in B_{1/2}$ and $\th\in S^{N-1}$}.
$$
Clearly $Z_w$ is as smooth as $\n w$ and $Z_w(x,r,\th):=p_w(x,x+r\th)$ for $r>0$. We then define ${\cA}_{    K^u_{h}}: B_{1/4}\times [0,1/4)\times S^{N-1}\to \R$ by  
\begin{align}\label{eq:AK-CNMC}
{\cA}_{    K^u_{h}}(x,r,\th):=1_{B_2}(x) 1_{B_2}(x+r\th) \int_{0}^{1}\frac{2 d\rho}{\left(1+  (Z_{u(\cdot+h)}(x,r,\th)+\rho Z_{u-u(\cdot+h)}(x,r,\th))^2  \right)^{(N+2s)/2}} ,
\end{align}
which by \eqref{eq:def-Kuh}, satisfies    ${\cA}_{    K^u_{h}}(x,r,\th)= r^{N+2s}K^u_h(x,x+r\th)$ for all $(x,r,\th)\in B_{1/4}\times (0,1/4)\times S^{N-1}$.
Moreover, 
$$
{\cA}_{    K^u_{h}}(x,0,\th)-{\cA}_{    K^u_{h}}(x,0,-\th)=0 \qquad\textrm{ for all $(x,\th)\in B_{1/4}\times S^{N-1} $.}
$$
In addition from,   \eqref{eq:esimt-u-CNM1} together with  \eqref{eq:AK-CNMC}, we have  that 
$$
\|{\cA}_{    K^u_{h}}  \|_{C^{\a_0}(Q_{2^{-7}}\times S^{N-1}    )}\leq C,
$$
with $C$, only depending on $N,s,\|u\|_{ C^{0,1}(B_2) },\a_0$ and  $\|f \|_{C^{0,1}(B_2)}$.
We then conclude  that $   K^{u}_{h}\in \scrK^s(\k,\a_0, Q_{2^{-7}})$, for some $\k$, only depending on $N,s,\|u\|_{ C^{0,1}(B_2) },\a_0$ and $ \|f \|_{C^{0,1}(B_2)}$.
 Therefore applying Theorem \ref{th:main-th10}$(ii)$ to   \eqref{eq:Kuh-v-e-NMC}, we deduce that 
 $$
\| \n v\|_{C^{\min(2s-1-\e,\a_0 )}(B_{2^{-8}})} \leq C ( \|v\|_{L^\infty (B_1) }+ \|f_{h,1} \|_{L^\infty (B_2)}),
$$
for all $\e\in (0,2s-1)$ and $C$ a constant,  only depending on $N,s,\|u\|_{ C^{0,1}(B_2) },\a_0,\e$ and $ \|f \|_{C^{0,1}(B_2)}$. 
Hence, recalling that $v=u_{h,1}$ in $B_{1/8}$, we get  
$$
\| \n u\|_{C^{1, \a_1}(B_{2^{-9}})} \leq C ,  
$$
with $\a_1:=\min(2s-1-\e,\a_0 )$. Hence, for all $h\in B_{2^{-10}}$, we have   $    K^{u}_{h}\in \scrK^s(\k,1, Q_{2^{-10}})$, for some $\k$, only depending on $N,s,\|u\|_{ C^{0,1}(B_2) },\a_1$ and $ \|f \|_{C^{0,1}(B_2)}$.
We apply once more Theorem \ref{th:main-th10}$(ii)$ to   \eqref{eq:Kuh-v-e-NMC}, to   get
$
\| v\|_{C^{ 1,2s-1-\e }(B_{2^{-11}})}  \leq C,
$
so that
\be \label{eq:estim-CNMC11}
\| u\|_{C^{ 2,2s-1-\e }(B_{2^{-12}})}  \leq C.
\ee
This finishes the   proof of $(i)$ after a scaling and covering.\\

For $(ii)$,   we consider first the case $m=1$. Clearly \eqref{eq:estim-CNMC11} and \eqref{eq:AK-CNMC} imply that  $    K^{u}_{h}\in \scrK^s(\k,2s-1+\a, Q_{2^{-13}})$,  for all $h\in B_{2^{-13}}$ and $\a\in (0,1)$.  In particular,  by Lemma  \ref{lem:estim-G_Ku}$(iii)$, we have $\|G_h\|_{C^{0,\a}(B_{2^{-13}})}\leq C \|u_{h,1}\|_{L^\infty(B_1)}$.
Now by \eqref{eq:estim-CNMC11} and Lemma \ref{lem:Lem-Gamma-u-nmc},    for all $h\in B_{2^{-13}}$, we have  $ \|\G^u_{h,1}\|_{ C^{1,2s-1-\e}(B_{2^{-13}})}\leq C $. Therefore, 
  applying Theorem \ref{th:Schauder-0-intro} to the equation \eqref{eq:Kuh-v-e-NMC},  we get 
$
\| v\|_{C^{ 2s+\a }(B_{2^{-15}})}  \leq C,
$
provided $2s+\a\not\in \N$.
Hence 
$$
 \| u\|_{C^{ 1+2s+\a }(B_{2^{-16}})}  \leq C.
 $$
  If now $m\geq 2$, then the above estimate implies that $    K^{u}_{h}\in \scrK^s(\k,2s+\a, Q_{2^{-18}})$ for all $h\in B_{2^{-18}}$. Hence,   Lemma  \ref{lem:estim-G_Ku}$(iii)$ implies that $\|G_h\|_{C^{1,\a}(B_{2^{-18}})}\leq C \|u_{h,1}\|_{L^\infty(B_1)}$. On the other hand, by  Lemma \ref{lem:Lem-Gamma-u-nmc},   $  \G^u_{h,1}\in C^{2s+\a}(B_{2^{-16}})\subset  C^{1,\a}(B_{2^{-16}})$, because $2s>1$. It then follows, from  \eqref{eq:Kuh-v-e-NMC} and   Theorem \ref{th:Schauder-0-intro},    that $\| \de_{x_i} v\|_{C^{ 2s+\a }(B_{2^{-18}})}  \leq C $. This yields $\| \de_{x_i} u\|_{C^{ 1+2s+\a }(B_{2^{-19}})}  \leq C $, because $v=\vp_{1/8} u_{h,1}$.
 Now iterating the above argument, then   for all $k\in\{1,\dots,m\}$ and $i=\{1,\dots,N\}$, we can  find two constants   $r_k$, only depending on $k$, and a constant $ C_k>0$, only depending on $N,s,\|u\|_{ C^{0,1}(B_2) },k,\a$ and $ \|f \|_{C^{m,\a}(B_2)}$, such that 
 $$
 \| \de_{x_i}^k u_{h,1}\|_{C^{ 2s+\a }(B_{r_k})}  \leq C_k 
 $$ for all $h\in B_{r_k/2}$. A covering and scaling arguments yield $(iii)$.
\end{proof}
%
%
%
  \subsection{Proof of Theorem \ref{th:nonloca-surf1} and Theorem \ref{th:nonloca-surf2} }
Up to a change of coordinates and a scaling, we may assume that  a neighborhood of   $0\in \Sig$ is  parameterized by a $C^{1,\g}$-diffeomorphism $\Phi: B_2\to \Sig$, for some $\g\in (0,1)$,  satisfying $ \Phi(0)= 0$ and 
\be\label{eq:DPhi-close-identity}
|D \Phi(x)-id|\leq \frac{1}{2} \qquad\textrm{ for all $x\in B_2$.} 
\ee
  We consider the following open sets in $\Sig$ given by
 $$
   \cB_r:=\Phi(B_r) \qquad\textrm{ for $r\in (0,2]$}
   $$
   and  we define $\eta_r(\ov x)=\vp_r(\Phi^{-1}(\ov x))$. For $\Psi\in C^\infty_c(\cB_{1/2})$,  we then we have 
  \begin{align}
  \int_{ \cB_2} \int_{ \cB_2}  \frac{(u(\ov x)-u(\ov y))(\Psi(\ov x)-\Psi(\ov y)) }{|\ov x-\ov y|^{N+2s}}\eta_2(\ov x)\eta_2(\ov y) \, d\s(\ov x)d\s(\ov y)&+\int_{\Sig} V_1(\ov x) u(\ov x) \Psi(\ov x)\, d\s(\ov x)  \nonumber\\
  & =\int_{\Sig} f_1(\ov x) \Psi(\ov x)\, d\s(\ov x), \label{eq:reg-Manifold-fin}
  \end{align}
where
\be  \label{eq:def-V_11}
V_1(\ov x):=V(\ov x)+  \int_{\Sig} (1-\eta_2(\ov y)) |\ov x-\ov y|^{-N-2s}\, d\s(\ov y)
\ee
 and 
 \be  \label{eq:def-V_12}
  f_1(\ov x):=f(\ov x)+ \int_{\Sig} (1-\eta_2(\ov y))u(\ov y) |\ov x-\ov y|^{-N-2s}\,d\s(\ov y).
\ee 
We denote by $Jac_\Phi$   the Jacobian determinant of $\Phi $.
Let  $\psi(x)=\Psi(\Phi(x))$,  $v(x)=u(\Phi(x))$, $\ti V(x)=V_1(x)Jac_\Phi(x)$ and  $\ti f(x)=f_1(x)Jac_\Phi(x)$.  Then by the  changes of variables $\ov x=\Phi (x)$ and $\ov y=\Phi (y)$, in \eqref{eq:reg-Manifold-fin},  we get 
  \begin{align*}
\frac{1}{2}  \int_{ \R^N} \int_{ \R^N}  {(v(x)-v(y))(\psi(x)-\psi(y)) } K(x,y) \, dxdy+\int_{B_1} \ti V(x) u(x) \psi(x)\, dx =\int_{B_1} \ti f(x) \psi(x)\, dx,
  \end{align*}
where 
\be\label{eq:def-K-hyersurf}
K(x,y) =  {\vp_2(x) \vp_2(y) Jac_{\Phi}(x)Jac_{\Phi}(y) }  {|\Phi(x)-\Phi(y)|}  ^{-N-2s}.
\ee
We further consider $w=\vp_{1/4} v\in H^s(\R^N)$, so that  by Lemma \ref{lem:estim-G_Ku},  
\be\label{eq:w-satisf}
\cL_{K} w+ \ti V w= \ti f+ G \qquad\textrm{in $B_{1/16}$}, 
\ee
where 
\be\label{eq:def-G-nonloc-hypersurface} 
G(x)=\int_{B_2}(1-\vp_{1/4}(y))v(y)K(x,y)\, dy.
\ee
Next, we observe  that the function   ${\cA}_{K}:B_{1}\times [0,1]\times S^{N-1}\to \R^N$, given  by 
$$
{\cA}_{K}(x,r,\th)=\vp_2(x) \vp_2(x+r\th) Jac_{\Phi}(x)Jac_{\Phi}(x+r\th)  \left|\int_0^1D\Phi(x+r\th)  \th\,  dt\right| ^{- N-2s}
$$
is an extension of $(x,r,\th)\mapsto r^{N+2s}K(x,x+r\th)$ on $B_{1}\times [0,1]\times S^{N-1}$.   
%
%
Moreover, since $\Phi \in C^{1,\g}(B_2)$,  we see that 
\be \label{eq:K-hypersurface}
\begin{aligned}
& \|{\cA}_{K} \|_{C^\g(B_{1/2}\times[0,1/2]\times S^{N-1})}\leq \frac{1}{\k},\\\
&{\cA}_{K}(x,0,\th)={\cA}_{K}(x,0,-\th) \qquad\textrm{ for all $(x,\th)\in \R^N\times S^{N-1} $},
%
\end{aligned}
\ee
for some $\k>0$, only depending on  $N,s,\g$ and $\|\Phi\|_{C^{1,\g}(B_2)}$. Consequently by \eqref{eq:K-hypersurface}, \eqref{eq:DPhi-close-identity} and   \eqref{eq:def-K-hyersurf}, decreasing $\k$ if necessary, we see that      $K\in \scrK^s(\k,\g, Q_{1/2} )$.
  In addition, from \eqref{eq:def-V_11} and \eqref{eq:def-V_12}, we easily deduce that for  $p>1$,
\be\label{eq:Holder-entires-manifold}
\|\ti f\|_{L^p(  B_{1/16}   )}+ \|G\|_{L^p(  B_{1/16}   )}     \leq C (\|u\|_{{L_s(\Sig)}}+ \|f\|_{L^p(\cB_2)}) \qquad\textrm{ and } \qquad\|\ti V\|_{L^p(  B_{1/2}   )}   \leq C,
\ee
where  $C$ is a constant only depending on $N,s,p,\g,    \|\n \Phi\|_{C^{1,\g}(B_2)}, \|V\|_{L^p(\cB_2)}$ and $ \|1\|_{{L_s(\Sig)}}$.
%
\begin{proof}[Proof of Theorem \ref{th:nonloca-surf1} (completed)]
From the computations above, we have that $w=\vp_{1/2}u\circ\Phi \in H^s(\R^N)$ satisfies \eqref{eq:w-satisf} with $K\in \scrK^s(\k,\g, Q_{1/2} )$.   Thanks to \eqref{eq:Holder-entires-manifold},  we can apply Theorem \ref{th:main-th10}, to get the result.
\end{proof}
%

%
%
%
%
%
%
 \begin{proof}[Proof of Theorem \ref{th:nonloca-surf2} (completed)]
We know from Theorem \ref{th:nonloca-surf1} and the above argument  that  $w=\vp_{1/2}u\circ\Phi \in H^s(\R^N)\cap C^{\min (2s-\e,1)}(B_{1/4})$, for all $\e\in (0,2s)$ and solves  \eqref{eq:w-satisf} with  $K\in \scrK^s(\k,\g, Q_{1/2} )$. However, in view of \eqref{eq:def-V_11} and \eqref{eq:def-V_12},  we can use similar arguments as in the proof of Lemma \ref{lem:estim-G_Ku}$(iv)$ to deduce that  
\be \label{eq:es1-Hyp} 
\|\ti f\|_{C^\a(  B_{1/16}   )}  \leq C (\|u\|_{{L_s(\Sig)}}+ \|f\|_{C^\a(\cB_2)})
\ee
and, using also \eqref{eq:integ-hypersurface}, we get
\be  \label{eq:es2-Hyp} 
\|\ti V\|_{C^\a(  B_{1/16}   )}  \leq C (\| V\|_{C^\a(  \cB_{2}   )} +  \|1\|_{{L_s(\Sig)}} ) \leq C  ,
\ee
where here and below, the letter $C$ denotes a positive constant which may vary from line to line but only depends on  $N,s,\a,\g,  \|V\|_{C^\a(\cB_2)}, \|\n \Phi\|_{C^{1,\g}(B_2)}$ and $ \|1\|_{{L_s(\Sig)}}$.
Moreover, recalling \eqref{eq:def-G-nonloc-hypersurface},  applying Lemma \ref{lem:estim-G_Ku}$(iii)$, we have that  
\be \label{eq:es3-Hyp} 
\|G\|_{C^\a(  B_{1/16}   )}  \leq C  \|w\|_{L^1(B_2)} \leq C \|u\|_{{L_s(\Sig)}}.
\ee
In view of   \eqref{eq:w-satisf},   \eqref{eq:es1-Hyp},  \eqref{eq:es2-Hyp} and  \eqref{eq:es3-Hyp}, we can  apply Theorem \ref{th:Schauder-0-intro}-$(i)$ and use a bootstrap argument, to deduce  that 
\begin{align*}
\|w\|_{C^{2s+\a}(B_{r_0}) } &\leq C (\|w\|_{L^\infty(\R^N)}+   \|\ti f\|_{C^\a(B_2)}+ \|G\|_{C^\a(B_{1/16})})\\
&\leq C (\|u\|_{L^2(\cB_2)}+ \|u\|_{{L_s(\Sig)}} +\|f\|_{C^\a(\cB_2)}),
\end{align*}
for some $r_0,C>0$,   depends only on  $N,s,\a,\g,c_1,c_0,\|V\|_{C^\a(\cB_2)}, \|\n \Phi\|_{C^{1,\g}(B_2)}$ and $I_{s,\Sig}$.
The proof is thus completed by scaling, covering and a  change of variables.
 \end{proof}

\section{Appendix}\label{s:Appendix}
%
\begin{proof}[Proof of Lemma \ref{lem:2sp-alph-estim-F_Keo}]
\noindent
\textbf{Case $2s+\a<2$.}
For simplicity, recalling \eqref{eq:A-Cm12} and \eqref{eq:A-Cm12-tau}, we assume that 
$$
 \|A\|_{C^{k+2s+\a}(Q_\infty)\times L^\infty(S^{N-1} )}+  \|B\|_{\cC^{k}_{\t_s}(Q_\infty )\times L^\infty(S^{N-1} )} \leq 1,
$$
where  $\t_s:=\a+(2s-1)_+$ if $2s\not=1$ and  $\t_{1/2}:=\a + \e$ if $2s=1$. We also assume that 
$$
\|u\|_{C^{k+2s+\a+\e_s}(\R^N )}\leq 1,
$$
where $\e_s:=0$ if $2s\not=1$ and $\e_s:=\e$ if $2s=1$.\\
We consider the case $m=0$.
Since $\|u\|_{L^\infty(\R^N)}\leq 1$,   we have
\be\label{eq:Apend-1e}
|\d^e u(x,r,\th )| \leq C \min (1,r^{2s+\a}).
\ee
Here,   for $2s\geq 1$, we use the fact that $ 2 \d^e u(x,r)=r \int_0^1(\n u(x+t r \th )-\n u(x-t r \th ))\cdot \th \, dt $.
Moreover for $x_1,x_2\in \R^N$ and $r>0$, then for $2s+\a<1$,  we have 
\be\label{eq:Apend-2e}
|\d^e u(x_1,r,\th )-\d^e u(x_2,r,\th )| \leq C \min (r^{2s+\a}, |x_1-x_2|^{2s+\a})
\ee
 and if $2s\geq 1$, we have 
\be\label{eq:Apend-3e}
|\d^e u(x_1,r,\th )-\d^e u(x_2,r,\th )| \leq C \min (r^{2s+\a}, r |x_1-x_2|^{\t_s}).
\ee
On the other hand, for all $s\in (0,1)$,  
\be\label{eq:Apend-4o}
|\d^o u(x,r,\th) | \leq C \min (1,r  )^{\min(2s+\a,1)},
\ee
and 
\be\label{eq:Apend-5o}
|\d^o u(x_1,r,\th )-\d^o u(x_2,r,\th )| \leq C \min (r,  |x_1-x_2| )^{\min(2s+\a,1)}.
\ee
Using \eqref{eq:Apend-4o},  for  $s\in (0,1)$,  we estimate
\begin{align*}
|\cO^s_{B,u}(x)|&\leq C \int_0^\infty\min (r,1)^{\min (2s+\a,1)}    \min (r, 1)^{(2s-1)_++\a}    r^{-1-2s}\, dr\\
&\leq C\int_0^1r^{\min (2s+\a,1)}  r ^{(2s-1)_++\a}   r^{-1-2s}\, dr+  C   \int_1^\infty     r^{-1-2s}\, dr,
\end{align*}
so that, 
\be\label{eq:estimFoB-k-1}
\|\cO^s_{B,u}\| _{L^\infty(\R^N)}\leq C.
\ee
We consider next $\cE^s_{A,u}$. For all $x\in \R^N$ and for all $s\in (0,1)$, by \eqref{eq:Apend-1e},  we have
\begin{align*}
|\cE^s_{A,u}(x)|&\leq C \int_0^\infty\min( r^{2s+\a}  ,1)    r^{-1-2s}\, dr \leq C \int_0^1r^{\a-1}   \, dr+ C \int_1^\infty     r^{-1-2s}\, dr,
\end{align*}
yielding
\be\label{eq:estimFeB-k-1}
\|\cE^s_{A,u}\| _{L^\infty(\R^N)}\leq C.
\ee
  Let $x_1, x_2\in \R^N$ with    $|x_1-x_2|\leq 1$. Using \eqref{eq:Apend-5o}, for $s\in (0,1)$  we have
\begin{align*}
|\cO^s_{B,u}(x_1)&-\cO^s_{B,u}(x_2)|\leq C \int_0^\infty\min (r,|x_1-x_2|)^{\min (2s+\a,1)}    \min (r, 1)^{\t_s} r^{-1-2s}\, dr\\
&+C \int_0^\infty\min (r,1)^{\min (2s+\a,1)}    \min (r, |x_1-x_2|)^{\t_s}  r^{-1-2s}\, dr\\
&\leq C\int_0^{|x_1-x_2|}r^{\min (2s+\a,1)+\t_s}   r^{-1-2s}\, dr +C|x_1-x_2|^{\min (2s+\a,1)  }  \int_{|x_1-x_2|}^1 r^{\t_s -1-2s  } \, dr\\
&+C|x_1-x_2|^{\t_s   }  \int_{|x_1-x_2|}^1 r^{\min (2s+\a,1)-1-2s  } \, dr\\
&+C |x_1-x_2|^{\min (2s+\a,1)  }    \int_{1}^\infty     r^{-1-2s}\, dr+C |x_1-x_2|^{ \t_s }    \int_{1}^\infty     r^{-1-2s}\, dr\\
&\leq C |x_1-x_2|^{\a} .
\end{align*}
In the above estimate, it is used that $\t_{s}=\a+\e$, for $s=1/2$.
This together with \eqref{eq:estimFoB-k-1}    imply that $\|\cO^s_{B,u}\|_{C^{0,\a}(\R^N)}\leq C $, for all $s\in (0,1)$. \\
Now for $2s\geq  1$, let  $x_1 \not = x_2\in \R^N$ with   $|x_1-x_2|\leq 1$. Using \eqref{eq:Apend-3e} and \eqref{eq:Apend-1e}     we have 
\begin{align*}
&|\cE^s_{A,u}(x_1)-\cE^s_{A,u}(x_2)|\\
& \leq C \int_0^\infty \min (r^{ 2s+\a} , r|x_1-x_2|^{ \t_s} )    r^{-1-2s }\, dr  +C |x_1-x_2|^{\a}   \int_0^\infty \min(r ^{ 2s+\a},1 )    r^{-1-2s}\, dr\\
&\leq C \int_0^{|x_1-x_2|}   r^{\a-1}\, dr+C|x_1-x_2|^{\t_s}\int_{|x_1-x_2|}^\infty    r^{-2s}\, dr +C |x_1-x_2|^{\a}  \leq C |x_1-x_2|^\a.
\end{align*}
Hence using \eqref{eq:estimFeB-k-1},  for $2s\geq 1$,  we get $\|\cE^s_{A,u}\|_{C^{0,\a}(\R^N)}\leq C $.\\
We now consider the case $2s+\a<  1$.   For  $x_1, x_2\in \R^N$,   $|x_1-x_2|\leq 1$, by  \eqref{eq:Apend-2e},   we estimate
\begin{align*}
&|\cE^s_{A,u}(x_1)-\cE^s_{A,u}(x_2)|\\
&\leq  C\int_0^\infty\min (r,|x_1-x_2|)^{ 2s+\a}     r^{-1-2s}\, dr  +C|x_1-x_2|^{\a}   \int_0^\infty \min(r ^{ 2s+\a},1 )    r^{-1-2s}\, dr\\
&\leq C \int_0^{|x_1-x_2|}r^{-1+\a}    \, dr+C|x_1-x_2|^{2s+\a}\int_{|x_1-x_2|}^\infty    r^{-1-2s}\, dr+C |x_1-x_2|^{\a}   \leq C |x_1-x_2|^\a.
\end{align*}
We then conclude from this and \eqref{eq:estimFeB-k-1} that   $\|\cE^s_{A,u}\|_{C^{0,\a}(\R^N)}\leq C $, provided    $2s+\a<1$.\\
If  $m>1$, we can use the Leibniz formula  for the derivatives of the product of two functions. Note that for all $\g\in\N^N$ with $|\g|\leq m$, we have  that $\d^e \de^\g u$ (resp. $\d^o \de^\g u$) satisfies \eqref{eq:Apend-1e} and  \eqref{eq:Apend-2e} (resp. \eqref{eq:Apend-4o} and  \eqref{eq:Apend-5o}).\\

\noindent
\textbf{Case $2s+\a>2$}.  We first observe from the arguments in the previous case that 
\begin{align}\label{eq:L-infty-bound-cEcO-higher}
\|\cE_{A,u}^s\|_{L^{\infty}(\R^N)}\leq  C \|A\|_{C^{0}(Q_\infty) \times L^\infty(S^{N-1} )}  \|u\|_{C^{2s+\a}(\R^N)},\nonumber\\
  \|\cO_{B,u}^s\|_{L^{\infty}(\R^N)} \leq  C \|A\|_{\cC^{0}_{1}(Q_\infty)\times L^\infty(S^{N-1} )}  \|u\|_{C^{2s+\a}(\R^N)} .
\end{align}
  Since $B(y,0,\th) =0$, we  have 
$$
B(x_1,r,\th) - B(x_2,r,\th)= r\int_0^1  (D_rB(x_1,\varrho  r,\th) - D_r B(x_2,\varrho  r,\th))  \,  d\varrho.
$$
On the other hand
$$
B(x_1,r,\th) - B(x_2,r,\th)=  \int_0^1  D_x B(\varrho x_1+(1-\varrho) x_2,
 r,\th)\cdot (x_1-x_2)   \,  d\varrho.
$$
The above two estimates  yield
\begin{align}\label{eq:B-x1-x2-High}
 |B(x_1,r,\th) - B&(x_2,r,\th)|\leq   (  \|B\|_{C^{2s+\a-1}(Q_\infty)\times L^\infty(S^{N-1} )} \nonumber\\
 &+  \|B\|_{\cC^1_{2s+\a-2} (Q_\infty) \times L^\infty(S^{N-1} )}  ) \min (r|x_1-x_2|^{2s+\a-2},  r^{2s+\a-2} |x_1-x_2|  ).
\end{align}
In addition, we have 
\begin{align*}
 \d^o u(x_1,r,\th) - \d^o u(x_2,r,\th) = \int_0^1   D_x   \d^ou(\varrho x_1+(1-\varrho) x_2,
 r,\th)\cdot (x_1-x_2)    d\varrho, 
\end{align*}
  so that 
$$
| \d^o u(x_1,r,\th) - \d^o u(x_2,r,\th) ) | \leq  C \|u\|_{C^{2s+\a}(\R^N)}  \min (r, r^{2s+\a-2} |x_1-x_2|).
$$
Using this and \eqref{eq:B-x1-x2-High}, we find that, for all $x_1,x_2\in \R^N$, 
\begin{align}\label{eq:cOB-Holder-Higher}
|\cO_{B,u}^s(x_1)- \cO_{B,u}^s(x_2)| \le  C (  \|B\|_{C^{2s+\a-1}(Q_\infty)\times L^\infty(S^{N-1} )}+  \|B\|_{\cC^1_{2s+\a-2} (Q_\infty) \times L^\infty(S^{N-1} )}  ) |x_1-x_2|^\a.
\end{align}
Next, we write $2 \d^e u(x,r)=r \int_0^1(\n u(x+t r \th )-\n u(x-t r \th ))\cdot \th \, dt$ from which we deduce that
\begin{align*}
 \d^e u(x,r)&=  r^2 \int_0^1 t \int_0^1 D^2_x \d^o u(x, \varrho  t r,  \th ) [\th,\th]  \,  d\varrho dt
\end{align*}
and 
\begin{align*}
 \d^e u(x_1,r)-  \d^e u(x_2,r)&=r  \int_0^1 \int_0^1D^2_x \d^o u( \varrho x_1+ (1-\varrho) x_2,t r, \th )[x_1-x_2,\th] \, dt  d\varrho.
\end{align*}
By combining the above two estimates, we get 
$$
| \d^e u(x_1,r)- \d^e u(x_2,r)| \leq C  \|u\|_{C^{2s+\a}(\R^N)}  \min (r^{2s+\a}, r^{2s+\a-1} |x_1-x_2| ).
$$
Using now the above estimate and the fact  that $A\in {C^{\a}(Q_\infty) \times L^\infty(S^{N-1} )}$, we immediately deduce that  $[\cE_{A,u}^s]_{C^\a(\R^N)}\leq C \|A\|_{C^{\a}(Q_\infty)\times L^\infty(S^{N-1} )}  \|u\|_{C^{2s+\a}(\R^N)} $. From this,   \eqref{eq:L-infty-bound-cEcO-higher} and \eqref{eq:cOB-Holder-Higher}, we get the statement in the lemma for $m=0$ and $2s+\a>2$. 
In the general case that $m\geq 1$,  we can use the Leibniz formula  for the derivatives of the product of two functions and argue as above to get the desired estimates.
\end{proof}

\end{document}